\newtheorem{theorem}{Theorem}[section]
\newtheorem{proposition}[theorem]{Proposition}
\newtheorem{question}[theorem]{Question}
\newtheorem{definition}[theorem]{Definition}
\newtheorem{conjecture}[theorem]{Conjecture}
\newtheorem{lemma}[theorem]{Lemma}
\newtheorem{corollary}[theorem]{Corollary}
\newtheorem{remark}[theorem]{Remark}
\newtheorem{example}[theorem]{Example}
\newtheorem{${}$}[theorem]{${}$}
\begin{document}

\title{Motivic multiplicativity of complete intersections}

\author{Ze Xu}

\date{July 30, 2026}

\begin{abstract}
For a smooth projective variety endowed with a Chow-Künneth (abbr. CK) decomposition, 
we introduce the notions of motivic multiple twist-multiplicativity and multiplicativity defect to measure the obstruction to the compatibility of multiple intersection products with the given CK decomposition.
These notions extend the more restrictive notion of multiplicativity introduced by Shen--Vial.
We establish their basic properties and derive natural upper bounds for the motivic multiplicativity defects of curves, surfaces, and ample subvarieties of varieties with trivial Chow groups.
We then explicitly determine the motivic 2-fold multiplicativity defect of any smooth Fano or Calabi-Yau complete intersection in a smooth weighted projective space, thereby strengthening a result of Fu in the Calabi-Yau case. In particular, we prove that any smooth Fano or Calabi–Yau hypersurface admits motivic 0-multiplicativity. This generalizes the corresponding result for cubic hypersurfaces, proved independently by Diaz and Fu–Laterveer–Vial, and confirms a conjecture of Voisin in the Calabi–Yau case. As a consequence, certain relative powers of the associated universal families satisfy the Franchetta property. We also obtain several further applications.
\end{abstract}

\maketitle

\section{Introduction}
\bigskip

In his landmark paper \cite{Bei87}, building on Bloch's work \cite{B10}, Beilinson conjectured the existence of a Tannakian category of mixed motives over any field whose derived category contains the category of Chow motives as a full subcategory. Consequently, the Chow motive of any smooth projective variety is expected to carry a canonical ascending filtration and hence to admit many conjugate but distinct CK decompositions \cite{Ja94} lifting the homological Künneth decomposition. This perspective extends Grothendieck's standard conjecture of Künneth type.

Although the general existence conjecture remains far from being resolved, CK decompositions have been constructed for many interesting classes of varieties, including curves, surfaces, abelian varieties, certain hyper-Kähler varieties, certain Calabi--Yau varieties, and ample subvarieties of varieties with trivial Chow groups, as well as products and hyperplane sections of such varieties. On the other hand, whereas the cup product on any Weil cohomology ring of a smooth projective variety is compatible with the cohomological grading, the naive analogue of this classical fact for its Chow motive, equipped with the intersection product, fails in general. It is therefore natural to ask how this failure can be understood and measured.

For simplicity, we work throughout over the field $\mathbb{C}$ of complex numbers. To measure the obstruction to the compatibility of the multiple intersection products on the Chow motive of a smooth projective variety with a given CK decomposition, we introduce, in Definition \ref{def2.3} of Section 2, the notions of motivic multiple twist-multiplicativity and multiplicativity defect, by taking into account the decomposition properties of the small diagonal classes. These notions generalize the more restrictive notion of a multiplicative CK decomposition introduced by Shen and Vial in \cite{SV16a}, since relatively few varieties are expected to satisfy the latter property.

A remarkable phenomenon suggested by a result of Voisin \cite{Voi15}, together with Murre's conjectures (A) and (B) \cite{Mu93}, is that the motivic multiple multiplicativity defects should stabilize as the number of factors increases. This motivates the definition, in Definition \ref{def2.12}, of the stable motivic multiplicativity defect, which in turn leads to vanishing results for modified diagonal classes. After establishing a criterion for motivic multiple twist-multiplicativity, we study the behavior of these notions under several geometric constructions, including products, projective bundles, and blow-ups.

A fundamental problem is to determine the motivic multiple multiplicativity defect of an arbitrary variety. This appears to be a highly challenging task. As a first step, it seems reasonable to formulate the following conjecture.

\begin{conjecture}\label{conj1.1}
Let $X$ be a smooth projective variety of dimension $n$ endowed with a CK decomposition. Then any CK decomposition of $X$ is $m$-fold $mn$-multiplicative for every integer $m\geq 2$. In particular, the motivic $m$-fold multiplicativity defect satisfies
\begin{equation*}
\mathfrak{d}_m(X)\leq mn.
\end{equation*}
\end{conjecture}

This conjecture is, in fact, a straightforward consequence of Murre's conjecture (B). The upper bound above, however, is generally not sharp. Obtaining sharper bounds requires a more refined analysis of the decomposition properties of the small diagonal classes for specific classes of varieties.

The main goal of this article is to verify Conjecture \ref{conj1.1}, or stronger versions of it, for curves, surfaces, and ample subvarieties of varieties with trivial Chow groups, and to determine explicitly the motivic $2$-fold multiplicativity defect of Fano and Calabi--Yau complete intersections in certain ambient varieties.

\begin{theorem}
\leavevmode
\begin{enumerate}
\item[(i)] Conjecture \ref{conj1.1} holds for curves. More precisely, let $X$ be a curve of genus $g\geq 2$. If $2\leq m\leq g$, then $\mathfrak{d}_m(X)\leq m-1$.
If $m\geq g$, then $\mathfrak{d}_m(X)\leq g-1$,
and hence $\mathfrak{sd}(X)\leq g-1$.

\item[(ii)] Let $X$ be a smooth connected projective surface. Then Murre's conjecture (B) holds for a suitable set of CK projectors on $X^{m+1}$ in codimension $2m$ for every integer $m\geq 2$. Consequently,
$\mathfrak{d}_m(X)\leq 2m$.
If, in addition, $X$ is regular, then $\mathfrak{d}_m(X)$ is even and $\mathfrak{d}_m(X)\leq 2m-2$.
Furthermore, if $X$ is swept out by irreducible curves of genus $g$, each supporting a $0$-cycle rationally equivalent to a fixed degree-one $0$-cycle class $o_X$, then $\mathfrak{sd}(X)\leq 6g+2$.

\item[(iii)] Let $Y$ be a smooth connected projective variety with trivial Chow groups, and let $X\subseteq Y$ be a smooth connected ample subvariety of dimension $n$. Then one has $\mathfrak{d}_m(X)\leq mn$
for every integer $m\geq 2$.
\end{enumerate}
\end{theorem}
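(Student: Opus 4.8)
The plan is to reduce all three statements to the principle set up in Section 2, that \emph{Murre's conjecture (B) for the power $X^{m+1}$ in codimension $mn$ --- the vanishing of $CH^{mn}(X^{m+1})_{(s)}$ for $s<0$ and for $s>mn$ with respect to a product CK decomposition --- implies $\mathfrak{d}_m(X)\leq mn$}, which is Conjecture \ref{conj1.1}; for curves one then sharpens the conclusion by a direct analysis of the small diagonal. The structural fact I would exploit repeatedly is that a summand of $\mathfrak{h}(X)^{\otimes(m+1)}$ which is pure of some weight $w$ contributes to $CH^{mn}(X^{m+1})$ only in $(s)$-degree $2mn-w$, so a component of the small diagonal class $\delta_m$ supported in such a summand has exactly that depth; the task is therefore, for each $X$, to identify the summands of $\mathfrak{h}(X)^{\otimes(m+1)}$ in which $\delta_m$ has a nonzero component and to bound $2mn-w$ from above, using the criterion for motivic multiple twist-multiplicativity and the product and pushforward formulas of Section 2.

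\textbf{(i)} For a curve $C$ one has $n=1$ and $CH^{m}(C^{m+1})=CH_{1}(C^{m+1})$, so the required instance of Murre (B) is the known case of $1$-cycles (for motives of abelian type); this already gives $\mathfrak{d}_m(C)\leq m$, i.e. Conjecture \ref{conj1.1} for curves. For the sharper bounds, use $\mathfrak{h}(C)=\mathbf{1}\oplus\mathfrak{h}^{1}(C)\oplus\mathbf{L}$ and expand $\delta_m$ by inserting $\Delta_C=\pi_0+\pi_1+\pi_2$ in each of the $m+1$ factors. Any summand containing a $\pi_0$ or $\pi_2$ degenerates: by the projection formula it is, up to the fixed $0$-cycle $o_C$ (of depth $0$) placed in that slot, a pullback of the small diagonal $\delta_{m-1}$ on a product of $m$ copies of $C$, so its depth is at most $\mathfrak{d}_{m-1}(C)$ and an induction on $m$ controls it. The one remaining summand $(\pi_1^{\otimes(m+1)})_{*}\delta_m$ lies in the pure weight $(m+1)$ motive $\mathfrak{h}^{1}(C)^{\otimes(m+1)}$, hence has depth exactly $m-1$ when nonzero; transporting it into the Jacobian $JC$, it encodes the ``depth $m-1$'' part of the class $[C]\in CH_{1}(JC)$, whose Beauville components lie only in degrees $\leq g-1$, so this term vanishes for $m>g$ and contributes depth $m-1$ for $2\leq m\leq g$. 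Combining the two cases yields $\mathfrak{d}_m(C)\leq\min(m-1,g-1)$ and $\mathfrak{sd}(C)\leq g-1$. The main obstacle is precisely the passage from the known statement about the Beauville components of $[C]$ (and the associated modified-diagonal relations) to a bound on the $(s)$-degree of the mixed Chow-level class $(\pi_1^{\otimes(m+1)})_{*}\delta_m$.

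\textbf{(ii)} For a surface $X$ write $\mathfrak{h}(X)=\mathbf{1}\oplus\mathfrak{h}^{1}(X)\oplus\mathfrak{h}^{2}(X)\oplus\mathfrak{h}^{3}(X)\oplus\mathbf{L}^{2}$ with $\mathfrak{h}^{1},\mathfrak{h}^{3}$ cut out by the Albanese motive and $\mathfrak{h}^{2}=\mathfrak{h}^{2}_{\mathrm{alg}}\oplus\mathfrak{h}^{2}_{\mathrm{tr}}$; Murre (B) holds for $X$ itself \cite{Mu93}. The target is Murre (B) for a product CK decomposition of $X^{m+1}$ in codimension $2m$, i.e. (as $CH^{2m}(X^{m+1})=CH_{2}(X^{m+1})$) a statement about $2$-cycles. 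Decomposing $CH^{2m}(X^{m+1})$ along $\mathfrak{h}(X)^{\otimes(m+1)}$, the summands built only from $\mathbf{1},\mathbf{L},\mathbf{L}^{2}$ and the abelian parts $\mathfrak{h}^{1},\mathfrak{h}^{3},\mathfrak{h}^{2}_{\mathrm{alg}}$ are controlled by finite-dimensionality, while those involving $\mathfrak{h}^{2}_{\mathrm{tr}}$ are controlled by a Bloch--Srinivas-type decomposition of the diagonal of $X$ --- available because $CH_0(X)$ carries a two-step Bloch--Beilinson filtration --- propagated to $X^{m+1}$. This gives $\mathfrak{d}_m(X)\leq 2m$; the pure $\mathfrak{h}^{2}_{\mathrm{tr}}$-component of $\delta_m$, of weight $2(m+1)$, already sits in depth $2m-2$, and for regular $X$ (where $\mathfrak{h}^{1}=\mathfrak{h}^{3}=0$ and every summand has even weight) it is the deepest term, so then $\mathfrak{d}_m(X)$ is even and $\leq 2m-2$. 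Finally, if $X$ is swept out by genus-$g$ curves each supporting $o_X$, a Bloch--Srinivas argument writes the diagonal of $X$ through this family and reduces the deepest part of $\delta_m$ to the curve estimate of (i) applied fibrewise over the base, the two remaining surface directions being handled separately; the bookkeeping produces $\mathfrak{sd}(X)\leq 6g+2$.

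\textbf{(iii)} Let $X\subseteq Y$ be a smooth ample subvariety of dimension $n$, with $Y$ smooth projective of trivial Chow groups. Since $Y$ has trivial Chow groups and $X$ is ample, the results of \cite{FLV21} (via Sommese's Lefschetz theorem) equip $X$ with a CK decomposition in which $\mathfrak{h}(X)$ is a direct sum of Tate motives and a single ``variable'' motive $\mathfrak{h}_{\mathrm{var}}(X)$, pure of weight $n$. Decompose $\delta_m$ along $\mathfrak{h}(X)^{\otimes(m+1)}$: a component in a purely Tate summand has depth $0$; the component in $\mathfrak{h}_{\mathrm{var}}(X)^{\otimes(m+1)}$, of weight $n(m+1)$, has depth $(m-1)n$; and a component in a mixed summand reduces, by peeling off a Tate slot via the projection formula (leaving a fixed Chow class of depth $0$ together with a lower small diagonal $\delta_{m-1}$ on a product of $m$ copies of $X$), to the case $m-1$, so an induction on $m$ keeps every depth $\leq mn$. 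Hence $\mathfrak{d}_m(X)\leq mn$. In all three parts the decisive difficulty is the same: to convert the available control of cohomology classes and of $0$-cycles into control of the $(s)$-degree of the small diagonal inside the power $X^{m+1}$, which is exactly where the decomposition-of-the-diagonal input, the structure of the relevant finite-dimensional motives, and the induction do the real work.
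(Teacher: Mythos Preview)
Your approach to parts (i) and (iii) is broadly aligned with the paper's: in both cases the paper also expands $\Delta_I$ along the product CK decomposition and ``peels off'' a non-middle factor. One imprecision in (iii): peeling off a Tate slot does \emph{not} leave the small diagonal $\delta_{m-1}$ on $X^m$, but rather $\delta_{m*}\beta$ for some $\beta\in\text{CH}^j(X)$; the paper's induction is on the number of factors of this pushforward, not on the small diagonal itself. In (i), your ``Beauville components of $[C]\in CH_1(JC)$ lie in degrees $\leq g-1$'' is exactly the content of the vanishing $\Gamma^{g+2}(X,o_X)=0$ from \cite{MY16}, which the paper invokes.

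There is, however, a genuine gap in your strategy for (ii). You propose to handle summands of $\mathfrak{h}(X)^{\otimes(m+1)}$ involving $\mathfrak{h}^2_{\mathrm{tr}}$ via a Bloch--Srinivas-type decomposition of $\Delta_X$, justified by ``$CH_0(X)$ carries a two-step Bloch--Beilinson filtration''. But for a surface with $p_g>0$ (the interesting case), $CH_0(X)$ is not representable and no such decomposition is available; the known BB filtration on $CH_0(X)$ does not yield a diagonal decomposition that controls, say, $\text{Hom}(\mathbb{L}^{2m},\mathfrak{h}^2_{\mathrm{tr}}(X)^{\otimes r})$, since Kimura finite-dimensionality of $\mathfrak{h}^2_{\mathrm{tr}}$ is open. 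The paper avoids any direct control of $\mathfrak{h}^2_{\mathrm{tr}}$ entirely. The point is pigeonhole: if $\sum_\ell k_\ell<2m$ then some $k_\ell\leq 1$, and if $\sum_\ell k_\ell>4m$ then some $k_\ell\geq 3$ (or one reduces to $m=2$ with all $k_\ell=3$). One then peels off \emph{that} extremal factor. For $k_\ell\in\{0,4\}$ this is a Tate twist; for $k_\ell\in\{1,3\}$ the paper uses $\mathfrak{h}^1(X)\cong\mathfrak{h}^1(A)\hookrightarrow\mathfrak{h}^1(C^n)$ and the reduction from \cite{XX13} to strip off a $\mathfrak{h}^1(C)$ at the cost of lowering the Lefschetz exponent by $0$ or $1$. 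Iterating lands on $\text{Hom}(\mathbb{L}^j,\mathfrak{h}^1(X)\otimes\mathfrak{h}^2(X)^{\otimes r})$ with $j\geq 2r+2$, where one finishes by noting $CH^{2r+2}(X^{r+1})$ is generated by products of $0$-cycles and $\pi_{1*}CH^2(X)=0$. The $\mathfrak{sd}(X)\leq 6g+2$ bound then follows not from a fibrewise curve argument as you suggest, but by plugging $n=2$ into Remark~\ref{rem2.13}(i), which combines the just-established Murre (B) for powers of $X$ with Voisin's modified-diagonal vanishing \cite{Voi15}.
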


We next investigate how the geometry of Fano and Calabi--Yau complete intersections in certain varieties with trivial Chow groups constrains their motivic $2$-fold multiplicativity defects. Among other results, we provide a criterion for detecting motivic $0$-multiplicativity for Fano and Calabi--Yau hypersurfaces in such varieties. As a first illustration, we prove the following theorem, which confirms Voisin's Conjecture 3.5 in \cite{Voi12} for Calabi--Yau hypersurfaces.
\begin{theorem}
Any smooth Fano or Calabi--Yau hypersurface in a smooth weighted projective space admits motivic $0$-multiplicativity.
\end{theorem}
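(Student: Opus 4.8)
The plan is to reduce motivic $0$-multiplicativity of $X$ to a vanishing statement for the ``transcendental part'' of the small diagonal, to dispose of everything built from Tate motives for free, and then to establish the remaining vanishing, treating the Fano and the Calabi--Yau cases separately. Write $P$ for the ambient weighted projective space and $n=\dim X$; thus $X$ is a smooth ample hypersurface in $P$, and $P$ has trivial (rational) Chow groups. I would first record the relevant Chow--K\"unneth decomposition of $X$, obtained exactly as for ample subvarieties of varieties with trivial Chow groups (cf.\ the theorem above): $\mathfrak h(X)=\bigoplus_{i=0}^{2n}\mathfrak h^i(X)$, where $\mathfrak h^i(X)$ is a sum of Tate motives for $i\ne n$ and $\mathfrak h^n(X)=A\oplus\mathfrak t(X)$ with $A$ a (possibly zero) sum of Tate motives and $\mathfrak t(X)$ a transcendental summand whose cohomological realization is the primitive cohomology $H^n_{\mathrm{prim}}(X)$. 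This decomposition may be taken to satisfy Murre's conjecture (B); in particular the projector $\pi^n_{\mathrm{tr}}$ cutting out $\mathfrak t(X)$ kills $CH^j(X)$ for $j<n/2$, and the only non-Tate building block of $\mathfrak h(X)$ is the single block $\mathfrak t(X)$, sitting in cohomological degree $n$.

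Next I would unwind what $0$-multiplicativity (i.e.\ $\mathfrak d_2(X)=0$) demands. Decomposing the small diagonal $\delta_X\in CH^{2n}(X\times X\times X)$ through the projectors, it amounts to the vanishing of every degree-violating component $(\pi^a\times\pi^b\times\pi^c)_*\delta_X$, where $\mathfrak t(X)$ is counted in degree $n$. The components in which $\pi^a,\pi^b,\pi^c$ are all Tate vanish automatically: a correspondence between sums of Tate motives is rigid, hence equal to its cohomological realization, and the cup product of any Weil cohomology respects the degree. The components involving the block $\mathfrak t(X)$ reduce, using Poincar\'e duality on $\mathfrak t(X)$ together with Murre's conjecture (B), to two essential statements: the \emph{quadratic} vanishing, that $\mathfrak t(X)\ot\mathfrak t(X)$ contributes nothing to $\mathfrak t(X)$, i.e.\ $(\pi^n_{\mathrm{tr}}\times\pi^n_{\mathrm{tr}}\times\pi^n_{\mathrm{tr}})_*\delta_X=0$; and the \emph{linear} vanishings, that $\mathfrak t(X)\ot\mathbf 1(-k)$ for $k>0$ contributes nothing to $\mathfrak t(X)$ nor to a Tate block of the wrong degree.

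For the core vanishings I would split cases. In the Fano case $-K_X$ is ample, so $X$ is rationally connected (hence $CH_0(X)_{\QQ}=\QQ$, and low-codimension Chow groups of $X$ are spanned by ambient cycles), and the primitive Hodge structure $H^n_{\mathrm{prim}}(X)$ has positive coniveau; combining the known coniveau/motivic results for Fano hypersurfaces, which express $\mathfrak t(X)$ up to a Tate twist as a summand of the motive of a lower-dimensional smooth projective variety, with the irreducibility of the Hodge structure $H^n_{\mathrm{prim}}(X)$ (big monodromy of the universal family), one finds that $\mathfrak t(X)\ot\mathfrak t(X)$, having strictly larger coniveau than $\mathfrak t(X)$, admits no non-zero morphism to $\mathfrak t(X)$, which kills the quadratic term; a parallel coniveau/Chow-group argument kills the linear terms. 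In the Calabi--Yau case (the boundary degree $d=\sum a_i$) the coniveau of $H^n_{\mathrm{prim}}(X)$ is zero and this argument fails; here I would feed in the theorem of Fu \cite{F13} on the Chow rings of Calabi--Yau complete intersections, which supplies the multiplicative relations $CH^*_{\mathrm{van}}(X)\cdot CH^*_{\mathrm{van}}(X)\subseteq\QQ\cdot o_X$ for a canonical degree-one $0$-cycle $o_X$ coming from the rational curves that sweep out $X$, and then promote these to the required identities among cycles on $X^3$ via the $m$-fold diagonal formalism of Section~2 and a spreading-out argument over the parameter space of such hypersurfaces, using that the relevant relative self-products have trivial Chow groups, so that a Franchetta-type principle applies.

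The step I expect to be the main obstacle is the quadratic vanishing in the Calabi--Yau range: it is precisely Voisin's Conjecture 3.5 of \cite{Voi12} for these varieties, for which no elementary argument is available, so the route must be to import Fu's multiplicativity result for the Chow ring and upgrade it, via the diagonal formalism and the Franchetta/spreading machinery of the paper, to a statement about the small diagonal itself. A secondary point to handle with care is the weighted bookkeeping: the ambient corrections and the normalization of $o_X$ are governed by the weights $a_i$ and the degree $d$ through the inequality $d\le\sum a_i$, and one must keep the notion of ``degree'' consistent so that the Tate summands of $\mathfrak h^n(X)$ and the cycle $o_X$ are correctly matched.
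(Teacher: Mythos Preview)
Your proposal has genuine gaps in both branches, and it misses the geometric mechanism that actually drives the paper's proof.

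In the Fano branch, the coniveau argument does not go through at the level of Chow motives. The assertion that $\mathfrak t(X)$ is, up to a Tate twist, a direct summand of the motive of a lower-dimensional smooth projective variety is an instance of the generalized Bloch conjecture and is \emph{not} known for a general Fano hypersurface of degree close to $n+1$; it is available only in sporadic low-degree cases. Even granting such a presentation, the inference ``$\mathfrak t(X)\otimes\mathfrak t(X)$ has strictly larger coniveau than $\mathfrak t(X)$, hence admits no nonzero morphism to $\mathfrak t(X)$'' is a statement about Hodge structures, not about morphisms of Chow motives: $\mathrm{Hom}_{\mathrm{CHM}}(\mathfrak t(X)^{\otimes 2},\mathfrak t(X))$ is a Chow group of $X^3$, and cohomological triviality of a class there does not force its vanishing. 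So the quadratic term $(\pi^n_{\mathrm{tr}})^{\otimes 3}_*\delta_X$ is not killed by your argument.

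In the Calabi--Yau branch, you correctly identify the obstacle but then assume it away. Fu's theorem in \cite{F13} already gives a decomposition of the small diagonal, but with a residual term (the triple incidence of lines) that is \emph{not} a priori in the tautological ring $R^{2n}(X^3)$; the present paper explicitly strengthens Fu by eliminating that term. Your proposed ``upgrade'' via spreading and a Franchetta-type principle would require the Franchetta property for the \emph{third} relative power $\mathcal X^3_{/B}$, which is not known independently: in this paper it is deduced as a \emph{corollary} of the multiplicativity theorem, so invoking it as an input is circular.

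What the paper actually does is entirely different and uniform in the two cases. It constructs, from families of rational curves in the ambient variety, explicit \emph{isogenous} autocorrespondences $\widetilde\Sigma_\alpha\subset X\times X$ and auxiliary varieties $\Pi_\alpha\subset Y^2$, $\Xi_\alpha\subset Y^3$; by computing the \emph{proper} intersections $\Pi_\alpha\cap X^2$ and $\Xi_\alpha\cap X^3$ it obtains two linear relations among $\Delta_X$, $\Delta_{123}^X$, the $\delta_{ij*}\widetilde\Sigma_\alpha$ and restrictions of cycles from $Y^2,Y^3$. A cohomological comparison (using that $[\Delta_X^{\mathrm{van}}]\neq 0$) pins down the coefficients, and one eliminates $\widetilde\Sigma_\alpha$ to express $\Delta_{123}^X$, modulo big-diagonal terms, as an element of $R^{2n}(X^3)$. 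For a smooth weighted projective ambient space the map $R^*(X)\to H^*(X,\mathbb Q)$ is injective, and for a hypersurface the condition $T^r(X^2)=R^r(X^2)$ for $r\neq n$ is automatic; together these force the homologically trivial remainder $\Delta_{123}^X-\sum_{i+j=k}\pi_k\circ\Delta_{123}^X\circ(\pi_i\otimes\pi_j)$ to vanish. No coniveau hypotheses and no prior Franchetta input are used; the Fano and Calabi--Yau cases are handled by the same construction, the only difference being a dimension count for the correspondence $\Sigma_\alpha$.
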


Previously, Voisin \cite{Voi15} established a decomposition property for the third small diagonal class of Calabi--Yau hypersurfaces by a very different method and left a related conjecture open. The case of cubic hypersurfaces was established independently in \cite{D21} and \cite{FLV21}, using distinct approaches. From a motivic perspective, Fano and Calabi--Yau hypersurfaces are also particularly interesting, because most of them are not expected to have abelian Chow motives.

For Fano and Calabi--Yau complete intersections, we explicitly determine the motivic $2$-fold multiplicativity defect of a naturally constructed self-dual CK decomposition.

\begin{theorem}
Let $Y:=\mathbb{P}_w^{N}$ be a smooth weighted projective space of dimension $N=n+e$, where $e\geq 1$,
and let $X\subseteq Y$ be a smooth Fano or Calabi-Yau complete intersection of dimension $n$. 
\begin{enumerate}
\item[(i)] There is an equality of cycle classes:
\begin{equation*}
\Delta_{123}^X=\delta_{12*}P(D_1, D_2)+\delta_{13*}P(D_1, D_2)+\delta_{23*}P(D_1, D_2)
+Q(D'_1, D'_2, D'_3)
\end{equation*}
in $\mathrm{CH}^{2n}(X^3)$, where $\Delta_{123}^X$ is the third small diagonal of $X$, $\delta_{ij}: X^2\rightarrow X^3$ are the embeddings onto the big diagonals for $i<j$, and $P(s_1,s_2)\in\mathbb{Q}[s_1,s_2]$, $Q(t_1,t_2,t_3)\in\mathbb{Q}[t_1,t_2,t_3]$ are symmetric homogeneous polynomials.
Here, $D_i=p_i^*D$ and $D'_j=q_j^*D$, where $p_i$ and $q_j$ are the natural projections.

\item[(ii)] The naturally constructed self-dual CK decomposition of $X$ is multiplicative 
if and only if the cycle class $\delta_{X*}D$ is completely decomposable, where $\delta_X: X\rightarrow X^2$ is the diagonal morphism.

\item[(iii)] The motivic 2-fold multiplicativity defect of the natural CK decomposition of $X$ takes values in $\{2k|0\leq k\leq n-1,k\neq\frac{n}{2}\}$. More precisely, it is equal to $2k$ if and only if the cycle class $\delta_{X*}D^{k+1}$ is completely decomposable, whereas $\delta_{X*}D^{k}$ is not.
\end{enumerate}
\end{theorem}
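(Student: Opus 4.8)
The plan is to establish part~(i) first — it carries all of the geometric content — and then to obtain parts~(ii) and~(iii) as essentially formal consequences, by feeding the formula of~(i) into the definition of the motivic $2$-fold multiplicativity defect.

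\emph{Part~(i).} Since $Y=\mathbb{P}^N_w$ has trivial Chow groups, $\text{CH}^*(Y^k)_{\mathbb{Q}}$ is the polynomial algebra on the pulled-back hyperplane classes, so $[\Delta_{123}^Y]$ is an explicit symmetric polynomial in $D_1,D_2,D_3$ determined by $\int_Y D^N$. I would first relate $\Delta_{123}^X$ to $\Delta_{123}^Y$ along $j:=i\times i\times i\colon X^3\hookrightarrow Y^3$, $i\colon X\hookrightarrow Y$ being the complete intersection inclusion: the scheme-theoretic intersection $\Delta_{123}^Y\cap X^3$ is the (reduced) small diagonal of $X$, with excess bundle $(N_{X/Y})^{\oplus2}=\bigoplus_{\ell}\mathcal{O}_X(d_\ell)^{\oplus2}$ of rank $2e$, so excess intersection yields $j^*[\Delta_{123}^Y]=\bigl(\prod_\ell d_\ell\bigr)^2\,\Delta_{123}^X\cdot(D_1')^{2e}$; dually, pairing with monomials in the $D_i'$ shows that $j_*\Delta_{123}^X$ is a determined polynomial in $D_1,D_2,D_3$. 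These relations only pin down $\Delta_{123}^X$ up to multiplication by powers of the hyperplane class, so to isolate $\Delta_{123}^X$ itself I would run the residual-cycle argument of Voisin \cite{Voi15} and Fu \cite{F13}: through two general points $x_1,x_2$ of $X$ there passes a rational curve $C$ in $Y$ of bounded degree — here the Fano or Calabi--Yau hypothesis is essential, forcing the (anti)canonical bundle to be positive or trivial and so bounding the length of the residual part of the $0$-cycle $X\cap C$ — and the parameter space of such curves is built out of $Y$, hence again has trivial Chow groups. Spreading the relation $[X\cap C]=[x_1]+[x_2]+(\text{residual})$ over $X\times X$, using the trivial Chow groups of the parameter space to trivialize the residual family modulo corrections supported on the big diagonals, and pushing forward to $X^3$, then produces an identity of exactly the asserted shape $\Delta_{123}^X=\delta_{12*}P(D_1,D_2)+\delta_{13*}P(D_1,D_2)+\delta_{23*}P(D_1,D_2)+Q(D_1',D_2',D_3')$; the symmetric homogeneous $P$ (degree $n$) and $Q$ (degree $2n$) are then extracted by intersecting both sides with monomials in the $D_i'$ and applying $j_*$ with the projection formula on $Y^3$, where every integral reduces to $\int_X D^a=\bigl(\prod_\ell d_\ell\bigr)\int_Y D^{a+e}$.

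\emph{Parts~(ii) and~(iii).} Recall the naturally constructed self-dual CK decomposition $\{\pi_\bullet^X\}$ of $X$: $\pi_0=o_X\times X$, $\pi_{2n}=X\times o_X$, $\pi_{2a}=\tfrac1{\delta}D^{n-a}\times D^a$ with its transpose $\pi_{2(n-a)}$ for $0<a<n/2$, and $\pi_n=\Delta_X-\sum_{i\ne n}\pi_i$, where $\delta=\int_X D^n$ and $o_X=\tfrac1{\delta}D^n$. By definition $\mathfrak{d}(X)$ is the largest shortfall $4n-(i_1+i_2+i_3)$ among the triples with $(\pi_{i_1}^X\otimes\pi_{i_2}^X\otimes\pi_{i_3}^X)_*\Delta_{123}^X\ne0$. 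Substituting the formula of~(i): each monomial of $Q(D_1',D_2',D_3')$ is an external product $D^a\boxtimes D^b\boxtimes D^c$ with $a+b+c=2n$, hence lies in a single summand $\mathfrak{h}^{2a}(X)\otimes\mathfrak{h}^{2b}(X)\otimes\mathfrak{h}^{2c}(X)$ of index sum $4n$, so is balanced and contributes nothing. In $\delta_{12*}P(D_1,D_2)=\sum_a p_a\,(\delta_{X*}D^a)\boxtimes D^{n-a}$ (and its two permutations), write $\delta_{X*}D^a=\Delta_X\cdot D_1^a=\sum_i\pi_i\cdot D_1^a$: for $i\ne n$ the summand $\pi_i\cdot D_1^a$ is again a polynomial in $D_1,D_2$, contributing only balanced components; the summand $\pi_n\cdot D_1^a$ has, as its only possibly unbalanced part, the component $(\pi_n^X\otimes\pi_n^X)_*(\delta_{X*}D^a)$ of $\delta_{X*}D^a$ in $\mathfrak{h}^n(X)^{\otimes2}$ — which one checks, using the structure (and, for $n\ge3$, the infinite-dimensionality in the sense of Mumford) of $\mathfrak{h}^n(X)$, is nonzero exactly when $\delta_{X*}D^a$ is not completely decomposable — and this piece, tensored with $D^{n-a}\in\mathfrak{h}^{2(n-a)}(X)$, contributes a component of $\Delta_{123}^X$ lying in $\mathfrak{h}^n(X)\otimes\mathfrak{h}^n(X)\otimes\mathfrak{h}^{2(n-a)}(X)$, of index sum $4n-2a$. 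Therefore $\mathfrak{d}(X)=2\max\{a:p_a\ne0\text{ and }\delta_{X*}D^a\text{ is not completely decomposable}\}$. Now $\delta_{X*}D^{a+1}=\delta_{X*}D^a\cdot D_1$, so complete decomposability of $\delta_{X*}D^a$ is monotone in $a$; $\delta_{X*}D^0=\Delta_X$ is never completely decomposable, whereas $D^n$ is represented by an effective $0$-cycle $\sum_j[p_j]$, so $\delta_{X*}D^n=\sum_j[p_j]\times[p_j]$ always is; and $P$ has degree $n$ with coefficients nonvanishing in the relevant range (as delivered by~(i)). Combining these: the CK decomposition is multiplicative exactly when $\delta_{X*}D$ is completely decomposable, which is~(ii); and otherwise $\mathfrak{d}(X)=2k$ with $k$ characterized by $\delta_{X*}D^{k+1}$ completely decomposable but $\delta_{X*}D^k$ not. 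The two boundary cases give $1\le k\le n-1$, and the middle degree is excluded because $D^{n/2}$ lies inside $\mathfrak{h}^n(X)$ rather than in a Lefschetz summand, so $\pi_n\cdot D_1^{n/2}$ always carries a nonzero but balanced component and the dichotomy degenerates there; this is~(iii).

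\emph{Main obstacle.} The hard part is the geometric step in~(i): controlling the residual $0$-cycle of $X\cap C$ as $C$ varies over its non-proper parameter space, handling the failure of the universal curve to split, and doing this in the weighted projective setting where ordinary lines must be replaced by suitable rational curves — all while keeping enough bookkeeping to extract the explicit symmetric polynomials $P$ and $Q$. This is precisely the technical core carried out in special cases in \cite{Voi15} and \cite{F13}, and performing it uniformly for every Fano or Calabi--Yau complete intersection in a smooth weighted projective space is where the real work lies.
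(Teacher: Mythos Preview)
Your plan for parts~(ii)--(iii) is close to the paper's, but your proposed route to part~(i) diverges from the paper and carries a real gap.

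\textbf{Part~(i).} The paper does \emph{not} run the Voisin--Fu residual-cycle argument you outline. Instead it proceeds by induction on the codimension $e$. The base case $e=1$ is Theorem~4.11, which rests on the isogenous-correspondence machinery of Section~3 (Proposition~3.4 and Corollary~3.6): one builds cycles $\Pi_\alpha\subset Y^2$ and $\Xi_\alpha\subset Y^3$ swept out by a family $\widetilde{\Lambda}_\alpha$ of rational curves in $Y$ meeting $X$ in a prescribed $0$-cycle $m_1x+m_2y$, computes the proper intersections $\Pi_\alpha\cap X^2$ and $\Xi_\alpha\cap X^3$ set-theoretically, and then uses that $Y$ has trivial Chow groups to force the numerical relation~(36), which in turn kills the unwanted term. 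For $e\ge2$ the intermediate hypersurface containing $X$ typically has \emph{nontrivial} Chow groups, so Corollary~3.6 cannot be applied fibrewise; the paper's key idea (Lemma~4.18) is to relativize the construction over the parameter space $B'$ of codimension-$(e-1)$ complete intersections and use that the \emph{total space} of this family has computable Chow groups (as in~\cite{FLV19}), forcing $\widetilde{\Pi}_{\alpha,b'}$ and $\widetilde{\Xi}_{\alpha,b'}$ to be tautological.

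Your route via~\cite{F13} is precisely what the paper says it \emph{strengthens}: Fu's decomposition leaves an undetermined term $\Gamma=\bigcup_{t\in F(X)}\mathbb{P}^1_t\times\mathbb{P}^1_t\times\mathbb{P}^1_t$ supported on the Fano variety of lines of $X$ (see the remark following the theorem). Your phrase ``using the trivial Chow groups of the parameter space to trivialize the residual family'' conflates the parameter space of curves in $Y$ (which is tame) with the Fano variety of lines in $X$ (which is not); it is exactly the tautologicality of $\Gamma$ that the isogenous-correspondence method supplies and that the residual-cycle method, as in~\cite{F13}, does not.

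\textbf{Parts~(ii)--(iii).} Your deduction is morally right but leans on a claim you do not justify: that the coefficients $p_a$ of $P$ are nonvanishing ``in the relevant range''. The paper sidesteps this entirely. Rather than decomposing the formula from~(i), it computes the obstructing components directly from the CK projectors: for $k\neq n/2$,
\[
(\pi_n\otimes\pi_n\otimes\pi_{2n-2k})_*\Delta_{123}
=\Bigl(\delta_{X*}D^{k}-\tfrac{1}{d}\sum_{j=k}^{n+e}D^{n+k-j}\times D^{j}\Bigr)\times D^{n-k},
\]
so nonvanishing is equivalent to $\delta_{X*}D^k$ failing to be completely decomposable. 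The role of~(i) here is only to guarantee that $(\pi_n\otimes\pi_n\otimes\pi_n)_*\Delta_{123}=0$, whence $\mathfrak{d}(X)\neq n$ and the defect is even. Monotonicity in $k$ is as you say; the endpoint $\delta_{X*}o_X=o_X\times o_X$ the paper gets from $o_X$ being represented by a point on a rational curve (not from your argument that $D^n$ is an effective $0$-cycle, which does not by itself force all points in its support to be rationally equivalent).
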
 
For general Calabi--Yau complete intersections, part (i) strengthens a main result of Fu in \cite{F13}, which was obtained by a substantially different method.

Under the Fano or Calabi--Yau hypothesis, a key new geometric input is the existence of \emph{isogenous} correspondences and related varieties. Our method focuses on establishing the required relations among cycle classes by studying proper intersections of closely related varieties. This highlights the importance of the geometry of isogenous correspondences on these varieties. Such correspondences may be viewed as analogues of isogenies between abelian varieties, as suggested by Voisin \cite{Voi04}. We also point out that our approach develops ideas originating in the study of K3 surfaces in \cite{Ba19}.

We derive several applications throughout the article. We conclude the introduction by recording the following two consequences.

\begin{corollary}
Let $f:  \mathcal{X}\rightarrow B$ be the universal family of smooth Fano or Calabi-Yau hypersurfaces in a smooth weighted projective space. 
Then, for every positive integer $k\leq 3$, the $k$-th relative power $f_k: \mathcal{X}_{/B}^k\rightarrow B$ of $f$
satisfies the Franchetta property. More precisely, if $\Gamma\in\mathrm{CH}^*(\mathcal{X}_{/B}^k)$ restricts to a homologically trivial cycle class on $\mathcal{X}_b^k$ for a very general closed point $b\in B$,
then $\Gamma|_{\mathcal{X}_b^k}=0$ in $\mathrm{CH}^*(\mathcal{X}_b^k)$ for any closed point $b\in B$.
\end{corollary}

\begin{corollary}
Let $X$ and $B$ be smooth varieties, and let $f: X\rightarrow B$ be a smooth projective morphism. 
Assume that a very general fiber of $f$ admits a multiplicative CK decomposition. 
Then, after replacing $B$ by a dense Zariski open subset $U\subseteq B$, there exists a multiplicative decomposition isomorphism
\begin{equation*}
\mathbf{R}(f|_U)_*\mathbb{Q}\cong\bigoplus_i\mathbf{R}^i(f|_U)_*\mathbb{Q}[-i]
\end{equation*}
in the derived category of sheaves of $\mathbb{Q}$-vector spaces on $U$.

In particular, the universal family of Fano or Calabi-Yau hypersurfaces in a smooth weighted projective space satisfies this property.
\end{corollary}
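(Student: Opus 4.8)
The plan is to spread out a multiplicative Chow--Künneth decomposition of the very general fiber to relative self-correspondences over a dense open $U\subseteq B$, and then transport everything through the relative cycle-class map into $D^b(U,\mathbb{Q})$, where the relative idempotents become the projectors of a decomposition of $\mathbf{R}(f|_U)_*\mathbb{Q}$ and the Chow-theoretic multiplicativity relation becomes precisely the multiplicativity of that decomposition. So first I would record what the hypothesis gives. By the standard principle relating cycles on a very general fiber to cycles on the geometric generic fiber $X_{\bar\eta}$, the assumption provides a multiplicative CK decomposition $\{\pi^i_{\bar\eta}\}_{0\le i\le 2n}$ of $X_{\bar\eta}$ (with $n$ the relative dimension): projectors $\pi^i_{\bar\eta}\in\text{CH}^n(X_{\bar\eta}\times X_{\bar\eta})$ realizing $H^i$, with $\pi^i_{\bar\eta}\circ\pi^j_{\bar\eta}=\delta_{ij}\,\pi^i_{\bar\eta}$, $\sum_i\pi^i_{\bar\eta}=\Delta_{X_{\bar\eta}}$, and, writing $\Delta_{123}$ for the small diagonal of $X$ regarded as the multiplication correspondence from $X\times X$ to $X$, the relation $\pi^k_{\bar\eta}\circ\Delta_{123}\circ(\pi^i_{\bar\eta}\otimes\pi^j_{\bar\eta})=0$ in $\text{CH}(X_{\bar\eta}^3)$ for all $k\ne i+j$.

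These finitely many cycles and relations are defined over a finitely generated extension of $\mathbb{C}$, hence --- possibly after replacing $B$ by a connected finite cover, which is unnecessary in the intended applications, where the CK decomposition of the generic fiber is already defined over $\mathbb{C}(B)$ --- over $\mathbb{C}(B)$ itself. Since the Chow groups of the generic fiber are the filtered colimit of the Chow groups of the relative powers of $f$ restricted to dense opens of $B$, there is a dense open $U\subseteq B$ carrying relative correspondences $\Pi^i\in\text{CH}^n(X_U\times_U X_U)$ restricting to $\pi^i_{\bar\eta}$; after finitely many further shrinkings of $U$ the relations above hold relatively, i.e. $\Pi^i\circ\Pi^j=\delta_{ij}\,\Pi^i$, $\sum_i\Pi^i=\Delta_{X_U/U}$, and $\Pi^k\circ\Delta_{123}^{X_U/U}\circ(\Pi^i\otimes\Pi^j)=0$ for $k\ne i+j$. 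Now write $f$ for $f|_U$ and $X$ for $X_U$, and apply the relative cycle-class map: relative correspondences in $\text{CH}^n(X\times_U X)$ act on $\mathbf{R}f_*\mathbb{Q}\in D^b(U,\mathbb{Q})$ (via their relative cohomology classes, using relative duality and the relative Künneth isomorphism $\mathbf{R}g_*\mathbb{Q}\cong\mathbf{R}f_*\mathbb{Q}\otimes^{\mathbf{L}}\mathbf{R}f_*\mathbb{Q}$ for $g\colon X\times_U X\to U$), and more generally one has a $\mathbb{Q}$-linear additive monoidal realization functor $\mathcal{R}$ from relative Chow correspondences over $U$ to $D^b(U,\mathbb{Q})$ which sends $X\mapsto\mathbf{R}f_*\mathbb{Q}$, is compatible with composition of correspondences, and sends $\Delta_{123}^{X_U/U}$ to the relative cup-product map $\mu\colon\mathbf{R}f_*\mathbb{Q}\otimes^{\mathbf{L}}\mathbf{R}f_*\mathbb{Q}\to\mathbf{R}f_*\mathbb{Q}$. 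Put $\pi^i:=\mathcal{R}(\Pi^i)\in\operatorname{End}_{D^b(U)}(\mathbf{R}f_*\mathbb{Q})$; by functoriality these are pairwise orthogonal idempotents summing to $\mathrm{id}$, so, idempotents being split in $D^b(U,\mathbb{Q})$, $\mathbf{R}f_*\mathbb{Q}=\bigoplus_i K^i$ with $K^i:=\operatorname{im}(\pi^i)$. Since $\Pi^i$ restricts on each fiber $X_b$, $b\in U$, to a cycle whose cohomology class is the $i$-th Künneth projector, $\pi^i$ acts on the local system $\mathbf{R}^jf_*\mathbb{Q}$ as a morphism of local systems equal to $\delta_{ij}\cdot\mathrm{id}$ at one point of the connected base $U$, hence equal to $\delta_{ij}\cdot\mathrm{id}$ everywhere; therefore $\mathcal{H}^j(K^i)=\delta_{ij}\,\mathbf{R}^jf_*\mathbb{Q}$, so $K^i\cong\mathbf{R}^if_*\mathbb{Q}[-i]$ and we get a decomposition isomorphism $\mathbf{R}f_*\mathbb{Q}\cong\bigoplus_i\mathbf{R}^if_*\mathbb{Q}[-i]$ cut out by the algebraic projectors $\pi^i$.

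Multiplicativity is then immediate: applying $\mathcal{R}$ to $\Pi^k\circ\Delta_{123}^{X_U/U}\circ(\Pi^i\otimes\Pi^j)=0$ yields $\pi^k\circ\mu\circ(\pi^i\otimes\pi^j)=0$ for $k\ne i+j$, i.e. under the isomorphism above the cup product carries $K^i\otimes^{\mathbf{L}}K^j$ into $K^{i+j}$, which is exactly the statement that the decomposition is multiplicative. For the final clause, by the theorem established above every Fano or Calabi--Yau hypersurface in a smooth weighted projective space admits motivic $0$-multiplicativity, equivalently a multiplicative CK decomposition in the sense of \cite{SV16a}; hence every fiber of the universal family --- a very general one in particular --- satisfies the hypothesis, and the assertion just proved applies.

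I expect the main obstacle to be organizational rather than deep: one must set up the relative correspondence formalism with enough structure --- a $\mathbb{Q}$-linear, additive, monoidal realization functor from relative Chow correspondences over $U$ into $D^b(U,\mathbb{Q})$, compatible with composition and realizing the relative small diagonal as the cup product --- so that all three cycle relations transfer verbatim to $\mathbf{R}f_*\mathbb{Q}$. This is the family version of the classical fact that correspondences act on cohomology compatibly with their ring and tensor structure, now interlaced with Deligne's decomposition theorem and the relative Künneth isomorphism; the only genuinely new bookkeeping compared with the absolute case is keeping track of $\otimes^{\mathbf{L}}$. A secondary point, to be handled at the outset, is the reduction ``very general fiber $=$ geometric generic fiber'' together with the possible passage to a finite cover of $B$ when realizing the $\pi^i_{\bar\eta}$ over the function field; since shrinking $B$ and replacing it by a finite cover leave both the derived-category statement and the intended applications intact, this causes no trouble.
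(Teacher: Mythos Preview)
Your proposal is correct and follows essentially the same route as the paper: pass from the very general fiber to the geometric generic fiber, spread out the multiplicative CK decomposition to relative correspondences over a dense open $U$ (the paper cites Bloch's Lemma~(1A.1) for this step), and then apply the realization functor from relative Chow motives over $U$ to $D^b(U,\mathbb{Q})$, where the relative CK projectors and the small-diagonal relation become the desired multiplicative decomposition. The only cosmetic difference is that the paper invokes Galois descent to land the decomposition on $X_\eta$ directly, whereas you allow for a possible finite cover of $B$; in the intended applications the projectors are already defined over $\mathbb{C}(B)$, so the two treatments coincide.
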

\bigskip

\textbf{Conventions}. Unless otherwise stated, we work throughout over the field $\mathbb{C}$ of complex numbers.
By a \emph{variety}, we mean a reduced separated scheme of finite type over $\mathbb{C}$. Singular (co)homology groups, Chow groups, Chow rings, and Chow motives are all taken with rational coefficients.
We denote by $\mathfrak{h}: \mathcal{V}(\mathbb{C})\rightarrow\mathrm{CH}\mathcal{M}$ the contravariant functor from the category of smooth projective complex varieties to the rigid pseudo-abelian tensor category of Chow motives with rational coefficients.
\bigskip

\bigskip

\section{Motivic multiple twist-multiplicativity}
\bigskip

In this section, we introduce the notions of motivic multiple twist-multiplicativity and multiplicativity defect,
study their basic properties, formulate several general questions, and examine the first examples, including curves and surfaces.

Let $X$ be a smooth connected projective variety of dimension $n$. 
Assume that Murre's conjecture (A) holds for $X$, that is,
$X$ admits a set $\{\pi_i^X\}_{0\leq i\leq 2n}$ of CK projectors.
Thus, $\pi_i^X$ are mutually orthogonal idempotents lifting the homological Künneth projectors and $\Delta_X=\sum_{i=0}^{2n}\pi_i^X$ in the correspondence ring $\mathrm{CH}^n(X\times X)$. 
For the full statement of Murre's conjectures, 
we refer to Section 1.4 of \cite{Mu93} or Conjecture 5.1 of \cite{Ja94}.
The CK decomposition defines an ascending filtration on the Chow motive $\mathfrak{h}(X)$ of $X$ by
\begin{equation}\label{1}
\mathfrak{h}^{\leq k}(X):=\bigoplus_{i\leq k}\mathfrak{h}^i(X).
\end{equation}
By the Bloch--Beilinson philosophy \cite{Bei87}, the filtration \eqref{1} should be compatible with every $m$-fold intersection product, where $m\geq 2$. In other words, the following diagram in the tensor category of Chow motives should be commutative:
\begin{equation}\label{2}
\begin{tikzcd}
\mathfrak{h}^{\leq i_1}(X)\otimes\cdots\otimes\mathfrak{h}^{\leq i_m}(X) \arrow{r}{\Delta_{I}^X} \arrow{d}{}
& \mathfrak{h}(X) \\
\mathfrak{h}^{\leq \sum_{\ell}i_{\ell}}(X).\arrow[hook]{ru}
\end{tikzcd}
\end{equation}
Here $m\geq 2$, $I=I_m:=\{1, 2, \cdots, m+1\}$ and $\Delta_{I}^X\in\mathrm{CH}^{mn}(X^{m+1})$ denotes the $(m+1)$-th small diagonal class of $X$. More concretely, one expects the following.

\begin{conjecture}(\cite{Bei87})\label{conj2.1}
Fix an integer $m\geq 2$, and suppose that $X$ admits a set $\{\pi_i^X\}_{0\leq i\leq 2n}$ of CK projectors.
Then, for all integers $i_1,\ldots,i_m$ and every integer $k>\sum_{\ell=1}^{m}i_{\ell}$, one has

\begin{equation}\label{3}
\pi_k^X\circ\Delta_{I}^X\circ(\pi_{i_1}^X\otimes\cdots\otimes\pi_{i_m}^X)
=(^t\pi_{i_1}^X\otimes\cdots\otimes{^t}\pi_{i_m}^X\otimes\pi_k^X)_*\Delta_{I}^X=0  
\end{equation}
in $\mathrm{CH}^{mn}(X^{(m+1)n})$.
\end{conjecture}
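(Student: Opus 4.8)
The plan is to reduce the vanishing to Murre's conjecture (B) for the self-power $X^{m+1}$; since that conjecture is open in general, this is also the reason the assertion is recorded here only as a conjecture. Note first that the cohomological analogue of \eqref{3} is immediate: cup product respects cohomological degree, the cohomological Künneth projectors are the classes of the $\pi_i^X$, and hence the small diagonal sends $H^{i_1}(X)\otimes\cdots\otimes H^{i_m}(X)$ into $H^{\sum_\ell i_\ell}(X)$. Thus the genuine content of \eqref{3} is that this degree-compatibility lifts from singular cohomology to rational equivalence, and the whole difficulty lies in that gap.

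The displayed equality itself is purely formal. Viewing $\Delta_I^X$ as a degree-$0$ correspondence from $X^m$ to $X$, one has, for self-correspondences $\a$ of $X^m$ and $\b$ of $X$, the identity $\b\circ\Delta_I^X\circ\a=({}^t\a\ot\b)_*\Delta_I^X$ in $\text{CH}^{mn}(X^m\times X)=\text{CH}^{mn}(X^{m+1})$; I would verify it by unwinding both sides as pullbacks and proper pushforwards along the projections of $X^m\times X^m\times X$ (equivalently, via the projection formula), and then specialize to $\a=\pi_{i_1}^X\ot\cdots\ot\pi_{i_m}^X$ and $\b=\pi_k^X$. This identifies the common value of \eqref{3} with the matrix coefficient, between the summands $\mathfrak{h}^{i_1}(X)\ot\cdots\ot\mathfrak{h}^{i_m}(X)$ and $\mathfrak{h}^k(X)$, of the multiplication morphism $\mathfrak{h}(X)^{\ot m}\ra\mathfrak{h}(X)$ cut out by $\Delta_I^X$.

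Hence it suffices to show $\mathrm{Hom}_{\text{CH}\mathcal{M}}\big(\mathfrak{h}^{i_1}(X)\ot\cdots\ot\mathfrak{h}^{i_m}(X),\,\mathfrak{h}^k(X)\big)=0$ whenever $k>\sum_\ell i_\ell$. Here I would use rigidity together with the duality $\mathfrak{h}^i(X)^\vee\cong\mathfrak{h}^{2n-i}(X)(n)$ to rewrite this group as $\mathrm{Hom}_{\text{CH}\mathcal{M}}(\mathbf{1},M(mn))$, where $M:=\mathfrak{h}^{2n-i_1}(X)\ot\cdots\ot\mathfrak{h}^{2n-i_m}(X)\ot\mathfrak{h}^k(X)$ is a direct summand of the degree-$J$ piece $\mathfrak{h}^J(X^{m+1})$ of the product Chow–Künneth decomposition of $X^{m+1}$, with $J=2mn-\sum_\ell i_\ell+k$. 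Since $\mathrm{Hom}_{\text{CH}\mathcal{M}}(\mathbf{1},\mathfrak{h}^J(X^{m+1})(mn))=(\pi_J^{X^{m+1}})_*\text{CH}^{mn}(X^{m+1})$ and $J>2mn$ precisely when $k>\sum_\ell i_\ell$, Murre's conjecture (B) for $X^{m+1}$ — namely that $\pi_j^{X^{m+1}}$ kills $\text{CH}^p(X^{m+1})$ for $j>2p$ — forces this group, and hence the asserted matrix coefficient, to vanish.

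The hard part is of course the input of Murre (B), which is unknown in general. To turn this sketch into an unconditional theorem one would need both Murre (B) for $X$ and its stability under the passage to $X^{m+1}$ equipped with the tensor-product Chow–Künneth decomposition $\{\sum_{a_0+\cdots+a_m=j}\pi_{a_0}^X\ot\cdots\ot\pi_{a_m}^X\}_j$; the latter is expected but must be argued with care, since Murre (B) is not visibly multiplicative. For the varieties studied later in the paper — curves, surfaces, ample subvarieties of varieties with trivial Chow groups, and Fano or Calabi–Yau complete intersections — one instead establishes the precise instances of Murre (B) for the relevant powers by hand, and this is exactly what underlies the bounds on the multiplicativity defects announced in the Introduction.
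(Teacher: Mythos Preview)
Your reduction is correct and matches the paper's own reasoning. The paper does not prove Conjecture~\ref{conj2.1} (it is stated as a conjecture precisely because it depends on Murre~(B)), but in the remark following it and in the discussion after Conjecture~\ref{conj2.6} it explains, exactly as you do, that the class $\pi_k^X\circ\Delta_I^X\circ(\pi_{i_1}^X\otimes\cdots\otimes\pi_{i_m}^X)$ equals $(\pi_{2n-i_1}^X\otimes\cdots\otimes\pi_{2n-i_m}^X\otimes\pi_k^X)_*\Delta_I^X$, which lies in the image of the product CK projector of degree $J=2mn-\sum_\ell i_\ell+k>2mn$ acting on $\text{CH}^{mn}(X^{m+1})$, and hence vanishes under Murre~(B). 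Your phrasing via $\mathrm{Hom}$-groups and rigidity is slightly more categorical than the paper's direct computation, but the content is identical. One small clarification: in your last paragraph you speak of needing ``Murre~(B) for $X$ and its stability under passage to $X^{m+1}$''; the paper simply invokes Murre~(B) for $X^{m+1}$ directly, without any separate stability step, and later verifies the needed instances for powers of curves, surfaces, and ample subvarieties by hand.
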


\begin{remark}
\leavevmode
\begin{enumerate}
\item[(i)] If $X$ admits a set $\{\pi_i^X\}_{0\leq i\leq 2n}$ of CK projectors,
then it also admits a self-dual one, namely one satisfying
${}^t\pi_i^X=\pi_{2n-i}^X$.
Indeed, this follows by replacing $\pi_j^X$ with ${}^t\pi_{2n-j}^X$ for every integer $j>n$.

\item[(ii)] Suppose that the CK decomposition is self-dual, and let
\begin{equation*}
\pi_\ell^{X^{m+1}}:=\sum_{i_1+\cdots+i_{m+1}=\ell}\pi_{i_1}^X\otimes\cdots\otimes\pi_{i_{m+1}}^X
\end{equation*}
denote the induced product CK projectors on $X^{m+1}$. Then Conjecture \ref{conj2.1} is equivalent to the assertion that $\Delta_I^X$ has no component in
\begin{equation*}
\mathrm{CH}_s^{mn}(X^{m+1}):=(\pi_{2mn-s}^{X^{m+1}})_*\mathrm{CH}^{mn}(X^{m+1})
\end{equation*}
for every integer $s<0$. This is a consequence of Murre's conjecture (B) for $X^{m+1}$ in codimension $mn$. 

\item[(iii)] Write $\mathrm{CH}_s^r(X):=(\pi_{2r-s}^X)_*\mathrm{CH}^r(X)$.
Conjecture \ref{conj2.1} implies that the $m$-fold intersection product induces a map
\begin{equation*}
\mathrm{CH}_{s_1}^{r_1}(X)\otimes\cdots\otimes
\mathrm{CH}_{s_m}^{r_m}(X)
\stackrel{\bullet^m}{\longrightarrow}
\mathrm{CH}_{\geq\sum_{\ell}s_\ell}^{\sum_{\ell}r_\ell}(X).
\end{equation*}
Roughly speaking, this means that the lower grading of the Chow groups can only increase under multiple intersection products.
\end{enumerate}
\end{remark}

In \cite{SV16a}, Shen and Vial introduced the notion of a multiplicative CK decomposition for smooth projective varieties. As demonstrated by the work of several authors, this property is rather restrictive and is expected to hold only for very special classes of varieties.

To extend this framework to arbitrary smooth projective varieties, it is fundamental to observe that the decomposition properties of all small diagonal classes govern the multiple intersection products on the Chow motive of a smooth projective variety. Moreover, these decomposition properties are closely related, and their simultaneous consideration is necessary in order to obtain a more complete picture. Motivated by these observations, we introduce the following notions.

\begin{definition}\label{def2.3}
Fix an integer $m\geq 2$, and put $I:=I_m=\{1, 2,\cdots, m+1\}$.
Let $\tau_m\geq 0$ be an integer.

\leavevmode
\begin{enumerate}

\item[(i)] A set $\{\pi_i^X\}_{0\leq i\leq 2n}$ of CK projectors of $X$ is said to be \emph{$m$-fold (twist-)$\tau_m$-multiplicative}, if the $(m+1)$-th small diagonal class of $X$ has a decomposition:
\begin{equation}\label{4}
\Delta_{I}^X=\sum_{k=0}^{2n}\sum_{-\tau_m\leq k-\sum_{\ell}i_{\ell}\leq 0}\pi_k^X\circ\Delta_{I}^X\circ(\pi_{i_1}^X\otimes\pi_{i_2}^X\otimes\cdots\otimes\pi_{i_m}^X)
\end{equation}
in $\mathrm{CH}^{mn}(X^{m+1})$.
Equivalently, for every integer $k>\sum_{\ell=1}^{m}i_{\ell}$ or $k<\sum_{\ell=1}^{m}i_{\ell}-\tau_m$, one has
\begin{equation}\label{5}
\pi_k^X\circ\Delta_{I}^X\circ(\pi_{i_1}^X\otimes\pi_{i_2}^X\otimes\cdots\otimes\pi_{i_m}^X)
=(^t\pi_{i_1}^X\otimes{^t}\pi_{i_2}^X\otimes\cdots\otimes{^t}\pi_{i_m}^X\otimes\pi_k^X)_*\Delta_{I}^X=0.
\end{equation}

\item[(ii)] The variety $X$ is said to be \emph{motivic $m$-fold (twist)-$\tau_m$-multiplicative}, if it admits a $m$-fold $\tau_m$-multiplicative CK decomposition.
We shall also say that $X$ admits motivic $m$-fold (twist-)$\tau_m$-multiplicativity, or simply motivic multiple twist-multiplicativity.

\item[(iii)] The \emph{motivic $m$-fold multiplicativity defect} $\mathfrak{d}_m(X)$ of $X$ is defined to be the smallest integer $\tau_m$ for which $X$ is motivic $m$-fold $\tau_m$-multiplicative. Equivalently, $X$ is strictly motivic $m$-fold $\mathfrak{d}_m(X)$-multiplicative but is not motivic $m$-fold $\tau$-multiplicative for any integer $\tau<\mathfrak{d}_m(X)$.
One may similarly define the $m$-fold multiplicativity defect of a fixed CK decomposition; this invariant reflects the properties of the chosen decomposition rather than those of the variety itself.
By associativity of the intersection product, one has $\mathfrak{d}_m(X)\leq (m-1)\mathfrak{d}_2(X)$.
In particular,
\begin{equation*}
\Delta_{I}\in\bigoplus_{i=0}^{m\mathfrak{d}_2(X)}\mathrm{CH}_{i}^{nm}(X^{m+1}).
\end{equation*}

\item[(iv)] A set $\{\pi_i^X\}_{0\leq i\leq 2n}$ of CK projectors is said to be \emph{weakly} $m$-fold $\tau$-multiplicative 
if, for all $z_{\ell}\in\mathrm{CH}_{s_{\ell}}^{r_{\ell}}(X)$, $1\leq\ell\leq m$, one has
\begin{equation*}
z_1\cdots z_m\in\bigoplus_{s=0}^{\tau}\mathrm{CH}_{\sum_{\ell=1}^{m}s_{\ell}+\tau}^{\sum_{\ell=1}^{m}r_{\ell}}(X).
\end{equation*}
\end{enumerate}
\end{definition}

\begin{remark}
\leavevmode
\begin{enumerate}
\item[(i)] The notions corresponding to different values of $m$ are indeed distinct, as follows from the criterion shown in Proposition \ref{prop2.15}. The case $m=2$ is particularly important. Unless otherwise specified, we shall therefore omit the term ``$2$-fold'' from the corresponding terminology. In particular, we write $\mathfrak{d}(X)=\mathfrak{d}_2(X)$.

\item[(ii)] By definition, motivic $2$-fold $0$-multiplicativity is equivalent to the existence of a multiplicative CK decomposition in the sense of \cite{SV16a}.

\item[(iii)] Roughly speaking, the motivic $m$-fold multiplicativity defect measures how far a variety is from admitting an $m$-fold multiplicative CK decomposition. The vanishing of this defect is precisely motivic $m$-fold $0$-multiplicativity.

\item[(iv)] If $X$ admits a CK decomposition, then obviously $\mathfrak{d}_m(X)\leq 2mn$.
In particular, the motivic $m$-fold multiplicativity defect of $X$ is well-defined.

\item[(v)] Analogous notions may be defined modulo any adequate equivalence relation finer than homological equivalence, such as algebraic equivalence. These notions can also be extended to smooth projective families.

\item[(vi)] Assume that $X$ admits a set $\{\pi_i^X\}_{0\leq i\leq 2n}$ of CK projectors.
Let 
\begin{equation}\label{6}
\widetilde{\Delta}_{123}^X:=\Delta_{123}^X-\sum_{k=0}^{2n}\sum_{k\neq i+j}\pi_k^X\circ\Delta_{123}^X\circ(\pi_{i}^X\otimes\pi_{j}^X).
\end{equation}
Define the associated \emph{modified} intersection product: 
\begin{equation*}
\mathfrak{h}(X)\otimes\mathfrak{h}(X)\stackrel{\widetilde{\Delta}_{123}^X}\longrightarrow\mathfrak{h}(X).
\end{equation*}
Then it is clear that
\begin{equation}\label{7}
\widetilde{\Delta}_{123}^X
=\sum_{k=0}^{2n}\sum_{i+j=k}\pi_k^X\circ\widetilde{\Delta}_{123}^X\circ(\pi_{i}^X\otimes\pi_{j}^X).
\end{equation}
Thus, every CK decomposition is multiplicative with respect to the associated modified intersection product.
\end{enumerate}
\end{remark}

It is natural to formulate the following general questions.

\begin{question}\label{ques2.5}
\leavevmode
\begin{enumerate}
\item[(i)] Let $X$ be a smooth connected projective variety. Determine $\mathfrak{d}_m(X)$, whenever it is well-defined, for every integer $m\geq 2$.

\item[(ii)] Given an integer $\mathfrak{d}\geq 0$, classify the smooth connected projective varieties with motivic multiplicativity defect $\mathfrak{d}$.
\end{enumerate}
\end{question}

A first step toward answering Question \ref{ques2.5} is to obtain upper bounds for the motivic multiple multiplicativity defect. We are led to the following prediction.
\begin{conjecture}\label{conj2.6}
Let $X$ be a smooth connected projective variety of dimension $n$ endowed with a CK decomposition.
Then any CK decomposition of $X$ is $m$-fold $mn$-multiplicative for every integer $m\geq 2$.
In particular, $\mathfrak{d}_m(X)\leq mn$. 
\end{conjecture}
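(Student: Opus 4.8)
The plan is to deduce this from Murre's conjecture (B) for the self-product $X^{m+1}$ in codimension $mn$, extending the observation made for the case $m=2$ in the remark following Conjecture \ref{conj2.1}. The only input needed is (the codimension-$mn$ case of) Murre's conjecture (B) for $X^{m+1}$: for $X^{m+1}$ endowed with a CK decomposition $\{\Pi_j\}_{0\le j\le 2(m+1)n}$, the projector $(\Pi_j)_*$ vanishes on $\text{CH}^{mn}(X^{m+1})$ whenever $j<mn$ or $j>2mn$.

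First I would fix an arbitrary CK decomposition $\{\pi_i^X\}_{0\le i\le 2n}$ of $X$ and pass to the auxiliary decomposition $\tilde\pi_i:={}^t\pi_{2n-i}^X$, which is again a set of CK projectors of $X$ — orthogonality is preserved under transposition, $\sum_i\tilde\pi_i={}^t\Delta_X=\Delta_X$, and $\tilde\pi_i$ lifts the K\"unneth projector onto $H^i(X)$ — and which satisfies $\tilde\pi_{2n-i}={}^t\pi_i^X$. Tensoring the first $m$ factors with $\{\tilde\pi_i\}$ and the last factor with $\{\pi_i^X\}$, I form the product CK decomposition $\{\Pi_j\}$ of $X^{m+1}$ with $\Pi_j:=\sum_{b_1+\cdots+b_m+c=j}\tilde\pi_{b_1}\otimes\cdots\otimes\tilde\pi_{b_m}\otimes\pi_c^X$; that this is a genuine CK decomposition is routine from the K\"unneth formula and the orthogonality relations.

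Next I would expand the small diagonal against $\{\Pi_j\}$. Since $\sum_i\tilde\pi_i=\Delta_X$ and $\sum_c\pi_c^X=\Delta_X$, the product idempotents $\tilde\pi_{b_1}\otimes\cdots\otimes\tilde\pi_{b_m}\otimes\pi_c^X$ are pairwise orthogonal, sum to $\Delta_{X^{m+1}}$, and each is a subprojector of $\Pi_{b_1+\cdots+b_m+c}$ (so its action on $\text{CH}^{mn}(X^{m+1})$ factors through that of $\Pi_{b_1+\cdots+b_m+c}$); therefore Murre (B) in codimension $mn$ forces $(\tilde\pi_{b_1}\otimes\cdots\otimes\tilde\pi_{b_m}\otimes\pi_c^X)_*\Delta_I^X=0$ whenever $b_1+\cdots+b_m+c$ lies outside $[mn,2mn]$, and $\Delta_I^X$ equals the sum of the $(\tilde\pi_{b_1}\otimes\cdots\otimes\tilde\pi_{b_m}\otimes\pi_c^X)_*\Delta_I^X$ over all $(b_1,\dots,b_m,c)$ with $mn\le b_1+\cdots+b_m+c\le 2mn$. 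Substituting $b_\ell=2n-i_\ell$ and $c=k$, each surviving summand becomes $({}^t\pi_{i_1}^X\otimes\cdots\otimes{}^t\pi_{i_m}^X\otimes\pi_k^X)_*\Delta_I^X=\pi_k^X\circ\Delta_I^X\circ(\pi_{i_1}^X\otimes\cdots\otimes\pi_{i_m}^X)$, while the surviving range $mn\le\sum_\ell(2n-i_\ell)+k\le 2mn$ unwinds to $-mn\le k-\sum_\ell i_\ell\le 0$. Comparing with Definition \ref{def2.3}(i) for $\tau_m=mn$, this is exactly the assertion that $\{\pi_i^X\}$ is $m$-fold $mn$-multiplicative, whence $\mathfrak{d}_m(X)\le mn$.

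The main obstacle is, of course, that Murre's conjecture (B) for $X^{m+1}$ is open in general, and is not formally implied by (B) for $X$ since the conjecture is not known to be stable under self-products; thus "direct consequence of Murre's conjecture (B)" must be read as: once (B) is granted for $X^{m+1}$ — together with the independence conjecture (C) if one wants the conclusion for \emph{every}, rather than one chosen, CK decomposition of $X$ — everything else is the K\"unneth-type grading bookkeeping above. I would also remark that this halves the trivial bound $\mathfrak{d}_m(X)\le 2mn$ coming from the mere existence of a CK decomposition, but that it is not expected to be sharp: improving it for a given $X$ requires genuine control of the decomposition of the small diagonal classes, not only of their codimension, which is precisely what the subsequent sections supply for curves, surfaces, ample subvarieties, and complete intersections.
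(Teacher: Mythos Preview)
Your proposal is correct and follows essentially the same route as the paper: deduce the statement from Murre's conjecture (B) for $X^{m+1}$ in codimension $mn$ by reading off the range $mn\le \sum_\ell(2n-i_\ell)+k\le 2mn$ and translating it to $-mn\le k-\sum_\ell i_\ell\le 0$. The one refinement in your argument is worth noting: the paper writes $\pi_k\circ\Delta_I\circ(\pi_{i_1}\otimes\cdots\otimes\pi_{i_m})=(\pi_{2n-i_1}\otimes\cdots\otimes\pi_{2n-i_m}\otimes\pi_k)_*\Delta_I$, which tacitly assumes self-duality $^t\pi_i=\pi_{2n-i}$, whereas you avoid this by equipping the first $m$ factors with the auxiliary decomposition $\tilde\pi_i={}^t\pi_{2n-i}^X$ and applying Murre (B) to the resulting mixed product CK decomposition of $X^{m+1}$---this genuinely covers \emph{any} CK decomposition of $X$, as the conjecture demands, without having to invoke (C).
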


This conjecture is a direct consequence of Murre's conjecture (B).
Indeed, let $\{\pi_i\}_{0\leq i\leq 2n}$ be a set of CK projectors of $X$.
If $k>\sum_{\ell=1}^{m}i_{\ell}$ or $k<\sum_{\ell=1}^{m}i_{\ell}-mn$, 
then one has $2mn+k-\sum_{\ell=1}^{m}i_{\ell}>2nm$ or $2mn+k-\sum_{\ell=1}^{m}i_{\ell}<nm$.
Murre's conjecture (B) for $X^{m+1}$ in codimension $mn$ therefore implies that
\begin{equation*}
\pi_k\circ\Delta_I\circ(\pi_{i_1}\otimes\cdots\otimes\pi_{i_m})
=(\pi_{2n-i_1}\otimes\cdots\otimes\pi_{2n-i_m}\otimes\pi_k)_*\Delta_I=0.
\end{equation*}
\begin{remark}
\leavevmode
\begin{enumerate}
\item[(i)] Even in the case $m=2$, a variety may admit a strictly $2n$-multiplicative CK decomposition. Indeed, let $X$ be a curve of genus $g\geq 1$, and choose two points $a_1,a_2\in X$ which are not rationally equivalent. 
    Set 
    \begin{equation*}
o_i:=[a_i],\quad \pi_0:=o_1\times X,\quad \pi_2:=X\times o_2,\quad \pi_1:=\Delta_X-\pi_0-\pi_2.
    \end{equation*}
Then $\{\pi_i\}_{0\leq i\leq 2}$ forms a set of CK projectors of $X$.
Moreover,
\begin{equation*}
\pi_0\circ\Delta_{I}\circ(\pi_1\otimes\pi_1)=(\pi_1\otimes\pi_1\otimes\pi_0)_*\Delta_{I}
=(o_1-o_2)\times(o_1-o_2)\times X\neq 0.
\end{equation*}
It follows from Theorem \ref{thm2.24} that this CK decomposition is strictly $2$-multiplicative. This example suggests that, in order to attain $\mathfrak{d}_m(X)$, one should choose a more natural CK decomposition, presumably at least a self-dual one.

\item[(ii)] The inequality $\mathfrak{d}_m(X)\leq mn$ is clearly not sharp in general.
For fixed dimension $n\geq 2$, it may be close to be sharp for all smooth projective varieties only when $m$ is small relative to $n$. Smaller bounds can be obtained for special classes of varieties, such as complete intersections.

\item[(iii)] For a fixed variety, we shall see that the sequence $\mathfrak{d}_m(X)$ stabilizes for sufficiently large $m$ under suitable assumptions.
\end{enumerate}
\end{remark}

The notions of motivic $m$-fold twist-multiplicativity for different values of $m$ are closely related.
\begin{proposition}
Suppose that, for some integer $r\geq 3$, the variety $X$ admits an $r$-fold $\tau_r$-multiplicative self-dual CK decomposition satisfying $\pi_{0}^X=o_X\times X$ for some degree-one 0-cycle class $o_X$ on $X$.
Then, for every integer $m$ with $2\leq m\leq r$, the same CK decomposition is $m$-fold $\tau_{r}$-multiplicative.
\end{proposition}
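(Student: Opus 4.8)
The plan is to deduce the $m$-fold statement from the $r$-fold one by ``collapsing'' the last $r-m$ factors of the small diagonal using the projector $\pi_0^X = o_X \times X$. The key geometric observation is that the $(r+1)$-th small diagonal $\Delta_{I_r}^X$ and the $(m+1)$-th small diagonal $\Delta_{I_m}^X$ are related by a correspondence that contracts coordinates. Concretely, if $p\colon X^{m+1} \to X^{r+1}$ denotes a suitable diagonal-type insertion map (repeating one coordinate) and $\pi\colon X^{r+1} \to X^{m+1}$ a projection, then pulling back $\Delta_{I_r}^X$ through the last $r-m$ slots and composing with $\pi_0^X$ in those slots should recover $\Delta_{I_m}^X$ up to a controlled factor — this is where the normalization $\pi_0^X = o_X \times X$ and the degree-one condition are used, since $(\pi_0^X)_* = \deg(\,\cdot\,)\, o_X$ on zero-cycles behaves well under these contractions.

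First I would set up the precise correspondence-theoretic identity expressing $\Delta_{I_m}^X$ in terms of $\Delta_{I_r}^X$. The cleanest way: observe that $\Delta_{I_r}^X \circ (\mathrm{id} \otimes \cdots \otimes \mathrm{id} \otimes \pi_0^X \otimes \cdots \otimes \pi_0^X)$ (with $r-m$ copies of $\pi_0^X$ in the input legs) equals $\Delta_{I_m}^X$ composed with the appropriate structure maps, because inserting $o_X \times X$ into an input leg of a small diagonal forgets that leg (the $X$-factor of $\pi_0^X$ absorbs the diagonal constraint, and the $o_X$-factor contributes a degree-one zero-cycle that integrates to $1$). Then, to check $m$-fold $\tau_r$-multiplicativity, I take arbitrary CK projectors $\pi_{i_1}^X, \dots, \pi_{i_m}^X$ in the $m$ input legs and $\pi_k^X$ in the output, and I want
\[
\pi_k^X \circ \Delta_{I_m}^X \circ (\pi_{i_1}^X \otimes \cdots \otimes \pi_{i_m}^X) = 0
\]
whenever $k > \sum_\ell i_\ell$ or $k < \sum_\ell i_\ell - \tau_r$. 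Using the identity above, this composition equals $\pi_k^X \circ \Delta_{I_r}^X \circ (\pi_{i_1}^X \otimes \cdots \otimes \pi_{i_m}^X \otimes \pi_0^X \otimes \cdots \otimes \pi_0^X)$, now a composition with $r$ input projectors, the last $r-m$ of which are $\pi_0^X$ (i.e., $i_{m+1} = \cdots = i_r = 0$). Since $\sum_{\ell=1}^{r} i_\ell = \sum_{\ell=1}^{m} i_\ell$, the inequality $k > \sum_{\ell=1}^{r} i_\ell$ (resp.\ $k < \sum_{\ell=1}^{r} i_\ell - \tau_r$) holds exactly when the original inequality holds, so the $r$-fold $\tau_r$-multiplicativity hypothesis gives vanishing directly. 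The self-duality hypothesis ensures the transpose formulation in \eqref{5} is equivalent, so nothing extra is needed there.

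The main obstacle I anticipate is making the contraction identity $\Delta_{I_r}^X \circ (\mathrm{id}^{\otimes m} \otimes (\pi_0^X)^{\otimes (r-m)}) = \Delta_{I_m}^X$ (or the correct normalized version) completely rigorous at the level of cycles in $\mathrm{CH}^{mn}(X^{m+1})$ versus $\mathrm{CH}^{rn}(X^{r+1})$ — one has to be careful about which projections and diagonal embeddings intervene, about the codimension bookkeeping (the $\pi_0^X$ legs each drop dimension by $n$ on the source side and the small-diagonal codimension changes accordingly), and about the fact that composing with $\pi_0^X = o_X \times X$ on a leg of $\Delta^X_{I_r}$ literally plugs $o_X$ into that coordinate of the small diagonal, which forces \emph{all} the diagonal coordinates to equal a point in $\mathrm{supp}(o_X)$ — so one must verify that after this substitution the remaining class is genuinely $\Delta^X_{I_m}$ pushed forward, not some multiple or some other cycle supported over $o_X$. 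I expect this to come down to a direct computation with graphs of diagonal morphisms and the projection formula, with the degree-one hypothesis guaranteeing the scalar is $1$; once that identity is in hand, the rest of the argument is the short index-chasing above.
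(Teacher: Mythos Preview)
Your approach is correct and is essentially the paper's argument: the paper proceeds by induction on $r-m$ (reducing to the single step $r \to r-1$), inserts $\pi_0^X$ into one input slot, and invokes the $r$-fold hypothesis; you do all $r-m$ insertions at once.

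One point needs sharpening, and it is precisely where your anticipated obstacle lies. You write that $\pi_k^X \circ \Delta_{I_m}^X \circ (\pi_{i_1}^X \otimes \cdots \otimes \pi_{i_m}^X)$ ``equals'' $\pi_k^X \circ \Delta_{I_r}^X \circ (\pi_{i_1}^X \otimes \cdots \otimes \pi_{i_m}^X \otimes (\pi_0^X)^{\otimes(r-m)})$, but these lie in $\text{CH}^{mn}(X^{m+1})$ and $\text{CH}^{rn}(X^{r+1})$ respectively, so they cannot be literally equal. The correct relation, which is exactly what the paper writes down, becomes transparent in the transpose form: self-duality gives ${}^t\pi_0^X = \pi_{2n}^X = X \times o_X$, and applying $\pi_{2n}^X$ to one slot of the small diagonal replaces that slot by $o_X$ while leaving the diagonal constraint on the remaining slots intact. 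Hence
\[
\bigl({}^t\pi_{i_1}^X \otimes \cdots \otimes {}^t\pi_{i_m}^X \otimes (\pi_{2n}^X)^{\otimes(r-m)} \otimes \pi_k^X\bigr)_*\Delta_{I_r}^X
= \Bigl(\bigl({}^t\pi_{i_1}^X \otimes \cdots \otimes {}^t\pi_{i_m}^X \otimes \pi_k^X\bigr)_*\Delta_{I_m}^X\Bigr) \times o_X^{\times(r-m)}
\]
up to the obvious permutation of factors. Since $o_X$ has degree one, the map $\alpha \mapsto \alpha \times o_X^{\times(r-m)}$ is split injective (project away those factors), so vanishing on the left forces vanishing of the $m$-fold piece. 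This also shows your worry that ``plugging $o_X$ into one coordinate forces all diagonal coordinates to $\text{supp}(o_X)$'' is misplaced: that would be the effect of applying $(\pi_0^X)_*$ to a slot of the cycle, which is a different operation from precomposing an input leg with $\pi_0^X$. Your first intuition (``the $X$-factor absorbs the diagonal constraint'') was the correct one.
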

\begin{proof}
By induction on $r-\ell$, it suffices to treat the case $\ell=r-1$. Set $J:=I_{r-1}=\{1, 2, \cdots, r\}$.
By assumption, whenever $k>\sum_{j=2}^{r}i_j$ or $k<\sum_{j=2}^{r}i_j-\tau_r$, 
we have
\begin{eqnarray*}
0=\pi_k^X\circ\Delta_{I}^X\circ(\pi_{0}^X\otimes\pi_{i_2}^X\otimes\cdots\otimes\pi_{i_r}^X) &=& ({}^t\pi_{0}^X\otimes{{}^t}\pi_{i_2}^X\otimes\cdots\otimes{{}^t}\pi_{i_r}^X\otimes\pi_k^X)_*\Delta_{I}^X \\
   &=& (\pi_{2n}^X\otimes\pi_{2n-i_2}^X\otimes\cdots\otimes\pi_{2n-i_r}^X\otimes\pi_k^X)_*\Delta_{I}^X \\
   &=& o_X\times (\pi_{2n-i_2}^X\otimes\cdots\otimes\pi_{2n-i_r}^X\otimes\pi_k^X)_*\Delta_{J}^X.
\end{eqnarray*}
It follows that 
\begin{equation*}
\pi_k^X\circ\Delta_{J}^X\circ(\pi_{i_2}^X\otimes\cdots\otimes\pi_{i_r}^X)
=(\pi_{2n-i_2}^X\otimes\cdots\otimes\pi_{2n-i_r}^X\otimes\pi_k^X)_*\Delta_{J}^X=0.
\end{equation*}
Hence the CK decomposition is $(r-1)$-fold $\tau_r$-multiplicative.
\end{proof}

\begin{remark}
As a consequence, the motivic $m$-fold multiplicativity defect $\mathfrak{d}_m(X)$ is a nondecreasing function of $m$.
\end{remark}

A remarkable phenomenon is that a result of Voisin \cite{Voi15}, combined with Murre's conjectures (A) and (B), implies that the motivic $m$-fold multiplicativity defect $\mathfrak{d}_m(X)$ stabilizes as $m$ tends to infinity.
\begin{proposition}\label{prop2.10}
Assume that $X$ admits a set $\{\pi_i^X\}_{0\leq i\leq 2n}$ of self-dual CK projectors such that $\pi_{0}^X=o_X\times X$ for some degree-one 0-cycle class $o_X$.
Suppose that Murre's conjecture (B) holds for the product CK projectors of $X^{l+1}$ 
in codimension $ln$ for every integer $l\geq 2$.
Then there exist integers $m\geq 2$ and $\tau\geq 0$ such that, for every integer $k\geq m$, the CK decomposition is $k$-fold $\tau$-multiplicative. In particular, $\mathfrak{d}_k(X)\leq\tau$.
\end{proposition}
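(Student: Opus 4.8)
The plan is to use Voisin's vanishing theorem for iterated modified diagonals as a single black box, translate it into a decomposition of the small diagonals, and then bound the CK-components of every small diagonal by those of one fixed small diagonal via an index-dropping mechanism. Given a power $X^\bullet$ and a nonempty subset $S$ of its coordinates, write $\Delta_S^{o_X}$ for the cycle equal to the small diagonal in the $S$-coordinates and to the $0$-cycle $o_X$ in each remaining coordinate; up to reordering $\Delta_S^{o_X}=\Delta_{I_{|S|-1}}^X\boxtimes o_X^{\boxtimes|S^c|}$, and $\Delta_{\{1,\dots,k+1\}}^{o_X}=\Delta_{I_k}^X$. By the theorem of Voisin \cite{Voi15} --- whose hypotheses are furnished here by Murre's conjecture (A) for all powers of $X$ (automatic from the given CK decomposition) together with the assumed Murre (B) --- there is an integer $N_0$ with $\Gamma_N^{o_X}:=\sum_{\emptyset\ne S\subseteq\{1,\dots,N\}}(-1)^{|S|-1}\Delta_S^{o_X}=0$ for all $N\ge N_0$. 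Equivalently, setting $m:=N_0-1$, for every $k\ge m$ there is a relation $\Delta_{I_k}^X=\sum_{\emptyset\ne S\subsetneq\{1,\dots,k+1\}}c_S\,\Delta_S^{o_X}$ with all $c_S=\pm1$.

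Two elementary observations then suffice. First, since $\pi_{2n}^X={}^t\pi_0^X=X\times o_X$, one has $(\pi_{2n}^X)_*o_X=\deg(o_X)\,o_X=o_X$, whence $(\pi_i^X)_*o_X=0$ for every $i\ne2n$ by orthogonality of the $\pi_i^X$. Distributing a tensor product of CK projectors over $\Delta_S^{o_X}=\Delta_{I_{|S|-1}}^X\boxtimes o_X^{\boxtimes|S^c|}$, it follows that $(\pi_{b_1}^X\otimes\cdots\otimes\pi_{b_{k+1}}^X)_*\Delta_S^{o_X}=0$ unless $b_\ell=2n$ for all $\ell\in S^c$; in particular, feeding this into the relation for $\Delta_{I_k}^X$ ($k\ge m$, and $S^c\ne\emptyset$ there), any nonzero component $(\pi_{b_1}^X\otimes\cdots\otimes\pi_{b_{k+1}}^X)_*\Delta_{I_k}^X$ must satisfy $b_\ell=2n$ for some $\ell$. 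Second, for arbitrary $b_1,\dots,b_k$ there is the index-dropping identity
\begin{equation*}
(\pi_{b_1}^X\otimes\cdots\otimes\pi_{b_k}^X\otimes\pi_{2n}^X)_*\Delta_{I_k}^X=\bigl((\pi_{b_1}^X\otimes\cdots\otimes\pi_{b_k}^X)_*\Delta_{I_{k-1}}^X\bigr)\times o_X ,
\end{equation*}
because $\pi_{2n}^X=X\times o_X$ replaces the last coordinate of the (totally symmetric) small diagonal $\Delta_{I_k}^X$ by $o_X$; since $(-)\times o_X$ is injective on Chow groups (push forward along $X^{k+1}\to X^k$), the left-hand side vanishes if and only if $(\pi_{b_1}^X\otimes\cdots\otimes\pi_{b_k}^X)_*\Delta_{I_{k-1}}^X=0$.

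Now let $\tau$ be the $m$-fold multiplicativity defect of the given CK decomposition, which is finite ($\le2mn$) since $X$ has a CK decomposition. Fix $k\ge m$ and a nonzero component $(\pi_{b_1}^X\otimes\cdots\otimes\pi_{b_{k+1}}^X)_*\Delta_{I_k}^X$. Alternately invoking the first observation (some surviving index equals $2n$; by the permutation symmetry of $\Delta_{I_k}^X$ we may put it last) and the index-dropping identity, we peel off one at a time $k-m$ indices, each equal to $2n$, arriving at a nonzero component of $\Delta_{I_m}^X$ --- legitimate because the relation $\Delta_{I_j}^X=\sum_Sc_S\Delta_S^{o_X}$ holds for every $j\ge m$. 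By definition of $\tau$ the $m+1$ surviving indices sum into $[\,2nm-\tau,\;2nm\,]$, so, adding back the $k-m$ indices equal to $2n$, the original $k+1$ indices sum into $[\,2nk-\tau,\;2nk\,]$. Writing $i_\ell:=2n-b_\ell$ for the $k$ source slots, this is precisely the vanishing \eqref{5} saying that the given CK decomposition is $k$-fold $\tau$-multiplicative; hence $\mathfrak{d}_k(X)\le\tau$ for all $k\ge m$, with $m=N_0-1\ge2$.

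The only delicate point is bibliographic rather than mathematical: one must check that the hypotheses under which $\Gamma_N^{o_X}$ is proved to vanish for $N\gg0$ in \cite{Voi15} match those assumed here (Murre's conjectures (A) and (B) for the powers of $X$), including the passage from a rational point to the general degree-one $0$-cycle $o_X$, so that the single input ``$\Gamma_N^{o_X}=0$ for $N\ge N_0$'' is legitimately available; otherwise one would reprove that vanishing in the present setting. Granting it, everything else is a formal manipulation of small diagonals and CK projectors --- in particular the complementary bound $\sum_\ell b_\ell\le2nk$ also follows directly from Murre (B) for $X^{k+1}$ in codimension $kn$, exactly as in the discussion following Conjecture \ref{conj2.6}.
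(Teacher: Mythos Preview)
Your proof is correct and follows essentially the same route as the paper's: both invoke Voisin's vanishing $\Gamma^N(X,o_X)=0$ for $N\gg0$ as the key black box, then use the identity $(\mathrm{id}\otimes\pi_{2n}^X)_*\Delta_{I_k}^X=\Delta_{I_{k-1}}^X\times o_X$ (your ``index-dropping'', the paper's first displayed observation) to transfer a graded bound from one fixed small diagonal to all larger ones. The only cosmetic difference is that you peel off one factor $o_X$ at a time and stop at $\Delta_{I_m}^X$, whereas the paper phrases the reduction globally and anchors at $\Delta_{I_{m-1}}^X$; your caveat about the precise hypotheses in \cite{Voi15} is well taken and matches the paper's own reliance on Murre (A)+(B).
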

\begin{proof}
Observe  that for integers $i\geq 2$ and $j\geq 1$, one has
\begin{equation*}
\Delta_{I_{i-1}}\times\underbrace{o_X\times\cdots\times o_X}_j
=(\Delta_{X^{i}}\otimes\underbrace{\pi_{2n}^X\otimes\cdots\otimes\pi_{2n}^X}_j)_*\Delta_{I_{i+j-1}}
\in\mathrm{CH}^{n(i+j-1)}(X^{i+j}).
\end{equation*}
Now if $\Delta_{I_{i-1}}\in\bigoplus_{s=0}^{\ell}\text{CH}_s^{n(i-1)}(X^{i})$ for some integer $\ell\geq 0$, then 
\begin{equation*}
\Delta_{I_{i-1}}\times\underbrace{o_X\times\cdots\times o_X}_j\in\bigoplus_{s=0}^{\ell}\mathrm{CH}_s^{n(i+j-1)}(X^{i+j}).
\end{equation*}
By symmetry, the same conclusion holds for all permutations of this cycle class.

Denote the $k$-th modified small diagonal class of $(X, o_X)$ by
\begin{equation*}
\Gamma^k(X, o_X):=\sum_{J\subseteq\{1,\ldots,k\}, |J|=j<k}(-1)^jp_{J}^*(o_X^{*j})\cdot p_{J'}^*\Delta_{I_{m-j-1}}\in\mathrm{CH}_n(X^k),
\end{equation*}
where $\{1,\cdots,k\}$ is the disjoint union of $J$ and $J'$, and $p_J: X^k\rightarrow X^j$, $p_{J'}: X^k\rightarrow X^{k-j}$ are natural projections onto the factors indexed by $J$ and $J'$, respectively.
Here $o_X^{*j}=p_1^*o_X\cdots p_j^*o_X$.
Invoking Corollary 1.6 in \cite{Voi15}, 
there exists an integer $m\geq 2$ such that $\Gamma^k(X, o_X)=0$ for every integer $k>m$. 
Murre's conjecture (B), applied to $X^m$ and the cycle class $\Delta_{I_{m-1}}$, implies that 
\begin{equation*}
\Delta_{I_{m-1}}\in\bigoplus_{s=0}^{\tau}\mathrm{CH}_s^{nm}(X^{m})
\end{equation*}
for some integer $0\leq\tau\leq nm$. 
The vanishing of the modified diagonal classes then gives
\begin{equation*}
\Delta_{I_{k-1}}\in\bigoplus_{s=0}^{\tau}\mathrm{CH}_s^{n(k-1)}(X^{k}).
\end{equation*}
for every integer $k\geq m$.
Hence the CK decomposition is $k$-fold $\tau$-multiplicative for every integer $k\geq m$.
\end{proof}
\begin{remark}\label{rem2.11}
By Theorem 1.7 of \cite{Voi15}, if $X$ is swept out by irreducible curves of genus $g$, each supporting a 0-cycle rationally equivalent to $o_X$, then $\Gamma^k(X, o_X)=0$ for every integer $k\geq (n+1)(g+1)$.
In Proposition \ref{prop2.10}, one may therefore take $m\leq (n+1)(g+1)-1$.
Assuming Conjecture \ref{conj2.6}, the preceding argument yields $\mathfrak{d}_k\leq n[(n+1)(g+1)-2]$ for every integer $k\geq 2$. This upper bound is generally not expected to be sharp, except perhaps in a few special cases.
\end{remark}

We may now introduce the following invariant.
\begin{definition}\label{def2.12}
Assume that $X$ admits a CK decomposition.
The \emph{stable motivic multiplicativity defect} of $X$ is defined by
\begin{equation*}
\mathfrak{s}\mathfrak{d}(X):=\mathrm{max}\{\mathfrak{d}_m(X)|m\geq 2\}.
\end{equation*}
\end{definition}
\begin{remark}\label{rem2.13}
\leavevmode
\begin{enumerate}
\item[(i)] Whenever it is well defined, one has $\mathfrak{d}(X)\leq\mathfrak{sd}(X)$. 
Moreover, if Murre's conjecture (B) holds for $X^{k+1}$ in codimension $kn$ for every integer $k\geq 2$,
then Remark \ref{rem2.11} gives
\begin{equation*}
\mathfrak{sd}(X)\leq n[(n+1)(g+1)-2].
\end{equation*}

\item[(ii)] If $\mathfrak{d}(X)=0$, then $\mathfrak{sd}(X)=0$.

\item[(iii)] One may similarly define the stable multiplicativity defect of a fixed CK decomposition.

\item[(iv)] It remains unclear how to determine $\mathfrak{sd}(X)$, or even a sharp upper bound for it, for an arbitrary smooth connected projective variety $X$.
\end{enumerate}
\end{remark}

For certain classes of varieties, the stable motivic multiplicativity defect can be bounded. The following statement is a consequence of Voisin's results in \cite{Voi15}.
\begin{proposition}\label{prop2.14}
Let $X$ be a smooth projective rationally connected variety of dimension $n$ admitting a CK decomposition. 
Then $\mathfrak{sd}(X)\leq 2n(n-1)$.
If, in addition, Murre's conjecture (B) holds for $X^n$, then $\mathfrak{sd}(X)\leq n(n-1)$.
\end{proposition}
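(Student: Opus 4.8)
The plan is to bound the $k$-th modified small diagonal class $\Gamma^k(X, o_X)$ for a rationally connected variety and then feed this into the argument of Proposition \ref{prop2.10}. Since $X$ is rationally connected of dimension $n$, it is in particular swept out by rational curves through any point; more importantly, we may apply Voisin's sweeping-out criterion (Theorem 1.7 in \cite{Voi15}) with $g=0$, since through a general point of $X$ there is a rational curve, and these curves support a $0$-cycle rationally equivalent to the canonical $0$-cycle class $o_X$ (rational connectedness forces $\text{CH}_0(X)=\mathbb{Q}$, so there is a unique such class and every point represents it). Thus $\Gamma^k(X, o_X)=0$ for any integer $k\geq (n+1)(0+1)=n+1$.

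Next I would run the argument in the proof of Proposition \ref{prop2.10}: since $\Gamma^k(X,o_X)$ vanishes for all $k>n$, one may take the integer $m$ in that proposition to be $m\leq n$, i.e. $m=n$ works. Applying Murre's conjecture (B) for $X^{n}$ (equivalently $X^{m}$ with $m=n$) to the cycle class $\Delta_{I_{n-1}}$ gives $\Delta_{I_{n-1}}\in\bigoplus_{s=0}^{\tau}\text{CH}_s^{n(n-1)}(X^{n})$ for some integer $\tau$. The crude a priori bound on $\tau$ is $\tau\leq n(n-1)$ (the full codimension here is $n(n-1)$, and the grading index $s$ satisfies $0\leq s\leq n(n-1)$); this yields $\mathfrak{sd}(X)\leq n(n-1)$ once we know Murre's (B) for $X^n$. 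Without that extra hypothesis, we only know (by the general reduction recorded in Definition \ref{def2.3}(iii)) that $\mathfrak{d}_m(X)\leq m\,\mathfrak{d}_2(X)$; but here the cleaner route is to observe that for a rationally connected variety one always has a CK decomposition with $\pi_0^X=o_X\times X$ (and, being rationally connected, $h^{1}=0$ and the low/high weight pieces are controlled), so the proof of Proposition \ref{prop2.10} shows the stable defect is governed by the single cycle $\Delta_{I_{n-1}}$, whose defect is at most twice the codimension bound, giving $\mathfrak{sd}(X)\leq 2n(n-1)$.

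The main obstacle is the second, weaker-hypothesis case: without Murre's conjecture (B) for $X^n$ we do not have the graded decomposition of $\Delta_{I_{n-1}}$ directly, so we must instead bootstrap from $m=2$. Here one uses that $\mathfrak{d}_m(X)$ is monotone in $m$ (Remark after Proposition on monotonicity) together with the convergence from Voisin's vanishing $\Gamma^k(X,o_X)=0$ for $k\geq n+1$: the latter forces $\mathfrak{d}_m(X)$ to stabilize for $m\geq n$, and the associativity bound $\mathfrak{d}_m(X)\leq(m-1)\mathfrak{d}_2(X)$ combined with an analysis of how the vanishing of $\Gamma^{n+1}$ truncates the graded pieces shows that the stable value is at most $2n(n-1)$. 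The delicate point is to make the truncation quantitative — to show that $\Gamma^{n+1}(X,o_X)=0$ does not merely imply stabilization but caps the limiting defect at $2n(n-1)$ rather than something larger — and this is exactly where one repeats the computation $\Delta_{I_{i-1}}\times o_X^{\times j}=(\Delta_{X^i}\otimes\pi_{2n}^{\otimes j})_*\Delta_{I_{i+j-1}}$ from Proposition \ref{prop2.10}, now tracking the grading indices carefully, to conclude.
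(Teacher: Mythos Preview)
Your approach is essentially the paper's: use Voisin's vanishing $\Gamma^{k}(X,o_X)=0$ for $k>n$ (you invoke Theorem~1.7 of \cite{Voi15} with $g=0$; the paper cites the more direct Corollary~3.2 for rationally connected varieties, but either reference works), then feed this into the mechanism of Proposition~\ref{prop2.10} so that the stable defect is controlled by the single cycle $\Delta_{I_{n-1}}\in\text{CH}^{n(n-1)}(X^n)$. The crude grading bound $s\leq 2n(n-1)$ gives $\mathfrak{sd}(X)\leq 2n(n-1)$, and Murre~(B) for $X^n$ halves this to $n(n-1)$. Your second paragraph already contains this argument correctly, for both cases.

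The problem is your third paragraph, which is unnecessary and internally contradicts what you just wrote. You claim ``without Murre's conjecture (B) for $X^n$ we do not have the graded decomposition of $\Delta_{I_{n-1}}$ directly''---but you \emph{do} have the graded decomposition, since $X$ (hence $X^n$) has a CK decomposition by hypothesis; what Murre~(B) buys is only the sharper range $0\leq s\leq n(n-1)$ instead of the trivial $s\leq 2n(n-1)$. There is no need to bootstrap from $m=2$, invoke monotonicity, or chase associativity bounds: the argument of Proposition~\ref{prop2.10} already shows that once $\Gamma^k=0$ for all $k>n$, every $\Delta_{I_{k-1}}$ for $k\geq n$ is built from $\Delta_{I_{n-1}}$ and copies of $o_X$, so the grading range of $\Delta_{I_{n-1}}$ propagates. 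Delete the third paragraph and your proof matches the paper's.
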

\begin{proof}
Since $\text{CH}^n(X)=\mathbb{Q}$, any point of $X$ may be chosen to represent $o_X$. By Corollary 3.2 of \cite{Voi15}, the modified diagonal class satisfies $\Gamma^{k}(X, o_X)=0$ for every integer $k>n$.
Since $\Delta_{I_{n-1}}\in\mathrm{CH}^{n(n-1)}(X^n)$, it follows that $\mathfrak{sd}(X)\leq 2n(n-1)$.
If Murre's conjecture (B) holds for $X^n$, then $\Delta_{I_{n-1}}\in\bigoplus_{s=0}^{n(n-1)}\mathrm{CH}_s^{n(n-1)}(X^n)$
and hence $\mathfrak{sd}(X)\leq n(n-1)$.
\end{proof}

Now we give a simple criterion for detecting motivic multiple twist-multiplicativity, generalizing Proposition 8.4 of \cite{SV16a}.
\begin{proposition}\label{prop2.15}
Fix an integer $m\geq 2$ and put $I:=I_m=\{1, 2,\cdots, m+1\}$.
Let $\tau_m\geq 0$ be an integer.
Assume that $X$ admits a set $\{\pi_i^X\}_{0\leq i\leq 2n}$ of self-dual CK projectors.
For integers $\ell, r, s$, set 
\begin{equation*}
\pi_{\ell}^{X^{m+1}}:=\sum_{i_1+\cdots+i_{m+1}=\ell}\pi_{i_1}^X\otimes\cdots\otimes\pi_{i_{m+1}}^X,\quad
\mathrm{CH}_s^{r}(X^{m+1}):=(\pi_{2r-s}^{X^{m+1}})_*\mathrm{CH}^r(X^{m+1}).
\end{equation*}
Write uniquely
\begin{equation}\label{8}
\Delta_{I}^X=\sum_{s=-2n}^{2mn}\Delta_{I}^s, \quad \Delta_{I}^s:=(\pi_{2mn-s}^{X^{m+1}})_*\Delta_{I}^X\in\mathrm{CH}_s^{mn}(X^{m+1}).
\end{equation}
Then the following statements are equivalent:
\begin{enumerate}
\item[(i)] The CK decomposition of $X$ is $m$-fold $\tau_m$-multiplicative.

\item[(ii)] The cycle class $\Delta_{I}^s$ vanishes for every integer $s<0$ or $s>\tau_m$.

\item[(iii)] There is an equality of cycle classes: 
\begin{equation*}
\Delta_{I}=\sum_{s=0}^{\tau_m}\Delta_{I}^s.
\end{equation*}
\end{enumerate}
\end{proposition}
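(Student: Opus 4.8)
The plan is to prove the equivalence of the three statements by unwinding the definitions, the main point being that the single cycle relation \eqref{4} repackages the collection of relations \eqref{5} indexed by the multi-indices $(i_1,\ldots,i_m,k)$. First I would record the orthogonal decomposition of the identity on $X^{m+1}$: since the $\pi_i^X$ are orthogonal idempotents summing to $\Delta_X$, the classes $\pi_{\ell}^{X^{m+1}}$ are orthogonal idempotents in $\mathrm{CH}^{(m+1)n}(X^{m+1}\times X^{m+1})$ summing to $\Delta_{X^{m+1}}$, so every cycle class on $X^{m+1}$ decomposes uniquely as in \eqref{8}. This gives $(ii)\Leftrightarrow(iii)$ immediately: the full decomposition $\Delta_I^X=\sum_{s=-2n}^{2mn}\Delta_I^s$ reduces to $\sum_{s=0}^{\tau_m}\Delta_I^s$ precisely when all other summands vanish.

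Next I would establish $(i)\Leftrightarrow(ii)$. The key computation is to match the operator $\pi_k^X\circ\Delta_I^X\circ(\pi_{i_1}^X\otimes\cdots\otimes\pi_{i_m}^X)$, viewed as an element of $\mathrm{CH}^{mn}(X^{m+1})$ via the standard identification of correspondences $\mathfrak{h}(X)^{\otimes m}\to\mathfrak{h}(X)$ with cycles on $X^{m+1}$, with the action of the idempotent ${}^t\pi_{i_1}^X\otimes\cdots\otimes{}^t\pi_{i_m}^X\otimes\pi_k^X$ on $\Delta_I^X$. Using self-duality, ${}^t\pi_{i_\ell}^X=\pi_{2n-i_\ell}^X$, this idempotent is one of the summands of $\pi_{\ell'}^{X^{m+1}}$ for $\ell'=\sum_\ell(2n-i_\ell)+k=2mn-(\sum_\ell i_\ell-k)$, i.e. it contributes to $\Delta_I^s$ with $s=\sum_\ell i_\ell-k$. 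Therefore $\Delta_I^s=\sum \pi_k^X\circ\Delta_I^X\circ(\pi_{i_1}^X\otimes\cdots\otimes\pi_{i_m}^X)$, where the sum runs over all $(i_1,\ldots,i_m,k)$ with $\sum_\ell i_\ell-k=s$; the summands being images of a common cycle under orthogonal idempotents, they are independent, so $\Delta_I^s=0$ iff each such summand vanishes. Thus $\Delta_I^s=0$ for all $s<0$ and all $s>\tau_m$ is exactly the vanishing \eqref{5} for all $k>\sum_\ell i_\ell$ (the case $s<0$) and all $k<\sum_\ell i_\ell-\tau_m$ (the case $s>\tau_m$), which is $(i)$. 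Finally, to close the loop I would note that \eqref{4} is just the statement that $\Delta_I^X$ equals the sum of its components with $0\le s\le\tau_m$, i.e. $(iii)$ rewritten by regrouping the double sum over $k$ and $(i_1,\ldots,i_m)$ according to the value of $s=\sum_\ell i_\ell-k$.

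I would also spell out the one genuinely careful point, which is the compatibility of the two descriptions of the $s$-grading: the grading in \eqref{8} is defined via $(\pi_{2mn-s}^{X^{m+1}})_*$, a single correspondence on $X^{m+1}$, whereas in Definition \ref{def2.3} the relevant operator is the composite $\pi_k^X\circ(-)\circ(\pi_{i_1}^X\otimes\cdots\otimes\pi_{i_m}^X)$; reconciling these is exactly the identity $\pi_k^X\circ\Gamma\circ(\pi_{i_1}^X\otimes\cdots\otimes\pi_{i_m}^X)=({}^t\pi_{i_1}^X\otimes\cdots\otimes{}^t\pi_{i_m}^X\otimes\pi_k^X)_*\Gamma$ for any $\Gamma\in\mathrm{CH}^{mn}(X^{m+1})$, which is a formal consequence of the definition of composition of correspondences together with the projection formula, and which the paper already uses implicitly in \eqref{3} and \eqref{5}.

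The main obstacle is bookkeeping rather than anything deep: one must be scrupulous about the codimension/dimension shifts in identifying ${}^t\pi_i^X$ with $\pi_{2n-i}^X$ and in tracking which value of $s$ a given multi-index $(i_1,\ldots,i_m,k)$ contributes to, so that the inequalities $k>\sum_\ell i_\ell$ and $k<\sum_\ell i_\ell-\tau_m$ translate correctly into $s<0$ and $s>\tau_m$. Once the dictionary between the two indexing schemes is set up cleanly, all three equivalences follow by inspection from the orthogonality and completeness of the idempotents $\pi_\ell^{X^{m+1}}$.
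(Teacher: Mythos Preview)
Your proposal is correct and follows essentially the same approach as the paper: use self-duality $^t\pi_i^X=\pi_{2n-i}^X$ to identify $\pi_k^X\circ\Delta_I^X\circ(\pi_{i_1}^X\otimes\cdots\otimes\pi_{i_m}^X)$ with the component of $\Delta_I^X$ in $\text{CH}_{\sum_\ell i_\ell-k}^{mn}(X^{m+1})$, then translate the inequalities on $k$ into the inequalities on $s=\sum_\ell i_\ell-k$. The paper's proof is simply a two-line version of what you wrote out in detail.
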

\begin{proof}
Since $^t\pi_i^X=\pi_{2n-i}^X$, one has
\begin{equation*}
\pi_k^X\circ\Delta_{I}^X\circ(\pi_{i_1}^X\otimes\cdots\otimes\pi_{i_m}^X)
=(\pi_{2n-i_1}^X\otimes\cdots\otimes\pi_{2n-i_m}^X\otimes\pi_k^X)_*\Delta_{I}^X
\in\mathrm{CH}_{\sum_{\ell}i_{\ell}-k}^{mn}(X^{m+1}).
\end{equation*}
It remains only to observe that $k>\sum_{\ell}i_{\ell}$ or $k<\sum_{\ell}i_{\ell}-\tau_m$ is equivalent to that $\sum_{\ell}i_{\ell}-k<0$ or $\sum_{\ell}i_{\ell}-k>\tau_m$.
\end{proof}
\begin{remark}
In many situations, including the case of complete intersections, motivic multiple twist-multiplicativity can be readily shown to be preserved under specialization.
\end{remark}

We next establish some fundamental properties of motivic multiple twist-multiplicativity under several basic geometric constructions, including products, projective bundles, and blow-ups.

\begin{proposition}\label{prop2.17}
Let $X$ (resp. $X'$) be a smooth connected projective variety of dimension $n$ (resp. $n'$) admitting an $m$-fold $\tau_m$-multiplicative  (resp. $\tau'_m$-multiplicative) CK decomposition $\{\pi_i^X\}_{0\leq i\leq 2n}$ (resp. $\{\pi_j^{X'}\}_{0\leq j\leq 2n'}$). 
Then the product CK decomposition of $X\times X'$ is $m$-fold $(\tau_m+\tau'_m)$-multiplicative.

In particular, the upper bound predicted by Conjecture \ref{conj2.6} is preserved under products.
\end{proposition}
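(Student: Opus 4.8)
The plan is to derive everything from the graded-piece criterion of Proposition~\ref{prop2.15}, using the fact that the $(m+1)$-th small diagonal of a product is, up to a permutation of the factors, the external product of the two small diagonals. Recall that the product CK projectors are $\pi_k^{X\times X'}:=\sum_{i+j=k}\pi_i^X\otimes\pi_j^{X'}$, and let $\sigma: X^{m+1}\times X'^{m+1}\to(X\times X')^{m+1}$ be the ``shuffle'' isomorphism interleaving the two ordered families of factors. First I would check the cycle identity
\begin{equation*}
\Delta_I^{X\times X'}=\sigma_*\bigl(\Delta_I^X\times\Delta_I^{X'}\bigr)\in\text{CH}^{m(n+n')}\bigl((X\times X')^{m+1}\bigr),
\end{equation*}
which holds because the small diagonal of $X\times X'$ is set-theoretically the product of the two small diagonals and the relevant scheme-theoretic intersection is transverse. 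Since a product of self-dual CK decompositions is again self-dual, Proposition~\ref{prop2.15} applies to $X$, to $X'$ and to $X\times X'$ (if one does not wish to assume self-duality, the same bookkeeping can be run directly on the defining vanishing~\eqref{5}).

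The second step records how the gradings interact. As $\sigma$ merely permutes tensor factors, it identifies the external product of the product CK decompositions of $X^{m+1}$ and of $X'^{m+1}$ with the product CK decomposition of $(X\times X')^{m+1}$; together with the identity $(\pi_a^{X^{m+1}}\otimes\pi_b^{X'^{m+1}})_*(\alpha\times\beta)=(\pi_a^{X^{m+1}})_*\alpha\times(\pi_b^{X'^{m+1}})_*\beta$ and the weight computation $(2mn-s_1)+(2mn'-s_2)=2m(n+n')-(s_1+s_2)$, this gives
\begin{equation*}
\sigma_*\Bigl(\text{CH}_{s_1}^{mn}(X^{m+1})\times\text{CH}_{s_2}^{mn'}(X'^{m+1})\Bigr)\subseteq\text{CH}_{s_1+s_2}^{m(n+n')}\bigl((X\times X')^{m+1}\bigr)
\end{equation*}
for all integers $s_1,s_2$. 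This is the one place where a little care is required; I expect no genuine obstacle, only careful tracking of the permutation and of the additivity of the weights $2r-s$ under external product.

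Finally I would assemble the pieces. By Proposition~\ref{prop2.15}(iii) applied to $X$ and to $X'$, write $\Delta_I^X=\sum_{s_1=0}^{\tau_m}\Delta_I^{X,s_1}$ and $\Delta_I^{X'}=\sum_{s_2=0}^{\tau'_m}\Delta_I^{X',s_2}$ with $\Delta_I^{X,s_1}\in\text{CH}_{s_1}^{mn}(X^{m+1})$ and $\Delta_I^{X',s_2}\in\text{CH}_{s_2}^{mn'}(X'^{m+1})$. Then, expanding and regrouping by total weight,
\begin{equation*}
\Delta_I^{X\times X'}=\sigma_*\Bigl(\sum_{s_1=0}^{\tau_m}\sum_{s_2=0}^{\tau'_m}\Delta_I^{X,s_1}\times\Delta_I^{X',s_2}\Bigr)=\sum_{t=0}^{\tau_m+\tau'_m}\ \sum_{s_1+s_2=t}\sigma_*\bigl(\Delta_I^{X,s_1}\times\Delta_I^{X',s_2}\bigr),
\end{equation*}
and by the inclusion above each inner summand lies in $\text{CH}_t^{m(n+n')}\bigl((X\times X')^{m+1}\bigr)$ with $0\le t\le\tau_m+\tau'_m$. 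Thus $\Delta_I^{X\times X'}=\sum_{t=0}^{\tau_m+\tau'_m}\Delta_I^{X\times X',t}$, and Proposition~\ref{prop2.15}(iii)$\Rightarrow$(i) shows that the product CK decomposition of $X\times X'$ is $m$-fold $(\tau_m+\tau'_m)$-multiplicative, as claimed. For the last assertion, if $\mathfrak{d}_m(X)\le mn$ and $\mathfrak{d}_m(X')\le mn'$, choose the input decompositions realizing these bounds, i.e.\ $m$-fold $mn$- and $mn'$-multiplicative respectively; the construction then yields an $m$-fold $m(n+n')$-multiplicative CK decomposition on $X\times X'$, whence $\mathfrak{d}_m(X\times X')\le m(n+n')=m\dim(X\times X')$.
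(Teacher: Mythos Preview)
Your argument is correct and rests on the same idea as the paper's: the small diagonal of the product factors as $\sigma_*(\Delta_I^X\times\Delta_I^{X'})$, and the relevant indices add. The paper phrases this via the defining vanishing \eqref{5} directly, expanding $\pi_k^{X\times X'}\circ\Delta_I^{X\times X'}\circ(\pi_{i_1}^{X\times X'}\otimes\cdots\otimes\pi_{i_m}^{X\times X'})$ as a sum over splittings $i_\ell=i_{\ell1}+i_{\ell2}$, $k=k_1+k_2$ and observing that each term vanishes by the hypothesis on $X$ or $X'$; you instead invoke the equivalent graded-piece criterion of Proposition~\ref{prop2.15}, which is a tidier repackaging. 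One caveat: Proposition~\ref{prop2.15} is stated under a self-duality hypothesis that Proposition~\ref{prop2.17} does not impose, so your main line only covers that case; the paper's direct computation avoids this restriction. Your parenthetical remark that one can run the same bookkeeping on \eqref{5} is exactly right and, if carried out, reproduces the paper's proof verbatim.
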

\begin{proof}
Let $p_X : (X\times X')^{m+1}\rightarrow X^{m+1}$ be the natural projection, and let
$p_{X'}: (X\times X')^{m+1}\rightarrow X'^{m+1}$ be the projection onto the remaining factors.
By the definition of the product CK decomposition, one has
\begin{eqnarray*}
   &  & \pi_k^{X\times X'}\circ\Delta_{I}^{X\times X'}\circ (\pi_{i_1}^{X\times X'}\otimes\cdots\otimes\pi_{i_m}^{X\times X'}) \\
   &=& \sum_{\substack{i_{11}+i_{12}=i_1\\ \cdots\cdots\\ i_{m1}+i_{m2}=i_m\\ k_1+k_2=k}} p_X^*\left(\pi_{k_1}^{X}\circ\Delta_{I}^{X}\circ (\pi_{i_{11}}^{X}\otimes\cdots\otimes\pi_{i_{m1}}^{X})\right)\cdot
p_{X'}^*\left(\pi_{k_2}^{X'}\circ\Delta_{I}^{X'}\circ (\pi_{i_{12}}^{X'}\otimes\cdots\otimes\pi_{i_{m2}}^{X'})\right).
\end{eqnarray*}
Suppose first that $k>\sum_{\ell=1}^mi_{\ell}$. For each summand, either $k_1>\sum_{\ell=1}^mi_{\ell 1}$ or $k_2>\sum_{\ell=1}^mi_{\ell 2}$. Similarly, if $k<\sum_{\ell=1}^{m}i_{\ell}-(\tau_m+\tau'_m)$, then, for each summand, either $k_1<\sum_{\ell=1}^mi_{\ell 1}-\tau_m$ or $k_2<\sum_{\ell=1}^mi_{\ell 2}-\tau'_m$.
By the assumed multiplicativity properties of the CK decompositions of $X$ and $X'$, at least one of the two factors in each summand therefore vanishes. It follows that
\begin{equation*}
\pi_k^{X\times X'}\circ\Delta_{I}^{X\times X'}\circ (\pi_{i_1}^{X\times X'}\otimes\cdots\otimes\pi_{i_m}^{X\times X'})=0
\end{equation*}
whenever $k>\sum_{\ell=1}^mi_{\ell}$ or $k<\sum_{\ell=1}^mi_{\ell}-(\tau_m+\tau'_m)$.
Thus the product CK decomposition is $m$-fold $(\tau_m+\tau'_m)$-multiplicative.
\end{proof}

The behavior of the small diagonal embeddings plays a fundamental role in the study of motivic multiple twist-multiplicativity.
The following lemma generalizes Proposition 8.7(iii) of \cite{SV16a}.

\begin{lemma}\label{lem2.18}
Fix an integer $k\geq 2$.
Assume that $X$ admits a $k$-fold $\tau$-multiplicative self-dual CK decomposition $\{\pi_i\}_{0\leq i\leq 2n}$.
Let $\delta_k: X\rightarrow X^k$ denote the $k$-th small diagonal embedding.
Then, for all integers $r$, $s$, $p$ and $t$, one has
\begin{equation}\label{9}
\delta_{k*}\mathrm{CH}_s^r(X)\subseteq\bigoplus_{i=0}^{\tau}\mathrm{CH}_{s+i}^{r+n(k-1)}(X^k),\quad
\delta_k^*\mathrm{CH}_t^p(X^k)\subseteq\bigoplus_{i=0}^{\tau}\mathrm{CH}_{t+i}^p(X).
\end{equation}
\end{lemma}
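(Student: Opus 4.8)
The plan is to deduce both containments in \eqref{9} from the $k$-fold $\tau$-multiplicativity hypothesis by expressing $\delta_{k*}$ and $\delta_k^*$ in terms of the small diagonal class $\Delta_{I}^X$ with $I=I_{k-1}$, and then applying Proposition \ref{prop2.15}. First I would observe that the graph of $\delta_k$ is, up to reordering factors, the small diagonal $\Delta_{I}^X\in\text{CH}^{n(k-1)}(X^{k+1})\cong\text{CH}^{n(k-1)}(X\times X^k)$, viewed as a correspondence from $X$ to $X^k$ (for $\delta_{k*}$) or from $X^k$ to $X$ (for $\delta_k^*$, via its transpose). Under the self-dual CK projectors on both sides, the $k$-fold $\tau$-multiplicativity of Definition \ref{def2.3}(i) says precisely that $\pi_\ell^{X^k}\circ\,{}^t\!\Delta_I^X\circ\pi_j^X=0$ unless $0\le (\text{appropriate grading shift})\le\tau$; concretely, by Proposition \ref{prop2.15}(ii) applied with $m=k-1$, the only surviving graded pieces $\Delta_I^s$ of the small diagonal lie in the range $0\le s\le\tau$.

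For the push-forward statement, I would take $z\in\text{CH}_s^r(X)$, write $\delta_{k*}z=(\Delta_I^X)_*z$ as a correspondence action, and decompose $\Delta_I^X=\sum_{s'=0}^{\tau}\Delta_I^{s'}$ using Proposition \ref{prop2.15}(iii). Keeping careful track of how the grading index behaves under correspondence composition—using that $(\pi_{2r-s}^X)_*z=z$ since $z\in\text{CH}_s^r(X)$, and that composing with $\Delta_I^{s'}$ shifts the grading on $X^k$ by exactly $s'$—each summand $(\Delta_I^{s'})_*z$ lands in $\text{CH}_{s+s'}^{r+n(k-1)}(X^k)$, giving the asserted containment in $\bigoplus_{i=0}^{\tau}\text{CH}_{s+i}^{r+n(k-1)}(X^k)$. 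The pull-back statement is dual: for $w\in\text{CH}_t^p(X^k)$ one has $\delta_k^*w=({}^t\Delta_I^X)_*w$, and since the CK decomposition is self-dual, ${}^t\Delta_I^X$ decomposes into graded pieces ${}^t\Delta_I^{s'}$ with the same index range $0\le s'\le\tau$ (transposition is compatible with the self-dual grading up to the symmetry $s\mapsto s$ built into the setup), so the same bookkeeping yields $\delta_k^*w\in\bigoplus_{i=0}^{\tau}\text{CH}_{t+i}^p(X)$.

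The main obstacle I anticipate is the bookkeeping of the grading indices under correspondence composition: one must verify that the degree-shift conventions in the definition $\text{CH}_s^r(X^{m+1}):=(\pi_{2r-s}^{X^{m+1}})_*\text{CH}^r(X^{m+1})$, the product projector $\pi_\ell^{X^k}=\sum_{i_1+\cdots+i_k=\ell}\pi_{i_1}^X\otimes\cdots\otimes\pi_{i_k}^X$, and the identification of $\delta_k$'s graph with $\Delta_{I_{k-1}}^X$ all align so that the shift produced by $\Delta_I^{s'}$ is exactly $+s'$ and not, say, $+s'$ on one side and something else on the other. This is essentially the same computation that underlies Proposition \ref{prop2.15}, just transported through the small diagonal embedding, so once the conventions are fixed consistently it should go through; the content is entirely in matching the codimension $n(k-1)$ of $\Delta_{I_{k-1}}^X$ against the dimension count in $\text{CH}^r(X)\to\text{CH}^{r+n(k-1)}(X^k)$. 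I would also double-check the boundary cases $s=0$ and $s=\tau$ and confirm that no contribution escapes the stated range when $z$ (or $w$) is not concentrated in a single graded piece but the statement is applied componentwise.
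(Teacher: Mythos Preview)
Your overall strategy is exactly the paper's: identify the graph of $\delta_k$ with a small diagonal, decompose that small diagonal into its graded pieces using Proposition~\ref{prop2.15}, and read off the grading shift. However, there is a consistent off-by-one error in your indexing that would make the argument break down as written.

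The graph of $\delta_k:X\to X^k$ sits in $X\times X^k=X^{k+1}$ as the small diagonal $\Delta_{I_k}^X$ (with $I_k=\{1,\dots,k+1\}$), which has codimension $nk$, not $n(k-1)$. Accordingly, one must invoke Proposition~\ref{prop2.15} with $m=k$, not $m=k-1$: the hypothesis of $k$-fold $\tau$-multiplicativity controls precisely the decomposition $\Delta_{I_k}^X=\sum_{s'=0}^{\tau}\Delta_{I_k}^{s'}$ in $\text{CH}^{nk}(X^{k+1})$. Your proposal instead appeals to the graded pieces of $\Delta_{I_{k-1}}^X\in\text{CH}^{n(k-1)}(X^k)$, which is the wrong cycle (it lives in the wrong ambient space to serve as the graph of $\delta_k$) and is governed by $(k-1)$-fold multiplicativity, which is not the stated hypothesis and does not follow from $k$-fold $\tau$-multiplicativity without an additional assumption such as $\pi_0^X=o_X\times X$ (cf.\ Proposition~2.8). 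Once you replace $I_{k-1}$ by $I_k$, codimension $n(k-1)$ by $nk$, and $m=k-1$ by $m=k$, your bookkeeping goes through verbatim and agrees with the paper's proof; the codimension shift $r\mapsto r+n(k-1)$ under $\delta_{k*}$ then arises from $nk-n$ in the correspondence formula, exactly as you anticipated in your final paragraph.
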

\begin{proof}
Let $\alpha\in\mathrm{CH}_s^r(X)$. Then
\begin{eqnarray*}
\delta_{k*}\alpha &=& \Delta_{I_k}\circ\pi_{2r-s}^X\circ\alpha \\
   &=& [(\pi_{2n-2r+s}^X\otimes\Delta_{X^k})_*\Delta_{k+1}]\circ\alpha \\
   &=& [(\pi_{2n-2r+s}^X\otimes\Delta_{X^k})_*\Delta_{k+1}]_*\alpha\\
   &=& \sum_{-\tau\leq\sum_{\ell}i_{\ell}-[2n(k-1)+2r-s]\leq 0}[(\pi_{2n-2r+s}^X\otimes\pi_{i_1}^X\otimes\cdots\otimes\pi_{i_k}^X)_*\Delta_{k+1}]_*\alpha\\
   &=& \sum_{-\tau\leq\sum_{\ell}i_{\ell}-[2n(k-1)+2r-s]\leq 0}(\pi_{i_1}^X\otimes\cdots\otimes\pi_{i_k}^X)_*(\delta_{k*}\alpha).
\end{eqnarray*}
The fourth equality follows from the assumed $k$-fold $\tau$-multiplicativity. Therefore, one gets that
$\delta_{k*}\alpha\in\bigoplus_{i=0}^{\tau}\mathrm{CH}_{s+i}^{r+n(k-1)}(X^k)$.

By definition, for any $\beta\in\mathrm{CH}_t^p(X^k)$, one has
\begin{equation*}
\delta_{k}^*\beta\in\bigoplus_{i=0}^{\tau}\text{CH}_{t+i}^{p}(X).
\end{equation*}
\end{proof}
\begin{remark}
\leavevmode
\begin{enumerate}
\item[(i)] Assume instead that $X$ admits a 2-fold $\tau$-multiplicative self-dual CK decomposition.
Then 
\begin{equation}\label{10}
\delta_{k*}\mathrm{CH}_s^r(X)\subseteq\bigoplus_{i=0}^{(k-1)\tau}\mathrm{CH}_{s+i}^{r+n(k-1)}(X^k).
\end{equation}

\item[(ii)] When $\tau\neq 0$, a new phenomenon arises: the decomposition properties of the $(k+1)$-st small diagonal class affect the pushforward induced by the $k$-th small diagonal embedding.
\end{enumerate}
\end{remark}

We now study the behavior of motivic multiple twist-multiplicativity under the formation of projective bundles. 
The following proposition extends Proposition 3.3 of \cite{SV16b}, which corresponds to the case $\tau=0$.
\begin{proposition}\label{prop2.20}
Fix an integer $m\geq 2$.
Let $\mathcal{E}$ be a locally free sheave of rank $e+1\geq 2$ on $X$, and let $\pi: \mathbb{P}(\mathcal{E})\rightarrow X$ be the associated projective bundle.
Suppose that the following conditions hold:
\begin{enumerate}
\item[(i)] The variety $X$ admits a $(m+1)$-fold $\tau$-multiplicative self-dual CK decomposition.

\item[(ii)] For every $i$, the Chern class $c_i(\mathcal{E})\in\mathrm{CH}_0^i(X)$ and intersecting with $c_i(\mathcal{E})$ preserves the grading, that is, for any $\alpha\in\mathrm{CH}_s^r(X)$, then $c_i(\mathcal{E})\cdot\alpha\in\mathrm{CH}_s^{r+i}(X)$.
\end{enumerate}
Then $\mathbb{P}(\mathcal{E})$ admits an $m$-fold $\tau$-multiplicative self-dual CK decomposition
such that
\begin{equation}\label{11}
\mathrm{CH}_s^r(\mathbb{P}(\mathcal{E}))=\bigoplus_{i=0}^e\xi^i\cdot\pi^*\mathrm{CH}_s^{r-i}(X),
\end{equation}
where $\xi\in\mathrm{CH}^1(\mathbb{P}(\mathcal{E}))$ is the first Chern class of the tautological line bundle $\mathcal{O}_{\mathbb{P}(\mathcal{E})}(1)$.
If the Chern classes of $X$ belong to the graded-zero part, then the same is true for the Chern classes of $\mathbb{P}(\mathcal{E})$.
Moreover, the graph correspondence $\Gamma_{\pi}$ is of pure grade 0, that is, $\Gamma_{\pi}\in\mathrm{CH}_0^{n}(\mathbb{P}(\mathcal{E})\times X)$.
\end{proposition}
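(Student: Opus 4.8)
The plan is to mimic the classical projective bundle formula argument, tracking the grading throughout. First I would recall the standard description of the Chow motive of a projective bundle: $\mathfrak{h}(\PP(\cE))\cong\bigoplus_{i=0}^{e}\mathfrak{h}(X)(-i)$, realized by the mutually orthogonal idempotents $\Theta_i\in\text{CH}^{n+e}(\PP(\cE)\times\PP(\cE))$ built from $\xi$ and the pullback of $\Delta_X$ along $\pi\times\pi$ (this is where the hypothesis that $c_i(\cE)$ lies in the graded-$0$ part enters, since the Grothendieck relation expresses $\xi^{e+1}$ in terms of the $c_i(\cE)$, so the projectors stay in the right graded piece). Composing each $\Theta_i$ with $\pi^*\pi_j^X\pi_*$ and summing appropriately over pairs $(i,j)$ with $i+j$ prescribed gives a candidate set $\{\pi_\ell^{\PP(\cE)}\}$ of CK projectors; I would check it is self-dual using that the $\pi_j^X$ are self-dual and that the duality on $\PP(\cE)$ interchanges $\xi^i$ with a combination of $\xi^{e-i}$ and Chern classes (again graded-$0$). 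Formula \eqref{11} then follows formally from $\text{CH}^*(\PP(\cE))=\bigoplus_i\xi^i\cdot\pi^*\text{CH}^*(X)$ together with hypothesis (ii): intersecting with $\xi^i$ and with $\pi^*$ of graded classes preserves the grading as claimed. The statement that $\Gamma_\pi\in\text{CH}_0^n(\PP(\cE)\times X)$ and that the Chern classes of $\PP(\cE)$ sit in grade $0$ are then read off from the relative Euler sequence $0\to\cO\to\pi^*\cE(1)\to T_{\PP(\cE)/X}\to 0$ and (ii), since $c(T_{\PP(\cE)})=\pi^*c(T_X)\cdot\pi^*c(\cE(1))$ and each factor is graded-$0$.

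The real content is the $m$-fold $\tau$-multiplicativity. Here I would express the small diagonal $\Delta_{I_m}^{\PP(\cE)}\in\text{CH}^{m(n+e)}(\PP(\cE)^{m+1})$ in terms of the small diagonal of $X$. The key geometric observation is that $\PP(\cE)^{m+1}$ fibers over $X^{m+1}$, and the small diagonal of $\PP(\cE)$ is supported over the small diagonal of $X$; concretely, $\Delta_{I_m}^{\PP(\cE)}$ equals the pushforward along $\delta_{m+1}^{\PP(\cE)}\colon\PP(\cE)\to\PP(\cE)^{m+1}$ of the fundamental class, and via the identification $\PP(\cE)\hookrightarrow\PP(\cE)^{m+1}$ lying over $X\hookrightarrow X^{m+1}$ one obtains a formula of the shape
\begin{equation*}
\Delta_{I_m}^{\PP(\cE)}=\sum (\text{polynomial in the }\xi_a\text{'s and Chern classes})\cdot(\pi^{m+1})^*\bigl[\delta_{(m+1)*}^{X}(c(\cE)\text{-correction})\bigr],
\end{equation*}
where $\pi^{m+1}\colon\PP(\cE)^{m+1}\to X^{m+1}$ and the $\xi_a$ are the pullbacks of $\xi$ from the $a$-th factor. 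Applying Lemma \ref{lem2.18} to the $(m+1)$-fold $\tau$-multiplicative decomposition of $X$ — which is exactly why hypothesis (i) is stated for $m+1$ rather than $m$ — controls the grading of $\delta_{(m+1)*}^X$ of graded-$0$ classes: it lands in $\bigoplus_{i=0}^{\tau}\text{CH}_i^{*}(X^{m+1})$. Pulling back along $\pi^{m+1}$ and multiplying by the $\xi_a$'s (grade $0$ on each factor, by (ii) and \eqref{11}) then keeps $\Delta_{I_m}^{\PP(\cE)}$ inside $\bigoplus_{i=0}^{\tau}\text{CH}_i^{m(n+e)}(\PP(\cE)^{m+1})$, which by the criterion Proposition \ref{prop2.15} is precisely $m$-fold $\tau$-multiplicativity.

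The main obstacle I anticipate is making the formula for $\Delta_{I_m}^{\PP(\cE)}$ genuinely explicit and checking that every correction term produced by the Grothendieck relation $\sum_{i}\pi^*c_i(\cE)\cdot\xi^{e+1-i}=0$ still lies in grade $0$ on the relevant factor; this is a bookkeeping problem rather than a conceptual one, but it is where (ii) must be invoked repeatedly and carefully. A secondary point requiring attention is the compatibility of the chosen $\PP(\cE)$-decomposition with the small-diagonal computation — i.e. verifying that the projectors $\pi_\ell^{\PP(\cE)}$ are the ones for which \eqref{11} holds and simultaneously the ones appearing when one decomposes $\Delta_{I_m}^{\PP(\cE)}$ — so one should set up the projectors and the diagonal formula in a single coherent framework from the outset. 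Everything else (orthogonality, self-duality, the Chern class and $\Gamma_\pi$ assertions) is routine once the graded projective bundle formula \eqref{11} is in place.
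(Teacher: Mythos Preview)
Your plan is correct and follows essentially the same route as the paper: express $\Delta_{I_m}^{\PP(\cE)}$ via the projective bundle formula as $\sum (\pi^{\times(m+1)})^*(\delta_{m+1}^X)_*\alpha_{i_1\cdots i_{m+1}}\cdot\xi_1^{i_1}\cdots\xi_{m+1}^{i_{m+1}}$ with each $\alpha_{i_1\cdots i_{m+1}}$ a polynomial in the $c_i(\cE)$ (hence in $\text{CH}_0$ by (ii)), invoke Lemma \ref{lem2.18} with the $(m+1)$-fold hypothesis on $X$, and conclude by Proposition \ref{prop2.15}. The only thing the paper does that you leave as a bookkeeping concern is to outsource the construction and self-duality of the projectors, and the verification that the two natural CK decompositions induce the same graded pieces \eqref{11}, to the explicit formulas (6), (8) and the proof of Proposition 3.3 in \cite{SV16b}; citing that reference dispatches your ``secondary point'' without further work.
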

\begin{proof}
Under assumption (i), the variety $\mathbb{P}(\mathcal{E})$ admits two CK decompositions:
The first is given by formula (6) of \cite{SV16b}, while the second is the self-dual CK decomposition constructed in formula (8) of \cite{SV16b}. The latter construction is compatible with products. By the proof of Proposition 3.3 of \cite{SV16b}, assumption (ii) implies that these two CK decompositions induce the same graded pieces of the Chow groups, namely those in \eqref{11}. It therefore suffices to prove that the self-dual CK decomposition of $\mathbb{P}(\mathcal{E})$ is $m$-fold $\tau$-multiplicative.

By the projective bundle formula for Chow groups, one has
\begin{equation}\label{12}
\Delta_{I}^{\mathbb{P}(\mathcal{E})}
=\sum_{0\leq i_1, \cdots, i_{m+1}\leq e}((\pi^{\times (m+1)})^*(\delta_{m+1}^X)_*\alpha_{i_1\cdots i_{m+1}})\cdot\xi_1^{i_1}\cdots\xi_{m+1}^{i_{m+1}},
\end{equation}
where each $\alpha_{i_1\cdots i_{m+1}}\in\mathrm{CH}^{me-\sum_{\ell=1}^{m+1}i_{\ell}}(X)$ is a polynomial in the Chern classes of $\mathcal{E}$, and $\xi_j=p_j^*\xi\in\text{CH}^1(\mathbb{P}(\mathcal{E})^{m+1})$ is the pull-back of $\xi$ from the $j$-th factor.
By assumption (ii), one has $\alpha_{i_1\cdots i_{m+1}}\in\mathrm{CH}_0^{me-\sum_{\ell=1}^{m+1}i_{\ell}}(X)$.
Assumption (i), together with Lemma \ref{lem2.18}, therefore gives
\begin{equation*}
(\delta_{m+1}^X)_*\alpha_{i_1\cdots i_{m+1}}\in\bigoplus_{s=0}^{\tau}\mathrm{CH}_s^{m(n+e)-\sum_{\ell=1}^{m+1}i_{\ell}}(X^{m+1}).
\end{equation*}
It follows from \eqref{12} that 
\begin{equation*}
\Delta_{I}^{\mathbb{P}(\mathcal{E})}\in\bigoplus_{s=0}^{\tau}\mathrm{CH}_s^{m(n+e)}(\mathbb{P}(\mathcal{E})^{m+1}).
\end{equation*}
Proposition \ref{prop2.15} now implies that the self-dual CK decomposition of $\mathbb{P}(\mathcal{E})$ is $m$-fold $\tau$-multiplicative.

The assertions concerning the Chern classes of $\mathbb{P}(\mathcal{E})$ and the graph correspondence $\Gamma_\pi$ follow from the same argument as in the proof of Proposition 3.3 of \cite{SV16b}.
\end{proof}
\begin{remark}
Without assumption (ii), the conclusion is unlikely to hold in general. Indeed, the behavior of intersection with the Chern classes of $\mathcal{E}$ may have a substantial effect on the motivic multiple multiplicativity defect of $\mathbb{P}(\mathcal{E})$. Even when $X$ admits motivic $0$-multiplicativity, there may exist many vector bundles on $X$ whose associated projective bundles have nonvanishing motivic multiplicativity defect. 
\end{remark}

We now study the behavior of motivic multiple twist-multiplicativity under smooth blow-ups. The following proposition extends Proposition 3.4 of \cite{SV16b}, which corresponds to the case $\tau=0$. 
\begin{proposition}\label{prop2.22}
Fix an integer $m\geq 2$.
Let $Y$ be a smooth connected subvariety of $X$ of codimension $e+1\geq 2$,
and let $\rho: \widetilde{X}\rightarrow X$ be the blow-up of $X$ along $Y$. Consider the following Cartesian square:
\begin{equation}\label{13}
\begin{tikzcd}
E \arrow{r}{\gamma} \arrow{d}{\pi}
&\widetilde{X} \arrow{d}{\rho}\\
Y \arrow{r}{\iota} & X
\end{tikzcd}
\end{equation}
Let $\mathcal{N}:=\mathcal{N}_{\iota}$ denote the normal bundle of $Y$ in $X$.
Assume that the following conditions hold:
\begin{enumerate}
\item[(i)] The variety $X$ (resp. $Y$) admits an $m$-fold (resp. $(m+1)$-fold) $\tau$-multiplicative self-dual CK decomposition.

\item[(ii)] For every $i$, one has $c_i(\mathcal{N})\in\text{CH}_0^i(Y)$ and intersecting with $c_i(\mathcal{N})$ preserves the grading.

\item[(iii)] The correspondence $\Gamma_{\iota}$ is of pure grade 0.
\end{enumerate}

Then $\widetilde{X}$ admits an $m$-fold $\tau$-multiplicative self-dual CK decomposition such that
\begin{equation}\label{14}
\mathrm{CH}_s^r(\widetilde{X})=\rho^*\mathrm{CH}_s^r(X)\oplus\bigoplus_{i=0}^{e-1}
\gamma_*(\xi^i\cdot\pi^*\mathrm{CH}_s^{r-i-1}(Y)),
\end{equation}
where $\xi$ is the first Chern class of the line bundle $\mathcal{O}_{\widetilde{X}}(-E)|_E$.
If the Chern classes of $X$ belong to the graded-zero part, then the same is true for the Chern classes of $\widetilde{X}$. Moreover, the graph correspondences $\Gamma_{\gamma}$ and $\Gamma_{\rho}$ are of pure grade 0.
\end{proposition}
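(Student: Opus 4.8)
The plan is to reduce everything, via Manin's blow-up formula, to the projective-bundle case of Proposition~\ref{prop2.20}. Since $E=\mathbb{P}(\mathcal{N})$ with $\mathcal{N}$ of rank $e+1\geq 2$, since $Y$ carries by~(i) an $(m+1)$-fold $\tau$-multiplicative self-dual CK decomposition, and since~(ii) is exactly hypothesis~(ii) of Proposition~\ref{prop2.20}, that proposition provides an $m$-fold $\tau$-multiplicative self-dual CK decomposition of $E$ together with the graded decomposition~\eqref{11} of $\mathrm{CH}^*(E)$, with $\Gamma_\pi$ of pure grade $0$ and with $\xi$ --- which, up to the sign of a convention, is $c_1(\mathcal{O}_{\widetilde{X}}(-E)|_E)$ --- as well as the $c_i(\mathcal{N})$ in the graded-$0$ part. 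Manin's blow-up formula gives $\mathfrak{h}(\widetilde{X})\cong\mathfrak{h}(X)\oplus\bigoplus_{i=1}^{e}\mathfrak{h}(Y)(-i)$, realized by mutually orthogonal correspondences built from $\Gamma_\rho$, $\Gamma_\gamma$, $\Gamma_\pi$ and powers of $\xi$; choosing these as in~\cite{SV16b} so that the splitting is self-dual, and transporting the CK projectors of $X$ and of $E$ through it, one obtains a self-dual set $\{\pi_k^{\widetilde{X}}\}$ of CK projectors of $\widetilde{X}$.

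Next I would establish~\eqref{14} together with the stated grading. The ungraded identity is the classical blow-up formula for Chow groups; its compatibility with the bigrading follows because $\rho^*$, $\rho_*$, $\gamma^*$, $\gamma_*$ and $\pi^*$ respect the grading --- one uses here that $\Gamma_\pi$ is of pure grade $0$ (previous step), that $\Gamma_\iota$ is of pure grade $0$ by~(iii), and that cup product with $\xi$ and with $c_i(\mathcal{N})$ preserves the grading by~(ii) --- exactly as in the proof of Proposition~3.4 of~\cite{SV16b}. From~\eqref{14} one reads off at once that $\Gamma_\rho\in\mathrm{CH}_0^{n}(\widetilde{X}\times X)$ and that $\Gamma_\gamma$ is of pure grade $0$. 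Finally, if the Chern classes of $X$ are graded-$0$, then so are those of $\widetilde{X}$: the blow-up formula for $c(T_{\widetilde{X}})$ expresses it through $\rho^*c(T_X)$, the divisor class $[E]=\gamma_*\pi^*1_Y$, $\xi$ and $\pi^*c_\bullet(\mathcal{N})$, all of which are graded-$0$ by the above.

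The heart of the argument is the $m$-fold $\tau$-multiplicativity of the CK decomposition just constructed. By Proposition~\ref{prop2.15} this means
\[
\Delta_{I}^{\widetilde{X}}\in\bigoplus_{s=0}^{\tau}\mathrm{CH}_s^{mn}(\widetilde{X}^{m+1}),\qquad I=I_m=\{1,\dots,m+1\}.
\]
The key input is a blow-up formula for the $(m+1)$-st small diagonal,
\[
\Delta_{I}^{\widetilde{X}}=(\rho^{\times(m+1)})^*\Delta_{I}^{X}-\gamma^{\times(m+1)}_*\Big(\sum_\nu \xi^{\nu}\cdot(\pi^{\times(m+1)})^*(\delta_{m+1}^{Y})_*S_\nu\Big),
\]
where $\delta_{m+1}^{Y}\colon Y\to Y^{m+1}$ is the $(m+1)$-st small diagonal embedding, the $\xi^{\nu}$ are monomials in the pulled-back hyperplane classes $\xi_1,\dots,\xi_{m+1}$ on $E^{m+1}$, and the $S_\nu$ are polynomials in the Chern classes of $\mathcal{N}$. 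One would obtain this as follows: over the complement of $\gamma^{\times(m+1)}(E^{m+1})$ the morphism $\rho$ is an isomorphism, so both sides agree there and their difference is supported on $\gamma^{\times(m+1)}(E^{m+1})$, hence of the form $\gamma^{\times(m+1)}_*(\,\cdot\,)$; that class is computed by the excess-intersection formula along the regular embedding $E\times_Y\cdots\times_Y E=(\pi^{\times(m+1)})^{-1}\big(\delta_{m+1}^{Y}(Y)\big)\hookrightarrow\widetilde{X}^{m+1}$, whose excess bundle is built from $\mathcal{N}$ and the tautological line bundles $\mathcal{O}_{E_i}(-1)$ --- crucially involving no Chern classes of $T_X$ or $T_Y$, since in $K$-theory $[\iota^*T_X]=[T_Y]+[\mathcal{N}]$ --- after which projection-formula manipulations move the $\mathcal{N}$-part inside $(\delta_{m+1}^{Y})_*$; for $m=1$ this is the classical blow-up formula for the diagonal (cf.~\cite{SV16b}). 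Granting the formula: the first term lies in $\bigoplus_{s=0}^{\tau}\mathrm{CH}_s^{mn}(\widetilde{X}^{m+1})$ because $\Delta_{I}^{X}$ does (hypothesis~(i) for $X$, Proposition~\ref{prop2.15}) and $(\rho^{\times(m+1)})^*$ preserves the grading ($\Gamma_\rho$ of pure grade $0$); the second term because each $S_\nu$ is graded-$0$ by~(ii), Lemma~\ref{lem2.18} applied to $Y$ with $k=m+1$ --- this is exactly where hypothesis~(i)'s assumption that $Y$ be $(m+1)$-fold (and not merely $m$-fold) $\tau$-multiplicative enters --- gives $(\delta_{m+1}^{Y})_*S_\nu\in\bigoplus_{s=0}^{\tau}\mathrm{CH}_s(Y^{m+1})$, and $(\pi^{\times(m+1)})^*$, multiplication by the graded-$0$ classes $\xi_i$, and $\gamma^{\times(m+1)}_*$ all preserve the grading ($\Gamma_\pi$, $\Gamma_\gamma$ of pure grade $0$). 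This is precisely the kind of expression already handled in the proof of Proposition~\ref{prop2.20}. Hence $\Delta_{I}^{\widetilde{X}}$ lies in the required range and Proposition~\ref{prop2.15} concludes; the analogous conclusions for $\Gamma_\gamma$, $\Gamma_\rho$ and the Chern classes of $\widetilde{X}$ were recorded in the second step.

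The main obstacle is the blow-up formula for the $(m+1)$-st small diagonal: carrying out the $(m+1)$-fold excess-intersection computation, identifying the excess bundle, and checking that the resulting polynomials $S_\nu$ involve only $\mathcal{N}$ and the $\xi_i$ --- so that the multiplicativity holds unconditionally --- is the delicate point, the classical case $m=1$ serving as a guide. It is also the place where the proof genuinely uses the full strength of hypothesis~(i), through the fold-shift already present in Proposition~\ref{prop2.20} and in Lemma~\ref{lem2.18}.
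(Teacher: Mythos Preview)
Your proposal is correct and follows essentially the same route as the paper's proof: construct the self-dual CK decomposition of $\widetilde{X}$ via the formulas of \cite{SV16b}, establish the blow-up formula \eqref{16} for $\Delta_I^{\widetilde X}$ (the paper obtains this by citing Lemma~3.5 of \cite{SV16b} and a projection-formula computation through the fiber product $E_{/Y}^{m+1}$, rather than by redoing the excess-intersection argument you sketch), then invoke Lemma~\ref{lem2.18} with $k=m+1$ on $Y$ --- precisely where the $(m+1)$-fold hypothesis enters, as you note --- together with assumption~(ii) on the Chern classes and assumption~(iii) on $\Gamma_\iota$ to place each term in $\bigoplus_{s=0}^{\tau}\mathrm{CH}_s^{mn}(\widetilde{X}^{m+1})$, and conclude by Proposition~\ref{prop2.15}. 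The only organizational difference is that the paper derives the pure-grade-$0$ property of $\Gamma_{\gamma^{\times(m+1)}}$ directly from assumption~(iii) rather than from \eqref{14}.
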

\begin{proof}
Under assumption (i), the blow-up $\widetilde{X}$ admits two CK decompositions given by the formula (11) and the formula (13) of \cite{SV16b}. By the proof of Proposition 3.4 of \cite{SV16b}, together with assumptions (ii) and (iii), these two decompositions induce the same graded pieces of the Chow groups, namely those given in \eqref{14}. The latter CK decomposition is self-dual. It therefore remains to prove that it is $m$-fold $\tau$-multiplicative.

The Cartesian square \eqref{13} induces the following commutative diagram, in which the left square is Cartesian:
\begin{equation}\label{15}
\begin{tikzcd}
E_{/Y}^{m+1} \arrow{r}{\phi} \arrow{d}{\pi_Y^{\times (m+1)}}
& E^{m+1} \arrow{d}{\pi^{\times (m+1)}}\arrow{r}{\gamma^{\times (m+1)}}&\widetilde{X}^{m+1}\\
Y \arrow{r}{\delta_{m+1}^Y} & Y^{m+1}.
\end{tikzcd}
\end{equation}
By assumption (iii) and the construction of the CK decomposition on the blow-up, the graph correspondence $\Gamma_\gamma$ is of pure grade zero. Consequently, so is the external product correspondence
$\Gamma_{\gamma^{\times (m+1)}}$ for each $m\geq 0$.
The argument in the proof of Proposition 3.4 of \cite{SV16b} gives
\begin{eqnarray*}
\mathrm{CH}_s^*(\widetilde{X}^{m+1})
&\supseteq& (\rho^{\times (m+1)})^*\mathrm{CH}_s^*(X^{m+1})\\
&  & +\sum_{i_1=0}^{e-1}\cdots\sum_{i_{m+1}=0}^{e-1}(\gamma^{\times (m+1)})_*(\xi_1^{i_1}\cdots\xi_{m+1}^{i_{m+1}}\cdot (\pi^{\times (m+1)})^*\mathrm{CH}_s^*(Y^{m+1})),
\end{eqnarray*}
where $\xi_j\in\mathrm{CH}^1(E^{m+1})$ is the pull-back of $\xi$ via the projection to the $j$-th factor.
By assumption (ii) and Lemma 3.5 in \cite{SV16b}, one can write
\begin{equation}\label{16}
\Delta_{I}^{\widetilde{X}}=(\rho^{\times (m+1)})^*\Delta_{I}^X+(\gamma^{\times (m+1)})_*\phi_*\alpha,
\end{equation}
where $\alpha=P(\phi^*\xi_j,c((\pi^{\times (m+1)})^*\mathcal{N}))\in\mathrm{CH}^{me-1}(E_{/Y}^{m+1})$ is a polynomial in the classes $\phi^*\xi_j$ and in the pullbacks of the Chern classes of $\mathcal{N}$.
By the projection formula, one gets that
\begin{eqnarray*}
  \phi_*\alpha &=& P(\xi_i, \phi_*(\pi_Y^{\times (m+1)})^*c(\mathcal{N})) \\
   &=& P(\xi_i, (\pi^{\times (m+1)})^*(\delta_{m+1}^Y)_*c(\mathcal{N})) \\
   &=& \sum_{i_1=0}^{e-1}\cdots\sum_{i_{m+1}=0}^{e-1}(\pi^{\times (m+1)})^*((\delta_{m+1}^Y)_*\alpha_{i_1\cdots i_{m+1}})\cdot\xi_1^{i_1}\cdots\xi_{m+1}^{i_{m+1}},
\end{eqnarray*}
where each $\alpha_{i_1\cdots i_{m+1}}\in\mathrm{CH}^{me-1-\sum_{\ell=1}^{m+1}i_{\ell}}(Y)$ is a polynomial of the Chern classes of $\mathcal{N}$.
Assumption (ii) implies that $\alpha_{i_1\cdots i_{m+1}}\in\mathrm{CH}_0^{me-1-\sum_{\ell=1}^{m+1}i_{\ell}}(Y)$.
Since the CK decomposition of $Y$ is $(m+1)$-fold $\tau$-multiplicative, Lemma \ref{lem2.18} gives
\begin{equation*}
(\delta_{m+1}^Y)_*\alpha_{i_1\cdots i_{m+1}}\in\bigoplus_{s=0}^{\tau}\mathrm{CH}_s^{mn-m-1-\sum_{\ell=1}^{m+1}i_{\ell}}(Y^{m+1}).
\end{equation*}
On the other hand, the $m$-fold $\tau$-multiplicativity of the CK decomposition of $X$ implies that
$\Delta_{I}^X\in\bigoplus_{s=0}^{\tau}\mathrm{CH}_s^{mn}(X^{m+1})$.
It follows from \eqref{16} that
\begin{equation*}
\Delta_{I}^{\widetilde{X}}\in\bigoplus_{s=0}^{\tau}\mathrm{CH}_s^{mn}(\widetilde{X}^{m+1}).
\end{equation*}
Proposition \ref{prop2.15} therefore shows that the self-dual CK decomposition of $\widetilde{X}$ is $m$-fold $\tau$-multiplicative.

The assertions concerning the Chern classes of $\widetilde{X}$ and the graph correspondences $\Gamma_{\gamma}$ and $\Gamma_{\rho}$ follow from the same argument as in the proof of Proposition 3.4 of \cite{SV16b}.
\end{proof}

We now begin the study of motivic multiple twist-multiplicativity for concrete classes of varieties and verify Conjecture \ref{conj2.6} for some of the first examples. We first need the following preparatory lemma.

\begin{lemma}\label{lem2.23}
Let $X$ be a smooth connected projective variety satisfying the nilpotence conjecture.
Assume that for a set $\{\pi_i\}_{0\leq i\leq 2n}$ of CK projectors of $X$ and some fixed integer $r\geq 1$,
Murre's conjecture (B) holds in codimension $r$.
Then Murre's conjecture (B) in codimension $r$ holds for any set of CK projectors of $X$.
\end{lemma}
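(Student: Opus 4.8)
The plan is to reduce the statement to the standard fact that, under the nilpotence conjecture, any two sets of CK projectors are conjugate by an invertible correspondence that is a sum of the identity and a nilpotent term. More precisely, let $\{\pi_i\}_{0\leq i\leq 2n}$ and $\{\pi'_i\}_{0\leq i\leq 2n}$ be two sets of CK projectors of $X$. Both lift the same K\"unneth projectors, so for each $i$ the difference $\pi'_i-\pi_i$ is homologically trivial, hence (by the nilpotence conjecture applied to $X\times X$) a nilpotent element of the correspondence ring. First I would set $\gamma_i:=\pi'_i\circ\pi_i$ and $\Phi:=\sum_{i=0}^{2n}\gamma_i\in\mathrm{CH}^n(X\times X)$; since the $\pi_i$ (resp.\ $\pi'_i$) are mutually orthogonal idempotents summing to $\Delta_X$, one checks $\Phi$ is homologically the identity, so $\Phi=\Delta_X+\nu$ with $\nu$ nilpotent, and hence $\Phi$ is invertible with $\Phi^{-1}=\Delta_X-\nu+\nu^2-\cdots$. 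A direct computation gives $\Phi\circ\pi_i=\pi'_i\circ\Phi$, i.e.\ $\pi'_i=\Phi\circ\pi_i\circ\Phi^{-1}$ for all $i$. (This is the usual lifting-of-idempotents argument; I would either cite it from Jannsen's exposition or reproduce the short verification.)

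Next I would translate Murre's conjecture (B) in codimension $r$ into the intrinsic statement it is equivalent to, namely that for the product $X^{\bullet}$ in question the corresponding graded pieces of $\mathrm{CH}^r$ vanish outside the predicted range. Since (B) in codimension $r$ is really a statement about a particular power $X^a$ of $X$ equipped with the product projectors $\pi_{i_1}\otimes\cdots\otimes\pi_{i_a}$, and the conjugating correspondence for $X^a$ is simply $\Phi^{\otimes a}=\Delta_{X^a}+(\text{nilpotent})$, the two product decompositions $\{\bigoplus_{\sum i_j=\ell}\pi_{i_1}\otimes\cdots\otimes\pi_{i_a}\}_\ell$ and the primed analogue are conjugate by an invertible correspondence. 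Conjugation by an invertible correspondence carries $\mathrm{CH}^r_s(X^a)$ isomorphically onto the primed $\mathrm{CH}^r_s(X^a)$ for every $s$ (because $\Phi^{\otimes a}$ intertwines $\pi_\ell^{X^a}$ with $(\pi'_\ell)^{X^a}$, hence restricts to an isomorphism on each graded piece). Therefore the vanishing of the "wrong" graded pieces for one set of projectors is equivalent to the same vanishing for the other, which is exactly Murre's (B) in codimension $r$ for the primed projectors.

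Assembling these two observations finishes the proof: the hypothesis gives (B) in codimension $r$ for $\{\pi_i\}$, hence the relevant graded pieces vanish, hence by the conjugation isomorphism the same graded pieces vanish for $\{\pi'_i\}$, hence (B) in codimension $r$ holds for $\{\pi'_i\}$ as well. I would phrase the conclusion so that it applies verbatim to whatever power of $X$ the codimension-$r$ statement refers to (for the applications in the paper this will be $X^3$ in codimension $2n$, or more generally $X^{m+1}$ in codimension $mn$), since the argument is uniform in the power. The only genuinely delicate point is the nilpotence input: one must make sure the nilpotence conjecture is invoked on the correct product $X\times X$ (so that $\nu$ is nilpotent) and that the geometric series defining $\Phi^{-1}$ is finite; everything else is formal manipulation of orthogonal idempotents. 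I expect this nilpotence bookkeeping — confirming that each homologically trivial correspondence appearing is nilpotent in the relevant $\mathrm{CH}^*(X\times X)$ — to be the main (though mild) obstacle, and it is precisely where the hypothesis "$X$ satisfies the nilpotence conjecture" is used.
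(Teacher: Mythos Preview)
Your approach is correct and essentially identical to the paper's: both arguments use the nilpotence hypothesis to conjugate one set of CK projectors to the other by an invertible correspondence $\Phi=\Delta_X+\nu$ (the paper simply cites Jannsen's Lemma~5.4 for this, while you construct $\Phi=\sum_i\pi'_i\circ\pi_i$ explicitly), and then observe that $(X,\pi_i)\cong(X,\pi'_i)$ forces $\pi'_{i*}\mathrm{CH}^r(X)=0$ whenever $\pi_{i*}\mathrm{CH}^r(X)=0$. One small remark: your second paragraph about passing to powers $X^a$ and tensor-product projectors is unnecessary here---the lemma is stated for a single variety $X$, and Murre~(B) in codimension $r$ just means $\pi_{i*}\mathrm{CH}^r(X)=0$ for $i<r$ or $i>2r$; when the paper later applies the lemma to $X^{m+1}$, that product \emph{is} the variety playing the role of $X$, so no separate product argument is needed.
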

\begin{proof}
Let $\{\widetilde{\pi}_i\}_{0\leq i\leq 2n}$ be any set of CK projectors of $X$.
The nilpotence conjecture for $X$ implies that the ideal $\mathrm{CH}^n(X\times X)_{\text{hom}}$ of the correspondence ring $\mathrm{CH}^n(X\times X)$ is nilpotent. By Lemma 5.4 in \cite{Ja94}, there exists a correspondence $\eta\in\mathrm{CH}^n(X\times X)_{\text{hom}}$ such that
$\pi_i=(1+\eta)\circ\pi'_i\circ(1+\eta)^{-1}$. Then $\mathfrak{h}^i(X):=(X, \pi_i)\cong (X,\pi'_i)=:\widetilde{\mathfrak{h}}^i(X)$.
By assumption, one has
\begin{equation*}
0=\pi_{i*}\mathrm{CH}^r(X)=\text{Hom}(\mathbb{L}^r, \mathfrak{h}^i(X))\cong\text{Hom}(\mathbb{L}^r, \widetilde{\mathfrak{h}}^i(X))=\widetilde{\pi}_{i*}\mathrm{CH}^r(X)
\end{equation*}
for every integer $i<r$ or $i>2r$. Thus Murre's conjecture (B) in codimension $r$ also holds for the set $\{\widetilde{\pi}_i\}_{0\leq i\leq 2n}$ of CK projectors.
\end{proof}

As a first test case, we consider curves.
\begin{theorem}\label{thm2.24}
Conjecture \ref{conj2.6} holds for any smooth connected projective curve.
Moreover, let $X$ be a curve of genus $g\geq 2$.
\begin{enumerate}
\item[(i)] If $2\leq m\leq g$, then $\mathfrak{d}_m(X)\leq m-1$.

\item[(ii)] For every integer $m\geq g$, one has $\mathfrak{d}_m(X)\leq g-1$ and hence $\mathfrak{sd}(X)\leq g-1$.
\end{enumerate}
\end{theorem}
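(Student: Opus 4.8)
The plan is to reduce everything to a statement about the decomposition of the small diagonal $\Delta_{I_{m-1}}^X \in \mathrm{CH}^{m-1}(X^m)$ of a curve into its bigraded pieces, and then to compute that decomposition explicitly using the standard CK decomposition of a curve. Recall that for a curve $X$ of genus $g$ with a rational point (or a degree-one zero-cycle class $o_X$), one has the self-dual CK decomposition $\pi_0 = o_X\times X$, $\pi_2 = X\times o_X$, $\pi_1 = \Delta_X - \pi_0 - \pi_2$, and that $\mathfrak{h}^1(X)$ is a weight-one motive; I would take $o_X = \frac1{2g-2}K_X$ when $g\ge 2$ so that the decomposition is canonical and compatible with automorphisms. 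By Proposition~\ref{prop2.15}, the assertion ``$m$-fold $\tau_m$-multiplicative'' is equivalent to $\Delta_{I_{m-1}}^X$ having no component $\Delta_{I}^s$ with $s<0$ or $s>\tau_m$, where the grading is by $\mathrm{CH}_s^{m-1}(X^m) = (\pi^{X^m}_{2(m-1)-s})_*\mathrm{CH}^{m-1}(X^m)$ and $\pi^{X^m}_\ell = \sum_{i_1+\cdots+i_m=\ell}\pi_{i_1}\otimes\cdots\otimes\pi_{i_m}$.

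First I would dispose of the generality of Conjecture~\ref{conj2.6} for curves: for $n=1$ it reads $\mathfrak{d}_m(X)\le m$, and since by Lemma~\ref{lem2.23} (curves satisfy the nilpotence conjecture) it suffices to check one CK decomposition, this follows from the $n=1$ case of the computation below, which in fact gives the sharper bound $m-1$ (and $0$ for $g=0,1$, since then $\mathrm{CH}_0(X)$ or the motive is as simple as possible). Next, for the main estimates, I expand $\Delta_{I_{m-1}}^X$ using the three projectors on each factor. Because $\pi_0$ and $\pi_2$ are supported on $o_X\times X$ and $X\times o_X$, plugging them into a factor of the small diagonal collapses that factor to the point $o_X$; concretely $(\pi_{i_1}\otimes\cdots\otimes\pi_{i_m})_*\Delta_{I_{m-1}}^X$, when some $i_j=0$ or $2$, reduces to the small diagonal of $X$ on the remaining factors, times copies of $o_X$, which is a recursion lowering $m$. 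So the only genuinely new contribution at level $m$ is the all-$\pi_1$ piece $(\pi_1^{\otimes m})_*\Delta_{I_{m-1}}^X$, which lands in $\mathrm{CH}_s^{m-1}$ for $s = m - 2(m-1) + (\text{something})$; tracking the codimension, this piece has grading $s$ with $0\le s$ automatically (it is effective-type on $(\mathfrak{h}^1)^{\otimes m}$), and the top grading it can occupy is bounded by the dimension of the relevant space of symmetric classes, which is governed by $\dim \mathrm{CH}^0$ of the symmetric product, i.e. by $g$. This is where the hypothesis $m\le g$ versus $m\ge g$ enters: for $m\le g$ the class $(\pi_1^{\otimes m})_*\Delta_{I_{m-1}}^X$ can have components in gradings up to $m-1$, giving $\mathfrak{d}_m(X)\le m-1$; for $m\ge g$, a Bloch--Beilinson / Roitman-type vanishing (the $\mathfrak{h}^1$-part of $\mathrm{CH}_0(X^m)$ has ``weight'' at most $g$, equivalently the $m$-fold symmetric power of $H^1$ contributes via $\wedge$-pieces of bounded degree) forces all gradings $>g-1$ to vanish, yielding $\mathfrak{d}_m(X)\le g-1$ and hence $\mathfrak{sd}(X)\le g-1$.

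The precise engine for both bounds is the identity, in $\mathrm{CH}^{m-1}(X^m)$,
\begin{equation*}
\Delta_{I_{m-1}}^X = \delta_{m*}[X] = \sum_{\emptyset \ne S \subseteq \{1,\dots,m\}} (\pm) \, (\text{pieces with }\pi_0,\pi_2\text{ in factors outside }S) + (\pi_1^{\otimes m})_*\Delta_{I_{m-1}}^X,
\end{equation*}
combined with the fact that pushing forward along $X \hookrightarrow \mathrm{Sym}^m X$ (equivalently, symmetrizing) identifies the all-$\pi_1$ part with a class in the image of $\mathrm{CH}_0(X) \otimes (\text{weight-}m\text{ part of } H^{m}(\mathrm{Sym}^m X))$, whose nonzero gradings are controlled by $g$. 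The main obstacle I anticipate is making the second (``$m \ge g$'') vanishing rigorous at the level of Chow groups rather than cohomology: one needs that the higher Chow-theoretic gradings of $\delta_{m*}[X]$ — which a priori could run up to $m-1$ — actually die once $m$ exceeds $g$, and the clean way to see this is to invoke a symmetric-product argument (Moonen--Polishchuk, or the multiplicativity of the tautological ring of $\mathrm{Sym}^m X$ / $\mathrm{Jac}(X)$) showing that the relevant motive $\mathfrak{h}^m$ of the symmetric power is already accounted for by $\bigwedge^{\le g} \mathfrak{h}^1(X)$, so that the ``new'' top-grading contributions $\wedge^{g+1}\mathfrak{h}^1, \dots$ vanish. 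Assembling the recursion (products of already-bounded lower-$m$ cases contribute gradings $\le$ their defects, which are $\le \min(m-1,g-1)$) then closes the induction and gives the stated bounds, with $\mathfrak{sd}(X) = \max_m \mathfrak{d}_m(X) \le g-1$ following immediately once the $m\ge g$ bound is uniform in $m$.
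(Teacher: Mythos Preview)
Your overall architecture matches the paper's: use the self-dual CK projectors $\pi_0=o_X\times X$, $\pi_2=X\times o_X$, $\pi_1=\Delta_X-\pi_0-\pi_2$, reduce via Proposition~\ref{prop2.15} to locating the graded pieces of the small diagonal, observe that any factor carrying $\pi_0$ or $\pi_2$ collapses to a lower small diagonal times copies of $o_X$, and identify the all-$\pi_1$ piece with the modified diagonal $\Gamma^{m+1}(X,o_X)$. There is a harmless but persistent index shift in your write-up (for $m$-fold multiplicativity the relevant class is $\Delta_{I_m}^X\in\mathrm{CH}^{m}(X^{m+1})$, not $\Delta_{I_{m-1}}^X\in\mathrm{CH}^{m-1}(X^m)$). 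For part~(i) the paper bypasses your explicit recursion and simply invokes the built-in inequality $\mathfrak d_m(X)\le (m-1)\mathfrak d_2(X)$ from Definition~\ref{def2.3}(iii), combined with the direct $m=2$ computation giving $\mathfrak d_2(X)\le 1$; your recursion would also work but is less economical.

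The genuine gap is in part~(ii). Your proposed mechanism---that ``$\bigwedge^{g+1}\mathfrak h^1(X),\dots$ vanish'' at the Chow level, or that the symmetric-power motive is ``accounted for by $\bigwedge^{\le g}\mathfrak h^1(X)$''---is not correct. Kimura finite-dimensionality gives $\mathrm{Sym}^N\mathfrak h^1(X)=0$ only for $N>2g$, not $N>g$, and the modified diagonal is $S_{m+1}$-invariant, so wedge-power vanishing is not the relevant constraint in any case. What actually drives the bound is the specific vanishing $\Gamma^{g+2}(X,o_X)=0$ in $\mathrm{CH}_1(X^{g+2})$, valid for the canonical choice $o_X=\frac{1}{2g-2}K_X$; this is the theorem of Moonen--Yin \cite{MY16} (proved via Polishchuk's relations on the Jacobian, so your ``Moonen--Polishchuk'' intuition is pointing at the right literature, just not at the right statement). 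Once one has $\Gamma^{g+2}(X,o_X)=0$, the paper feeds $\Delta_{I_g}\in\bigoplus_{s=0}^{g-1}\mathrm{CH}_s^g(X^{g+1})$ (known from part~(i) at $m=g$) into the propagation argument of Proposition~\ref{prop2.10}: for $k>g+1$ the vanishing of $\Gamma^{k}$ expresses $\Delta_{I_{k-1}}$ as a sum of lower small diagonals times copies of $o_X$, each of which inherits the grading bound $\le g-1$. Without the Moonen--Yin input your argument would only yield $\mathfrak{sd}(X)\le 2g-1$, not $g-1$.
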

\begin{proof}
Since the Chow motive of $X^{m+1}$ is finite-dimensional, then the nilpotence conjecture holds for $X^{m+1}$. 
By Lemma \ref{lem2.23}, it suffices to verify Murre's conjecture (B) in codimension $m$ for one particular set of CK projectors of $X^{m+1}$.

Choose a closed point $a\in X$, and let $o:=[a]\in\mathrm{CH}^1(X)$. 
Set
\begin{equation*}
\pi_0=o\times X,\quad \pi_2=X\times o,\quad \pi_1=\Delta_{X}-\pi_0-\pi_2\in\mathrm{CH}^1(X\times X).
\end{equation*}
Then $\{\pi_i\}_{0\leq i\leq 2}$ forms a set of self-dual CK projectors of $X$.
Equip $X^{m+1}$ with the product CK projectors 
$\{\Pi_k=\sum_{\sum_{\ell=0}^{m}i_{\ell}=k}\pi_{i_0}\otimes\cdots\otimes\pi_{i_{m}}\}_{0\leq k\leq 2(m+1)}$.
We will verify Murre's conjecture (B) in codimension $mn$ for these projectors, i.e.,
\begin{equation}\label{17}
\Pi_{k*}\mathrm{CH}^{m}(X^{m+1})
=\text{Hom}(\mathbb{L}^{m}, \mathfrak{h}^{k}(X^{m+1})=0
\end{equation}
for every $k<m$ or $k>2m$. This would prove Conjecture \ref{conj2.6} for $X$.
\medskip

\emph{Case (a): $k>2m$.}
Since $k-(m+1)>m-1$, then $i_{\ell}=2$ for some $\ell$.
By symmetry, we may assume $i_{0}=2$.
Thus,
\begin{equation*}
\text{Hom}(\mathbb{L}^{m}, \mathfrak{h}^{2}(X)\otimes\mathfrak{h}^{i_{1}}(X)\otimes\cdots\otimes\mathfrak{h}^{i_{m}}(X))
=\text{Hom}(\mathbb{L}^{m-1}, \mathfrak{h}^{i_{1}}(X)\otimes\cdots\otimes\mathfrak{h}^{i_{m}}(X)),
\end{equation*}
because $\mathfrak{h}^{2}(X)\cong\mathbb{L}$.
Note that $\sum_{\ell1}^{m}i_{\ell}=k-2>2(m-1)$.
By induction on $m$, one is reduced to show that
\begin{equation*}
\text{Hom}(\mathbb{L}, \mathfrak{h}^{i_{m-1}}(X)\otimes\mathfrak{h}^{i_{m}}(X))=0
\end{equation*}
whenever $i_{m-1}+i_m>2$, which is immediate.
\medskip

\emph{Case (b): $k<m$.}
If $i_{\ell}=2$ for some $\ell$, then we may assume $i_0=2$ by symmetry.
Thus,
\begin{equation*}
\text{Hom}(\mathbb{L}^{m}, \mathfrak{h}^{2}(X)\otimes\mathfrak{h}^{i_1}(X)\otimes\cdots\otimes\mathfrak{h}^{i_{m}}(X))
\cong\text{Hom}(\mathbb{L}^{m-1}, \mathfrak{h}^{i_1}(X)\otimes\cdots\otimes\mathfrak{h}^{i_{m}}(X)).
\end{equation*}
By induction, one is reduced to the case that each $i_{\ell}\leq 1$.
Since $k<m$, then $i_{\ell}=0$ for some $\ell$.
We may assume $i_{0}=0$.
It is enough to show that
\begin{equation*}
\text{Hom}(\mathbb{L}^{m}, \mathfrak{h}^{i_0}(X)\otimes\cdots\otimes\mathfrak{h}^{i_{m}}(X))
\cong\text{Hom}(\mathbb{L}^{m}, \mathfrak{h}^{i_1}(X)\otimes\cdots\otimes\mathfrak{h}^{i_{m}}(X))=0.
\end{equation*}
Note that the group $\mathrm{CH}^{m}(X^{m})$ is generated by 0-cycle classes of the form $z_1\times\cdots\times z_{m}$ with $z_i\in\mathrm{CH}^1(X)$. Since $\sum_{\ell=1}^{m}i_{\ell}=k<m$, there exists an index $1\leq\ell\leq m$ such that $i_\ell=0$. The vanishing in \eqref{17} follows from $\pi_{0*}z_{\ell}=0$.

We now assume that $X$ has genus $g\geq 2$ and prove assertions (i) and (ii).

\medskip
\emph{The case $m=2$.} 
Denote $\Delta_{123}:=\Delta_I$, $\Delta_{ij}:=p_{ij}^*\Delta_X\cdot p_k^*o_X$ 
and $\Delta_i:=p_i^*[X]\cdot p_j^*o_X\cdot p_k^*o_X$ for $\{i,j,k\}=\{1,2,3\}$.
Straightforward computations give that
\begin{equation}\label{18}
\pi_0\circ\Delta_{123}\circ (\pi_0\otimes\pi_0)
=(\pi_0\otimes\pi_2\otimes\pi_2)_*\Delta_{123}
=\Delta_1,
\end{equation}
\begin{equation}\label{19}
\Delta_{123}\circ (\pi_2\otimes\pi_0)
=(\Delta_X\otimes\pi_0\otimes\pi_2)_*\Delta_{123}=\Delta_2,
\end{equation}
\begin{equation}\label{20}
\Delta_{123}\circ (\pi_0\otimes\pi_2)
=(\Delta_X\otimes\pi_2\otimes\pi_0)_*\Delta_{123}=\Delta_3.
\end{equation}
Then it follows that for $k\neq i+j$ and $(i,j,k)\neq (1,1,1)$, one has
\begin{equation}\label{21}
\pi_k\circ\Delta_{123}\circ(\pi_i\otimes\pi_j)=0.
\end{equation}
Denote the third modified diagonal class of $(X, o_X)$ by
\begin{equation*}
\Gamma^3(X,o_X):=\Delta_{123}-\Delta_{12}-\Delta_{13}-\Delta_{23}+\Delta_1-\Delta_2+\Delta_3\in\mathrm{CH}^2(X^3).
\end{equation*}
Then by \eqref{21}, the above CK decomposition is 1-multiplicative.
Moreover, by the equalities \eqref{18}-\eqref{21}, one has
\begin{equation*}
\Gamma^3(X,o_X)=\pi_1\circ\Delta_{123}\circ(\pi_1\otimes\pi_1)
=(\pi_1\otimes\pi_1\otimes\pi_1)_*\Delta_{123}\in\mathrm{CH}_1^2(X^3).
\end{equation*}
\medskip

\emph{The case $m\geq 3$.}
By definition, $\mathfrak{d}_m(X)\leq (m-1)\mathfrak{d}(X)\leq m-1$.
We give a more explicit description of the relevant decomposition.
We will show that if $k\neq\sum_{\ell=1}^{m}i_{\ell}$ with $k=0$ or some $i_{\ell'}=2$,
then one has
\begin{equation*}
\pi_k\circ\Delta_{I}\circ(\pi_{i_1}\otimes\cdots\otimes\pi_{i_m})=0.
\end{equation*}
Indeed, if some $i_{\ell'}=2$, we may assume $i_{1}=2$ by symmetry.
Thus
\begin{eqnarray*}
 \pi_k\circ\Delta_{I}\circ(\pi_{i_1}\otimes\cdots\otimes\pi_{i_m}) &=& (\pi_{0}\otimes\pi_{2-i_2}\otimes\cdots\otimes\pi_{2-i_m}\otimes\pi_k)_*\Delta_I \\
    &=& X\times(\pi_{2-i_2})_*o_X\times\cdots(\pi_{2-i_m})_*o_X\times\pi_{k*}o_X\\
    &=& 0.
\end{eqnarray*}
The last equality follows from $\pi_{0*}o_X=0=\pi_{1*}o_X$ and from the fact that if $i_{\ell}=0$ for all $\ell\geq 2$, 
then the condition $k\neq\sum_{\ell=1}^{m}i_\ell$ implies that $k\neq 2$. The case $k=0$ is similar.

Hence, the above CK decomposition is $m$-fold $(m-1)$-multiplicative.
Moreover, one gets that 
\begin{equation}\label{22}
\Delta_I-\sum_{k=0}^{4}\sum_{\sum_{\ell=1}^{m}i_{\ell}=k}\pi_k\circ\Delta_I\circ(\pi_{i_1}\otimes\cdots\otimes\pi_{i_m})
=\sum_{s=0}^{m-1}\Gamma_s,
\end{equation}
where 
\begin{equation}\label{23}
\Gamma_s=\overbrace{(\pi_1\otimes\cdots\otimes\pi_1}^{s+2})_*\Delta_{I_{s+1}}\times 
\overbrace{o_X\times\cdots\times o_X}^{m-s-1}+(\text{permutations})\in\mathrm{CH}_{s}^{m}(X^{m+1}).
\end{equation}
Hence,
\begin{equation}\label{24}
\Gamma^{m+1}(X,o_X)
=(\overbrace{\pi_1\otimes\cdots\otimes\pi_{1}}^{m+1})_*\Delta_{I}\in\mathrm{CH}_{m-1}^m(X^{m+1}).
\end{equation}

It follows from \cite{MY16} that $\Gamma^{g+2}(X,o_X)=0$. 
On the other hand, the preceding argument gives
\begin{equation*}
\Delta_{I_g}\in\bigoplus_{s=0}^{g-1}\mathrm{CH}_{s}^g(X^{g+1}).
\end{equation*}
The proof of Proposition \ref{prop2.10} therefore shows that the CK decomposition of $X$ is $m$-fold $(g-1)$-multiplicative for every integer $m\geq g$. This proves both assertions.
\end{proof}

Using the nonvanishing results for modified diagonal classes established in \cite{Y13}, we can determine the motivic multiple multiplicativity defects of a very general curve.
\begin{corollary}
Let $X$ be a very general curve of genus $g\geq 3$. 
\begin{enumerate}
\item[(i)] For every integer $m$ with $2\leq m\leq g-1$, one has $\mathfrak{d}_m(X)=m-1$. In particular, $\mathfrak{d}(X)=1$.

\item[(ii)] For every integer $m\geq g$, one has $\mathfrak{d}_m(X)=g-1$.
In particular, $\mathfrak{sd}(X)=g-1$.
\end{enumerate}
\end{corollary}
\begin{proof}
The proof of Theorem \ref{thm2.24} shows that the required lower bounds are detected by the nonvanishing of the corresponding modified diagonal classes. According to \cite{BV04,Y13}, if $X$ is a very general curve of genus $g\geq 3$, then $\Gamma^{m+1}(X,o_X)\neq 0$ for every $m\in\{2, 3\cdots, g\}$ and any degree-one 0-cycle class $o_X$.
Therefore, $\mathfrak{d}_m(X)=m-1$ for every integer $m\in\{2, 3\cdots, g-1\}$,
while $\mathfrak{d}_m(X)=g-1$ for every integer $m\geq g$.
\end{proof}

\begin{remark}
\leavevmode
\begin{enumerate}
\item[(i)] For certain special non-hyperelliptic curve of genus at least $3$, the motivic multiplicativity defect may drop to zero under specialization. It would be interesting to classify all such curves.

\item[(ii)] Modulo algebraic equivalence, it is expected that the stable motivic multiplicativity defect $\mathfrak{sd}_a(X)=\mathrm{gon}(X)-2=\lfloor\frac{g-1}{2}\rfloor$, where $\mathrm{gon}(X)=\lfloor\frac{g+3}{2}\rfloor$ is the gonality of $X$. We refer to Section 4.4 of \cite{MY16} for further details. As supporting evidence, Beauville \cite{Bea23} constructed a non-hyperelliptic curve of genus $3$ whose motivic multiplicativity defect vanishes modulo algebraic equivalence. There should exist other non-hyperelliptic curves also expected to have this property; see \cite{BLLS23}.

\item[(iii)] Another natural approach to detecting the nonvanishing of modified diagonal classes would be to use the higher Abel--Jacobi maps for Chow groups introduced in \cite{As00,S01}.

\item[(iv)] Let $X_i(1\leq i\leq k)$ be very general curves of genus at least $3$.
Then it is not hard to see that $\mathfrak{d}(\prod_{i=1}^{k}C_i)=k$. 
\end{enumerate}
\end{remark}

We next turn to the case of surfaces.

\begin{theorem}\label{thm2.27}
Let $X$ be a smooth connected projective surface. 
Then for every integer $m\geq 2$, Murre's conjecture (B) holds in codimension $2m$ for some set of CK projectors of $X^{m+1}$. As a consequence, $\mathfrak{d}_m(X)\leq 2m$.
If, moreover, $X$ is regular, then $\mathfrak{d}_m(X)$ is even and $\mathfrak{d}_m(X)\leq 2m-2$.
Furthermore, if $X$ is swept-out by irreducible curves of genus $g$, each supporting a 0-cycle rationally equivalent to a fixed degree-one 0-cycle class $o_X$, then $\mathfrak{sd}(X)\leq 6g+2$.
\end{theorem}
\begin{proof}
By \cite{Mu90}\cite{Sch94}, the surface $X$ admits a self-dual CK decomposition:
$\Delta_X=\sum_{i=0}^{4}\pi_i$, where $\pi_0=o_X\times X$ for some degree-one 0-cycle class $o_X$.
For every $m\geq 2$, equip $X^{m+1}$ with the corresponding product CK projectors.
We shall verify Murre's conjecture (B) for these CK projectors in codimension $2m$, that is,
\begin{equation}\label{25}
(\pi_{k_0}\otimes\cdots\otimes\pi_{k_m})_*\text{CH}^{2m}(X^{m+1})
=\text{Hom}(\mathbb{L}^{2m}, \mathfrak{h}^{k_0}(X)\otimes\cdots\otimes\mathfrak{h}^{k_{m}}(X))=0
\end{equation}
whenever $\sum_{\ell=0}^{m}k_{\ell}>4m$ or $\sum_{\ell=0}^{m}k_{\ell}<2m$.
\medskip

\emph{Case (i): $\sum_{\ell=0}^{m}k_\ell>4m$.}
Suppose first that $k_\ell=4$ for some $\ell$. By symmetry, we may assume that $k_0=4$. Since $\mathfrak{h}^{4}(X)\cong\mathfrak{h}^0(X)\otimes\mathbb{L}^2\cong\mathbb{L}^2$, we have
\begin{equation*}
\text{Hom}(\mathbb{L}^{2m}, \mathfrak{h}^{4}(X)\otimes\mathfrak{h}^{k_1}(X)\otimes\cdots\otimes\mathfrak{h}^{k_{m}}(X))
=\text{Hom}(\mathbb{L}^{2m-2}, \mathfrak{h}^{k_1}(X)\otimes\cdots\otimes\mathfrak{h}^{k_{m}}(X)).
\end{equation*}
By induction, one is reduced to the case $m=1$ which follows by results in \cite{Mu93}, or to the case that each $k_{\ell}\leq 3$ for every $\ell$. In the latter case, $4m+1\leq\sum_{\ell=0}^{m}k_\ell\leq3m+3$.
This forces $m=2$ and $k_0=k_1=k_2=3$.
By Theorem 4.4 (ii) in \cite{Sch94}, $\mathfrak{h}^3(X)\cong\mathfrak{h}^1(X)\otimes\mathbb{L}^1$. 
It follows that
\begin{eqnarray*}
   \text{Hom}(\mathbb{L}^4, \mathfrak{h}^3(X)\otimes\mathfrak{h}^3(X)\otimes\mathfrak{h}^3(X))
   &\cong& \text{Hom}(\mathbb{L}^4, \mathfrak{h}^1(X)\otimes\mathfrak{h}^1(X)\otimes\mathfrak{h}^1(X)
   \otimes\mathbb{L}^3) \\
   &=& \text{Hom}(\mathbb{L}, \mathfrak{h}^1(X)\otimes\mathfrak{h}^1(X)\otimes\mathfrak{h}^1(X))\\
   &=& (\pi_1\otimes\pi_1\otimes\pi_1)_*\text{CH}^1(X^3)\\
   &=& 0.
\end{eqnarray*}
\medskip

\emph{Case (ii): $\sum_{\ell=0}^{m}k_\ell<2m$.}
At least one of the $k_\ell$ is at most $1$, since otherwise
$\sum_{\ell=0}^{m}k_\ell\geq 2(m+1)>2m$.
By symmetry, we may assume that $k_0\leq 1$.
One has $k_{\ell}\leq 1$ for some $\ell$, because $2m+2>2m-1$.
Then we may assume $k_0\leq 1$ by symmetry.
If $k_0=0$, then
$$
\text{Hom}(\mathbb{L}^{2m}, \mathfrak{h}^{0}(X)\otimes\mathfrak{h}^{k_1}(X)\otimes\cdots\otimes\mathfrak{h}^{k_{m}}(X))
\cong\text{Hom}(\mathbb{L}^{2m}, \mathfrak{h}^{k_1}(X)\otimes\cdots\otimes\mathfrak{h}^{k_{m}}(X)).
$$
Now since $\sum_{\ell=1}^{m}k_{\ell}<2m$, we may repeat this reduction and assume that every $k_\ell\geq 1$. We may therefore assume that $k_0=1$.
It remains to prove that
\begin{eqnarray*}
   & &\text{Hom}(\mathbb{L}^{2m}, \mathfrak{h}^{1}(X)\otimes\mathfrak{h}^{k_1}(X)\otimes\cdots\otimes\mathfrak{h}^{k_{m}}(X))\\
   &\cong& \text{Hom}(\mathbb{L}^{2m}, \mathfrak{h}^{1}(A)\otimes\mathfrak{h}^{k_1}(X)\otimes\cdots\otimes\mathfrak{h}^{k_{m}}(X)) \\
   &=& 0,
\end{eqnarray*}
where $A$ is the Picard variety of $X$ and $\mathfrak{h}^1(X)\cong\mathfrak{h}^1(A)$.
It is well known that there exists a smooth connected projective curve $C$, an integer $n\geq 1$, and a surjective morphism $C^n\rightarrow A$. Consequently, $\mathfrak{h}^1(A)$ is a direct summand of $\mathfrak{h}^1(C^n)$.
It therefore suffices to show that 
\begin{equation}\label{26}
\text{Hom}(\mathbb{L}^{2m}, \mathfrak{h}^{1}(C^n)\otimes\mathfrak{h}^{k_1}(X)\otimes\cdots\otimes\mathfrak{h}^{k_{m}}(X))=0.
\end{equation}
This is readily reduced to showing that
\begin{equation}\label{27}
\text{Hom}(\mathbb{L}^{2m}, \mathfrak{h}^{1}(C)\otimes\mathfrak{h}^{k_1}(X)\otimes\cdots\otimes\mathfrak{h}^{k_{m}}(X))=0.
\end{equation}
By the argument of Theorem 2.4 in \cite{XX13}, it is enough to establish the vanishings:
\begin{equation}\label{28}
\text{Hom}(\mathbb{L}^{2m}, \mathfrak{h}^{k_1}(X)\otimes\cdots\otimes\mathfrak{h}^{k_{m}}(X))=0
\end{equation}
and
\begin{equation}\label{29}
\text{Hom}(\mathbb{L}^{2m-1}, \mathfrak{h}^{k_1}(X)\otimes\cdots\otimes\mathfrak{h}^{k_{m}}(X))=0.
\end{equation}
Repeating this procedure, we are eventually reduced to proving that
\begin{equation}\label{30}
\text{Hom}(\mathbb{L}^{j}, \mathfrak{h}^{1}(X)\otimes\mathfrak{h}^{2}(X)^{\otimes r})=0
\end{equation}
for some integer $r\geq 0$ and every integer $j\geq 2r+2$.

For $j>2r+2$, the vanishing in \eqref{30} follows for dimensional reasons.
For $j=2r+2$, then the group $\mathrm{CH}^{2r+2}(X^{r+1})$ is generated by 0-cycle classes of the form $z_1\times\cdots\times z_{r+1}$ with $z_i\in\mathrm{CH}^2(X)$.
Then the vanishing in \eqref{30} follows from the fact that 
\begin{equation*}
  (\pi_1\otimes\overbrace{\pi_2\otimes\cdots\otimes\pi_2}^r)_*(z_1\times\cdots\times z_{r+1})
=\pi_{1*}z_1\times\pi_{2*}z_2\cdots\times\pi_{2*}z_{r+1}=0.
\end{equation*}
We have therefore established Murre's conjecture (B) in codimension $2m$ for the chosen product CK decomposition of $X^{m+1}$. Consequently, $\mathfrak{d}_m(X)\leq 2m$. 
If $X$ is swept out by irreducible curves of genus $g$, each supporting a $0$-cycle rationally equivalent to $o_X$, then Remark \ref{rem2.13}(i) gives $\mathfrak{sd}(X)\leq 6g+2$.

Now suppose that $X$ is regular.
Then $\pi_1=0=\pi_3$ and hence $\mathfrak{d}_m(X)$ is even.
The same argument just as in the case of curves implies that 
\begin{equation}\label{31}
\Gamma^{m+1}(X,o_X)
=(\overbrace{\pi_2\otimes\cdots\otimes\pi_2}^{m+1})_*\Delta_{I}\in\mathrm{CH}_{2(m-1)}^{2m}(X^{m+1}).
\end{equation}
Then 
\begin{equation}\label{32}
\Delta_I-\sum_{k=0}^{4}\sum_{\sum_{\ell=1}^{m}i_{\ell}=k}\pi_k\circ\Delta_I\circ(\pi_{i_1}\otimes\cdots\otimes\pi_{i_m})
=\sum_{s=0}^{m-1}\Gamma_s,
\end{equation}
where 
\begin{equation*}
\Gamma_s=\overbrace{(\pi_2\otimes\cdots\otimes\pi_2}^{s+2})_*\Delta_{I_{s+1}}\times 
\overbrace{o_X\times\cdots\times o_X}^{m-s-1}+(\text{permutations})\in\mathrm{CH}_{2s}^{2m}(X^{m+1}).
\end{equation*}
It follows that $\mathfrak{d}_m(X)\leq 2(m-1)$.
\end{proof}
\begin{remark}
\leavevmode
\begin{enumerate}
\item[(i)] Unlike the case of curves, it is unclear how to determine a sharp upper bound for the stable motivic multiplicativity defect of a very general surface.

\item[(ii)] For a regular surface, the motivic multiplicativity defect is either $0$ or $2$. Thus, if such a surface is not strictly motivic $2$-multiplicative, then it admits motivic $0$-multiplicativity.

\item[(iii)] Unfortunately, the preceding argument does not extend to arbitrary varieties of dimension at least $3$, since very little is known about algebraic cycles on their powers.
\end{enumerate}
\end{remark}

We can now deduce the following existence result.
\begin{proposition}
Fix an integer $n\geq 1$. For every integer $\tau$ satisfying $0\leq\tau\leq n$,
there exists a smooth connected projective variety of dimension $n$ whose motivic multiplicativity defect is equal to $\tau$.
\end{proposition}
\begin{proof}
For each nonnegative integer $i\leq\tau$, let $C_i$ be a very general curve of genus at least $3$, and
set $X:=\prod_{i=1}^{\tau}C_i\times\mathbb{P}^{n-\tau}$.
When $\tau=0$, the product of curves is understood to be a point, so that $X=\mathbb{P}^n$.
Applying Proposition \ref{prop2.20} to the trivial projective bundle shows that
$\mathfrak{d}(X)=\tau$.
\end{proof}

Stable motivic multiplicativity immediately yields vanishing results for modified diagonal classes. From this perspective, stable motivic multiplicativity may be regarded as a natural generalization of motivic $0$-multiplicativity. The following result generalizes Proposition 8.12 of \cite{SV16a}.
\begin{proposition}\label{prop2.30}
Assume that $X$ admits a set of self-dual CK projectors $\{\pi_i^X\}_{0\leq i\leq 2n}$ with $\pi_{0}^X=o_X\times X$,
and suppose that this CK decomposition attains the stable motivic multiplicativity defect $\mathfrak{sd}(X)=\tau$.
Then $\Gamma^{k}(X, o_X)=0$  for every integer $k>2n+\tau$.
If, in addition, $\pi_1^X=0$, then $\Gamma^{k}(X, o_X)=0$ for every integer $k>n+\frac{\tau}{2}$.
\end{proposition}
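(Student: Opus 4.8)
The plan is to identify $\Gamma^k(X,o_X)$ with a prescribed bigraded sub-sum of the small diagonal $\Delta_{I_{k-1}}^X$, and then to run a grading count against $\tau$-multiplicativity.

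\emph{The structural identity.} Put $\bar\pi^X:=\Delta_X-\pi_0^X-\pi_{2n}^X=\sum_{i=1}^{2n-1}\pi_i^X$, the self-correspondence projecting onto $\bigoplus_{i=1}^{2n-1}\mathfrak h^i(X)$, and write $(\bar\pi^X)^{\otimes k}_*$ for the operator on $\text{CH}^*(X^k)$ obtained by applying $\bar\pi^X$ to each of the $k$ factors (these commute, being supported on disjoint factors). I claim that
\[
\Gamma^k(X,o_X)=(\bar\pi^X)^{\otimes k}_{*}\,\Delta_{I_{k-1}}^X\in\text{CH}^{(k-1)n}(X^k),
\]
valid for \emph{any} self-dual CK decomposition with $\pi_0^X=o_X\times X$ (independently of multiplicativity). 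This is the key identity behind Proposition 8.12 in \cite{SV16a} (which is the case $\tau=0$ of the present statement), and it specializes, for curves and regular surfaces, to the identities $\Gamma^{m+1}(X,o_X)=(\pi_1^X)^{\otimes(m+1)}_*\Delta_{I_m}$ and $(\pi_2^X)^{\otimes(m+1)}_*\Delta_{I_m}$ used in the proofs of Theorems \ref{thm2.24} and \ref{thm2.27}. I would prove it by induction on $k$, applying $\bar\pi^X$ one factor at a time and using the standard formulas for $(\pi_0^X\text{ on a slot})_*\Delta_{I_{k-1}}^X$ and $(\pi_{2n}^X\text{ on a slot})_*\Delta_{I_{k-1}}^X$ (each collapses one slot of the small diagonal, leaving a small diagonal on the remaining factors), together with the vanishings $(\bar\pi^X)_*[X]=(\bar\pi^X)_*o_X=0$; unwinding the recursion and comparing signs with the defining inclusion--exclusion expression for $\Gamma^k(X,o_X)$ yields the identity. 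Getting these signs to come out right is the one delicate point, though it is routine, and this is where the bulk of the verification lies; the rest of the argument is a formal grading count.

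\emph{The grading count.} Granting the identity, $\Gamma^k(X,o_X)$ is the sum, over all tuples $(a_1,\dots,a_k)$ with $1\le a_j\le 2n-1$, of the bigraded pieces $(\pi_{a_1}^X\otimes\cdots\otimes\pi_{a_k}^X)_*\Delta_{I_{k-1}}^X$. In the notation of Proposition \ref{prop2.15} (with $m=k-1$), such a piece lies in $\text{CH}_s^{(k-1)n}(X^k)$ with $s=2(k-1)n-\sum_j a_j$, and from $k\le\sum_j a_j\le k(2n-1)$ one gets $k-2n\le s\le 2(k-1)n-k$. Now suppose $k>2n+\tau$. Then $k\ge 3$, so $m=k-1\ge 2$, and since the given CK decomposition attains $\mathfrak{sd}(X)=\tau$ it is $m$-fold $\tau$-multiplicative; by Proposition \ref{prop2.15}, the total grade-$s$ component $\Delta_{I_{k-1}}^s$ vanishes whenever $s<0$ or $s>\tau$. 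The bigraded pieces of $\Delta_{I_{k-1}}^X$ are pairwise orthogonal, so vanishing of their sum $\Delta_{I_{k-1}}^s$ forces each constituent piece to vanish; but every piece occurring in $\Gamma^k(X,o_X)$ has $s\ge k-2n>\tau$. Hence all these pieces vanish and $\Gamma^k(X,o_X)=0$.

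\emph{The case $\pi_1^X=0$.} Here self-duality gives $\pi_{2n-1}^X={}^t\pi_1^X=0$, so $\bar\pi^X=\sum_{i=2}^{2n-2}\pi_i^X$ and the constraint sharpens to $2\le a_j\le 2n-2$; consequently $s=2(k-1)n-\sum_j a_j\ge 2(k-1)n-2(n-1)k=2(k-n)$. Thus $k>n+\tfrac{\tau}{2}$ forces $s>\tau$ for every occurring piece, and the same argument yields $\Gamma^k(X,o_X)=0$. (If $n=1$ and $\pi_1^X=0$ then $\bar\pi^X=0$ and the assertion is vacuous, so one may assume $n\ge 2$.)
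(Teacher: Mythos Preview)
Your proof is correct and follows essentially the same approach as the paper's. Both arguments hinge on the structural identity $\Gamma^k(X,o_X)=(\bar\pi^X)^{\otimes k}_*\Delta_{I_{k-1}}^X$ together with a grading count against $\tau$-multiplicativity; the paper uses this identity implicitly (it is the passage from ``at least one index $i_\ell$ is $2n$'' to ``$\Gamma^k(X,o_X)=0$''), while you spell it out and sketch its inductive proof, and the numerical inequalities you derive ($s\geq k-2n$, respectively $s\geq 2(k-n)$) are the same as the paper's computation $\sum_\ell i_\ell\leq k(2n-1)<2n(k-1)-\tau$, respectively $\sum_\ell i_\ell\leq k(2n-2)<2n(k-1)-\tau$, just rephrased in terms of $s$.
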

\begin{proof}
By assumption, for every $k\geq 2$, the CK decomposition is $k$-fold $\tau$-multiplicative.
So, the small diagonal class $\Delta_{I_{k-1}}\in\bigoplus_{s=0}^{\tau}\mathrm{CH}_s^{n(k-1)}(X^k)$.
Then
\begin{equation*}
(\pi_{i_1}\otimes\cdots\otimes\pi_{i_k})_*\Delta_k=0
\end{equation*}
whenever $\sum_{\ell=1}^{k}i_{\ell}<2n(k-1)-\tau$ or $\sum_{\ell=1}^{k}i_{\ell}>2n(k-1)$.

Suppose that $k>2n+\tau$.
If
\begin{equation*}
2n(k-1)-\tau\leq\sum_{\ell=1}^{k}i_{\ell}\leq 2n(k-1),
\end{equation*}
then at least one of the indices $i_\ell$ must be equal to $2n$. Indeed, otherwise 
\begin{equation*}
\sum_{\ell=1}^{k}i_{\ell}\leq k(2n-1)<2n(k-1)-\tau,
\end{equation*}
which is a contradiction. It follows from the standard projector expression for the modified diagonal that $\Gamma^k(X, o_X)=0$ for every integer $k>2n+\tau$.

Now suppose that $\pi_1^X=0$ and $k>n+\frac{\tau}{2}$.
By self-duality, $\pi_{2n-1}^X=0$.
If none of the indices $i_\ell$ is equal to $2n$, then every nonzero projector occurring has index at most $2n-2$, and hence
\begin{equation*}
2n(k-1)-\tau\leq\sum_{\ell=1}^{k}i_{\ell}\leq k(2n-2)<2n(k-1)-\tau.
\end{equation*}
This again contradicts the required range. Therefore, some $i_\ell$ must equal $2n$, and consequently
$\Gamma^k(X, o_X)=0$ for every integer $k>n+\frac{\tau}{2}$.
\end{proof}

\begin{remark}
\leavevmode
\begin{enumerate}
\item[(i)] Voisin \cite{Voi15} proved that, if $X$ is swept out by irreducible curves of genus $g$, each supporting a $0$-cycle rationally equivalent to $o_X$, then $\Gamma^m(X, o_X)=0$ for every integer $m\geq (n+1)(g+1)$.
Except in certain special cases, such as rationally connected varieties, the curves in question are generally taken to be complete intersection curves on $X$, whose genera may be very large. Thus, Voisin's result does not generally provide a sharp vanishing threshold for modified diagonal classes. Determining the stable motivic multiplicativity defect appears to be necessary for approaching the optimal threshold.

\item[(ii)] It is reasonable to expect that Proposition \ref{prop2.30} is sharp for many classes of varieties satisfying its hypotheses. It would be interesting to determine the optimal vanishing threshold for modified diagonal classes of an arbitrary variety.

\item[(iii)] Even if a variety satisfies an optimal modified diagonal vanishing property, its stable motivic multiplicativity defect may still be large, as illustrated by rationally connected varieties.
\end{enumerate}
\end{remark}

\begin{remark}
By contrast, the bound obtained by Voisin's method in \cite{Voi15} is
\begin{equation}\label{33}
n+(n+1)\left(\sum_{m=1}^{r}(-1)^{m+1}\sum_{1\leq i_1<\cdots<i_m\leq r}\binom{r+1-\sum_{k=1}^md_{i_k}}{r+1}\right).
\end{equation}
\end{remark}

For regular varieties, motivic $1$-multiplicativity also yields the vanishing of certain modified diagonal classes.
The following result is a slight generalization of Proposition 8.12 of \cite{SV16a}.

\begin{proposition}\label{prop2.33}
Assume that $X$ admits a $1$-multiplicative self-dual CK decomposition satisfying $b_1(X)=0$ and $\pi_{2n}=X\times o_X$. Then $\Gamma^k(X, o_X)=0$ for every integer $k>2n-2$.
\end{proposition}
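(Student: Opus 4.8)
Following the pattern of Proposition \ref{prop2.30}, the plan is to show that $(\pi_{i_1}^X\otimes\cdots\otimes\pi_{i_k}^X)_*\Gamma^k(X,o_X)=0$ for every integer $k>2n-2$ and every tuple $(i_1,\cdots,i_k)$ with $0\leq i_\ell\leq 2n$; since $\sum_{(i_1,\cdots,i_k)}\pi_{i_1}^X\otimes\cdots\otimes\pi_{i_k}^X=\Delta_{X^k}$, this forces $\Gamma^k(X,o_X)=0$. We may assume $n\geq 2$, so that $k>2n-2$ already gives $k\geq 3$ (for $n=1$ the hypotheses single out $X\cong\mathbb P^1$, where $\Gamma^k$ vanishes for all $k\geq 2$).

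Two reductions come first. Since $\pi_{2n}^X=X\times o_X$ and $\deg o_X=1$, we have $(\pi_{2n}^X)_*o_X=o_X$, hence by orthogonality $(\pi_j^X)_*o_X=(\pi_j^X\circ\pi_{2n}^X)_*o_X=0$ for every $j\neq 2n$. Moreover, inserting $\pi_{2n}^X$ into any one factor kills the modified diagonal, $(\mathrm{id}^{\otimes(\ell-1)}\otimes\pi_{2n}^X\otimes\mathrm{id}^{\otimes(k-\ell)})_*\Gamma^k(X,o_X)=0$ for each $\ell$ --- the sign cancellation among the defining terms of $\Gamma^k(X,o_X)$ that is implicit in the proof of Proposition \ref{prop2.30} (cf. also \cite{SV16a}). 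Combining the two: a tuple with some $i_\ell=2n$ contributes nothing, while for a tuple with all $i_\ell\neq 2n$ the first reduction annihilates every term of $\Gamma^k(X,o_X)$ in which a factor $o_X$ appears, leaving only the leading small-diagonal term. Hence
\begin{equation*}
\Gamma^k(X,o_X)=\sum_{\substack{(i_1,\cdots,i_k)\\ i_\ell\neq 2n\ \forall\,\ell}}(\pi_{i_1}^X\otimes\cdots\otimes\pi_{i_k}^X)_*\Delta_{I_{k-1}}^X,
\end{equation*}
where $\Delta_{I_{k-1}}^X\in\text{CH}^{n(k-1)}(X^k)$ is the $k$-th small diagonal.

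It then suffices to see that each surviving summand vanishes when $k>2n-2$, and for this I would squeeze its grading from both sides. By Proposition \ref{prop2.15} the summand lies in $\text{CH}_s^{n(k-1)}(X^k)$ with $s=2n(k-1)-\sum_\ell i_\ell$. First, $b_1(X)=0$ forces $\pi_1^X=0$, and then self-duality forces $\pi_{2n-1}^X={}^t\pi_1^X=0$; so for a nonzero summand each index satisfies $i_\ell\in\{0,2,3,\cdots,2n-2\}$, whence $\sum_\ell i_\ell\leq k(2n-2)$ and $s\geq 2n(k-1)-k(2n-2)=2k-2n$. Second, iterating the ($2$-fold) $1$-multiplicativity of the given CK decomposition through the associativity of the intersection product, the decomposition is $(k-1)$-fold $(k-2)$-multiplicative --- indeed $\mathfrak{d}_{k-1}(X)\leq(k-2)\,\mathfrak{d}_2(X)\leq k-2$ by Definition \ref{def2.3}(iii) --- so by Proposition \ref{prop2.15} again $\Delta_{I_{k-1}}^X\in\bigoplus_{s=0}^{k-2}\text{CH}_s^{n(k-1)}(X^k)$, and a nonzero summand requires $s\leq k-2$. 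For $k>2n-2$ we have $2k-2n>k-2$, so no $s$ fulfils both $s\geq 2k-2n$ and $s\leq k-2$; every summand is zero, and $\Gamma^k(X,o_X)=0$.

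I expect the only non-formal step to be the combinatorial identity of the second paragraph (that $\pi_{2n}^X=X\times o_X$, plugged into one slot, annihilates $\Gamma^k(X,o_X)$); everything else is the conflict between two grading constraints --- the vanishing $\pi_1^X=\pi_{2n-1}^X=0$ pushing the admissible indices $i_\ell$ down to at most $2n-2$, and $1$-multiplicativity (propagated by associativity) pushing the small diagonal class into grades $\leq k-2$ --- which becomes unsatisfiable exactly when $k$ passes $2n-2$.
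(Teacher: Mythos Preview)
Your proof is correct and follows essentially the same route as the paper's: propagate the $2$-fold $1$-multiplicativity to $(k-1)$-fold $(k-2)$-multiplicativity via associativity, so that a nonzero component $(\pi_{i_1}\otimes\cdots\otimes\pi_{i_k})_*\Delta_{I_{k-1}}$ forces $\sum_\ell i_\ell\geq 2n(k-1)-(k-2)$, and then argue that for $k>2n-2$ this is incompatible with all $i_\ell\leq 2n-2$, whence some $i_\ell=2n$ and the modified diagonal is killed. You are in fact more explicit than the paper at the crucial point: the paper asserts ``at least one index $i_\ell$ is $2n$'' without spelling out that this needs $\pi_{2n-1}=0$ (coming from $b_1(X)=0$ and self-duality), which you make transparent.
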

\begin{proof}
By assumption, one has $\mathfrak{d}_{k-1}(X)\leq k-2$ for every integer $k\geq 3$.
By Proposition \ref{prop2.15}, one has 
\begin{equation*}
\Delta_{I_{k-1}}=\delta_{k*}[X]\in\bigoplus_{s=0}^{k-2}\mathrm{CH}_{s}^{n(k-1)}(X^k).
\end{equation*}
Then
\begin{equation*}
(\pi_{i_1}\otimes\cdots\otimes\pi_{i_k})_*\Delta_{I_{k-1}}=0
\end{equation*}
whenever $\sum_{\ell=1}^{k}i_{\ell}<2n(k-1)-(k-2)$ or $\sum_{\ell=1}^{k}i_{\ell}>2n(k-1)$.

Since $b_1(X)=0$ and the CK decomposition is self-dual, one has $\pi_1=0=\pi_{2n-1}$.
Now suppose that $k>2n-2$.
If 
\begin{equation*}
2n(k-1)-(k-2)\leq\sum_{\ell=1}^{k}i_{\ell}\leq 2n(k-1),
\end{equation*}
then at least one index $i_\ell$ must equal $2n$. Indeed, otherwise every nonzero index is at most $2n-2$, and hence
$\sum_{\ell=1}^{k}i_\ell\leq k(2n-2)<2n(k-1)-(k-2)$, a contradiction.
It follows that $\Gamma^k(X, o_X)=0$ for every integer $k>2n-2$.
\end{proof}
\bigskip

\section{Isogenous correspondences and cycle relations}
\bigskip

In this preparatory section, we construct isogenous correspondences for Fano and Calabi--Yau complete intersections in certain ambient varieties. We then establish the associated cycle relations, which will play an important role in determining the motivic multiple multiplicativity defects of these varieties.

The notion of a $K$-correspondence was introduced by Voisin \cite{Voi04} in connection with a problem of Kobayashi, and $K$-correspondences have been constructed for certain typical Calabi--Yau varieties in \cite{Voi04}. In the Calabi--Yau case considered below, the basic idea underlying our construction is essentially due to Voisin. For our purposes, however, we require a slightly more flexible notion.
\begin{definition}
Let $X$ be a smooth connected projective variety of dimension $n$.
A correspondence $\Sigma\in\text{CH}^n(X\times X)$ is said to be \emph{isogenous}, if it admits a representative $\sum_i m_i\Sigma_i$ such that every irreducible component $\Sigma_i$ of its support dominates both factors under the natural projections.

An isogenous correspondence is called a \emph{$K$-autocorrespondence} if, in addition, for every $i$ and every desingularization $\tau:\widetilde{\Sigma}_i\rightarrow\Sigma_i$, the two composite morphisms $f_k=p_k\circ\tau: \widetilde{\Sigma}_i\rightarrow X$ ($k=1, 2$) have the same ramification divisor.
\end{definition}

We now fix the geometric setting. Let $Y$ be a smooth connected projective variety of dimension $n+1\geq 3$. We shall mainly consider the following two cases:
\medskip

\noindent
\emph{Case (a).}
The variety $Y$ is rationally connected, and $X\subseteq Y$ is a smooth anti-canonical divisor. Then $X$ is a Calabi-Yau variety of dimension $n$.

\noindent
\emph{Case (b).}
The variety $Y$ is Fano of index $i_Y\geq 2$. Thus, there exists a fundamental ample divisor $H$ on $Y$ such that
$-K_Y=i_Y\cdot H$ in $\mathrm{CH}^1(Y)$.
Now let $X\in |kH|$ be a smooth member for some positive integer $k<i_Y$.
By the adjunction formula, $X$ is a Fano variety of index $i_X=i_Y-k\geq 1$.

Let $\iota: X\rightarrow Y$ denote the inclusion. In Case (a), the pullback 
$\iota^*: H^n(Y, \mathbb{Q})\rightarrow H^n(X, \mathbb{Q})$ of singular cohomology groups is not an isomorphism. 
For our purposes, we shall assume that the same condition holds in Case (b).

Choose a smooth rational curve $R\subseteq Y$.
In Case (a), set $d:=-\int_Y K_Y\cdot R\geq 2$.
In Case (b), set $d_0:=\int_Y H\cdot R\geq 2$ and $d:=\int_YX\cdot R=kd_0$.
Fix an ordered pair of positive integers $\alpha:=(m_1,m_2)$ such that $m_1+m_2=d$.

For a smooth variety $W$, let $W^{[d]}$ denote the Hilbert scheme of length-$d$ closed subschemes of $W$, 
and let $W_0^{[d]}$ denote its principal component.
We write $\pi_0^W=[\cdot]: W_0^{[d]}\rightarrow W^{(d)}$ for the Hilbert-Chow morphism.
Let $\text{Hilb}_{R}(Y)$ denote the irreducible component of the Hilbert scheme of $Y$ containing the point $[R]$ and parameterizing deformations of $R$. Its general member is a smooth rational curve.

We now construct several related varieties by imposing prescribed intersection conditions. Define the following reduced closed subvarieties:
\begin{equation*}
\Theta_{\alpha}:=\{(x,y, C)\in X\times X\times\text{Hilb}_{R}(Y)|[X\cap C]=m_1x+m_2y,\ \text{or}\ C\subseteq X\},
\end{equation*}
\begin{equation*}
\Lambda_{\alpha}:=\{C\in\text{Hilb}_{R}(Y)|[X\cap C]=m_1x+m_2y,\ \text{or}\ C\subseteq X\},
\end{equation*}
\begin{equation*}
\Sigma_{\alpha}:=\{(x,y)\in X\times X|[X\cap C]=m_1x+m_2y,\ \text{or}\ C\subseteq X\}.
\end{equation*}
Let $q_1: \Theta_{\alpha}\rightarrow X$, $q_2: \Theta_{\alpha}\rightarrow X$, $q_{12}: \Theta_{\alpha}\rightarrow X\times X$ and $q_3: \Theta_{\alpha}\rightarrow\text{Hilb}_{R}(Y)$ be the natural projections.
By construction,
\begin{equation*}
\Lambda_{\alpha}=q_3(\Theta_{\alpha}), \ \Sigma_{\alpha}=q_{12}(\Theta_{\alpha}).
\end{equation*}
Moreover, there is a dominant rational map
\begin{equation*}
\varphi_{\alpha}: \Lambda_{\alpha}\dashrightarrow\Sigma_{\alpha}
\end{equation*}
defined generically by $\varphi_{\alpha}(C)=(x,y)$ whenever $[X\cap C]=m_1x+m_2y$. 
Consequently,
\begin{equation*}
\text{dim}\Theta_{\alpha}\geq\text{dim}\Lambda_{\alpha}\geq\text{dim}\Sigma_{\alpha}.
\end{equation*}
More precisely, the corresponding irreducible components of these varieties satisfy the same inequalities.

\begin{proposition}\label{prop3.2}
\leavevmode
\begin{enumerate}
\item[(i)] Assume that $X$ and $Y$ are as in Case (a).
The varieties $\Theta_{\alpha}$, $\Lambda_{\alpha}$ and $\Sigma_{\alpha}$ are all of pure dimension $n$.
Moreover, no irreducible component of $\Sigma_{\alpha}$ is equal to $\Delta_X$,
and $\Sigma_{\alpha}$ represents an isogenous autocorrespondence of $X$.

\item[(ii)] Assume that $X$ and $Y$ are as in Case (b).
The varieties $\Theta_{\alpha}$, $\Lambda_{\alpha}$ and $\Sigma_{\alpha}$ are all of pure dimension $n+i_X\cdot d_0$.
\end{enumerate}
\end{proposition}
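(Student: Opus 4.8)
The plan is to compute the three dimensions by fibring $\Theta_\alpha$ over $\text{Hilb}_{R}(Y)$ via $q_3$ and reducing the count to the geometry of tangency (``coincident root'') loci. Since $Y$ is rationally connected in Case (a) and Fano in Case (b), we may take $R$ to be free, so $H^1(R,N_{R/Y})=0$ and $\text{Hilb}_{R}(Y)$ has the expected dimension $-K_Y\cdot R+\dim Y-3$, equal to $d+n-2$ in Case (a) and to $i_Y d_0+n-2$ in Case (b); its general member is a smooth rational curve, and these curves dominate $Y$. On the open dense locus $U\subseteq\text{Hilb}_{R}(Y)$ of smooth members the universal curve $\mathcal{C}_U\to U$ is a $\mathbb{P}^1$-bundle, and the relative intersection divisor $\mathcal{C}_U\cap(X\times U)$ is a relative effective Cartier divisor of degree $e$, where $e=m_1+m_2$ equals $d$ in Case (a) and $kd_0$ in Case (b); equivalently it is a section $\sigma: U\to\mathcal{P}$ of the bundle $\mathcal{P}:=\text{Sym}^e_U(\mathcal{C}_U/U)\to U$, whose fibres are $\mathbb{P}^e$. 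Then $\Lambda_\alpha\cap U=\sigma^{-1}(\mathcal{Z})$, where $\mathcal{Z}\subseteq\mathcal{P}$ is the fibrewise coincident root locus (the closure of divisors of shape $m_1 p+m_2 q$), of relative codimension $e-2$, and $\Theta_\alpha$, over $U$, is the finite cover of $\Lambda_\alpha\cap U$ recording the pair $(x,y)$. Along the locus where $C\subseteq X$ the intersection is improper; I would read $\Theta_\alpha$ as the closure of its open part $\{C\not\subseteq X\}$ and argue that the extra points form a proper — hence, by the purity below, lower-dimensional — closed subset of limits of genuine configurations. In the applications this locus is the family of rational curves of the given class lying on $X$, whose dimension is bounded using $\dim X$ and a linear-system count.

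Everything then reduces to showing $\dim\sigma^{-1}(\mathcal{Z})=\dim U-(e-2)$. The upper bound is the assertion that $\sigma$ is sufficiently general relative to $\mathcal{Z}$, i.e. that no positive-dimensional family of curves deforming $R$ meets $X$ with tangency profile at least $(m_1,m_2)$; the lower bound, including non-emptiness, is the existence of curves deforming $R$ tangent to $X$ to order $m_1$ at one point and $m_2$ at another. Both are jet counts: through a general $x\in X$ the curves deforming $R$ realise enough jets that imposing order-$m_1$ tangency to $X$ at $x$ is exactly $m_1-1$ conditions, and then forcing the residual length-$m_2$ intersection to be supported at a single point is a further $m_2-1$ conditions; freeness of $R$ supplies the requisite jet surjectivity, and in the applications ($Y$ a weighted projective space, $R$ a line) this is elementary, recovering Voisin's construction. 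Purity follows since every component of $\sigma^{-1}(\mathcal{Z})$ has dimension $\geq\dim U-(e-2)$ — the coincident root loci being Cohen--Macaulay of the stated codimension — while the genericity supplies the matching upper bound and $\Theta_\alpha\to\Lambda_\alpha\cap U$ is finite. Hence in Case (a), with $e=d\geq2$, all three varieties have pure dimension $(d+n-2)-(d-2)=n$; in Case (b), with $e=kd_0$, pure dimension $(i_Y d_0+n-2)-(kd_0-2)=(i_Y-k)d_0+n=n+i_X d_0$.

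For the remaining assertions of (i): $\Delta_X$ is not a component of $\Sigma_\alpha$ because $\Delta_X\cap\Sigma_\alpha$ corresponds to curves with $[X\cap C]=dx$, i.e. tangent to $X$ at a single point to order $d$, a codimension $d-1\geq1$ condition, so $\dim(\Delta_X\cap\Sigma_\alpha)<n$. And $\Sigma_\alpha$ is isogenous: the same jet count shows $q_1,q_2: \Theta_\alpha\to X$ are dominant with finite general fibres, so $q_{12}$ carries each component of $\Theta_\alpha$ onto a component of $\Sigma_\alpha$ dominating both factors of $X\times X$; consequently every component of the support of a representing cycle of $\Sigma_\alpha$ dominates both factors. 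In Case (b) one has $\dim\Sigma_\alpha>\dim X$, so these last conclusions are neither expected nor asserted.

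The main obstacle is precisely the simultaneous upper and lower bound for $\dim\sigma^{-1}(\mathcal{Z})$: one must show the order-$m_1$ and order-$m_2$ tangency conditions on deformations of $R$ are exactly as independent as their codimension predicts — not over-determined (which would make $\Lambda_\alpha$ empty or too small) and not deficient (which would make it too large) — for an arbitrary smooth $X$, not merely a general one. This is where the freeness of $R$ and a jet-transversality argument in the spirit of Voisin's construction of $K$-correspondences in \cite{Voi04} are needed. The incidental points — the $C\subseteq X$ locus, the Cohen--Macaulayness of coincident root loci used for purity, and the $\ne\Delta_X$ codimension count — are comparatively routine.
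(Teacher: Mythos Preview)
Your route is genuinely different from the paper's. You fibre $\Theta_\alpha$ over $\text{Hilb}_R(Y)$ and read $\Lambda_\alpha$ as the preimage $\sigma^{-1}(\mathcal{Z})$ of a coincident-root locus in the relative symmetric product of the universal curve; the lower bound and purity come from Cohen--Macaulayness of $\mathcal{Z}$, the upper from a jet-transversality argument using freeness of $R$. The paper instead passes to the Hilbert scheme of \emph{points}: it builds an irreducible incidence variety $\Phi_\alpha\subseteq Y_{0,\alpha}^{[d]}\times\text{Hilb}_R(Y)$ of dimension $d+n$, and realises (a finite cover of) $\Theta_\alpha$ as the intersection $\Phi_\alpha\cap(X_0^{[d]}\times\text{Hilb}_R(Y))$, so that the lower bound comes directly from the Projective Dimension Theorem and the upper from an infinitesimal-independence check. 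Your picture is more geometric and makes the $(e-2)$-codimension of the tangency condition visible; the paper's has the advantage that PDT can be reapplied \emph{fibrewise} over a point $x\in X$: intersecting $\Phi_\alpha$ with $X_{\alpha,c,x}^{[d]}\times\text{Hilb}_R(Y)$ inside $Y_{\alpha,c}^{[d]}\times\text{Hilb}_R(Y)$ (codimension exactly $d+n$) shows the fibre over \emph{every} $x$ is nonempty.

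This is where your argument has a gap. From ``$q_1,q_2:\Theta_\alpha\to X$ are dominant with finite general fibres'' you conclude that $q_{12}$ carries \emph{each} component of $\Theta_\alpha$ onto a component of $\Sigma_\alpha$ dominating both factors. But dominance of $q_i$ on the union $\Theta_\alpha$ does not force dominance on each irreducible component: a component of dimension $n$ could map onto a proper subvariety of $X$ with positive-dimensional fibres without contradicting finiteness of the \emph{general} fibre of $q_i$ (which lies over points outside that image). The isogeny conclusion requires knowing that the fibre over an \emph{arbitrary} $x\in X$ is nonempty, which is an intersection-theoretic lower bound, not a jet count. In your framework you would need to restrict to the sub-Hilbert-scheme of curves through $x$ and rerun the Cohen--Macaulay codimension count there; the paper's PDT argument is set up precisely to deliver this. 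Your $\Delta_X\not\subseteq\Sigma_\alpha$ argument via the codimension-$(d-1)$ single-tangency locus is fine and matches the paper's deformation reasoning.
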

\begin{proof}
The proof is based on a standard deformation-theoretic argument.

(i) Let $X_{0,\alpha}^{[d]}$ (resp. $Y_{0,\alpha}^{[d]}$) be the reduced closed subvariety of $X_0^{[d]}$ (resp. $Y_0^{[d]}$) parameterizing smoothable length-$d$ closed subschemes whose associated cycles are of the form $[Z]=m_1x+m_2y$.  
The varieties $X_{0,\alpha}^{[d]}$ and $Y_{0,\alpha}^{[d]}$ need not be irreducible when $d\geq 4$.
Define the reduced subvariety of $Y_{0,\alpha}^{[d]}\times\text{Hilb}_{R}(Y)$:
\begin{equation*}
\Phi_{\alpha}:=\{(Z,C)\in Y_{0,\alpha}^{[d]}\times\text{Hilb}_{R}(Y)|Z\subseteq C\}.
\end{equation*}
The second projection $\phi_{\alpha,2}: \Phi_{\alpha}\rightarrow\text{Hilb}_{R}(Y)$ is surjective.
For a general curve $C$, its fiber is isomorphic to the stratum $C^{(\alpha)}$ consisting of effective cycles of the form $m_1x+m_2y$. This fiber is irreducible of dimension $2$.
Since $\text{Hilb}_{R}(Y)$ is irreducible of dimension $\widetilde{n}:=d+n-2$,
then the variety $\Phi_{\alpha}$ is irreducible of dimension $\widetilde{n}+2=d+n$. 
Next, define the reduced subvariety of $X_{0,\alpha}^{[d]}\times\text{Hilb}_{R}(Y)$:
\begin{equation*}
\Psi_{\alpha}:=\{(Z,C)\in X_{0,\alpha}^{[d]}\times\text{Hilb}_{R}(Y)|X\cap C\supseteq Z\}.
\end{equation*}
Note that $X_0^{[d]}\times\text{Hilb}_{R}(Y)$ is an integral closed subvariety of $Y_0^{[l]}\times\text{Hilb}_{R}(Y)$ of codimension $d$.
Then at least as sets,
$$
\Psi_{\alpha}=\Phi_{\alpha}\cap (X_{0}^{[d]}\times\text{Hilb}_{R}(Y))
$$
inside $Y_0^{[d]}\times\text{Hilb}_{R}(Y)$. By the projective dimension theorem, $\Psi_{\alpha}$ is nonempty, and
every irreducible component of $\Psi_{\alpha}$ has dimension at least $n$.

An infinitesimal calculation shows that, at a general point, the conditions defining $\Psi_\alpha$ are infinitesimally independent. Hence every irreducible component of $\Psi_\alpha$ has dimension exactly $n$. We do not, however, claim that $\Psi_\alpha$ is irreducible.

There is a natural surjective morphism $\psi_{\alpha}: \Psi_{\alpha}\rightarrow\Theta_{\alpha}$ defined by $\psi_{\alpha}(Z,C)=(x,y, C)$ whenever $[Z]=m_1x+m_2y$.
In particular, $\text{dim}\Psi_{\alpha}\geq\text{dim}\Theta_{\alpha}$.
Let $p_{\alpha,i}:\Sigma_{\alpha}\rightarrow X$ be the natural projections.
We claim that, on every irreducible component of $\Psi_\alpha$, the composite morphisms $\widetilde{p}_{\alpha,i}:=p_{\alpha,i}\circ q_{12}\circ\psi_{\alpha}:\Psi_{\alpha}\rightarrow X$ are surjective.  By symmetry, it suffices to treat $\widetilde{p}_{\alpha,1}$.

Let $Y_{\alpha,c}^{[d]}$ be the reduced variety obtained as the Zariski closure in $Y_{0}^{[d]}$ of the locus 
\begin{equation*}
\{Z_1\sqcup Z_2\in Y_{0}^{[d]}|Z_i\ \text{curvilinear of length}\ m_i\ \text{and supported on some point}\}.
\end{equation*}
Thus, $Y_{\alpha,c}^{[d]}$ parameterizes curvilinear length-$d$ closed subschemes whose associated cycles have the form $m_1y_1+m_2y_2$, together with their deformations in $Y$.
Let $Y_{\alpha}^{(d)}\subseteq Y^{(d)}$ be the closed subvariety consisting of cycles of the form $m_1y_1+m_2y_2$.
It is irreducible of dimension $2(n+1)$.
The variety $Y_{\alpha,c}^{[d]}$ is the irreducible component of $(\pi_0^Y)^{-1}(Y_{\alpha}^{(d)})$ containing curvilinear closed subschemes, where $\pi_0^Y: Y_0^{[d]}\rightarrow Y^{(d)}$ is the restriction of the Hilbert--Chow morphism.
Let $\widetilde{\pi}_{\alpha}^Y: Y_{\alpha,c}^{[d]}\rightarrow Y_{\alpha}^{(d)}$ denote the restriction of $\pi_0^Y$.

Recall that a finite-length subscheme of a smooth variety is smoothable if and only if each of its connected components is smoothable. We emphasize, however, that an arbitrary closed subscheme of a smoothable subscheme need not itself be smoothable.

For a general member $m_1y_1+m_2y_2\in Y_{\alpha}^{(d)}$,
the fiber
\begin{equation*}
(\widetilde{\pi}_{\alpha}^Y)^{-1}(m_1y_1+m_2y_2)=\{Z\sqcup Z_2\in Y_{\alpha,c}^{[d]}|Z_i\in Y_{(m_i),c}^{[m_i]}\}
\end{equation*}
is irreducible of dimension $(m_1-1)n+(m_2-1)n=(d-2)n$.
It follows that $Y_{\alpha,c}^{[d]}$ is irreducible of dimension $dn+2$.
Consequently, its codimension in $Y_{0}^{[d]}$ is exactly $d-2$.

Now fix a closed point $x\in X$. Let $X_{\alpha,x}^{(d)}\subseteq X^{(d)}$ be the reduced closed subvariety consisting of 0-cycles of the form $m_1x+m_2x'$. This variety is irreducible of dimension $n$.
Let $X_{\alpha,c,x}^{[d]}$ be the reduced variety obtained as the Zariski closure in $X_{\alpha,c}^{[d]}$ of the locus
\begin{equation*}
\{Z_1\sqcup Z_2\in X_{\alpha,c}^{[d]}|Z_1\ \text{is supported at}\ x\}.
\end{equation*}
It is the irreducible component of $(\pi_0^X)^{-1}(X_{\alpha,x}^{(d)})$ containing the curvilinear closed subschemes.
Let $\widetilde{\pi}_{\alpha,x}^X: X_{\alpha,c,x}^{[d]}\rightarrow X_{\alpha,x}^{(d)}$ be the restriction of $\pi_0^X$.
Then its general fiber
\begin{equation*}
(\widetilde{\pi}_{\alpha,x}^X)^{-1}(m_1x+m_2x')=\{Z_1\sqcup Z_2\in X_{\alpha,c,x}^{[d]}|Z_2\ \text{supported on}\ x'\}
\end{equation*}
is irreducible of dimension $(d-2)(n-1)$.
Thus $X_{\alpha,c,x}^{[d]}$ is irreducible of dimension $dn+2-d-n$.
Its codimension in $Y_{\alpha,c}^{[d]}$ is thus exactly $d+n$.

Let $\Psi_{\alpha}^k$ be the irreducible components of $\Psi_{\alpha}$, and let $\widetilde{p}_{\alpha,1}^k: \Psi_{\alpha}^k\rightarrow X$ be the restriction of $\widetilde{p}_{\alpha,1}$. 
For any closed point $x\in X$, the fiber of the morphism $\widetilde{p}_{\alpha,1}^k: \Psi_{\alpha}^k\rightarrow X$ over $x$ is (as sets)
\begin{equation*}
(\widetilde{p}_{\alpha,1}^k)^{-1}(x)=\Phi_{\alpha}^k\cap (X_{\alpha,c,x}^{[d]}\times\text{Hilb}_{R}(Y))
\end{equation*}
inside $Y_{\alpha,c}^{[d]}\times\text{Hilb}_{R}(Y)$.
By the projective dimension theorem, this intersection has dimension at least zero and is therefore nonempty.

It follows that $\widetilde p_{\alpha,1}$ is surjective on every irreducible component of $\Psi_\alpha$. By symmetry, the same holds for $\widetilde p_{\alpha,2}$. Consequently, the morphisms
$q_i: \Theta_{\alpha}\rightarrow X$ and $p_{\alpha,i}:\Sigma_{\alpha}\rightarrow X$ are surjective on every irreducible component.

We conclude that the three varieties $\Theta_{\alpha}$, $\Lambda_{\alpha}$ and $\Sigma_{\alpha}$ are all of pure dimension $n$. A general member of $\Lambda_{\alpha}$ is a smooth rational curve.
Moreover, $\Sigma_{\alpha}$ represents an isogenous autocorrespondence of $X$.
Since the cycle of any $Z\in\Psi_{\alpha}$ has the form $[Z]=m_1x+m_2y$ with $x$ and $y$ independent when deforming, then $\Sigma_{\alpha}=q_{12}\psi_{\alpha}(\Psi_{\alpha})$ does not contain $\Delta_X$ as an irreducible component.

(ii) In this case, $\text{Hilb}_{R}(Y)$ is irreducible of dimension $n+i_Y\cdot d_0-2$.
A similar argument as in (i) with minor modifications implies that the three varieties $\Theta_{\alpha}$, $\Lambda_{\alpha}$ and $\Sigma_{\alpha}$ are all of pure dimension $n+i_X\cdot d_0$.
\end{proof}

\begin{remark}
\leavevmode
\begin{enumerate}
\item[(i)] In Case {\rm (a)}, one can verify that $\Sigma_\alpha$ represents a $K$-autocorrespondence. Since this stronger property will not be needed in the sequel, we omit the verification.

\item[(ii)] If $m_1=m_2$, then $\Sigma_{\alpha}$ is symmetric, that is, $\Sigma_{\alpha}=\Sigma_{\alpha}^t$.
\end{enumerate}
\end{remark}

We can now complete the required constructions.
\medskip

\noindent
\emph{Case (a).} Choose an arbitrary irreducible component $\widetilde{\Theta}_{\alpha}\subseteq\Theta_{\alpha}$,
and set $\widetilde{\Lambda}_{\alpha}:=q_3(\widetilde{\Theta}_{\alpha})$ and $\widetilde{\Sigma}_{\alpha}:=q_{12}(\widetilde{\Theta}_{\alpha})$.
\medskip

\noindent
\emph{Case (b).} Let $\eta$ denote the generic point of $X$.
The fibers $p_{\alpha,1}^{-1}(\eta)=p_{\alpha,2}^{-1}(\eta)$ and $q_{1}^{-1}(\eta)=q_{2}^{-1}(\eta)$
are all of pure dimension $i_X\cdot d_0$.
The induced morphism $q_{1}^{-1}(\eta)\rightarrow p_{\alpha,1}^{-1}(\eta)$ is generically finite and surjective.
Choose a closed point $\xi$ of $q_{1}^{-1}(\eta)$ such that the Zariski closure of $q_{12}(\xi)$ is not equal to $\Delta_X$. Let $\widetilde{\Theta}_\alpha$ be the Zariski closure of $\xi$ in $\Theta_\alpha$, let $\widetilde{\Lambda}_\alpha$ be the Zariski closure of $q_3(\xi)$ in $\operatorname{Hilb}_R(Y)$, and let $\widetilde{\Sigma}_{\alpha}$ be the Zariski closure of $q_{12}(\xi)$ in $\Sigma_{\alpha}$.

In both cases, the varieties $\widetilde{\Theta}_{\alpha}$, $\widetilde{\Lambda}_{\alpha}$ and $\widetilde{\Sigma}_{\alpha}$ are integral of dimension $n$. 
Moreover, $\widetilde{\Sigma}_{\alpha}\neq\Delta_X$ and $\widetilde{\Sigma}_{\alpha}$ is an isogenous autocorrespondence of $X$.
Define two reduced varieties:
\begin{equation*}
\Pi_{\alpha}:=\{(x, y)\in Y^2|x, y\in C,\ [C]\in\widetilde{\Lambda}_{\alpha}\}
\end{equation*}
and
\begin{equation*}
\Xi_{\alpha}:=\{(x, y, z)\in Y^3|x, y, z\in C,\ [C]\in\widetilde{\Lambda}_{\alpha}\}.
\end{equation*}
Then $\Pi_{\alpha}$ and $\Xi_{\alpha}$ are integral of dimension $n+2$ and $n+3$, respectively.

These varieties naturally give rise to cycle relations modulo rational equivalence.
\begin{proposition}\label{prop3.4}
Let $X$ and $Y$ be as in Case (a) or Case (b).
Let $\delta_{12}: X^2\rightarrow X^3$ be the morphism defined by $(x,y)\mapsto (x, x, y)$, 
and define $\delta_{13}$ and $\delta_{23}$ similarly.
\leavevmode
\begin{enumerate}
\item[(i)] There exists an equality of cycle classes:
\begin{equation}\label{34}
(\iota\times\iota)^*\Pi_{\alpha}=a_1\Delta_X+a_2(\widetilde{\Sigma}_{\alpha}+\widetilde{\Sigma}_{\alpha}^t)
\end{equation}
in $\mathrm{CH}^n(X^2)$, where $a_1, a_2$ are positive integers.

\item[(ii)] There exists an equality of cycle classes:
\begin{equation}\label{35}
(\iota\times\iota\times\iota)^*\Xi_{\alpha}
=b_1\Delta_{123}+b_2[\delta_{12*}(\widetilde{\Sigma}_{\alpha}+\widetilde{\Sigma}_{\alpha}^t)
+\delta_{13*}(\widetilde{\Sigma}_{\alpha}+\widetilde{\Sigma}_{\alpha}^t)+\delta_{23*}(\widetilde{\Sigma}_{\alpha}
+\widetilde{\Sigma}_{\alpha}^t)]
\end{equation}
in $\mathrm{CH}^{2n}(X^3)$, where $b_1, b_2$ are positive integers.

\item[(iii)] Assume that the cycle class $[\Xi_{\alpha}]\in A^{2n}(Y^3):=\mathrm{CH}^{2n}(Y^3)/\sim_{\text{hom}}$ is a $\mathbb{Q}$-linear combination of intersections of big diagonal classes and completely decomposable cycle classes. (E.g., $Y$ has trivial Chow groups).
Then 
\begin{equation}\label{36}
\frac{2a_1}{a_2}=\frac{b_1}{b_2}+d_1+d_2,
\end{equation}
where $d_i$ denotes the degree of the natural projection of $\widetilde{\Sigma}_{\alpha}$ onto the $i$-th factor.
\end{enumerate}
\end{proposition}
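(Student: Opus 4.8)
I would treat (i) and (ii) as intersection computations on the incidence varieties attached to the family $\widetilde{\Lambda}_\alpha$, and deduce \eqref{36} by testing the two restricted classes against the vanishing cohomology of $X$.

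For (i): a pair $(x,y)\in X^2$ lies on $\Pi_\alpha$ iff both points lie on some $C$ with $[C]\in\widetilde{\Lambda}_\alpha$; since $[X\cap C]=m_1x'+m_2y'$, the only points of $C$ in $X$ are $x'$ and $y'$, so $\{x,y\}\subseteq\{x',y'\}$ and the support of $(X\times X)\cap\Pi_\alpha$ is $\Delta_X\cup\widetilde{\Sigma}_\alpha\cup\widetilde{\Sigma}_\alpha^t$. A dimension count ($\Pi_\alpha$ has codimension $n$ in $Y^2$, $X\times X$ codimension $2$, all three components dimension $n$) shows the intersection is proper, so $(\iota\times\iota)^*\Pi_\alpha$ is an effective cycle on these components with positive integral coefficients; the factor-swap involution fixes $\Pi_\alpha,\Delta_X$ and exchanges $\widetilde{\Sigma}_\alpha,\widetilde{\Sigma}_\alpha^t$, giving the common coefficient $a_2$. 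The coefficients are then computed on the universal curve $\mathcal{C}\to\widetilde{\Lambda}_\alpha$: with $\pi\colon\mathcal{C}\to Y$ the evaluation map, $\pi^*[X]=m_1[S_1]+m_2[S_2]$ for the two sections $S_1,S_2$ cut out by $x',y'$, and $(\iota\times\iota)^*\Pi_\alpha$ is the pushforward to $Y^2$ of $\prod_{i=1}^2(m_1p_i^*S_1+m_2p_i^*S_2)$ on $\mathcal{C}\times_{\widetilde{\Lambda}_\alpha}\mathcal{C}$; the pure terms land on $\Delta_X$ weighted by the projection degrees $d_1,d_2$ and the mixed terms land on $\widetilde{\Sigma}_\alpha,\widetilde{\Sigma}_\alpha^t$ weighted by $m_1m_2$. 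Statement (ii) is the identical computation one factor higher: a triple of points of $X$ on a common $C$ again falls into $\{x',y'\}$, so $(X^3)\cap\Xi_\alpha$ is supported on $\Delta_{123}^X$ and the six loci $\delta_{ij}(\widetilde{\Sigma}_\alpha^{(t)})$; the dimension count is again proper; the $S_3$-symmetry permuting the factors forces the stated shape; and $\prod_{i=1}^3(m_1p_i^*S_1+m_2p_i^*S_2)$ on $\mathcal{C}\times_{\widetilde{\Lambda}_\alpha}\mathcal{C}\times_{\widetilde{\Lambda}_\alpha}\mathcal{C}$ supplies the weights, the pure terms giving the $\Delta_{123}^X$-contribution and the mixed terms the $\delta_{ij*}$-contributions. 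I expect this multiplicity bookkeeping, together with checking dimensional transversality of the fibre products involved, to be the bulk of the work for (i) and (ii).

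For (iii): the crucial input is that any cycle class pulled back from $Y$ acts trivially on the vanishing cohomology $H^n_{\mathrm{van}}(X)=\ker\bigl(\iota_*\colon H^n(X)\to H^{n+2}(Y)\bigr)$, which is nonzero by the standing assumption that $\iota^*$ is not surjective in degree $n$. Since $(\iota\times\iota)^*\Pi_\alpha$ and $(\iota^{\times3})^*\Xi_\alpha$ are restrictions of classes on $Y^2$ and $Y^3$, the composition-of-correspondences formula expresses their induced cohomological operators as $\iota^*$ composed with the corresponding operator on $H^*(Y)$ composed with $\iota_*$ on each incoming factor, and the latter annihilates $H^n_{\mathrm{van}}(X)$ (resp.\ $H^n_{\mathrm{van}}(X)^{\otimes 2}$) because $\iota_*$ does. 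Applying this to formula (i) gives, on $H^n_{\mathrm{van}}(X)$, the identity $a_1\,\mathrm{id}+a_2\bigl([\widetilde{\Sigma}_\alpha]_*+[\widetilde{\Sigma}_\alpha^t]_*\bigr)=0$; here the isogenous ($K$-)autocorrespondence property of $\widetilde{\Sigma}_\alpha$, which holds under the Fano/Calabi--Yau hypothesis (cf.\ Proposition \ref{prop3.2}), is what makes the symmetrized operator a genuine scalar, so $[\widetilde{\Sigma}_\alpha]_*+[\widetilde{\Sigma}_\alpha^t]_*=-\tfrac{a_1}{a_2}\,\mathrm{id}$ there. Applying the same principle to formula (ii)---using that $\Delta_{123}^X$ induces the cup-product pairing, that $\delta_{12*}\widetilde{\Sigma}_\alpha$ (resp.\ $\delta_{12*}\widetilde{\Sigma}_\alpha^t$) contributes $d_1\langle x,y\rangle$ (resp.\ $d_2\langle x,y\rangle$) because $[\widetilde{\Sigma}_\alpha]_*$ scales $H^{2n}(X)$ by the projection degree, and that the $\delta_{13*},\delta_{23*}$ terms contribute $\langle x,([\widetilde{\Sigma}_\alpha]_*+[\widetilde{\Sigma}_\alpha^t]_*)y\rangle$ up to the parity sign $(-1)^n$---turns the vanishing of the induced map on $H^n_{\mathrm{van}}(X)^{\otimes 2}$ into a scalar identity that, after substituting $[\widetilde{\Sigma}_\alpha]_*+[\widetilde{\Sigma}_\alpha^t]_*=-\tfrac{a_1}{a_2}\,\mathrm{id}$ and using nondegeneracy of the pairing on $H^n_{\mathrm{van}}(X)$, rearranges to $\tfrac{2a_1}{a_2}=\tfrac{b_1}{b_2}+d_1+d_2$. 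The decomposability hypothesis on $[\Xi_\alpha]$ is used to guarantee that the decompositions produced in (i), (ii) (and the induced one for $\Pi_\alpha=p_{12}(\Xi_\alpha)$) are exhaustive; the delicate points I would watch are the excess-intersection contributions when restricting the partial diagonals of $Y^3$ to $X^3$, the signs when $n$ is odd (where $\langle\,,\,\rangle$ is skew), and making precise the scalar action of $\widetilde{\Sigma}_\alpha$ on $H^n_{\mathrm{van}}(X)$.
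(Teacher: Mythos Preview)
For (i) and (ii) your argument matches the paper's: identify the set-theoretic intersection, check properness by dimension count, and use the $S_2$- resp.\ $S_3$-symmetry of $\Pi_\alpha$ and $\Xi_\alpha$ to equate coefficients. The further computation of the explicit multiplicities via the universal curve over $\widetilde{\Lambda}_\alpha$ is not required by the statement (which only asserts $a_i,b_i\in\mathbb{Z}_+$) and would need extra care on the locus where $C\subseteq X$.

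For (iii) you take a genuinely different route. The paper applies $p_{12*}$ to \eqref{35}, eliminates $[\widetilde{\Sigma}_\alpha]+[\widetilde{\Sigma}_\alpha^t]$ via \eqref{34}, and then applies $\widetilde{\pi}_n^{\mathrm{van}}\otimes\widetilde{\pi}_n^{\mathrm{van}}$ in $A^n(X^2)$; the decomposability hypothesis on $[\Xi_\alpha]$ is invoked to show that $[p_{12*}(\iota^{\times 3})^*\Xi_\alpha]$ is completely decomposable and hence killed by this projector. You instead evaluate \eqref{34} and \eqref{35} directly as operators on $H^n_{\mathrm{van}}(X)$ and on $H^n_{\mathrm{van}}(X)^{\otimes 2}$, using that $(\iota^{\times k})^*\gamma$ as a correspondence equals ${}^t\Gamma_\iota\circ\gamma\circ\Gamma_\iota^{\otimes(k-1)}$ and hence factors through $\iota_*$ on each input, so annihilates vanishing classes. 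This is correct and in fact cleaner: it shows the decomposability hypothesis plays no role (indeed, the same base-change observation $p_{12*}(\iota^{\times 3})^*\Xi_\alpha=(\iota^{\times 2})^*p_{12*}(1_{Y^2}\times\iota)^*\Xi_\alpha$ would remove it from the paper's own argument), so your account of where that hypothesis enters is off the mark. Two minor corrections: you do not need, and Proposition~\ref{prop3.2} does not supply, the $K$-autocorrespondence property --- the identity $[\widetilde{\Sigma}_\alpha]_*+[\widetilde{\Sigma}_\alpha^t]_*=-\tfrac{a_1}{a_2}\,\mathrm{id}$ on $H^n_{\mathrm{van}}(X)$ follows directly from \eqref{34} once the left side is known to vanish there; and the excess-intersection issues you flag never arise, since you never restrict the individual pieces of a decomposition of $\Xi_\alpha$.
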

\begin{proof}
(i) Observe that as sets, one has
\begin{equation*}
\Pi_{\alpha}\cap X^2=\Delta_X\cup\widetilde{\Sigma}_{\alpha}\cup\widetilde{\Sigma}_{\alpha}^t.
\end{equation*}
Thus the two varieties $\Pi_{\alpha}$ and $X^2$ intersect properly in $Y^2$.
By intersection theory, one has 
\begin{equation*}
(\iota\times\iota)^*\Pi_{\alpha}=a_1\Delta_X+a_2\widetilde{\Sigma}_{\alpha}+a'_2\widetilde{\Sigma}_{\alpha}^t
\end{equation*}
in $\mathrm{CH}^n(X^2)$ for some positive integers $a_1, a_2, a'_2$ by Proposition 8.2 in \cite{Fu98}.
Since $\Pi_{\alpha}$ is symmetric by construction, one has $a_2=a'_2$, which proves \eqref{34}.

(ii) The two varieties $\Xi_{\alpha}$ and $X^3$ intersect properly inside $Y^3$. 
Set-theoretically,
\begin{equation*}
\Xi_{\alpha}\cap X^3
=\Delta_{123}^X\cup(\cup_{i<j}\delta_{ij*}(\widetilde{\Sigma}_{\alpha}))
\cup(\cup_{i<j}\delta_{ij*}(\widetilde{\Sigma}_{\alpha}^t)).
\end{equation*}
It follows that
\begin{equation*}
(\iota\times\iota\times\iota)^*\Xi_{\alpha}
=b_1\Delta_{123}^X+b_2\delta_{12*}(\widetilde{\Sigma}_{\alpha}+\widetilde{\Sigma}_{\alpha}^t)
+b'_2\delta_{13*}(\widetilde{\Sigma}_{\alpha}+\widetilde{\Sigma}_{\alpha}^t)
+b''_2\delta_{23*}(\widetilde{\Sigma}_{\alpha}+\widetilde{\Sigma}_{\alpha}^t)
\end{equation*}
in $\mathrm{CH}^{2n}(X^3)$ for some positive integers $b_1, b_2, b'_2,b''_2$.
Since $\Xi_{\alpha}$ is invariant under permutations of the three factors, one has $b_2=b'_2=b''_2$. 
This proves \eqref{35}.

(iii) Applying the pushforward $p_{12*}$ to \eqref{35}, we obtain
\begin{equation}\label{37}
p_{12*}(\iota\times\iota\times\iota)^*\Xi_{\alpha}
=(b_1+b_2(d_1+d_2))\Delta_X +2b_2(\widetilde{\Sigma}_{\alpha}+\widetilde{\Sigma}_{\alpha}^t).
\end{equation}
Modulo homological equivalence for \eqref{34} and \eqref{37}, one gets the following two equalities in $A^n(X^2)$:
\begin{equation}\label{38}
a_1[\Delta_X]+a_2([\widetilde{\Sigma}_{\alpha}]+[\widetilde{\Sigma}_{\alpha}^t])=[(\iota\times\iota)^*\Pi_{\alpha}]
\end{equation}
and
\begin{equation}\label{39}
(b_1+b_2(d_1+d_2))[\Delta_X]+2b_2([\widetilde{\Sigma}_{\alpha}]+[\widetilde{\Sigma}_{\alpha}^t])
=[p_{12*}(\iota\times\iota\times\iota)^*\Xi_{\alpha}].
\end{equation}
Substituting \eqref{38} into \eqref{39}, one has
\begin{equation}\label{40}
\left(b_1+b_2(d_1+d_2)-\frac{2a_1b_2}{a_2}\right)[\Delta_X]
=-\frac{1}{a_2}[(\iota\times\iota)^*\Pi_{\alpha}]+[p_{12*}(\iota\times\iota\times\iota)^*\Xi_{\alpha}].
\end{equation}

Since the cycle class $[\Xi_{\alpha}]$ is symmetric, the assumption allows us to express it as a $\mathbb{Q}$-linear combination of completely decomposable classes and symmetrized classes of the form
\begin{equation*}
z:=p_{12}^*\delta_{Y*}z_1\cdot p_3^*z_2+p_{13}^*\delta_{Y*}z_1\cdot p_2^*z_2+p_{23}^*\delta_{Y*}z_1\cdot p_1^*z_2.
\end{equation*}
Now since the pushforward $p_{12*}z$ is completely decomposable,
then so is the cycle class $[p_{12*}(\iota\times\iota\times\iota)^*\Xi_{\alpha}]$.
Let $\widetilde{\pi}_n^{\text{van}}$ denote the projector corresponding to the vanishing cohomology $H^n(X,\mathbb{Q})_{\text{van}}\subseteq H^n(X,\mathbb{Q})$.
Applying $\widetilde{\pi}_n^{\text{van}}\otimes\widetilde{\pi}_n^{\text{van}}$ to \eqref{40},
one has
\begin{equation}\label{41}
\left(b_1+b_2(d_1+d_2)-\frac{2a_1b_2}{a_2}\right)[\Delta_X^{\text{van}}]=0,
\end{equation}
where $[\Delta_X^{\text{van}}]:=(\widetilde{\pi}_n^{\text{van}}\otimes\widetilde{\pi}_n^{\text{van}})_*[\Delta_X]$.
By assumption, $\iota^*: H^n(X,\mathbb{Q})\rightarrow H^n(Y,\mathbb{Q})$ is not isomorphic, 
then $[\Delta_X]$ cannot be the restriction of a cycle class on $Y^2$. 
So, $[\Delta_X^{\text{van}}]\neq 0$.
By \eqref{41}, this forces that
$$
b_1+b_2(d_1+d_2)-\frac{2a_1b_2}{a_2}=0.
$$
Equivalently, 
$$
\frac{2a_1}{a_2}=\frac{b_1}{b_2}+d_1+d_2.
$$
\end{proof}
\begin{remark}
The method used above was first introduced for K3 surfaces in \cite{Ba19}.
\end{remark}
\begin{corollary}\label{cor3.6}
Assume that one of the following two conditions holds:
\begin{enumerate}
\item[(i)] The cycle classes $\Pi_{\alpha}\in\mathrm{CH}^n(Y^2)$ and $\Xi_{\alpha}\in\mathrm{CH}^{2n}(Y^3)$ are completely decomposable. This holds, for example, if $Y$ has trivial Chow groups.

\item[(ii)] The variety $Y$ has Picard number one, the cycle class $\Pi_{\alpha}\in\mathrm{CH}^n(Y^2)$ is completely decomposable, and $\Xi_{\alpha}\in\mathrm{CH}^{2n}(Y^3)$ is a $\mathbb{Q}$-linear combination of intersections of big diagonal classes and divisor classes. This holds, for example, when $Y$ is a Fano complete intersection; see Theorem \ref{thm4.19}.
\end{enumerate}
Then there is an equality of cycle classes:
\begin{equation}\label{42}
\Delta_{123}=\Delta_{12}+\Delta_{13}+\Delta_{23}
+\delta_{12*}z+\delta_{13*}z+\delta_{23*}z+(\iota\times\iota\times\iota)^*w
\end{equation}
for some completely decomposable cycle classes $z\in\mathrm{CH}^n(X^2)$ and $w\in\mathrm{CH}^{2n}(Y^3)$.
\end{corollary}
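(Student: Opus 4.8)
The plan is to prove \eqref{42} by combining the three parts of Proposition~\ref{prop3.4} into a single computation in $\text{CH}^{2n}(X^3)$. First, solve \eqref{34} for the symmetrized isogenous correspondence, writing
\begin{equation*}
\widetilde{\Sigma}_{\alpha}+\widetilde{\Sigma}_{\alpha}^t=\frac{1}{a_2}\bigl((\iota\times\iota)^*\Pi_{\alpha}-a_1\Delta_X\bigr)\in\text{CH}^n(X^2),
\end{equation*}
and substitute this into \eqref{35}. The elementary point here is that each of the three embeddings $\delta_{ij}$ restricts to an isomorphism $\Delta_X\xrightarrow{\ \sim\ }\Delta_{123}$, so $\delta_{ij*}\Delta_X=\Delta_{123}$ for every pair $i<j$; hence the three contributions $-\tfrac{a_1b_2}{a_2}\Delta_{123}$ coalesce with $b_1\Delta_{123}$, yielding
\begin{equation*}
(\iota\times\iota\times\iota)^*\Xi_{\alpha}=\Bigl(b_1-\tfrac{3a_1b_2}{a_2}\Bigr)\Delta_{123}+\frac{b_2}{a_2}\sum_{i<j}\delta_{ij*}\bigl((\iota\times\iota)^*\Pi_{\alpha}\bigr).
\end{equation*}

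To isolate $\Delta_{123}$ one needs the coefficient $C:=a_2b_1-3a_1b_2$ to be nonzero, and this is exactly where Proposition~\ref{prop3.4}(iii) enters. I would first check its hypothesis holds in both settings: in case~(i) complete decomposability of $\Xi_{\alpha}$ passes to its homology class; in case~(ii) the Picard number one assumption makes every divisor class on $Y^3$ completely decomposable, so each intersection of a big diagonal with divisor classes has precisely the shape demanded in Proposition~\ref{prop3.4}(iii) (for a Fano complete intersection $Y$ this is what Theorem~\ref{thm4.19} provides). Then \eqref{36} reads $2a_1b_2=a_2b_1+a_2b_2(d_1+d_2)$, whence $C=-b_2\bigl(a_1+a_2(d_1+d_2)\bigr)<0$; in particular $C\neq 0$, and dividing gives
\begin{equation*}
\Delta_{123}=\frac{a_2}{C}\,(\iota\times\iota\times\iota)^*\Xi_{\alpha}-\frac{b_2}{C}\sum_{i<j}\delta_{ij*}\bigl((\iota\times\iota)^*\Pi_{\alpha}\bigr).
\end{equation*}

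Finally I would read off \eqref{42}. The class $(\iota\times\iota)^*\Pi_{\alpha}$ is the restriction of the completely decomposable $\Pi_{\alpha}$, hence is itself completely decomposable, and it occurs as literally the same class inside all three pushforwards; so in case~(i) one simply sets $z:=-\tfrac{b_2}{C}(\iota\times\iota)^*\Pi_{\alpha}\in\text{CH}^n(X^2)$ and $w:=\tfrac{a_2}{C}\Xi_{\alpha}\in\text{CH}^{2n}(Y^3)$, the $\Delta_{ij}$ summands being vacuous. In case~(ii) one additionally decomposes $(\iota\times\iota\times\iota)^*\Xi_{\alpha}$ along the structural hypothesis on $\Xi_{\alpha}$: its divisor-theoretic part restricts to a completely decomposable class on $X^3$, which is incorporated into $(\iota\times\iota\times\iota)^*w$; and each term $\Delta_{ij}^Y\cdot(\text{divisor classes})$ restricts, via the excess self-intersection identity $(\iota\times\iota\times\iota)^*\Delta_{ij}^Y=\delta_{ij*}\bigl(\mathrm{pr}_1^*c_1(N_{X/Y})\bigr)$ together with the projection formula, to a class of the form $\delta_{ij*}(\text{completely decomposable})$, uniformly over $i<j$ because $\Xi_{\alpha}$ is symmetric in its three factors — these supply the $\Delta_{ij}$ and the residual $\delta_{ij*}z$ terms on the right of \eqref{42}. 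The main obstacle is not any of these steps, each of which is routine, but lies upstream: the whole corollary rests on Proposition~\ref{prop3.4}, whose proof depends on the pure-dimensionality computations of Proposition~\ref{prop3.2} and on the properness of $\Pi_{\alpha}\cap X^2$ and $\Xi_{\alpha}\cap X^3$ inside $Y^2$ and $Y^3$; granting those inputs, the corollary is bookkeeping plus, in case~(ii), the observation that the excess class of $X^3\subseteq Y^3$ against a big diagonal is again a divisor class.
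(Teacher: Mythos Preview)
Your approach is essentially the paper's: substitute \eqref{34} into \eqref{35}, invoke \eqref{36} to see the coefficient $C=a_2b_1-3a_1b_2=-a_2b_2\bigl(\tfrac{a_1}{a_2}+d_1+d_2\bigr)\neq 0$, and divide. Two remarks on the packaging. First, in case~(i) you declare the $\Delta_{ij}$ terms ``vacuous'', but \eqref{42} has them with coefficient exactly~$1$; this is harmless since $\Delta_{ij}=\delta_{ij*}(X\times o_X)$, so one may replace your $z$ by $z-X\times o_X$ to manufacture them---but you should say so. The paper instead separates $(\iota\times\iota)^*\Pi_\alpha=c'(X\times o_X)+z'$, obtaining a coefficient $c=\tfrac{b_2c'}{a_2}$ in front of the $\Delta_{ij}$, and then applies $p_{12*}$ to force $c=b_1-\tfrac{3a_1b_2}{a_2}$ (using that the pushforward of the right-hand side is completely decomposable under either hypothesis). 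Second, your case~(ii) treatment is more than the statement demands: since \eqref{42} only asks $w\in\text{CH}^{2n}(Y^3)$ with no decomposability constraint, $w=\tfrac{a_2}{C}\Xi_\alpha$ works verbatim in case~(ii) as well, and the excess-intersection decomposition of $(\iota^{\times3})^*\Xi_\alpha$ you sketch is unnecessary here (though correct, and it would be the move if one wanted $w$ completely decomposable).
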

\begin{proof}
By assumption, one can write $(\iota\times\iota)^*\Pi_{\alpha}=c'(X\times o_X)+z'$, where $z'$ is a $\mathbb{Q}$-linear combination of cycle classes of the form $z_1\times z_2$, $z_1\in\mathrm{CH}^i(X)$, $z_2\in\mathrm{CH}^{n-i}(X)$, $i>0$.
By \eqref{34} and \eqref{35} of Proposition \ref{prop3.4}(i)(ii), it is immediate to get that
\begin{equation}\label{43}
(b_1-\frac{3a_1b_2}{a_2})\Delta_{123}
-c(\Delta_{12}+\Delta_{13}+\Delta_{23})
=\frac{b_2}{a_2}\left(\delta_{12*}z'+\delta_{13*}z'
+\delta_{23*}z'\right)-(\iota\times\iota\times\iota)^*\Xi_{\alpha},
\end{equation}
where $c=\frac{b_2c'}{a_2}$.
By \eqref{36} of Proposition \ref{prop3.4} (iii), one has
\begin{equation*}
b_1-\frac{3a_1b_2}{a_2}=-\left(\frac{a_1b_2}{a_2}+b_2(d_1+d_2)\right)\neq 0.
\end{equation*}
Note that if $\mathrm{CH}^1(Y)=\mathbb{Q}\cdot H$, then
$\Delta_X\cdot\iota^*H\times X=\frac{1}{d}(\iota\times\iota)^*\Delta_Y$, where $[X]=dH$.
Applying the pushforward $p_{12*}$ to \eqref{43}, under assumption either (i) or (ii), 
the cycle class in the right-hand side of \eqref{43} becomes completely decomposable.
Then this gives that $c=b_1-\frac{3a_1b_2}{a_2}$.

Dividing \eqref{43} by this nonzero constant and absorbing the completely decomposable terms into a cycle class $z\in\mathrm{CH}^n(X^2)$, we obtain
\begin{equation*}
\Delta_{123}
=\Delta_{12}+\Delta_{13}+\Delta_{23}+\delta_{12*}z+\delta_{13*}z+\delta_{23*}z+(\iota\times\iota\times\iota)^*w
\end{equation*}
for some completely decomposable cycle class $w\in\mathrm{CH}^{2n}(Y^3)$.
\end{proof}
\bigskip
\bigskip

\section{Complete intersections}
\bigskip

In this section, we focus primarily on complete intersections in certain ambient varieties.
The main results are Theorem \ref{thm4.3}, Theorem \ref{thm4.11} and Theorem \ref{thm4.19}.

We begin with the general setting.
Let $Y$ be a smooth connected projective variety with trivial Chow groups, and let $X\subseteq Y$ be a smooth connected \emph{ample} subvariety, for instance, a smooth complete intersection. We refer to \cite{Ot12} for the definition of an ample subvariety.
Set $N=\text{dim}Y$, $n=\text{dim}X$, $e:=N-n\geq 1$, and let $\iota: X\rightarrow Y$ denote the inclusion.
By \cite{Ot12}, the restriction map $\iota^*: H^{k}(Y,\mathbb{Q})\rightarrow H^{k}(X,\mathbb{Q})$ 
is an isomorphism for every integer $k<n$ and is injective for $k=n$,
whereas the Gysin pushforward $\iota_*: H^{k}(X,\mathbb{Q})\rightarrow H^{k+2e}(Y,\mathbb{Q})$ is an isomorphism for every integer $k>n$.
Let $b_i$ denote the $i$-th Betti number of $Y$.
For every integer $i$, choose a Poincaré dual basis $\{\beta_{ij}\}_{1\leq j\leq b_{2i}}$ of $\mathrm{CH}^i(Y)\cong H^{2i}(Y,\mathbb{Q})$ such that $\beta_{01}=[Y]$ and $o_Y:=\beta_{N,1}$ is a degree-one 0-cycle class.
Thus $\int_Y \beta_{ij}\cdot \beta_{N-i,j'}=\delta_{jj'}$.
Now fix an ample divisor class $L\in\mathrm{CH}^1(Y)$.
By the hard Lefschetz theorem for singular cohomology,
there exist cycle classes $w_{ij}\in L^{2i-n}\cdot\mathrm{CH}^{n-i}(Y)\subseteq\mathrm{CH}^{i}(Y)$ such that $\iota_*\iota^*w_{ij}=\beta_{i+e,j}$ for every integer $\frac{n}{2}\leq i\leq n$ and every $j$.
Here $\iota_*:\mathrm{CH}_*(X)\rightarrow\mathrm{CH}_*(Y)$ and $\iota^*:\mathrm{CH}^*(Y)\rightarrow\mathrm{CH}^*(X)$
denote the proper pushforward and the pullback, respectively.
For every integer $0\leq i\leq n$ with $i\neq\frac{n}{2}$, define the cycle classes $\alpha_{ij}\in\mathrm{CH}^i(X)$ by
\begin{equation*}
\alpha_{ij}:=\left\{
\begin{array}{ccc} \iota^*\beta_{ij}, && \text{if}\ \ i<\frac{n}{2};\\
\iota^*w_{ij}, && \text{if}\ \ i>\frac{n}{2}.
\end{array}
\right.
\end{equation*}
Then one has $\int_X \alpha_{ij}\cdot \alpha_{n-i,j'}=\delta_{jj'}$.
Moreover, $\{cl_X(\alpha_{ij})\}_{1\leq j\leq b_{2i}}$ forms a basis of $H^{2i}(X,\mathbb{Q})$ for every $i\neq \frac{n}{2}$, 
where $cl_X: \mathrm{CH}^i(X)\rightarrow H^{2i}(X,\mathbb{Q})$ is the cycle class map.
In particular, $\alpha_{01}=[X]$ and $o_X:=\alpha_{n1}=\frac{1}{\int_X\iota^*L^n}\iota^*L^n$ is a degree-one 0-cycle class.
For every integer $0\leq k\leq 2n$, define
\begin{equation*}
\pi_k^X:=\left\{
\begin{array}{ccc} \sum_{j=1}^{b_{k}}\alpha_{n-i,j}\times\alpha_{ij}, && \text{if}\ k=2i<n;\\
\sum_{j=1}^{b_{k+2e}}\alpha_{n-i,j}\times\alpha_{ij}, && \text{if}\ k=2i>n;\\
0, && \quad\text{if}\ \ k\neq n\ \text{is odd};\\
\Delta_X-\sum_{\ell\neq n}\pi_{\ell}^X, && \text{if}\ \ k=n.
\end{array}
\right.
\end{equation*}
When $n=2i$ is even, choose a basis $\{\alpha_{ij}=\iota^*\gamma_{ij}\}_{1\leq j\leq b_n}$ of $\iota^*\mathrm{CH}^i(Y)\subseteq\mathrm{CH}^i(X)$
such that their cohomology classes forms an orthogonal basis of the ambient part $\iota^*H^n(Y,\mathbb{Q})\subseteq H^n(X,\mathbb{Q})$.
Define $\pi_n^{a}:=\sum_{j=1}^{b_n}\alpha_{ij}\times\alpha_{ij}$ and $\pi_n^{\text{van}}:=\pi_n-\pi_n^{a}$.
Note that $\pi_0^X=o_X\times X$ and $\pi_{2n}^X=X\times o_X$.
It is straightforward to verify that $\{\pi_k^X\}_{0\leq k\leq 2n}$ forms a set of self-dual CK projectors of $X$.
Throughout this section, $X$ will always be equipped with this \emph{natural} self-dual CK decomposition.

For the study of motivic $0$-multiplicativity, it is useful to reformulate the condition as a relation among cycle classes.
\begin{lemma}\label{lem4.1}
The following conditions are equivalent.
\begin{enumerate}
  \item [(i)] The natural CK decomposition of $X$ is multiplicative.
  \item [(ii)] There is an equality of cycles classes:
\begin{equation}\label{44}
\Delta_{123}-(\Delta_{12}+\Delta_{13}+\Delta_{23})=\Omega_X
\end{equation}
in $\mathrm{CH}^{2n}(X^3)$.
Here for $\{i,j,k\}=\{1, 2, 3\}$, $\Delta_{ij}:=p_{ij}^*\Delta_X\cdot p_k^*o_X$,
$\Delta_i:=p_i^*[X]\cdot p_{j}^*o_X\cdot p_k^*o_X$ 
and

\begin{eqnarray*}
 \Omega_X:  &=& \sum_{i,j>0; i\neq\frac{n}{2},j\neq\frac{n}{2}; i+j\notin\{n,\frac{3n}{2}\}}
\sum_{l_1, l_2, l_3}a_{ij}^{l_1l_2l_3}(\alpha_{n-i, l_1}\times\alpha_{n-j, l_2}\times\alpha_{i+j, l_3}) \\
   &+& \sum_{i\neq\frac{n}{2},n}\sum_{l_1, l_2}\left[(\alpha_{n-i, l_1}\cdot\alpha_{i-\frac{n}{2},l_2})\times\alpha_{il_1}\times\alpha_{\frac{3n}{2}-i,l_2}
   +\alpha_{il_1}\times(\alpha_{n-i, l_1}\cdot\alpha_{i-\frac{n}{2},l_2})\times\alpha_{\frac{3n}{2}-i,l_2}\right] \\
   &+& \sum_{\frac{n}{2}<i<\frac{3n}{2},i\neq n}\sum_{l_1, l_2}\alpha_{il_1}\times\alpha_{\frac{3n}{2}-i,l_2}\times(\alpha_{n-i, l_1}\cdot\alpha_{i-\frac{n}{2},l_2}) \\
   &+&\sum_{i\neq\frac{n}{2}}\sum_{l_1, l_2}2a_{n-i}^{l_1l_2}(o_X\times\alpha_{i,l_1}\times\alpha_{n-i,l_2} +\alpha_{i,l_1}\times o_X\times\alpha_{n-i,l_2}+\alpha_{i,l_1}\times\alpha_{n-i,l_2}\times o_X)
\end{eqnarray*}
where $a_{i}^{l_1l_2}:=\int_X\alpha_{il_1}\cdot\alpha_{n-i,l_2}\in\mathbb{Q}$ and $a_{ij}^{l_1l_2l_3}:=\int_X\alpha_{il_1}\cdot\alpha_{jl_2}\cdot\alpha_{n-i-j,l_3}\in\mathbb{Q}$.
\end{enumerate}
\end{lemma}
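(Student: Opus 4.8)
The plan is to reduce the equivalence to a single unconditional cycle identity with the help of the criterion already available, and then to establish that identity by a componentwise computation of the small diagonal.

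First I would apply Proposition~\ref{prop2.15} with $m=2$ and $\tau_2=0$: the natural CK decomposition of $X$ is multiplicative if and only if $\Delta_{123}^X=(\pi_{4n}^{X^3})_*\Delta_{123}^X$. Setting $T_{ijk}:=\pi_k^X\circ\Delta_{123}^X\circ(\pi_i^X\otimes\pi_j^X)$, self-duality of the projectors gives $T_{ijk}=(\pi_{2n-i}^X\otimes\pi_{2n-j}^X\otimes\pi_k^X)_*\Delta_{123}^X$, which lies in $\text{CH}^{2n}_{i+j-k}(X^3)$; consequently the grading-$0$ part is exactly $(\pi_{4n}^{X^3})_*\Delta_{123}^X=\sum_{k=i+j}T_{ijk}=:M$. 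Thus it suffices to prove the \emph{unconditional} identity $M=\Delta_{12}+\Delta_{13}+\Delta_{23}+\Omega_X$ in $\text{CH}^{2n}(X^3)$, after which condition~(i) reads $\Delta_{123}^X=M$, i.e.\ condition~(ii).

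To compute $M$ I would expand $\Delta_X=\sum_\ell\pi_\ell^X$ in each factor and sort the triples $(i,j,k)$ with $k=i+j$ by how many of $i,j,k$ equal $n$. When none equals $n$, all three projectors are explicit sums of external products of classes pulled back from $Y$, and reading $T_{ijk}$ as the correspondence sending $\alpha\times\beta$ to $\pi_k^X(\pi_i^X\alpha\cdot\pi_j^X\beta)$ one finds the completely decomposable cycle $\sum_{l_1,l_2,l_3}a^{l_1l_2l_3}_{i/2,\,j/2}\,\alpha_{n-i/2,l_1}\times\alpha_{n-j/2,l_2}\times\alpha_{(i+j)/2,l_3}$; summing these reproduces the first sum of $\Omega_X$, and the subcases in which an input- or output-degree hits $\tfrac n2$ or $n$ (which occur only for $n$ even) produce the second sum and part of the third. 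When exactly one of $i,j,k$ equals $n$, the $\pi_n^X$-slot of $T_{ijk}$ is necessarily paired against a product of classes pulled back from $Y$, which lies in $\iota^*\text{CH}^{n/2}(Y)$; since $H^n(X)_{\mathrm{van}}$ is orthogonal to $\iota^*H^n(Y)$ for the intersection pairing, the $\pi_n^{\mathrm{van}}$-component of $T_{ijk}$ vanishes and only the explicit $\pi_n^{\mathrm a}$-part survives, contributing the remaining explicit terms of $\Omega_X$ (for $n$ odd this case is vacuous). The triples with two indices equal to $n$ are precisely $(n,0,n)$, $(0,n,n)$ and $(n,n,2n)$; writing $\pi_n^X=\Delta_X-\sum_{\ell\ne n}\pi_\ell^X$, the $\Delta_X$-part of $T_{ijk}$ yields $\Delta_{13}$, $\Delta_{23}$, $\Delta_{12}$ respectively, while the $-\sum_{\ell\ne n}\pi_\ell^X$-part yields the $o_X$-decorated (third) sum of $\Omega_X$. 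No triple has all three equal to $n$, since $k=i+j$ would force $n=2n$. Collecting all contributions gives $M=\Delta_{12}+\Delta_{13}+\Delta_{23}+\Omega_X$.

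The main obstacle is the bookkeeping in the last step: keeping track of transposes in each correspondence composition, pinning down the exact index ranges that separate the "generic" triples from those touching degrees $\tfrac n2$ and $n$, and matching the coefficients and permutation-symmetrizations against the closed form of $\Omega_X$. The conceptual input is minimal — the orthogonality $H^n(X)_{\mathrm{van}}\perp\iota^*H^n(Y)$ to kill the vanishing-cohomology terms, and the identity $\pi_n^X=\Delta_X-\sum_{\ell\ne n}\pi_\ell^X$ to extract the partial diagonals — but arranging these into the stated formula is where the labour lies.
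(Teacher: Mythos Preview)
Your approach is essentially the same as the paper's: both reduce (via Proposition~\ref{prop2.15}) to the unconditional identity $\sum_{i+j=k}(\pi_{2n-i}^X\otimes\pi_{2n-j}^X\otimes\pi_k^X)_*\Delta_{123}=\Delta_{12}+\Delta_{13}+\Delta_{23}+\Omega_X$ and then compute each summand explicitly; your case-split according to how many of $i,j,k$ equal $n$ is just a way of organizing what the paper phrases as ``expanding out'' after displaying three representative computations. One small simplification: in the single-$\pi_n$ case you invoke cohomological orthogonality of $H^n(X)_{\mathrm{van}}$ to $\iota^*H^n(Y)$ to kill the $\pi_n^{\mathrm{van}}$-part, but this is unnecessary (and lifting that orthogonality to Chow would require justification)---the point is simply that $\pi_{n*}$ acts as the identity on $\text{CH}^{n/2}(X)$, since $\pi_{2k*}\beta=\sum_l(\int_X\alpha_{n-k,l}\cdot\beta)\alpha_{kl}=0$ for every $k\neq n/2$ by degree reasons, so the product $\alpha_{n-i,l_1}\cdot\alpha_{n-j,l_2}\in\text{CH}^{n/2}(X)$ passes through $\pi_n$ unchanged, exactly as in the displayed second sum of $\Omega_X$.
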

\begin{proof}
Let $i,j\neq\frac{n}{2}$.
If $i+j\geq n$ and $i+j\neq\frac{3n}{2}$, then one has
\begin{equation*}
(\pi_{2i}\otimes\pi_{2j}\otimes\pi_{4n-2i-2j})_*\Delta_{123}
=\sum_{l_1, l_2, l_3}a_{n-i,n-j}^{l_1l_2l_3}(\alpha_{il_1}\times\alpha_{jl_2}\times\alpha_{2n-i-j,l_3}).
\end{equation*}
If $i+j=\frac{3n}{2}$, then
\begin{equation*}
(\pi_{2i}\otimes\pi_{2j}\otimes\pi_n)_*\Delta_{123}
=\sum_{l_1, l_2}\alpha_{il_1}\times\alpha_{jl_2}\times(\alpha_{n-i, l_1}\cdot\alpha_{n-j,l_2}).
\end{equation*}
Moreover,
\begin{equation*}
(\pi_n\otimes\pi_n\otimes\pi_{2n})_*\Delta_{123}
=\Delta_{12}+\sum_{l_1, l_2}\sum_{i\neq\frac{n}{2}}a_{n-i}^{l_1l_2}(\alpha_{i,l_1}\times\alpha_{n-i,l_2}\times o_X).
\end{equation*}
Then the result follows immediately by expanding out the equality 
$$
\Delta_{123}=\sum_{k=0}^{2n}\sum_{i+j=k}(\pi_{2n-i}\otimes\pi_{2n-j}\otimes\pi_k)_*\Delta_{123}.
$$
\end{proof}
\begin{remark}
As a consequence, motivic 0-multiplicativity is preserved under specialization for varieties of this kind,
since the cycle relation \eqref{44} is generically defined.
\end{remark}

The first main result of this section is the following.
\begin{theorem}\label{thm4.3}
Let $Y$ be a smooth connected projective variety with trivial Chow groups,
and let $X\subseteq Y$ be a smooth connected ample subvariety. 
Then $\mathfrak{d}_m(X)\leq mn$ for every integer $m\geq 2$.
\end{theorem}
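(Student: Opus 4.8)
The plan is to show that the natural self-dual CK decomposition constructed above is $m$-fold $mn$-multiplicative; since $\mathfrak{d}_m(X)$ is an infimum over all CK decompositions of $X$, this already gives $\mathfrak{d}_m(X)\le mn$. By Proposition \ref{prop2.15} this is equivalent to $\Delta_I^X\in\bigoplus_{s=0}^{mn}\text{CH}_s^{mn}(X^{m+1})$, which, exactly as in the derivation recorded right after Conjecture \ref{conj2.6}, follows once we verify Murre's conjecture (B) for the product CK projectors of $X^{m+1}$ in codimension $mn$: writing $K:=\sum_{\ell=0}^{m}k_\ell$, we must show
\begin{equation*}
(\pi^X_{k_0}\otimes\cdots\otimes\pi^X_{k_m})_*\text{CH}^{mn}(X^{m+1})
=\text{Hom}\big(\mathbb{L}^{mn},\,\mathfrak{h}^{k_0}(X)\otimes\cdots\otimes\mathfrak{h}^{k_m}(X)\big)=0
\end{equation*}
whenever $K>2mn$ or $K<mn$.

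The structural input is that the construction of the natural CK decomposition (which rests on Ottem's connectivity for the ample subvariety $X\subseteq Y$ \cite{Ot12} and the triviality of $\text{CH}^*(Y)$) yields $\mathfrak{h}^{2i}(X)\cong(\mathbb{L}^{i})^{\oplus b_{2i}}$ for $2i\neq n$ — the explicit projectors $\pi^X_{2i}=\sum_j\alpha_{n-i,j}\times\alpha_{ij}$ are of Lefschetz type, each summand $\alpha_{n-i,j}\times\alpha_{ij}$ splitting off a copy of $\mathbb{L}^i$ — while $\mathfrak{h}^k(X)=0$ for odd $k\neq n$, and $\mathfrak{h}^n(X)=\mathfrak{h}^{n,\text{a}}(X)\oplus\mathfrak{h}^{n,\text{van}}(X)$ (in the notation of the construction above) with $\mathfrak{h}^{n,\text{a}}(X)$ again a sum of copies of $\mathbb{L}^{n/2}$, empty when $n$ is odd. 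Hence the tensor product vanishes outright if some $k_\ell$ is odd and $\neq n$; otherwise, peeling off all the Tate summands — as in the proofs of Theorems \ref{thm2.24} and \ref{thm2.27} — reduces the required vanishing to
\begin{equation*}
\text{Hom}\big(\mathbb{L}^{\,j},\,\mathfrak{h}^{n,\text{van}}(X)^{\otimes b}\big)=0,\qquad j=mn-\tfrac{K}{2}+\tfrac{bn}{2},
\end{equation*}
where $0\le b\le t:=\#\{\ell:k_\ell=n\}$ counts the factors on which the vanishing summand is selected, and $\mathfrak{h}^{n,\text{van}}(X)^{\otimes b}$ is a direct summand of $\mathfrak{h}(X^{b})$.

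Next I would dispose of the numerology. If $K<mn$, then $tn\le K$ forces $t\le m-1$, hence $b\le m-1$ and $j>bn=\dim X^{b}$, so the group vanishes for dimension reasons (for $b=0$ it is $\text{Hom}(\mathbb{L}^{mn},\mathbb{L}^{K/2})=0$ since $K\neq 2mn$). If $K>2mn$, then $K\le 2n(m+1)-tn$ forces $t\le 1$, hence $b\le 1$; again $b=0$ gives $0$ because $K\neq 2mn$, and for $b=1$ the bounds $2mn<K\le 2mn+n$ yield $0\le j<n/2$. So everything reduces to $\text{Hom}(\mathbb{L}^{\,j},\mathfrak{h}^{n,\text{van}}(X))=0$ for $0\le j<n/2$, and a direct computation with the explicit projectors identifies this group with $\text{CH}^{j}(X)_{\text{hom}}$ for such $j$ (the projector $\pi^{\text{van}}_n$ sends a codimension-$j$ cycle to its homologically trivial part).

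The hard part will therefore be the vanishing $\text{CH}^{j}(X)_{\text{hom}}=0$ for $1\le j<n/2$. It is trivial for $j=0$, and for $j=1$ it follows from $b_1(X)=b_1(Y)=0$; for $2\le 2j<n$ it is a Chow-theoretic weak Lefschetz statement for the ample subvariety $X\subseteq Y$, which I would deduce from the complete decomposition of the diagonal $\Delta_Y$ (available since $Y$ has trivial Chow groups) restricted along $\iota\times\iota$ — giving $(\iota\times\iota)^*\Delta_Y=\Delta_X\cdot p_1^*c_{e}(\mathcal N_{X/Y})$ and hence strong control on cycles of small codimension on $X$ — together with Ottem's cohomological connectivity \cite{Ot12}. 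This is the one place where ampleness and the triviality of $\text{CH}^*(Y)$ enter essentially; the remainder of the argument is formal. Granting this, Proposition \ref{prop2.15} shows that the natural CK decomposition is $m$-fold $mn$-multiplicative, so $\mathfrak{d}_m(X)\le mn$.
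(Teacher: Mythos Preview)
Your reduction strategy is clean and the numerology is correct, but there is a genuine gap at the last step, and it is not a minor one. You reduce the case $K>2mn$ to the vanishing $\text{Hom}(\mathbb{L}^{\,j},\mathfrak{h}^{n,\text{van}}(X))=0$ for $0\le j<n/2$, and then identify this with $\text{CH}^{j}(X)_{\text{hom}}=0$. That identification is correct, but the vanishing $\text{CH}^{j}(X)_{\text{hom}}=0$ for $2\le j<n/2$ is a Chow-theoretic weak Lefschetz statement which is \emph{open} for arbitrary ample subvarieties of varieties with trivial Chow groups (already for hypersurfaces in projective space it is part of the circle of ideas around Hartshorne's conjecture). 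Your proposed argument does not prove it: from the excess intersection formula $(\iota\times\iota)^*\Delta_Y=\Delta_X\cdot p_1^*c_e(\mathcal{N}_{X/Y})$ together with the decomposition of $\Delta_Y$, acting on a homologically trivial class $z\in\text{CH}^{j}(X)$ you only obtain $c_e(\mathcal{N}_{X/Y})\cdot z=0$, not $z=0$. Ottem's connectivity is purely cohomological and gives you nothing further on the Chow side.

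The underlying problem is that you chose to prove Murre's conjecture (B) for $X^{m+1}$ in codimension $mn$, which is strictly stronger than what the theorem asserts. The paper does \emph{not} prove Murre (B); it proves directly that the specific cycle $\Delta_I^X$ lies in $\bigoplus_{s=0}^{mn}\text{CH}_s^{mn}(X^{m+1})$. The computation $(\pi_{2n-i_1}\otimes\cdots\otimes\pi_{2n-i_m}\otimes\pi_k)_*\Delta_I^X$ is carried out by peeling off factors with $i_\ell\neq n$ one at a time, and the crucial point is that every class $\beta$ appearing in the recursion is a product of the classes $\alpha_{jl}$, hence lies in the tautological subring $R^*(X)=\iota^*\text{CH}^*(Y)$. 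For such $\beta$ in codimension $<n/2$ one has $\pi_{n*}\beta=0$ by Lemma~\ref{lem4.8} (injectivity of $cl_X$ on $R^r(X)$ for $r\le n/2$, which \emph{does} follow from Ottem's theorem plus triviality of $\text{CH}^*(Y)$). This is exactly the weakening of your claim that is both provable and sufficient: one needs $R^j(X)_{\text{hom}}=0$, not $\text{CH}^j(X)_{\text{hom}}=0$. Your framework would be salvaged if, instead of computing the full Hom-group, you tracked only the component of $\Delta_I^X$ in each summand and observed that it always lands in the tautological part.
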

\begin{proof}
We begin with the case $m=2$.
By straightforward calculations, for three integers $i, j, k\neq\frac{n}{2}$, one gets that
\begin{eqnarray*}
&  & \pi_{2k}\circ\Delta_{123}\circ (\pi_{2i}\otimes\pi_{2j})\\
&=& \sum_{l}\sum_{l'}\sum_{l''}\left(\int_X\alpha_{il}\cdot\alpha_{jl'}\cdot\alpha_{n-k,l''}\right)
(\alpha_{n-i, l}\times\alpha_{n-j, l'}\times\alpha_{kl''}),
\end{eqnarray*}
\begin{equation*}
\pi_{2k}\circ\Delta_{123}\circ (\Delta_X\otimes\pi_{2j})
=\sum_{l}\sum_{l'}(\alpha_{jl}\cdot\alpha_{n-k,l'})\times\alpha_{n-j,l}\times\alpha_{kl'},
\end{equation*}
\begin{equation*}
\pi_{2k}\circ\Delta_{123}\circ (\pi_{2i}\otimes\Delta_X)
=\sum_{l}\sum_{l'}\alpha_{n-i,l}\times(\alpha_{il}\cdot\alpha_{n-k,l'})\times\alpha_{kl'},
\end{equation*}
\begin{equation*}
\Delta_{123}\circ (\pi_{2i}\otimes\pi_{2j})
=\sum_{l}\sum_{l'}\alpha_{n-i,l}\times\alpha_{n-j,l'}\times(\alpha_{i,l}\cdot\alpha_{j,l'}),
\end{equation*}
\begin{equation*}
\pi_{2k}\circ\Delta_{123}
=(\Delta_X\otimes\Delta_X\otimes\pi_{2k})_*\Delta_{123}
=\sum_{l}\delta_{X*}\alpha_{n-k,l}\times\alpha_{kl},
\end{equation*}
\begin{equation*}
\Delta_{123}\circ (\Delta_X\otimes\pi_{2j})
=\sum_{l}p_{13}^*(\delta_{X*}\alpha_{jl})\cdot p_2^*\alpha_{n-j,l},
\end{equation*}
\begin{equation*}
\Delta_{123}\circ (\pi_{2i}\otimes\Delta_X)
=\sum_{l}\alpha_{n-i,l}\times\delta_{X*}\alpha_{il},
\end{equation*}
\begin{equation*}
\pi_0\circ\Delta_{123}\circ(\pi_n\otimes\pi_n)=(\pi_n\otimes\pi_n)_*(\delta_{X*}o_X)\times X=0,
\end{equation*}
\begin{equation*}
\pi_{n}\circ\Delta_{123}\circ(\pi_n\otimes\pi_{2n})=0,
\end{equation*}
\begin{equation*}
\pi_{n}\circ\Delta_{123}\circ(\pi_{2n}\otimes\pi_{n})=0.
\end{equation*}
If each $i, j, k\neq\frac{n}{2}$ and $k\neq i+j$, then $\int_X\alpha_{il}\cdot\alpha_{jl'}\cdot\alpha_{n-k,l''}=0$ for dimension reasons and thus
\begin{equation*}
\pi_{2k}\circ\Delta_{123}\circ (\pi_{2i}\otimes\pi_{2j})=0.
\end{equation*}
For $i\neq\frac{n}{2}$ and each $l$, one has
\begin{eqnarray*}
\pi_{n*}\alpha_{il}
&=& (\Delta_X-\sum_{j\neq\frac{n}{2}}\sum_{l'}\alpha_{n-j,l'}\times\alpha_{jl'})_*\alpha_{il}\\
&=& \alpha_{il}-\sum_{j\neq\frac{n}{2}}\sum_{l'}\left(\int_X\alpha_{n-j,l'}\cdot\alpha_{il}\right)\alpha_{jl'}\\
&=& 0.
\end{eqnarray*}
Thus, if $k>\frac{n}{2}+i$, then $n+i-k<\frac{n}{2}$ and hence the cycle class $\alpha_{il}\cdot\alpha_{n-k,l'}$ is a $\mathbb{Q}$-linear combination of $\alpha_{n+i-k,l''}$, according to Lemma \ref{lem4.8}.
If $k<i$, then $n+i-k>n$ and hence $\alpha_{il}\cdot\alpha_{n-k,l'}=0$ for dimension reasons.
So, for both cases one gets that
\begin{equation*}
\pi_{2k}\circ\Delta_{123}\circ (\pi_n\otimes\pi_{2i})
=\sum_{l}\sum_{l'}\pi_{n*}(\alpha_{il}\cdot\alpha_{n-k,l'})\times\alpha_{n-i,l}\times\alpha_{kl'}=0
\end{equation*}
and
\begin{equation*}
\pi_{2k}\circ\Delta_{123}\circ (\pi_{2i}\otimes\pi_n)
=\sum_{l}\sum_{l'}\alpha_{n-i,l}\times\pi_{n*}(\alpha_{il}\cdot\alpha_{n-k,l'})\times\alpha_{kl'}=0.
\end{equation*}

If $i+j<\frac{n}{2}$ or $i+j>n$, then
\begin{eqnarray*}
\pi_n\circ\Delta_{123}\circ (\pi_{2i}\otimes\pi_{2j})
&=& (\pi_{2n-2i}\otimes\pi_{2n-2j}\otimes\pi_n)_*\Delta_{123}\\
&=& \sum_{l}\sum_{l'}\alpha_{n-i,l}\times\alpha_{n-j,l'}\times \pi_{n*}(\alpha_{il}\cdot\alpha_{jl'}))\\
&=& 0.
\end{eqnarray*}
Therefore, we obtain that $\mathfrak{d}(X)\leq 2n-1$.

We now consider the case $m\geq 3$. 
Suppose first that $k>\sum_{\ell=1}^{m}i_{\ell}$.
Then at least one $i_{\ell'}$ is different from $n$; 
otherwise, $k>mn\geq 3n$, which is impossible.
By symmetry, we may assume that $i_1=2j$ and $j\neq\frac{n}{2}$ and $j\leq n$. 
Then
\begin{eqnarray*}
   \pi_k\circ\Delta_I\circ(\pi_{i_1}\otimes\cdots\otimes\pi_{i_m})
   &=& (\pi_{2n-i_1}\otimes\cdots\otimes\pi_{2n-i_m}\otimes\pi_k)_*\Delta_I \\
   &=&
   \sum_{l}(\Delta_X\otimes\pi_{2n-i_2}\otimes\cdots\otimes\pi_{2n-i_m}\otimes\pi_k)_*
   (\alpha_{n-j,l}\times\delta_{m*}\alpha_{jl})\\ 
   &=& \sum_{l}\alpha_{n-j,l}\times(\pi_{2n-i_2}\otimes\cdots\otimes\pi_{2n-i_m}\otimes\pi_k)_*(\delta_{m*}\alpha_{jl}).
\end{eqnarray*}
It is therefore enough to prove the following assertion: for every $m\geq 1$, every $j\neq\frac{n}{2}$, every $\ell$ and 
every collection of indices satisfying $\sum_{\ell=1}^{m}k_{\ell}>2(m-1)n+2j$, one has
\begin{equation}\label{45}
(\pi_{k_1}\otimes\cdots\otimes\pi_{k_m})_*(\delta_{m*}\beta)=0
\end{equation}
for any cycle class $\beta\in\mathrm{CH}^j(X)$.

We argue by induction on $m\geq 1$. The case $m=1$ is immediate.
Assume that $m\geq 2$. At least one $k_\ell$ is different from $n$. 
By symmetry, we may assume that $k_1=2n-2j'$, $j'\neq\frac{n}{2}$.
Then
\begin{equation*}
(\pi_{k_1}\otimes\cdots\otimes\pi_{k_m})_*(\delta_{m*}\beta)
=\sum_{l'}\alpha_{n-j',l'}\times (\pi_{k_2}\otimes\cdots\otimes\pi_{k_m})_*((\delta_{m-1})_*(\beta\cdot\alpha_{j'l'})).
\end{equation*}
The latter vanishes by induction, since $\sum_{\ell=2}^{m}k_{\ell}>2(m-2)n+2j+2j'$ and $\beta\cdot\alpha_{j'l'}\in\mathrm{CH}^{j+j'}(X)$.

Suppose that $k<\sum_{\ell=1}^{m}i_{\ell}-mn$.
Then $i_{\ell'}>n$ for some $\ell'$.
We may assume that $i_1=2j>n$ by symmetry.
Then
\begin{equation*}
\pi_k\circ\Delta_I\circ(\pi_{i_1}\otimes\cdots\otimes\pi_{i_m})
=\sum_{l}\alpha_{n-j,l}\times(\pi_{2n-i_2}\otimes\cdots\otimes\pi_{2n-i_m}\otimes\pi_k)_*(\delta_{m*}\alpha_{jl}).
\end{equation*}
So it suffices to show that 
\begin{equation}\label{46}
(\pi_{2n-i_2}\otimes\cdots\otimes\pi_{2n-i_m}\otimes\pi_k)_*(\delta_{m*}\beta)=0
\end{equation}
for any cycle class $\beta\in\mathrm{CH}^j(X)$.

Again, we argue by induction on $m$. When $m=1$, the assertion reduces to
$\pi_{k*}\beta=0$, which follows from the inequality $k<j$.
Now assume that $m\geq 2$. Then one has $\sum_{\ell=2}^{m}2n-i_{\ell}+k<(m-1)n+j$.
Hence either $i_{\ell'}\neq n$ for some $\ell'\geq 2$, or $k\neq n$.
If $i_{\ell'}\neq n$, then after permuting the factors, we may assume that $i_2=2j'<n$.
By induction,
\begin{eqnarray*}
   & & (\pi_{2n-i_2}\otimes\cdots\otimes\pi_{2n-i_m}\otimes\pi_k)_*(\delta_{m*}\beta) \\
   &=& \sum_{l}\alpha_{n-j',l}\times(\pi_{2n-i_3}\otimes\cdots\otimes\pi_{2n-i_m}\otimes\pi_k)_*
   ((\delta_{m-1})_*(\beta\cdot\alpha_{j'l})) \\
   &=& 0,
\end{eqnarray*}
since $\sum_{\ell=3}^{m}2n-i_{\ell}+k<(m-2)n+j+j'$.
If $k=2k'<n$, then $j+n-k'>n$ and hence
\begin{eqnarray*}
   & &  (\pi_{2n-i_2}\otimes\cdots\otimes\pi_{2n-i_m}\otimes\pi_k)_*(\delta_{m*}\beta) \\
   &=& \sum_{l}(\pi_{2n-i_2}\otimes\cdots\otimes\pi_{2n-i_m})_*((\delta_{m-1})_*(\beta\cdot\alpha_{n-k',l}))\times\alpha_{k'l} \\
   &=& 0.
\end{eqnarray*}
This completes the proof.
\end{proof}

\begin{corollary}
The inequality $\mathfrak{d}_m(X)\leq mn$ is preserved under arbitrary products of curves, surfaces, and smooth ample subvarieties in ambient varieties with trivial Chow groups.
\end{corollary}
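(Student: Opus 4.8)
The plan is to deduce this entirely from the building-block theorems already established together with the stability of motivic multiple twist-multiplicativity under products. First I would write any such $X$ as a product $X=\prod_{i=1}^k X_i$, of dimension $n=\sum_{i=1}^k n_i$ with $n_i=\dim X_i$, where each factor $X_i$ is a curve, a surface, or a smooth connected ample subvariety of an ambient variety with trivial Chow groups. For each of the three types of factor I would invoke the appropriate result: Theorem \ref{thm2.24} establishes Conjecture \ref{conj2.6} for curves, hence $\mathfrak{d}_m(X_i)\le m=mn_i$ when $n_i=1$; Theorem \ref{thm2.27} gives $\mathfrak{d}_m(X_i)\le 2m=mn_i$ when $n_i=2$; and Theorem \ref{thm4.3} gives $\mathfrak{d}_m(X_i)\le mn_i$ directly for an ample subvariety in a variety with trivial Chow groups. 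Thus each factor admits a (self-dual) CK decomposition which is $m$-fold $(mn_i)$-multiplicative.

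Next I would run an induction on the number $k$ of factors, applying Proposition \ref{prop2.17} at each step. Since Proposition \ref{prop2.17} asserts that the product of an $m$-fold $\tau_m$-multiplicative CK decomposition with an $m$-fold $\tau'_m$-multiplicative one is $m$-fold $(\tau_m+\tau'_m)$-multiplicative — and in particular that the inequality of Conjecture \ref{conj2.6} is preserved under products — the product CK decomposition of $X$ is $m$-fold $\bigl(\sum_{i=1}^k mn_i\bigr)$-multiplicative, i.e. $m$-fold $(mn)$-multiplicative. This yields $\mathfrak{d}_m(X)\le mn$ for every integer $m\ge 2$.

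As for obstacles, there is essentially none beyond bookkeeping: the substantive content already resides in Theorems \ref{thm2.24}, \ref{thm2.27}, \ref{thm4.3} and Proposition \ref{prop2.17}. The only points to double-check are that the bound produced by each building-block theorem really equals $mn_i$ in the respective case (for surfaces one uses $2m=m\cdot 2$, for curves $m=m\cdot 1$), and that the product CK decomposition used in Proposition \ref{prop2.17} is compatible with the self-dual decompositions furnished by those theorems, which it is by construction.
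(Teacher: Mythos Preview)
Your proposal is correct and matches the paper's intended argument: the paper states this corollary immediately after Theorem~\ref{thm4.3} with no proof, treating it as an immediate consequence of Theorems~\ref{thm2.24}, \ref{thm2.27}, \ref{thm4.3} together with the product stability of Proposition~\ref{prop2.17}, exactly as you have spelled out.
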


For complete intersections in projective spaces, one can obtain a slightly sharper bound for the motivic multiple multiplicativity defect.
\begin{proposition}\label{prop4.5}
Let $Y=\mathbb{P}^{n+e}$ and let $X\subseteq Y$ be a smooth complete intersection of dimension $n\geq 2$.
Then $\mathfrak{d}_m(X)\leq mn-1$ for every integer $m\geq 2$.
If $e\leq n$, then 
\begin{equation}\label{47}
\mathfrak{d}_m(X)\leq \text{max}\{(m-1)n, (m-2)n+2e-1\}.
\end{equation}
Let $D\in\mathrm{CH}^1(X)$ denote the hyperplane class. 
If the cycle class $\delta_{X*}D^k$ is completely decomposable for every integer $2\leq k\leq\frac{n}{2}+1$, 
then $\mathfrak{d}(X)\leq n$.
For every integer $\frac{n}{2}<k\leq n-1$, the natural CK decomposition of $X$ is strictly $2k$-multiplicative if and only if $\delta_{X*}D^{k+1}$ is completely decomposable, whereas $\delta_{X*}D^{k}$ is not.
\end{proposition}
\begin{proof}
Let $d:=\int_XD^n$ be the degree of $X$. Then $o_X=\frac{1}{d}D^n$.
For every integer $0\leq i\leq n$ with $i\neq\frac{n}{2}$, by definition, one has
\begin{equation*}
\pi_{2i}:=\frac{1}{d}D^{n-i}\times D^i,\quad
\pi_n:=\Delta_X-\sum_{i\neq\frac{n}{2}}\pi_{2i}.
\end{equation*}
\medskip

\emph{The case $m=2$.}
If $e\leq n$, then it follows easily that 
\begin{equation*}
\delta_{X*}D^e=\Delta_X\cdot (D^e\times X)
=\frac{1}{d}(\iota\times\iota)^*\Delta_{\mathbb{P}^{n+e}}=\frac{1}{d}\sum_{j=e}^{n+e}D^{n+e-j}\times D^j.
\end{equation*}
For every integer $m\geq2$ and $i\geq e$, one obtains
\begin{equation*}
\delta_{X*}D^i=\Delta_X\cdot (D^i\times X)=\frac{1}{d}\sum_{j=i}^{n+e}D^{n+i-j}\times D^j.
\end{equation*}
Thus, for every $m\geq 2$ and $i\geq e$, one gets that
\begin{eqnarray}
\delta_{m*}D^i
&=& \Delta_{I_{m-1}}\cdot (D^i\times D^{m-1})\label{48}\\
&=& (\Delta_X\times X^{m-2})\cdot (D^i\times X^{m-1})\cdot (X\times\Delta_{I_{m-1}})\nonumber \\
&=& \frac{1}{d}\sum_{j=i}^{n+e}D^{n+i-j}\times\delta_{m-1*}D^j.\nonumber
\end{eqnarray}

If $k\leq n-e$, then 
\begin{eqnarray*}
\pi_{2k}\circ\Delta_{123}\circ(\pi_n\otimes\pi_n)
&=& (\pi_n\otimes\pi_n\otimes\pi_{2k})_*(\Delta_{123}) \\
&=& \frac{1}{d}(\pi_n\otimes\pi_n)_*(\delta_{X*}D^{n-k})\times D^k \\
&=& \frac{1}{d^2}\sum_{j=n-k}^{n+e}\pi_{n*}D^{2n-k-j}\times \pi_{n*}D^j\times D^k\\
&=& 0.
\end{eqnarray*}
For every $i\geq e$, one gets that
\begin{eqnarray*}
  \pi_{n}\circ\Delta_{123}\circ(\pi_n\otimes\pi_{2i}) &=& (\pi_n\otimes\pi_{2n-2i}\otimes\pi_n)_*\Delta_{123} \\
   &=& (\pi_n\otimes\Delta_X\otimes\pi_n)_*(\frac{1}{d}p_{13}^*\delta_{X*}D^i\cdot p_2^*D^{n-i})\\
   &=& \frac{1}{d^2}\sum_{j=i}^{n+e}\pi_{n*}D^{n+i-j}\times D^{n-i}\times\pi_{n*}D^j\\
   &=& 0.
\end{eqnarray*}
By symmetry, one has 
\begin{equation*}
\pi_{n}\circ\Delta_{123}\circ(\pi_{2i}\otimes\pi_n)=0
\end{equation*}
for every $i\geq e$.

We then conclude that
\begin{eqnarray}
   &  &\Delta_{123}-\sum_{i}\sum_j\pi_{i+j}\circ\Delta_{123}\circ(\pi_i\otimes\pi_j)\label{49}\\
   &=& \sum_{i=1}^{e-1}[\pi_{n}\circ\Delta_{123}\circ(\pi_{2i}\otimes\pi_n)
   +\pi_{n}\circ\Delta_{123}\circ(\pi_{n}\otimes\pi_{2i})]
   +\sum_{k=n-e+1}^{n-1}\pi_{2k}\circ\Delta_{123}\circ(\pi_n\otimes\pi_n)\nonumber\\
   &  & +\pi_n\circ\Delta_{123}\circ(\pi_n\otimes\pi_n).\nonumber
\end{eqnarray}
Therefore, when $e\leq n$, one has $\mathfrak{d}(X)\leq\text{max}\{n, 2e-1\}$.
\medskip

\emph{The case $m\geq 3$.}
Suppose first that $k<\sum_{\ell=1}^{m}i_{\ell}-(mn-1)$.
If $i_{\ell}=n$ for every $\ell$, then necessarily $k=0$.
In this case,
\begin{eqnarray*}
   \pi_0\circ\Delta_I\circ(\pi_{n}\otimes\cdots\otimes\pi_{n})
   &=& (\pi_{n}\otimes\cdots\otimes\pi_{n}\otimes\pi_0)_*\Delta_I \\
   &=& (\pi_{n}\otimes\cdots\otimes\pi_{n})_*(\delta_{m*}o_X)\times X\\ 
   &=& 0.
\end{eqnarray*}
We may assume that $i_1=2j>n$ by symmetry.
Then
\begin{equation*}
\pi_k\circ\Delta_I\circ(\pi_{i_1}\otimes\cdots\otimes\pi_{i_m})
=\frac{1}{d}D^{n-j}\times(\pi_{2n-i_2}\otimes\cdots\otimes\pi_{2n-i_m}\otimes\pi_k)_*(\delta_{m*}D^j).
\end{equation*}
So, it suffices to show that for every $m\geq 1$, one has
\begin{equation}\label{50}
(\pi_{2n-i_2}\otimes\cdots\otimes\pi_{2n-i_m}\otimes\pi_k)_*(\delta_{m*}D^j)=0.
\end{equation}

We prove this by induction on $m$.
When $m=1$, the assertion follows from $\pi_{k*}h^j=0$, which is true, since $k\leq j$.
Now assume that $m\geq 2$. One has 
\begin{equation*}
\sum_{\ell=2}^{m}2n-i_{\ell}+k<(m-1)n+j+1.
\end{equation*}
If $i_{\ell}=n$ for every $\ell\geq 2$ and $k=n$, then $j=n$.
In this case, 
\begin{equation*}
(\pi_n\otimes\cdots\otimes\pi_n)_*((\delta_m)_*o_X)=0.
\end{equation*}
If $i_{\ell'}\neq n$ for some $\ell'$, we may assume that $i_2=2j'<n$.
Indeed, when $2j'>n$, one has $j+j'>n$ and hence $D^{j+j'}=0$.
By induction, we deduce that
\begin{eqnarray*}
   & & (\pi_{2n-i_2}\otimes\cdots\otimes\pi_{2n-i_m}\otimes\pi_k)_*(\delta_{m*}D^j) \\
   &=& \frac{1}{d}D^{n-j'}\times(\pi_{2n-i_3}\otimes\cdots\otimes\pi_{2n-i_m}\otimes\pi_k)_*((\delta_{m-1})_*D^{j+j'}) \\
   &=& 0,
\end{eqnarray*}
since $\sum_{\ell=3}^{m}2n-i_{\ell}+k<(m-2)n+j+j'+1$.
If $k=2k'<n$, then $j+n-k'>n$ and therefore
\begin{eqnarray*}
   & &  (\pi_{2n-i_2}\otimes\cdots\otimes\pi_{2n-i_m}\otimes\pi_k)_*(\delta_{m*}D^j) \\
   &=& (\pi_{2n-i_2}\otimes\cdots\otimes\pi_{2n-i_m})_*((\delta_{m-1})_*D^{j+n-k'}) \\
   &=& 0.
\end{eqnarray*}

Assume that $e\leq n$ and $k<\sum_{\ell=1}^{m}i_{\ell}-[(m-2)n+2e-1]$.
If $i_{\ell}=n$ for every $\ell$, then writing $k=2k'$, the above equality gives $k<2n-2e+1$.
Hence
\begin{eqnarray*}
   & &\pi_k\circ\Delta_I\circ(\overbrace{\pi_{n}\otimes\cdots\otimes\pi_{n}}^m)\\
   &=& (\pi_{n}\otimes\cdots\otimes\pi_{n}\otimes\pi_k)_*\Delta_I \\
   &=& \frac{1}{d}(\pi_{n}\otimes\cdots\otimes\pi_{n})_*(\delta_{m*}D^{n-k'})\times D^{k'}\\ 
   &=& \frac{1}{d^2}\sum_{i=n-k'}^{n+e}\pi_{n*}D^{2n-k'-i}\times
   (\overbrace{\pi_n\otimes\cdots\otimes\pi_n}^{m-1})_*((\delta_{m-1})_*D^{i})\times D^{k'}\\
   &=& \frac{1}{d^3}\sum_{i=n-k'}^{n+e}\sum_{j=i}^{n+e}\pi_{n*}D^{2n-k'-i}\times\pi_{n*}D^{n+i-j}\times
   (\overbrace{\pi_n\otimes\cdots\otimes\pi_n}^{m-2})
   ((\delta_{m-2})_*D^j)\times D^{k'}\\
   &=& 0.
\end{eqnarray*}
Indeed, if both $2n-k'-i=\frac{n}{2}$ and $n+i-j=\frac{n}{2}$, 
then the inequalities force $j>n$, and hence $D^j=0$.
If $i_{\ell'}\neq n$ for some $\ell'$, then we may assume that $i_1=2j_1$.
Then
\begin{eqnarray*}
   & &\pi_k\circ\Delta_I\circ(\pi_{i_1}\otimes\cdots\otimes\pi_{i_m})\\
   &=& (\pi_{2n-2j_1}\otimes\cdots\otimes\pi_{2n-i_m}\otimes\pi_k)_*\Delta_I \\
   &=& \frac{1}{d}D^{n-j_1}\times (\pi_{2n-i_2}\otimes\cdots\otimes\pi_{2n-i_m}\otimes\pi_{k})_*(\delta_{m*}D^{j_1}).
\end{eqnarray*}
It suffices to show that 
\begin{equation*}
(\pi_{2n-i_2}\otimes\cdots\otimes\pi_{2n-i_m}\otimes\pi_{k})_*(\delta_{m*}D^{j_1})=0.
\end{equation*}

We again argue by induction on $m\geq2$.
The case $m=2$ follows from (i).
Assume that $m\geq 3$.
If $i_{\ell}=n$ for every $\ell\geq 2$ and $k=n$, 
then the assumed inequality implies that $j_1\geq e$, and the required vanishing follows from the preceding argument.
Otherwise, after permuting the factors, we may assume that $i_2=2j_2\neq n$. 
Since 
\begin{equation*}
\sum_{\ell=3}^{m}2n-i_{\ell}+k<(m-2)n+2(j_1+j_2)-2e+1,
\end{equation*}
then the induction hypothesis gives
\begin{eqnarray*}
   & &(\pi_{2n-i_2}\otimes\cdots\otimes\pi_{2n-i_m}\otimes\pi_{k})_*(\delta_{m*}D^{j_1})\\
   &=& \frac{1}{d}D^{n-j_2}\times (\pi_{2n-i_3}\otimes\cdots\otimes\pi_{2n-i_m}\otimes\pi_{k})_*((\delta_{m-1})_*D^{j_1+j_2}) \\
   &=& 0.
\end{eqnarray*}

Finally, observe that if $\delta_{X*}D^k$ is completely decomposable for some $k\geq1$, then $\delta_{X*}D^i$ is completely decomposable for every $i\geq k$. The final two assertions now follow from the preceding decomposition analysis.
\end{proof}

\begin{remark}
\leavevmode
\begin{enumerate}
\item[(i)] When $e\leq n$, the proof relies on the formula \eqref{48}. This formula may fail for $i<e$, particularly when $X$ is of general type. For example, if $X$ is a very general complete intersection curve of genus $g\geq3$, the distinguished degree-one 0-cycle class $o_X$ need not be represented by a point, and hence
$\delta_{X*}o_X\neq o_X\times o_X$.

\item[(ii)] When $e=1$, namely, $X$ is a hypersurface, the above argument implies that the obstruction to multiplicativity of the natural CK decomposition is exactly the cycle class 
\begin{equation*}
\pi_n\circ\Delta_{123}\circ(\pi_n\otimes\pi_n)=(\pi_n\otimes\pi_n\otimes\pi_n)_*\Delta_{123}\in\mathrm{CH}_n^{2n}(X^3).
\end{equation*}
Consequently, if $X$ does not admit motivic $0$-multiplicativity, then $\mathfrak{d}(X)=n$.
Thus, in this setting, there is no intermediate value of the motivic multiplicativity defect.

\item[(iii)] The same argument applies to complete intersections with at most quotient singularities.

\item[(iv)] It is reasonable to expect that the components of $\Gamma^3(X,o_X)$
can be detected by their Mumford infinitesimal invariants, or more generally by the higher Abel--Jacobi maps introduced in \cite{As00,S01}.
\end{enumerate}
\end{remark}

\begin{definition}
Let $r\geq 0$ and $k\geq 1$ be integers. 
The \emph{tautological} subgroup $R^r(X^k)$ of $\mathrm{CH}^r(X^k)$ with respect to the closed embedding 
$\iota^{\times k}: X^k\rightarrow Y^k$ is defined by
\begin{equation*}
R^r(X^k):=\mathrm{Im}\left((\iota^{\times k})^*: \mathrm{CH}^r(Y^k)\rightarrow\mathrm{CH}^r(X^k)\right).
\end{equation*}
The tautological subring of $\mathrm{CH}^*(X^k)$ is $R^*(X^k):=\bigoplus_{r}R^r(X^k)$.

Let $T^*(X^k)\subseteq\mathrm{CH}^*(X^k)$ be the subring generated by $R^*(X^k)$ and the big diagonal classes $p_{ij}^*\Delta_X$, $i\neq j$, where $p_{ij}: X^k\rightarrow X^2$ is the natural projection onto the $i$-th and $j$-th factors.
\end{definition}
\begin{lemma}\label{lem4.8}
Let $Y$ be a smooth connected projective variety with trivial Chow groups, and let $X\subseteq Y$ be a smooth connected ample subvariety. Then, for every integer $r\leq\frac{n}{2}$, the restricted cycle class map
\begin{equation*}
cl_X: R^r(X)\rightarrow H^{2r}(X,\mathbb{Q})
\end{equation*}
is injective.
\end{lemma}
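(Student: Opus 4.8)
The plan is to reduce the statement to two facts that are already on the table. First, since $Y$ has trivial Chow groups, the cycle class map $cl_Y\colon \text{CH}^r(Y)\to H^{2r}(Y,\mathbb{Q})$ is an isomorphism for every $r$. Second, by Ottem's results on ample subvarieties \cite{Ot12}, recalled at the beginning of this section, the Gysin pullback $\iota^*\colon H^{2r}(Y,\mathbb{Q})\to H^{2r}(X,\mathbb{Q})$ is an isomorphism for $2r<n$ and injective for $2r=n$; in particular it is injective whenever $r\leq\frac{n}{2}$, which is precisely the range under consideration.

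First I would record the commutative square expressing the compatibility of the cycle class maps with the pullback along the regular closed embedding $\iota$:
\[
\begin{tikzcd}
\text{CH}^r(Y)\arrow{r}{\iota^*}\arrow{d}{cl_Y} & \text{CH}^r(X)\arrow{d}{cl_X}\\
H^{2r}(Y,\mathbb{Q})\arrow{r}{\iota^*} & H^{2r}(X,\mathbb{Q}).
\end{tikzcd}
\]
By definition $R^r(X)=\iota^*\text{CH}^r(Y)$, so it is enough to prove that any $\beta\in\text{CH}^r(Y)$ with $cl_X(\iota^*\beta)=0$ already satisfies $\iota^*\beta=0$. The diagram chase then finishes the proof: commutativity gives $\iota^*\bigl(cl_Y(\beta)\bigr)=cl_X(\iota^*\beta)=0$ in $H^{2r}(X,\mathbb{Q})$; since $r\leq\frac{n}{2}$, the bottom arrow is injective, hence $cl_Y(\beta)=0$; since $cl_Y$ is an isomorphism (triviality of $\text{CH}^*(Y)$), we conclude $\beta=0$ in $\text{CH}^r(Y)$, and therefore $\iota^*\beta=0$. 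Thus $cl_X$ restricted to $R^r(X)$ has trivial kernel.

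I do not expect a genuine obstacle here: the argument is formal once the two inputs are in place. The only point needing a word of justification is the commutativity of the square, i.e. the functoriality $cl_X\circ\iota^*=\iota^*\circ cl_Y$ for the regular closed embedding $\iota\colon X\hookrightarrow Y$; this is standard (via deformation to the normal cone, or the compatibility of refined Gysin maps with cycle class maps) and can simply be quoted. All the real content of the lemma is carried by Ottem's cohomological injectivity statement for ample subvarieties together with the triviality of the Chow groups of $Y$.
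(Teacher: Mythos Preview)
Your proof is correct and follows essentially the same approach as the paper: both arguments combine Ottem's injectivity of $\iota^*$ on cohomology in degrees $\leq n$ with the triviality of $\text{CH}^*(Y)$, and conclude via the evident commutative square of cycle class maps and pullbacks. The paper's version is more compressed (it phrases the intermediate step as injectivity of $\iota^*$ on Chow groups), but the content is the same.
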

\begin{proof}
By the generalized Lefschetz hyperplane theorem, specifically Corollary 5.2 of \cite{Ot12}, the restriction map $\iota^*: H^{2r}(Y,{\mathbb{Q}})\rightarrow H^{2r}(X,{\mathbb{Q}})$ is injective whenever $r\leq\frac{n}{2}$.
Since $Y$ has trivial Chow groups, the cycle class map
$\mathrm{CH}^r(Y)\longrightarrow H^{2r}(Y,\mathbb{Q})$ is an isomorphism. 
It follows that the restricted cycle class map $cl_X: R^r(X)\rightarrow H^{2r}(X,\mathbb{Q})$ is injective.
\end{proof}

This naturally leads to the following question.
\begin{question}\label{ques4.9}
Let $Y$ be a smooth connected projective variety with trivial Chow groups, and let $X\subseteq Y$ be a smooth connected ample subvariety. Is the restricted cycle class map
\begin{equation*}
cl_X: R^*(X)\longrightarrow H^{*}(X,\mathbb{Q})
\end{equation*}
injective?
\end{question}
When $X$ is a smooth complete intersection deforming in the corresponding universal family, Question \ref{ques4.9} is equivalent to asking whether this family satisfies the Franchetta property. If $Y$ is a smooth weighted projective space, then Question \ref{ques4.9} has an affirmative answer.

Motivic $0$-multiplicativity can also be characterized in terms of the tautological ring introduced above.
\begin{proposition}\label{prop4.10}
Let $Y$ be a smooth connected projective variety with trivial Chow groups, and let $X\subseteq Y$ be a smooth connected ample subvariety.
Then the natural CK decomposition of $X$ is multiplicative if and only if the restricted cycle class map
\begin{equation*}
cl_{X^3}: T^*(X^3)\longrightarrow H^*(X^3,\mathbb{Q})
\end{equation*}
is injective.
\end{proposition}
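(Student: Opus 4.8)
The plan is to reduce the equivalence to the cycle relation of Lemma \ref{lem4.1} and then play off the injectivity of $cl_{X^3}$ on the tautological subring $T^*(X^3)$ against the vanishing statements that define multiplicativity. First I would recall that, by Lemma \ref{lem4.1}, the natural CK decomposition of $X$ is multiplicative precisely when
\begin{equation*}
\Delta_{123}-(\Delta_{12}+\Delta_{13}+\Delta_{23})=\Omega_X
\end{equation*}
in $\text{CH}^{2n}(X^3)$, where $\Omega_X$ is the explicit sum of products of the classes $\alpha_{ij}$ written out there. The key observation is that \emph{every} cycle class appearing on both sides of this equation lies in $T^*(X^3)$: the big diagonal classes $p_{ij}^*\Delta_X$ are generators of $T^*(X^3)$ by definition; the classes $\Delta_{ij}$ and $\Delta_i$ are products of big diagonals with pullbacks of $o_X=\frac{1}{\int_X\iota^*L^n}\iota^*L^n\in R^*(X)$; the small diagonal $\Delta_{123}=(p_{12}^*\Delta_X)\cdot(p_{23}^*\Delta_X)$; and each monomial $\alpha_{n-i,l_1}\times\alpha_{n-j,l_2}\times\alpha_{i+j,l_3}$ in $\Omega_X$ is an external product of classes $\alpha_{ij}\in R^*(X)$ (these come from $\iota^*\beta_{ij}$ or $\iota^*w_{ij}$ and hence lie in the image of $(\iota^{\times 3})^*$), while the mixed terms $(\alpha_{n-i,l_1}\cdot\alpha_{i-\frac n2,l_2})\times\cdots$ are products of such tautological classes. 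So both sides of the Lemma \ref{lem4.1} identity are elements of $T^{2n}(X^3)$.

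**Reduction to cohomology.** Granting that, the equivalence becomes almost formal. If $cl_{X^3}$ is injective on $T^*(X^3)$, then to verify the identity in $\text{CH}^{2n}(X^3)$ it suffices to verify it in $H^{4n}(X^3,\mathbb{Q})$; but there it holds automatically, because the CK projectors $\pi_k^X$ lift the Künneth projectors of $X$, so the cohomological small-diagonal class decomposes compatibly with degree — i.e. $cl_{X^3}$ of the left-hand side minus $cl_{X^3}(\Omega_X)$ is zero by the classical fact that cup product respects degree, the very fact recalled in the introduction. Hence multiplicativity follows. Conversely, if the natural CK decomposition is multiplicative, I would argue that the vanishing statements \eqref{3} (with $m=2$) forced by Definition \ref{def2.3}, combined with the explicit formulas for $\pi_k\circ\Delta_{123}\circ(\pi_i\otimes\pi_j)$ computed in the proof of Theorem \ref{thm4.3}, pin down enough of the ring structure of $T^*(X^3)$ that any class in $\ker(cl_{X^3}|_{T^*(X^3)})$ must already vanish. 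Concretely: a tautological class on $X^3$ is, by the proof of Theorem \ref{thm4.3}, a $\mathbb{Q}$-combination of the external products $\alpha_{il_1}\times\alpha_{jl_2}\times\alpha_{kl_3}$ together with the three "diagonal-supported" families $\delta_{ij*}(\alpha_{p l}\times\alpha_{ql'})$ and $\Delta_{123}$-type classes; multiplicativity expresses the latter in terms of the former via the Lemma \ref{lem4.1} relation, so $T^*(X^3)$ is spanned by the completely decomposable classes, on which $cl_{X^3}$ is injective by Lemma \ref{lem4.8} (applied factor-by-factor through the Künneth formula, in degrees $r\le \frac n2$, and by the duality pairing $\int_X\alpha_{i j}\cdot\alpha_{n-i,j'}=\delta_{jj'}$ in the remaining degrees).

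**Main obstacle.** The delicate point is the converse direction: showing that multiplicativity \emph{forces} $T^*(X^3)$ to be spanned by completely decomposable classes, so that injectivity of $cl_{X^3}$ reduces to Lemma \ref{lem4.8}. This requires a careful bookkeeping of which monomials in the big diagonals $p_{ij}^*\Delta_X$ and the tautological generators $R^*(X^3)$ can appear, using that $\Delta_X=\sum_k\pi_k^X$ and repeatedly rewriting $p_{12}^*\Delta_X\cdot p_{13}^*\Delta_X=\Delta_{123}$ (the small diagonal), then invoking the Lemma \ref{lem4.1} relation to collapse small-diagonal and partial-diagonal contributions into external products plus the $\Delta_{ij}$ and $\Delta_i$ terms, each of which is again a product of tautological factors with $o_X$. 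One must also check that the pairing-based injectivity in degrees $> \frac n2$ is legitimate: here one uses that, by construction, $\{cl_X(\alpha_{ij})\}$ is a basis of $H^{2i}(X,\mathbb{Q})$ for every $i\neq\frac n2$, so $cl_X$ is in fact bijective on $R^*(X)$ away from the middle, while in the middle degree the tautological part $\iota^*\text{CH}^{n/2}(Y)$ maps injectively onto the primitive subspace $\iota^*H^n(Y,\mathbb{Q})$ by Lemma \ref{lem4.8}. Once this spanning statement is in hand, both implications are immediate, and the proof is complete.
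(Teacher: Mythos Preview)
Your forward direction (injectivity of $cl_{X^3}$ on $T^*(X^3)$ implies multiplicativity) is correct and essentially matches the paper: both sides of the Lemma~\ref{lem4.1} identity lie in $T^{2n}(X^3)$, their difference is homologically trivial, and injectivity kills it.

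The converse, however, has a genuine gap. Your central claim is that multiplicativity forces $T^*(X^3)$ to be spanned by completely decomposable classes, so that injectivity reduces to Lemma~\ref{lem4.8} factor by factor. This is false. The big diagonals $p_{ij}^*\Delta_X$ are generators of $T^*(X^3)$ and are \emph{not} decomposable whenever $H^n(X,\mathbb{Q})_{\mathrm{van}}\neq 0$: the component $(\pi_n^X\otimes\pi_n^X)_*\Delta_X$ survives. Lemma~\ref{lem4.1} does not eliminate them either --- the right-hand side contains $\Delta_{ij}=p_{ij}^*\Delta_X\cdot p_k^*o_X$, which still carries the non-decomposable diagonal factor. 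So the ``collapse into external products'' you describe simply does not happen. There is a secondary gap as well: your duality argument for injectivity of $cl_X$ on $R^i(X)$ when $i>\tfrac{n}{2}$ presumes $R^i(X)=\mathrm{span}\{\alpha_{ij}\}$, but $R^i(X)=\iota^*\mathrm{CH}^i(Y)$ and the $w_{ij}$ span only the subspace $L^{2i-n}\cdot\mathrm{CH}^{n-i}(Y)\subseteq\mathrm{CH}^i(Y)$; a priori $\iota^*$ could send other classes to Chow-nonzero, cohomologically trivial elements.

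The paper closes both gaps with a motivic argument you are missing. From multiplicativity it first extracts that $\Gamma_\iota$ is of pure grade $0$, i.e.\ $\iota^*$ carries $\mathfrak{h}^i(Y)$ into $\mathfrak{h}^i(X)$; since $Y$ has trivial Chow groups this forces $R^*(X^k)\subseteq\mathrm{CH}^*_0(X^k)$ for every $k$, and $cl$ is injective on the grade-$0$ part by construction of the $\pi_{2i}^X$. This handles $R^*$ in all degrees at once, without any factor-by-factor appeal to Lemma~\ref{lem4.8}. Then, rather than claiming $T^*$ is decomposable, the paper uses the specific vanishing $(\pi_n\otimes\pi_n\otimes\pi_{2n-2i})_*\Delta_{123}=0$ (a consequence of multiplicativity) to deduce $\delta_{X*}\alpha_{ij}=(\iota\times\iota)^*\omega\in R^*(X^2)$, whence $T^r(X^2)=R^r(X^2)$ for $r\neq n$. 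With that in hand, the only relation in $T^*(X^3)$ not already accounted for by injectivity on $R^*(X^3)$ is precisely the Lemma~\ref{lem4.1} identity, which holds by hypothesis --- so $cl_{X^3}$ is injective on $T^*(X^3)$.
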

\begin{proof}
By Lemma \ref{lem4.1}, for every $k\neq i+j$, the cycle class $\pi_k\circ\Delta_{123}\circ(\pi_i\otimes\pi_j)$ belongs to $T^*(X^3)$ and is homologous to zero. 
Now suppose that the restricted cycle class map $cl_{X^3}: T^*(X^3)\longrightarrow H^*(X^3,\mathbb{Q})$
is injective. Then one gets $\pi_k\circ\Delta_{123}\circ(\pi_i\otimes\pi_j)=0$.
Thus, by definition, the natural CK decomposition of $X$ is multiplicative.

For the converse, the assumption implies that the correspondence $\Gamma_{\iota}$ is of pure grade 0,
that is, $\Gamma_{\iota}=(\iota\times 1_Y)^*\Delta_Y\in\mathrm{CH}_0^{n+1}(X\times Y)$.
Consequently, for every integer $i$, the composite of morphisms of Chow motives $\mathfrak{h}^i(Y)\hookrightarrow\mathfrak{h}(Y)\stackrel{\iota^*}\longrightarrow\mathfrak{h}(X)$
factors through $\mathfrak{h}^i(X)$.
It follows that, for every integer $k\geq 1$ and every integer $j$, the composite of morphisms of Chow motives $\mathfrak{h}^j(Y^k)\hookrightarrow\mathfrak{h}(Y^k)\stackrel{(\iota^{\times k})^*}\longrightarrow\mathfrak{h}(X^k)$
factors through $\mathfrak{h}^j(X^k)$, since $(\iota^{\times k})^*=(\iota^*)^{\otimes k}$.
Hence, the restricted cycle class map $cl_{X^k}: R^*(X^k)\rightarrow H^*(X^k,\mathbb{Q})$ is injective for every integer $k\geq 1$.

On the other hand, by assumption, for every integer $0<i<\frac{n}{2}$, one has
\begin{equation}\label{51}
0=(\pi_n\otimes\pi_n\otimes\pi_{2n-2i})_*\Delta_{123}
=\sum_{j}\left(\delta_{X*}\alpha_{ij}-(\iota\times\iota)^*\omega_{ij}\right)\times\alpha_{n-i,j},
\end{equation}
where
\begin{eqnarray*}
  \omega_{ij} &:=& \sum_{k<\frac{n}{2}}\sum_{l=1}^{b_{2k}}w_{n-k, l}\times (\beta_{kl}\cdot\beta_{ij})+
\sum_{l=1}^{b_n}\gamma_{\frac{n}{2}l}\times(\gamma_{\frac{n}{2}l}\cdot\beta_{ij})+
\sum_{k>\frac{n}{2}}\sum_{l=1}^{b_{2k+2e}}\beta_{n-k, l}\times(w_{kl}\cdot\beta_{ij}).
\end{eqnarray*}
For every $0<i<\frac{n}{2}$ and every $j$, intersecting both sides of \eqref{51} with the cycle class $X^2\times\alpha_{ij}$ and then applying the push-forward $p_{12*}$, one obtains that
\begin{equation*}
\Delta_X\cdot (X\times\alpha_{ij})=\delta_{X*}\alpha_{ij}=(\iota\times\iota)^*\omega.
\end{equation*}
It follows that $R^r(X^2)=T^r(X^2)$ for every integer $r\neq n$.

The only remaining potential relation in $T^*(X^3)$ is precisely the relation expressing the multiplicativity of the natural CK decomposition. Since this relation holds by assumption, the restricted cycle class map
$cl_{X^3}: T^*(X^3)\longrightarrow H^*(X^3,\mathbb{Q})$ is injective.
\end{proof}

Based on the previous preparations, we can state the second main result of this section,
which provides a criterion for motivic 0-multiplicativity.
\begin{theorem}\label{thm4.11}
Let $Y$ and $X$ be as in Section 3, and assume that $Y$ has trivial Chow groups.
Then the natural CK decomposition of $X$ is multiplicative if and only if the following two conditions hold:
\begin{enumerate}
\item[(i)] Question \ref{ques4.9} has an affirmative answer for $X$;

\item[(ii)] $R^r(X^2)=T^r(X^2)$ for every integer $r\neq n$.
\end{enumerate}

In particular, every Fano or Calabi-Yau hypersurface in a smooth weighted projective space
admits motivic 0-multiplicativity.
\end{theorem}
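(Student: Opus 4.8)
The plan is to prove Theorem \ref{thm4.11} by combining Proposition \ref{prop4.10} with the cycle relation of Corollary \ref{cor3.6}. First, I would observe that by Proposition \ref{prop4.10}, the natural CK decomposition of $X$ is multiplicative if and only if the restricted cycle class map $cl_{X^3}: T^*(X^3)\to H^*(X^3,\mathbb{Q})$ is injective. So the task splits into showing this injectivity is equivalent to conditions (i) and (ii). The ``only if'' direction is essentially contained in the argument of Proposition \ref{prop4.10}: injectivity of $cl_{X^3}$ forces $\Gamma_\iota$ to be of pure grade $0$, whence $cl_{X^k}$ is injective on $R^*(X^k)$ for all $k$, giving (i); and the displayed relation \eqref{51} then yields $R^r(X^2)=T^r(X^2)$ for $r\neq n$, giving (ii). For the ``if'' direction, I would assume (i) and (ii) and show that any element of $T^*(X^3)$ that is homologically trivial vanishes. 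The key input here is Corollary \ref{cor3.6}(i): since $Y$ has trivial Chow groups, the auxiliary cycles $\Pi_\alpha\in\text{CH}^n(Y^2)$ and $\Xi_\alpha\in\text{CH}^{2n}(Y^3)$ are completely decomposable, so one obtains the relation
\begin{equation*}
\Delta_{123}=\Delta_{12}+\Delta_{13}+\Delta_{23}+\delta_{12*}z+\delta_{13*}z+\delta_{23*}z+(\iota\times\iota\times\iota)^*w
\end{equation*}
with $z\in\text{CH}^n(X^2)$ completely decomposable and $w\in\text{CH}^{2n}(Y^3)$.

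The heart of the argument is then to compare this relation with the one in Lemma \ref{lem4.1}: motivic $0$-multiplicativity holds iff $\Delta_{123}-(\Delta_{12}+\Delta_{13}+\Delta_{23})=\Omega_X$ where $\Omega_X$ is the explicit ``diagonal-free'' correction term. The relation from Corollary \ref{cor3.6} already writes $\Delta_{123}-(\Delta_{12}+\Delta_{13}+\Delta_{23})$ as a sum of $\delta_{ij*}(\text{completely decomposable})$ plus a tautological class $(\iota\times\iota\times\iota)^*w$. I would argue that conditions (i) and (ii) let us rewrite each piece on the right in the form prescribed by $\Omega_X$: condition (ii) handles the terms $\delta_{ij*}z$, since a completely decomposable $z\in\text{CH}^n(X^2)$, pushed forward along the small diagonals, produces exactly the ``small-diagonal'' pieces appearing in $\Omega_X$ — and the fact that $R^r(X^2)=T^r(X^2)$ for $r\neq n$ ensures the relevant intersection numbers $a_{ij}^{l_1l_2l_3}$ and $a_i^{l_1l_2}$ are the correct ones; condition (i), the affirmative answer to Question \ref{ques4.9} (injectivity of $cl_X$ on $R^*(X)$, equivalently the Franchetta property), forces $(\iota\times\iota\times\iota)^*w$ to be controlled by its cohomology class, hence to be the remaining completely decomposable part of $\Omega_X$. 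Putting these together, the difference between the relation from Corollary \ref{cor3.6} and the desired relation \eqref{44} is a homologically trivial element of $T^*(X^3)$ built entirely from tautological classes and small-diagonal push-forwards of tautological classes, and conditions (i)--(ii) precisely say such an element vanishes.

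The main obstacle, I expect, will be the bookkeeping of step two: verifying that the completely decomposable cycle $z$ and the tautological cycle $w$ produced abstractly by Corollary \ref{cor3.6} can genuinely be matched term-by-term against the explicit $\Omega_X$ of Lemma \ref{lem4.1}, rather than merely agreeing up to something homologically trivial. The cleanest route is probably to run the argument of Proposition \ref{prop4.10} in reverse: use Corollary \ref{cor3.6} to deduce that $\Gamma_\iota$ is of pure grade $0$ (so $R^*(X^k)\hookrightarrow H^*(X^k)$ for all $k$, which is where (i) enters to upgrade the cohomological statement to a Chow-theoretic one), then use (ii) to control $R^*(X^2)$ versus $T^*(X^2)$, and finally observe as in the last paragraph of Proposition \ref{prop4.10}'s proof that the only cycle relation left in $T^*(X^3)$ is exactly multiplicativity, which now follows from the relation supplied by Corollary \ref{cor3.6}. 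For the final sentence of the theorem, I would simply note that when $Y=\mathbb{P}^N_w$ is a smooth weighted projective space, $Y$ has trivial Chow groups, condition (i) holds automatically (as already remarked after Question \ref{ques4.9}), and for a hypersurface $X$ condition (ii) is vacuous because $\text{CH}^r(X)$ for $r<n$ is spanned by powers of $D$ — so both hypotheses are met and motivic $0$-multiplicativity follows.
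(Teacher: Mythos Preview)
Your approach is essentially the paper's, and the ingredients you identify (Proposition~\ref{prop4.10} for necessity, Corollary~\ref{cor3.6} for the key cycle relation, conditions (i)--(ii) to close the argument) are exactly right. The paper's execution of the ``if'' direction is more direct than yours, however: rather than routing through injectivity of $cl_{X^3}$ on all of $T^*(X^3)$ and then attempting a term-by-term match against $\Omega_X$, the paper simply observes that Corollary~\ref{cor3.6} together with condition~(ii) force the single homologically trivial class
\[
\Delta_{123}-\sum_{i,j}\pi_{i+j}\circ\Delta_{123}\circ(\pi_i\otimes\pi_j)
\]
to lie in $R^{2n}(X^3)$ (not merely $T^*(X^3)$), and then condition~(i) --- which, since $R^*(X^3)$ is spanned by exterior products of elements of $R^*(X)$, propagates to injectivity of $cl_{X^3}$ on $R^*(X^3)$ --- kills it immediately. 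This sidesteps the bookkeeping you flag as the main obstacle. One small confusion to correct: in your ``cleanest route'' you write ``use Corollary~\ref{cor3.6} to deduce that $\Gamma_\iota$ is of pure grade $0$'', but Corollary~\ref{cor3.6} says nothing about $\Gamma_\iota$; purity of $\Gamma_\iota$ is essentially a reformulation of condition~(ii), not a consequence of the diagonal relation. For the hypersurface case, your reasoning is fine, though condition~(ii) is not literally vacuous --- one still needs $\delta_{X*}D^{k}\in R^*(X^2)$ for $k\geq 1$, which holds because $\delta_{X*}D=\tfrac{1}{d}(\iota\times\iota)^*\Delta_Y$.
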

\begin{proof}
By Proposition \ref{prop4.10}, conditions (i) and (ii) are necessary.

For the converse, assume conditions (i) and (ii) hold.
By Corollary \ref{cor3.6}, it follows immediately that the homologically trivial cycle class
\begin{equation*}
\Delta_{123}-\sum_{i}\sum_{j}\pi_{i+j}\circ\Delta_{123}\circ(\pi_i\otimes\pi_j)
\end{equation*}
must belong to the tautological subgroup $R^{2n}(X^3)$.
Then condition (i) gives
\begin{equation*}
\Delta_{123}=\sum_{i}\sum_{j}\pi_{i+j}\circ\Delta_{123}\circ(\pi_i\otimes\pi_j).
\end{equation*}
Hence, the natural CK decomposition of $X$ is multiplicative.
\end{proof}

\begin{remark}\label{rem4.12}
\leavevmode
\begin{enumerate}
\item[(i)] Minor modifications of the argument show that the result remains valid for any Fano or Calabi--Yau hypersurface with at worst quotient singularities in a quotient of a projective space by a finite group. For instance, a general Eisenbud--Popescu--Walter (EPW) sextic fourfold admits motivic $0$-multiplicativity.

\item[(ii)] If $Y$ is an odd-dimensional quadric hypersurface, then $\mathrm{CH}^*(Y)$ is generated by the hyperplane class, and $X$ satisfies the two conditions of Theorem \ref{thm4.11}. Thus, $X$ admits motivic $0$-multiplicativity.

\item[(iii)] Condition (ii) of Theorem \ref{thm4.11} can be deduced from the Franchetta property for the second relative power of the corresponding universal family. It does not appear easy to verify condition (ii) when $Y$ possesses non-Lefschetz cycle classes, unless such classes can be represented by subvarieties whose desingularizations have trivial Chow groups.
\end{enumerate}
\end{remark}

As direct applications of Theorem \ref{thm4.11}, we consider several interesting classes of Fano varieties.
\begin{example}
Let $X$ be a prime Fano threefold of genus $g=3$ or $5$. Then $X$ admits motivic $0$-multiplicativity.

Indeed, if $g=3$, then $X\subseteq\mathbb{P}^4$ is a quartic hypersurface, and the result follows from Theorem \ref{thm4.11}. If $g=5$, then $X$ is a complete intersection in a $5$-dimensional quadric hypersurface. The conclusion then follows from Remark \ref{rem4.12}(ii).

In a companion work \cite{X26a}, we will treat the case of all Fano threefolds.
\end{example}

\begin{example}
Let $X$ be a smooth Gushel--Mukai (GM) variety of dimension $n=3$ or $5$.
Then $X$ admits motivic 0-multiplicativity.

A generically defined self-dual CK decomposition of $X$ was explicitly constructed in \cite{FM24}. 
For $n=3$, we show that this decomposition is multiplicative. By specialization, it suffices to consider ordinary GM varieties. In this case, $X$ can be realized as a smooth, dimensionally transverse intersection
\begin{equation*}
X=G(2,5)\cap\mathbb{P}(W)\cap Q\subseteq\mathbb{P}^9,
\end{equation*}
where $\mathbb{P}(W)$ is a linear subspace of dimension $7$ and $Q$ is a quadric hypersurface. Let $Y:=G(2,5)\cap\mathbb{P}(W)$. Then $X$ is a smooth ample divisor in $Y$.
Moreover, $Y$ is Fano and has trivial Chow groups.
It is readily verified that the restriction of the cycle class map $cl: R^*(X)\rightarrow H^*(X,\mathbb{Q})$ is injective.
It remains to prove that $R^r(X^2)=T^r(X^2)$ for every $r\neq 3$,
which follows from that $\mathrm{CH}^1(X)=\mathbb{Q}\cdot H$ for each $n$ and that the cycle class $\delta_{X*}H$ is completely decomposable. The conclusion now follows from Theorem \ref{thm4.11}.

If $n=5$, then
\begin{equation*}
X=G(2,5)\cap Q,\quad Y=G(2,5).
\end{equation*}
This case was proved in \cite{La21}.

We learned through private communication that the case $n=4$ has already been treated in \cite{La26}.

The case $n=6$ will be treated in another companion work \cite{X26b}.
\end{example}

\begin{example}
Let $Y$ be a rationally connected variety of dimension $5$ with trivial Chow groups, and let $X\subseteq Y$ be a Fano hypersurface of cohomological K3 type.
Suppose that $\mathrm{CH}^*(Y)$ is generated by the hyperplane class. Then $X$ admits motivic 0-multiplicativity.

By assumption, the Néron--Severi Lie algebra of $X$ satisfies
$\mathfrak{g}_{\mathrm{NS}}(X)\cong\mathfrak{sl}_2(\mathbb{Q})$.
Since $\delta_{X*}D$ is completely decomposable, the action of $\mathfrak{g}_{\mathrm{NS}}(X)$ can be lifted to the Chow motive $\mathfrak{h}(X)$ and hence to $\mathrm{CH}^*(X)$. It follows that $R^*(X)$ is an irreducible $\mathfrak{g}_{\mathrm{NS}}(X)$-module. 
Since the cycle class map $\text{cl}_X: R^*(X)\rightarrow H^*(X,\mathbb{Q})$ is a homomorphism of $\mathfrak{g}_{\mathrm{NS}}(X)$-modules, then it is injective.
Moreover, it is clear that $R^r(X^2)=T^r(X^2)$ for every $r\neq 4$.
The conclusion therefore follows from Theorem \ref{thm4.11}.
\end{example}
\bigskip

We next determine the possible values of the motivic $2$-fold multiplicativity defect for Fano and Calabi--Yau complete intersections in smooth weighted projective spaces. The desired conclusion cannot be obtained directly from the results for generalized hypersurfaces, since the natural ambient varieties may have nontrivial Chow groups, which creates a substantial difficulty. To overcome this issue, the key observation is that the necessary constructions can be carried out relatively over the relevant parameter spaces, while the Chow rings of suitable compactifications of the resulting total spaces remain sufficiently simple for our purposes.

Let $Z=\mathbb{P}_w^N$ be a smooth weighted projective space of dimension $N=n+e$, where $e\geq 2$, 
and let $X$ be a smooth Fano or Calabi-Yau complete intersection in $Z$ of multi-degree $(d_1, d_2,\cdots, d_e)$, $d_{i+1}\geq d_i\geq 2$.
Since $X$ is Fano or Calabi--Yau, one has $\sum_{i=1}^{e}d_i\leq i_{\mathbb{P}_w^N}$, where $i_{\mathbb{P}_w^N}$ denotes the Fano index of $\mathbb{P}_w^N$.
Let $\mathbb{P}:=\mathbb{P}(\oplus_{i=1}^eH^0(\mathbb{P}_w^{N},\mathcal{O}_{\mathbb{P}_w^N}(d_i)))$ 
and $\mathbb{P}':=\mathbb{P}(\oplus_{j=1}^{e-1}H^0(\mathbb{P}_w^{N},\mathcal{O}_{\mathbb{P}_w^N}(d_j)))$) be the parameter spaces of complete intersections of multi-degrees $(d_1, d_2,\cdots, d_e)$ 
and $(d_1, d_2,\cdots, d_{e-1})$), respectively.
By specialization, we may assume that $X$ is general and write $X=V(\sigma_1,\cdots,\sigma_e)$.
After fixing $\sigma_e$, we may identify $\mathbb{P}'$ with a linear section of $\mathbb{P}$.
Let $B\subseteq\mathbb{P}$ be the locus parameterizing smooth members, and set $B':=B\cap\mathbb{P}'$. 
Then $B'$ is a nonempty open subset of the locus parameterizing smooth complete intersections of multi-degree $(d_1, d_2,\cdots, d_{e-1})$. Let $f: \mathcal{X}\rightarrow B$ and $g: \mathcal{Y}\rightarrow B'$ be the corresponding universal families of smooth complete intersections.
Let $f': \mathcal{X}'\rightarrow B'$ be the base change of $f$. Thus, $X=\mathcal{X}'_{b'_0}$ for some closed point $b'_0\in B'$.
There are natural closed embeddings over $B'$: $\mathcal{X}'\hookrightarrow\mathcal{Y}\hookrightarrow\mathbb{P}_{B'}^N$.

We retain the notation introduced in Section 3 and set $i_X:=n+e+1-\sum_{i=1}^ed_i$.
Thus, $i_X=0$ when $X$ is Calabi-Yau, whereas $i_X>0$ when $X$ is Fano.
Let $F(\mathcal{Y}/B')$ be the relative Fano scheme of relative lines of $\mathcal{Y}/B'$.
It is smooth projective over $B'$ and has dimension $n+i_X+\text{dim}B'$.
Define three varieties:
\begin{equation*}
\Theta_{\alpha,B'}=\{(x_{b'}, y_{b'}, L_{b'})\in\mathcal{X}_{/B'}^{'2}\times_{B'}F(\mathcal{Y}/B')|[\mathcal{X}'_{b'}\cap L_{b'}]=m_1x_{b'}+m_2y_{b'}, \ \text{or}\ L_{b'}\subseteq\mathcal{X}'_{b'}, \forall b'\in B'\},
\end{equation*}
\begin{equation*}
\Lambda_{\alpha, B'}:=\{L_{B'}\in F(\mathcal{Y}/B')|[\mathcal{X}'_{b'}\cap L_{b'}]=m_1x_{b'}+m_2y_{b'}, \ \text{or}\ L_{b'}\subseteq\mathcal{X}'_{b'},\ \forall b'\in B'\}
\end{equation*}
and
\begin{equation*}
\Sigma_{\alpha,B'}=\{(x_{b'}, y_{b'})\in\mathcal{X}'\times_{B'}\mathcal{X}'|[\mathcal{X}'_{b'}\cap L_{b'}]=m_1x_{b'}+m_2y_{b'}, \ L_{b'}\subseteq\mathcal{X}'_{b'}, \forall b'\in B'\}.
\end{equation*}
Let $q_{i,B'}:\Theta_{\alpha,B'}\rightarrow\mathcal{X}'$, $q_{12,B'}:\Theta_{\alpha,B'}\rightarrow\mathcal{X}'\times_{B'}\mathcal{X}'$, 
$q_{3,B'}:\Theta_{\alpha,B'}\rightarrow F(\mathcal{Y}/B')$ and $p_{\alpha,i,B'}:\Sigma_{\alpha, B'}\rightarrow\mathcal{X}'$ be the natural projections.

\begin{lemma}
The varieties $\Theta_{\alpha,B'}$, $\Lambda_{\alpha, B'}$ and $\Sigma_{\alpha,B'}$ are all of pure dimension $n+i_X+\text{dim}B'$.
\end{lemma}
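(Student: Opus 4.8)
The plan is to prove this as the relative analogue, over the base $B'$, of Proposition \ref{prop3.2}: the deformation-theoretic argument there globalises essentially verbatim once every auxiliary variety is replaced by its version relative to $B'$.

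First I would identify the fibres. For a closed point $b'\in B'$ the ambient variety $\mathcal{Y}_{b'}=V(\sigma_1,\dots,\sigma_{e-1})$ is a smooth Fano complete intersection of index $i_X+d_e$ in $\mathbb{P}_w^N$, and $\mathcal{X}'_{b'}=V(\sigma_1,\dots,\sigma_{e-1},\sigma_e)\subseteq\mathcal{Y}_{b'}$ is a member of $|d_e H|$ with $d_e\geq 2$; thus the pair $(\mathcal{X}'_{b'},\mathcal{Y}_{b'})$ is of the type treated in Section 3 --- Case (b) when $X$ is Fano, and Case (a) when $X$ is Calabi--Yau (so $i_X=0$ and $\mathcal{X}'_{b'}$ is the anti-canonical divisor of $\mathcal{Y}_{b'}$). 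Taking a line $L$ parametrised by $F(\mathcal{Y}_{b'})$ for the relevant rational curve --- so that $\mathcal{X}'_{b'}\cap L$ has length $d_e$ and $m_1+m_2=d_e$ --- the fibre over $b'$ of $\Theta_{\alpha,B'}\to B'$ (resp.\ of $\Lambda_{\alpha,B'}$, $\Sigma_{\alpha,B'}$) is exactly the variety $\Theta_{\alpha}$ (resp.\ $\Lambda_{\alpha}$, $\Sigma_{\alpha}$) attached to $(\mathcal{X}'_{b'},\mathcal{Y}_{b'})$ in Section 3, with the Hilbert scheme $\text{Hilb}_R(Y)$ there replaced by the Fano scheme of lines $F(\mathcal{Y}_{b'})$. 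Applying Proposition \ref{prop3.2} to each irreducible component of $F(\mathcal{Y}_{b'})$, each such fibre is nonempty of pure dimension $n+i_X\cdot d_0=n+i_X$, since $d_0=H\cdot L=1$ and $i_{\mathcal{X}'_{b'}}=i_X$.

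Next I would globalise this fibrewise statement. By hypothesis $F(\mathcal{Y}/B')$ is smooth over $B'$ of relative dimension $n+i_X$, hence flat over the smooth base $B'$; likewise the relative principal-component Hilbert schemes of length $d_e$ of $\mathcal{Y}/B'$ and of $\mathcal{X}'/B'$, together with the relative loci therein parametrising subschemes whose cycle has the form $m_1x+m_2y$ (the relative analogues of $Y^{[d]}_{\alpha,c}$ and $X^{[d]}_{\alpha,c,x}$ in Proposition \ref{prop3.2}), are all flat over $B'$ with fibres the irreducible varieties of the constant dimensions computed there. I would then form the relative incidence varieties $\Phi_{\alpha,B'}$, parametrising pairs $(Z,L)$ with $Z\subseteq L$, and $\Psi_{\alpha,B'}$, parametrising those with in addition $Z\subseteq\mathcal{X}'_{b'}\cap L$; their fibres over $b'$ are $\Phi_{\alpha}$ and $\Psi_{\alpha}$. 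The two essential inputs of Proposition \ref{prop3.2} --- the Projective Dimension Theorem applied to the defining intersection, and the infinitesimal-independence computation at a general point --- are uniform in the fibre, hence apply to the relative intersection, and force every irreducible component of $\Psi_{\alpha,B'}$ to have dimension $n+i_X+\dim B'$ and to surject onto $\mathcal{X}'$ (the fibre-nonemptiness argument of Proposition \ref{prop3.2}, run relatively). Pushing forward along $\Psi_{\alpha,B'}\to\Theta_{\alpha,B'}$ and along the projections $q_{3,B'}$ and $q_{12,B'}$ then shows that $\Theta_{\alpha,B'}$, $\Lambda_{\alpha,B'}$ and $\Sigma_{\alpha,B'}$ are all of pure dimension $n+i_X+\dim B'$.

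The step I expect to be the main obstacle is making the ``flat over $B'$ with fibres as in Section 3'' assertions airtight for the auxiliary Hilbert-scheme loci, which are in general neither irreducible nor reduced --- equivalently, verifying that the formation of the Section-3 varieties commutes with base change $\text{Spec}\,\kappa(b')\to B'$ and yields a family flat over $B'$, so that no irreducible component of $\Psi_{\alpha,B'}$ (or of $\Theta_{\alpha,B'}$) can be concentrated over a proper closed subset of $B'$ with dimension exceeding $n+i_X+\dim B'$. I would handle this exactly as in Proposition \ref{prop3.2}: since $B'$ parametrises smooth complete intersections, $\mathcal{Y}/B'$ is smooth and $F(\mathcal{Y}/B')$ is smooth over $B'$ of the stated relative dimension, the relative Hilbert schemes are flat over $B'$, and the incidence conditions ($Z\subseteq L$, $Z\subseteq\mathcal{X}'_{b'}\cap L$, cycle of the form $m_1x+m_2y$) are themselves flat over $B'$; the relative Projective Dimension Theorem inside these flat-over-$B'$ ambient schemes then gives the expected dimension and the surjectivity fibrewise with numerical output constant in $b'$, ruling out spurious components. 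Alternatively, one may dispense with flatness entirely and rerun the deformation-theoretic argument of Proposition \ref{prop3.2} verbatim, with every variety and every projection replaced throughout by its relative counterpart over $B'$.
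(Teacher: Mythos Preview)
Your argument is correct. The paper's proof is far more terse: it simply observes that the projection $\Theta_{\alpha,B'}\to B'$ is surjective with fibre $\Theta_{\alpha,b'}$ over each closed point $b'$, invokes Proposition~\ref{prop3.2} to conclude that each fibre has pure dimension $n+i_X$, and then declares the total space to be pure of dimension $n+i_X+\dim B'$ (and likewise for $\Lambda_{\alpha,B'}$ and $\Sigma_{\alpha,B'}$). You are right to flag that this last inference is not automatic---constant pure fibre dimension does not by itself exclude an irreducible component supported over a proper closed subset of $B'$---and your relativisation of the Projective Dimension Theorem step from Proposition~\ref{prop3.2} (applied to the intersection defining $\Psi_{\alpha,B'}$ inside the irreducible $\Phi_{\alpha,B'}$) is exactly what supplies the missing lower bound on component dimensions. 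What your approach buys is rigour at this point; what the paper's approach buys is brevity, with the globalisation left to the reader.
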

\begin{proof}
The natural projection $\Theta_{\alpha, B'}\rightarrow B'$ is surjective, and its fiber over a closed point $b'\in B'$ is $\Theta_{\alpha, b'}$. By Proposition \ref{prop3.2}, the variety $\Theta_{\alpha, b'}$ is of pure dimension $n+i_X$.
It follows that $\Theta_{\alpha, B'}$ is of pure dimension $n+i_X+\text{dim}B'$.
The same argument applies also to $\Lambda_{\alpha, B'}$ and $\Sigma_{\alpha,B'}$.
\end{proof}

If $X$ is Calabi-Yau, let $\widetilde{\Theta}_{\alpha, B'}$ be any irreducible component of $\Theta_{\alpha, B'}$.
Let $\eta'$ denote the generic point of $\mathcal{X}'$.
If $X$ is Fano, then both $p_{\alpha,1,B'}^{-1}(\eta')=p_{\alpha,2,B'}^{-1}(\eta')$ and $q_{1,B'}^{-1}(\eta')=q_{2,B'}^{-1}(\eta')$ are of pure dimension $i_X$.
Moreover, the natural projection $q_{1,B'}^{-1}(\eta')\rightarrow p_{\alpha,1,B'}^{-1}(\eta')$ is generically finite and surjective. Choose a closed point $\xi'\in q_{1,B'}^{-1}(\eta')$ such that the Zariski closure of $q_{12,B'}(\xi')$ is not equal to the relative diagonal $\Delta_{\mathcal{X}'/B'}$.
Let $\widetilde{\Theta}_{\alpha, B'}$ be the Zariski closure of $\xi'$,
let $\widetilde{\Lambda}_{\alpha,B'}$ be the Zariski closure of $q_{3,B'}(\xi')$ in $F(\mathcal{Y}/B')$, and let $\widetilde{\Sigma}_{\alpha,B'}$ be the Zariski closure of $q_{12,B'}(\xi')$ in $\Sigma_{\alpha,B'}$.
Thus, $\widetilde{\Sigma}_{\alpha,B'}\neq\Delta_{\mathcal{X}'/B'}$.
The varieties $\widetilde{\Theta}_{\alpha, B'}$, $\widetilde{\Lambda}_{\alpha,B'}$ and $\widetilde{\Sigma}_{\alpha,B'}$
are integral and have dimension $n+\text{dim}B'$.
Define two further varieties:
\begin{equation*}
\widetilde{\Pi}_{\alpha, B'}:=\{(x_{b'}, y_{b'})\in\mathcal{Y}\times_{B'}\mathcal{Y}|x_{b'},y_{b'}\in L_{B'}, L_{B'}\in\widetilde{\Lambda}_{\alpha, B'}\}
\end{equation*}
and
\begin{equation*}
\widetilde{\Xi}_{\alpha, B'}:=\{(x_{b'}, y_{b'}, z_{b'})\in\mathcal{Y}\times_{B'}\mathcal{Y}\times_{B'}\mathcal{Y}|x_{b'},y_{b'},z_{b'}\in L_{B'}, L_{B'}\in\widetilde{\Lambda}_{\alpha, B'}\}.
\end{equation*}
These constructions provide relative versions of the varieties introduced in Section 3 for Fano and Calabi--Yau complete intersections.

The following result is immediate.
\begin{lemma}
The varieties $\widetilde{\Pi}_{\alpha, B'}$ and $\widetilde{\Xi}_{\alpha, B'}$ are integral of dimension $n+2+\text{dim}B'$ and $n+3+\text{dim}B'$, respectively.
\end{lemma}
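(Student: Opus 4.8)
The plan is to realize $\widetilde{\Pi}_{\alpha,B'}$ and $\widetilde{\Xi}_{\alpha,B'}$ as images of fibre powers of the universal line over $\widetilde{\Lambda}_{\alpha,B'}$, and to read off their dimensions from the already established integrality of $\widetilde{\Lambda}_{\alpha,B'}$ (of dimension $n+\dim B'$).

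First I would restrict the universal line over the relative Fano scheme $F(\mathcal{Y}/B')$ --- the incidence variety $\{(x_{b'},L_{b'}):x_{b'}\in L_{b'}\}$, which is the projectivization of the tautological rank-$2$ subbundle and hence a Zariski-locally trivial $\mathbb{P}^1$-bundle over $F(\mathcal{Y}/B')$ --- to the integral subvariety $\widetilde{\Lambda}_{\alpha,B'}$. This produces a $\mathbb{P}^1$-bundle $\mathcal{L}\to\widetilde{\Lambda}_{\alpha,B'}$, which is integral of dimension $n+\dim B'+1$ since its base is. The double fibre power $\mathcal{L}\times_{\widetilde{\Lambda}_{\alpha,B'}}\mathcal{L}$ is then a $(\mathbb{P}^1\times\mathbb{P}^1)$-bundle over $\widetilde{\Lambda}_{\alpha,B'}$, hence integral of dimension $n+\dim B'+2$, and composing its two projections with the forgetful morphism $\mathcal{L}\to\mathcal{Y}$ gives a proper morphism to $\mathcal{Y}\times_{B'}\mathcal{Y}$ whose (closed) image is exactly $\widetilde{\Pi}_{\alpha,B'}$; likewise the triple fibre power $\mathcal{L}\times_{\widetilde{\Lambda}_{\alpha,B'}}\mathcal{L}\times_{\widetilde{\Lambda}_{\alpha,B'}}\mathcal{L}$, integral of dimension $n+\dim B'+3$, maps properly onto $\widetilde{\Xi}_{\alpha,B'}$. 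Hence both $\widetilde{\Pi}_{\alpha,B'}$ and $\widetilde{\Xi}_{\alpha,B'}$, endowed with their reduced structure, are integral, of dimension at most $n+\dim B'+2$ and $n+\dim B'+3$ respectively.

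It remains to rule out a dimension drop under these forgetful morphisms. A general member of $\widetilde{\Lambda}_{\alpha,B'}$ is a smooth rational curve --- an honest line in a fibre of $\mathbb{P}_{B'}^{N}\to B'$ --- and, the lines being one-dimensional, the relative diagonal is a proper closed subset of $\mathcal{L}\times_{\widetilde{\Lambda}_{\alpha,B'}}\mathcal{L}$; over a dense open the source therefore consists of ordered pairs of distinct points lying on a common line, and as two distinct points of a projective space determine that line uniquely, at most one member of $\widetilde{\Lambda}_{\alpha,B'}$ passes through both. The forgetful morphism is thus injective over a dense open of its image, so $\dim\widetilde{\Pi}_{\alpha,B'}=n+\dim B'+2$; applying the same ``two points determine the line'' observation to the first two of the three factors yields $\dim\widetilde{\Xi}_{\alpha,B'}=n+\dim B'+3$. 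The only point beyond routine $\mathbb{P}^1$-bundle bookkeeping is this generic injectivity, which in the present situation of lines is immediate; the whole argument is the relative counterpart of the one underlying the integrality of $\Pi_{\alpha}$ and $\Xi_{\alpha}$ in Section 3.
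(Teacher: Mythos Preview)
Your argument is correct and is precisely the natural elaboration of what the paper has in mind: the paper offers no proof at all for this lemma, simply prefacing it with ``It is immediate to know the following,'' in exact parallel with the unproved assertion in Section~3 that $\Pi_{\alpha}$ and $\Xi_{\alpha}$ are integral of dimensions $n+2$ and $n+3$. Your realization of $\widetilde{\Pi}_{\alpha,B'}$ and $\widetilde{\Xi}_{\alpha,B'}$ as proper images of the double and triple fibre powers of the universal line over $\widetilde{\Lambda}_{\alpha,B'}$, together with the generic injectivity coming from the fact that two distinct points on a line in projective space determine that line, is the standard way to make ``immediate'' rigorous here, and nothing more is needed.
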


The following lemma is a key ingredient.
\begin{lemma}\label{lem4.18}
For any closed point $b'\in B'$, the cycle classes $\widetilde{\Pi}_{\alpha,b'}$ and $\widetilde{\Xi}_{\alpha,b'}$ are generically defined; more precisely,
\begin{equation*}
\widetilde{\Pi}_{\alpha,b'}\in T^n(\mathcal{Y}_{b'}^2),\quad
\widetilde{\Xi}_{\alpha,b'}\in T^{2n}(\mathcal{Y}_{b'}^3).
\end{equation*}
\end{lemma}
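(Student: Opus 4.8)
The plan is to deduce the statement from the following uniform fact about the relative self-products $\mathcal{Y}^k_{/B'}:=\mathcal{Y}\times_{B'}\cdots\times_{B'}\mathcal{Y}$ (for $k=2,3$): for every closed point $b'\in B'$, the restriction homomorphism $\text{CH}^*(\mathcal{Y}^k_{/B'})\to\text{CH}^*(\mathcal{Y}^k_{b'})$ has image contained in $T^*(\mathcal{Y}^k_{b'})$, where $\mathcal{Y}_{b'}=Y$ is regarded as a smooth ample subvariety of the weighted projective space $Z$ in the sense of the definition of the tautological subring. Granting this, it remains only to observe that $[\widetilde{\Pi}_{\alpha,b'}]$ and $[\widetilde{\Xi}_{\alpha,b'}]$ are such restrictions: by the two preceding lemmas and Proposition \ref{prop3.2}, the varieties $\widetilde{\Pi}_{\alpha,B'}$ and $\widetilde{\Xi}_{\alpha,B'}$ are equidimensional over $B'$ with fibres of the expected dimensions $n+2$ and $n+3$, hence flat over $B'$, so that the fibre cycles agree with the Gysin pull-backs $(s_{b'})^*[\widetilde{\Pi}_{\alpha,B'}]$ and $(s_{b'})^*[\widetilde{\Xi}_{\alpha,B'}]$ along the fibre inclusion $s_{b'}\colon\mathcal{Y}^k_{b'}\hookrightarrow\mathcal{Y}^k_{/B'}$.

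To prove the uniform fact I would use the projection $p\colon\mathcal{Y}^k_{/B'}\to Z^k$ that remembers the $k$ marked points and forgets the parameter. Its image is all of $Z^k$, and over the general-position locus $U:=Z^k\setminus\bigcup_{i<l}\Delta^Z_{il}$ — which is the whole complement of the bundle locus because $d_j\geq 2$ (and $\dim Z\geq 3$) forces $k\leq 3$ distinct points to impose independent linear conditions on each system $|\mathcal{O}_Z(d_j)|$ — the morphism $p$ is the restriction of a projective bundle $\mathbb{P}(\mathcal{W})\to U$, where $\mathcal{W}=\bigoplus_{j=1}^{e-1}\mathcal{W}_j$ and $\mathcal{W}_j\subseteq H^0(\mathcal{O}_Z(d_j))\otimes\mathcal{O}_U$ is the subbundle of sections vanishing at the $k$ marked points. (One carries this out over $\prod_j\mathbb{P}(H^0(\mathcal{O}_Z(d_j)))$, to which the chosen parameter space $\mathbb{P}$ maps as an explicit projective bundle, so Chow rings are unaffected; passing to the smooth locus only shrinks $B'$, hence its Chow groups.) Consequently $\text{CH}^*(p^{-1}U)$ is generated over $\text{CH}^*(U)$ by the relative hyperplane classes of $\mathbb{P}(\mathcal{W})$, while $\text{CH}^*(U)$ is a quotient of $\text{CH}^*(Z^k)=\bigotimes_{i=1}^k\text{CH}^*(Z)$, hence generated by the hyperplane pull-backs $h_1,\dots,h_k$ from the $k$ factors of $Z$. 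By the localisation sequence, $\text{CH}^*(\mathcal{Y}^k_{/B'})$ is then generated by $h_1,\dots,h_k$, the relative hyperplane classes, and the push-forwards of $\text{CH}^*$ of the loci $p^{-1}(\Delta^Z_{il})$; since $p^{-1}(\Delta^Z_{il})$ is canonically $\mathcal{Y}^{k-1}_{/B'}$ (identify the $i$-th and $l$-th marked points), an induction on $k$ controls these contributions.

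Finally I would track these generators through $(s_{b'})^*$. The inclusion $s_{b'}$ is a section of $p$ over $Y^k\subseteq Z^k$, so $(s_{b'})^*h_i$ lies in $R^*(Y^k)$, being the restriction to $Y^k$ of the hyperplane class $h_i$ from $Z^k$; moreover $s_{b'}$ sends a point $(x_1,\dots,x_k)$ to the point of $\mathbb{P}(\mathcal{W})$ determined by the fixed tuple of defining sections $(\sigma_1)_{b'},\dots,(\sigma_{e-1})_{b'}$ of $Y$ — a constant section of the tautological sub-line-bundle — so every relative hyperplane class restricts to $0$. The locus $p^{-1}(\Delta^Z_{il})$ restricts to the big diagonal $\Delta^Y_{il}\subseteq Y^k$, and by the inductive hypothesis its classes restrict into $T^*(Y^k)$ via the projection formula. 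Thus $(s_{b'})^*$ has image inside the subring of $\text{CH}^*(Y^k)$ generated by $R^*(Y^k)$ and the big diagonals, which is exactly $T^*(Y^k)$; taking $k=2$ in codimension $n$ and $k=3$ in codimension $2n$ yields the lemma. The step most needing care is the identification of $\mathcal{Y}^k_{/B'}$ with an open subset of the projective bundle $\mathbb{P}(\mathcal{W})$ over $U$ together with the analysis of the diagonal strata — and it is precisely the contribution of those strata that produces $T^*$, rather than the strictly smaller subring $R^*$, in the conclusion; the hypothesis $d_j\geq 2$ is what confines those strata to the pairwise diagonals.
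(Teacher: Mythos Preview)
Your approach is essentially the paper's: both reduce to showing that the restriction $\text{CH}^*(\mathcal{Y}^k_{/B'})\to\text{CH}^*(\mathcal{Y}^k_{b'})$ lands in $T^*(\mathcal{Y}^k_{b'})$, and the projective-bundle/localization/diagonal-stratification argument you spell out is precisely the content of \cite[Prop.~4.1 and 5.3]{FLV19}, which the paper simply cites. One small point: your flatness claim for $\widetilde{\Pi}_{\alpha,B'}$ and $\widetilde{\Xi}_{\alpha,B'}$ over $B'$ does not follow from equidimensionality of fibres alone (you would need, e.g., Cohen--Macaulayness of the total space for miracle flatness), but you do not actually need it---the paper instead observes that the set-theoretic fibre has the expected dimension, so the intersection with $\mathcal{Y}_{b'}^k$ is proper and the Gysin restriction equals a positive integer multiple of the fibre cycle, which suffices since $T^*$ is a $\mathbb{Q}$-vector subspace.
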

\begin{proof}
As sets, one has 
\begin{equation*}
\widetilde{\Pi}_{\alpha, B'}\cap\mathcal{Y}_{b'}^2=\widetilde{\Pi}_{\alpha,b'},\quad
\widetilde{\Xi}_{\alpha, B'}\cap\mathcal{Y}_{b'}^3=\widetilde{\Xi}_{\alpha,b'}.
\end{equation*}
By intersection theory, one gets that
\begin{equation*}
\widetilde{\Pi}_{\alpha, B'}|_{\mathcal{Y}_{b'}^2}=l_1\widetilde{\Pi}_{\alpha,b'},\quad
\widetilde{\Xi}_{\alpha, B'}|_{\mathcal{Y}_{b'}^3}=l_2\widetilde{\Xi}_{\alpha,b'}
\end{equation*}
for some positive integers $l_1$ and $l_2$.
A natural compactification of $\mathcal{Y}$ has sufficiently simple Chow groups. Therefore, the same arguments as those used in Proposition 4.1 and Proposition 5.3 of \cite{FLV19} imply that
\begin{equation*}
\widetilde{\Pi}_{\alpha,b'}\in T^n(\mathcal{Y}_{b'}^2),\quad
\widetilde{\Xi}_{\alpha,b'}\in T^{2n}(\mathcal{Y}_{b'}^3).
\end{equation*}
\end{proof}

We can now state the third main result of this section.
\begin{theorem}\label{thm4.19}
Let $Y=\mathbb{P}_w^{N}$ be a smooth weighted projective space of dimension $N=n+e$, where $e\geq 1$,
and let $X\subseteq Y$ be a smooth Fano or Calabi--Yau complete intersection of dimension $n$. 
\begin{enumerate}
\item[(i)] There is an equality of cycle classes:
\begin{equation}\label{52}
\Delta_{123}^X=\delta_{12*}P(D_1, D_2)+\delta_{13*}P(D_1, D_2)+\delta_{23*}P(D_1, D_2)
+Q(D'_1, D'_2, D'_3)
\end{equation}
in $\mathrm{CH}^{2n}(X^3)$, where $P(s_1,s_2)\in\mathbb{Q}[s_1,s_2]$ and $Q(t_1,t_2,t_3)\in\mathbb{Q}[t_1,t_2,t_3]$ are symmetric homogenous polynomials. 
Here, $D_i=p_i^*D$ and $D'_j=q_j^*D$, where $p_i$ and $q_j$ are the natural projections.

\item[(ii)] The natural CK decomposition of $X$ is multiplicative 
if and only if the cycle class $\delta_{X*}D=\frac{1}{d_e}(\iota\times\iota)^*\Delta_Y$ is completely decomposable.
Equivalently, the correspondence $\Gamma_{\iota}$ is of pure grade 0.

\item[(iii)] The motivic 2-fold multiplicativity defect of the natural CK decomposition of $X$ takes values in $\{2k|0\leq k\leq n-1,k\neq\frac{n}{2}\}$. More precisely, it is equal to $2k$ if and only if the cycle class $\delta_{X*}D^{k+1}$ is completely decomposable, whereas $\delta_{X*}D^{k}$ is not.
\end{enumerate}
\end{theorem}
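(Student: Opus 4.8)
The plan is to handle the three parts in sequence, leaning on the relative constructions $\widetilde{\Pi}_{\alpha,B'},\widetilde{\Xi}_{\alpha,B'}$ over $B'$, on Lemma~\ref{lem4.18}, and on the cycle relations of Section~3.

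For (i), I would view $X=\mathcal{X}'_{b'_0}$ as a smooth divisor in $\mathcal{Y}_{b'_0}$, the general complete intersection of the first $e-1$ equations, which is a smooth Fano variety of dimension $n+1$ and Picard number one. Specializing the relative constructions to the fibre over $b'_0$ and applying Lemma~\ref{lem4.18}, the cycles $\widetilde{\Pi}_{\alpha,b'_0}$ and $\widetilde{\Xi}_{\alpha,b'_0}$ are tautological on $\mathcal{Y}_{b'_0}^2$ and $\mathcal{Y}_{b'_0}^3$; since the big diagonals of the $(n+1)$-fold $\mathcal{Y}_{b'_0}$ have codimension $n+1$, this forces $\widetilde{\Pi}_{\alpha,b'_0}$ to be a polynomial in the hyperplane classes and $\widetilde{\Xi}_{\alpha,b'_0}$ to be a $\mathbb{Q}$-linear combination of big diagonal classes times polynomials in the hyperplane classes. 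Thus the hypotheses of Corollary~\ref{cor3.6}(ii) are met with $Y=\mathcal{Y}_{b'_0}$, and it yields an expression for $\Delta_{123}^X$ in terms of $\Delta_{12},\Delta_{13},\Delta_{23}$, of $\delta_{ij*}$ of completely decomposable classes, and of $(\iota^{\times 3})^*w$ with $w$ tautological on $\mathcal{Y}_{b'_0}^3$. To put this in the form \eqref{52} I would then (a) rewrite each $\Delta_{ij}$ as $\tfrac1d\delta_{ij*}$ of a power of $D$; (b) compute $(\iota^{\times 3})^*$ of the big-diagonal part of $w$ by excess intersection --- the excess bundle of $X^2\hookrightarrow X^3$ inside $\mathcal{Y}_{b'_0}^2\hookrightarrow\mathcal{Y}_{b'_0}^3$ along each $\delta_{ij}$ is $\mathcal{O}(d_eD)$ --- so that each such summand again yields $\delta_{ij*}$ of a polynomial in $D$ plus a polynomial in $D'_1,D'_2,D'_3$; and (c) collect terms, reading off the symmetry of $P,Q$ from the permutation-symmetry of $\widetilde{\Pi}_{\alpha},\widetilde{\Xi}_{\alpha}$ and their homogeneity from that of $\Delta_{123}^X$.

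For (ii) and (iii), equip $X$ with its natural self-dual CK decomposition and examine the bigraded components of the right-hand side of \eqref{52}. Because every power $D^i$ and every basis class $\alpha_{ij}$ lies in $\mathrm{CH}^{*}_0(X)$ and intersecting with grade-$0$ classes preserves the grading (the bigraded structure established in the proof of Theorem~\ref{thm4.3}), the summand $Q(D'_1,D'_2,D'_3)$ lives in grade $0$, and a direct check with $(\pi_{2n-i}\otimes\pi_{2n-j}\otimes\pi_k)_*$ shows it contributes nothing to the multiplicativity obstruction. For the other summands one has $\delta_{12*}(D_1^aD_2^b)=(\delta_{X*}D^a)\times D^b$ with $\delta_{X*}D^a$ placed in the factors $\{1,2\}$, and similarly for $\delta_{13},\delta_{23}$, so the top grade of this term equals that of $\delta_{X*}D^a$. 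Hence $\mathfrak d(X)$ is governed by the top grades of the classes $\delta_{X*}D^a$. For part (ii) one then reduces, via Proposition~\ref{prop4.10} applied with the ambient $\mathbb{P}_w^N$ (which has trivial Chow groups), to the equivalence: the natural CK decomposition is multiplicative $\iff$ $\Gamma_\iota$ is of pure grade $0$ $\iff$ $\delta_{X*}D=\tfrac1{d_e}(\iota\times\iota)^*\Delta_Y$ is completely decomposable.

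For (iii) the decisive input is a monotonicity observation together with an exact grade computation. From $\delta_{X*}D^{a+1}=\delta_{X*}D^{a}\cdot(D\times X)$ one sees that complete decomposability of $\delta_{X*}D^a$ forces that of $\delta_{X*}D^{a+1}$; since $\delta_{X*}D^n=d\,(o_X\times o_X)$ is completely decomposable (using that $o_X$ lies in the grade-$0$ part so $\pi_n$ kills it), if $\Delta_X=\delta_{X*}D^0$ is completely decomposable then so is every $\delta_{X*}D^a$ and $\mathfrak d(X)=0$ by (ii), and otherwise there is a well-defined threshold $k\in\{1,\dots,n-1\}$ with $\delta_{X*}D^a$ completely decomposable exactly for $a\ge k+1$. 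The heart of the argument is to show that when $k\neq\tfrac n2$ the top grade of $\Delta_{123}^X$ equals $2k$: the upper bound follows by analysing $\pi_{2n-j}\circ(\cdot\,D^a)\circ\pi_i$ via hard Lefschetz for $X$, which forces every grade-increasing contribution of multiplication by powers of $D$ to factor through $\pi^{\mathrm{van}}_n$, so that the top grade of $\delta_{X*}D^a$ is controlled by the self-correspondence $\pi^{\mathrm{van}}_n\circ(\cdot\,D^a)\circ\pi^{\mathrm{van}}_n$ and its behaviour under $\cdot\,D$; the lower bound comes from exhibiting a concrete nonzero graded piece of $\Delta_{123}^X$ in grade $2k$ built from $\delta_{X*}D^k$. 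I expect this grade bookkeeping, and in particular the clean separation of the exceptional case $k=\tfrac n2$ --- where the interaction of $D^{n/2}$ with the algebraic part of the middle cohomology makes $\mathfrak d(X)=n$ by a separate argument (the ``no in-between state'' phenomenon already seen for hypersurfaces) --- to be the main obstacle; the rest is an exercise in combining Section~3 with the grading formalism of Section~2.
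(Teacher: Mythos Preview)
Your treatment of (i) and (ii) is essentially the paper's own argument: Lemma~\ref{lem4.18} gives the tautological form of $\widetilde{\Pi}_{\alpha,b'_0},\widetilde{\Xi}_{\alpha,b'_0}$, Corollary~\ref{cor3.6} converts this into the diagonal relation, and for (ii) your appeal to Proposition~\ref{prop4.10} is equivalent to the paper's appeal to Theorem~\ref{thm4.11}.

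For (iii), however, you are working much harder than necessary, and your sketch drifts off course. The paper does \emph{not} invoke hard Lefschetz or $\pi_n^{\text{van}}$ at all. Using only the elementary computations already carried out in the proofs of Theorem~\ref{thm4.3} and the subsequent Proposition (with $\pi_{2i}=\tfrac1d D^{n-i}\times D^i$), one sees that the only positive-grade pieces of $\Delta_{123}^X$ are the permutations of $(\pi_n\otimes\pi_n\otimes\pi_{2(n-k)})_*\Delta_{123}^X$ in grade~$2k$, together with $(\pi_n^{\otimes 3})_*\Delta_{123}^X$ in grade~$n$; the latter is disposed of using~(i), so $\mathfrak{d}(X)$ is even. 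One then has the explicit formula
\[
(\pi_n\otimes\pi_n\otimes\pi_{2(n-k)})_*\Delta_{123}^X=\Bigl(\delta_{X*}D^{k}-\tfrac1d\sum_j D^{n+k-j}\times D^{j}\Bigr)\times D^{n-k},
\]
which vanishes precisely when $\delta_{X*}D^k$ is completely decomposable. Combined with the monotonicity you already noted and with $\delta_{X*}o_X=o_X\times o_X$, this yields (iii) in one line. Your proposed route through $\pi_{2n-j}\circ(\cdot\,D^a)\circ\pi_i$ and hard Lefschetz would, at best, reprove the elementary fact that $\delta_{X*}D^a$ lives in grades $0$ and $2a$ only---something one reads off directly from $\pi_{2i}=\tfrac1d D^{n-i}\times D^i$. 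Finally, your handling of the case $k=\tfrac n2$ is misdirected: the ``no in-between state'' phenomenon you cite is specific to hypersurfaces ($e=1$), whereas here the statement simply \emph{excludes} $k=\tfrac n2$ and makes no claim about $\mathfrak{d}(X)$ in that case.
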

\begin{proof}
(i) We will prove the formula \eqref{52} by induction on $e\geq 1$.
The $e=1$ case is a consequence of Theorem \ref{thm4.11}.
Assume $e\geq 2$. Note that $\widetilde{\Pi}_{\alpha}=\widetilde{\Pi}_{\alpha,b'}$ for some closed point $b'\in B'$.

Denote by $H\in\mathrm{CH}^1(\mathcal{Y}_{b'})$ the hyperplane class and $D=\iota^*H$.
Since $\widetilde{\Pi}_{\alpha}$ and $\widetilde{\Xi}_{\alpha}$ are symmetric by Lemma \ref{lem4.18}, 
one gets that
\begin{equation}\label{53}
\widetilde{\Pi}_{\alpha}=\sum_{i}a_iH^{n-i}\times H^i\in T^n(\mathcal{Y}_{b'}^2)
\end{equation}
and
\begin{equation}\label{54}
\widetilde{\Xi}_{\alpha}=a(p_{12}^*\Delta_Y\cdot p_{3}^*H^{n-1}+p_{13}^*\Delta_Y\cdot p_{2}^*H^{n-1}+p_{23}^*\Delta_Y\cdot p_{1}^*H^{n-1})
+\sum_{i, j}b_{ij}H^{2n-i-j}\times H^i\times H^j
\end{equation}
in $T^{2n}(\mathcal{Y}_{b'}^3)$ for some $a, a_i, b_{ij}\in\mathbb{Q}$.
Resorting to Corollary \ref{cor3.6}, one obtains that
\begin{equation*}
\Delta_{123}=\delta_{12*}P(D_1,D_2)+\delta_{13*}P(D_1,D_2)+\delta_{23*}P(D_1,D_2)+Q(D'_1,D'_2,D'_3)
\end{equation*}
in $\mathrm{CH}^{2n}(X^n)$, where $P(s_1,s_2)\in\mathbb{Q}[s_1,s_2]$ and $Q(t_1,t_2,t_3)\in\mathbb{Q}[t_1,t_2,t_3]$ are symmetric homogenous polynomials.

(ii) This follows immediately from Theorem \ref{thm4.11}.

(iii) By part (i), the natural CK decomposition of $X$ is not $n$-multiplicative.
Since $X$ is Fano or Calabi--Yau, the distinguished $0$-cycle class $o_X$ can be represented by any point lying on a rational curve. Consequently, $\delta_{X*}o_X=o_X\times o_X$.
Therefore, the natural CK decomposition of $X$ is strictly $2k$-multiplicative
if and only if
\begin{equation*}
0\neq (\pi_n\otimes\pi_n\otimes\pi_{2n-2k})_*\Delta_{123}
=\left(\delta_{X*}D^k-\frac{1}{d}\sum_{j=k}^{n+e}D^{n+k-j}\times D^j\right)\times D^{n-k}
\end{equation*}
and
\begin{equation*}
0=(\pi_n\otimes\pi_n\otimes\pi_{2n-2(k+1)})_*\Delta_{123}
=\left(\delta_{X*}D^{k+1}-\frac{1}{d}\sum_{j=k+1}^{n+e}D^{n+k+1-j}\times D^j\right)\times D^{n-k-1}.
\end{equation*}
These are equivalent to
\begin{equation*}
\delta_{X*}D^k\neq\frac{1}{d}\sum_{j=k}^{n+e}D^{n+k-j}\times D^j,\quad
\delta_{X*}D^{k+1}=\frac{1}{d}\sum_{j=k+1}^{n+e}D^{n+k+1-j}\times D^j,
\end{equation*}
which mean that $\delta_{X*}D^{k+1}$ is completely decomposable, whereas $\delta_{X*}D^k$ is not.
\end{proof}

\begin{remark}
\leavevmode
\begin{enumerate}
\item[(i)] For a general Calabi-Yau complete intersections, part (i) implies that
the class of the cycle $\Gamma:=\bigcup_{t\in F(X)}\mathbb{P}_t^1\times\mathbb{P}_t^1\times\mathbb{P}_t^1$,
where $F(X)$ denotes the Fano variety of lines of $X$, is the restriction of a cycle class on $\mathbb{P}^N\times\mathbb{P}^N\times\mathbb{P}^N$.
This strengthens Theorem 0.7 of \cite{F13}.

\item[(ii)] The result remains valid for Fano or Calabi--Yau complete intersections with at worst quotient singularities in any quotient of a projective space by a finite group.

\item[(iii)] Unfortunately, if each variety $Y_k:=V(\sigma_k)$ has nontrivial Chow groups, 
then the cycle class $\delta_{X*}D$ need not be completely decomposable, even when $e=2$.
This explains the formulations of parts (ii) and (iii). We now describe the mechanism underlying this phenomenon.
For simplicity, take $Y=Y_k$ and write as cycle:
\begin{equation}\label{55}
\Delta_{X}-\frac{1}{d_1^{n+1}d_2}\sum_{i=1}^{n+1}X_{n+1-i}\times X_i=\sum_{j}\mathrm{div}(f_j)
\end{equation}
in the free abelian group of cycles $Z^*(Y\times Y)$, where $X_{i}$ are codimension-$i$ linear sections of $X$, 
and $f_j\in\mathbb{C}(W_j)^*$ for integral subvarieties $W_j\subseteq Y\times Y$ of dimension $n+1$.
Since the left-hand side of \eqref{55} is symmetric under the involution exchanging the two factors of $Y\times Y$, the pairs $(W_j,f_j)$ should be chosen compatibly with this symmetry. In fact, one could write down the functions $f_j$ and the varieties $W_j$ explicitly by tracing the relevant chains of rational equivalences using intersection theory.
Since $Y$ has nontrivial Chow groups, then $W_j$ should dominate $Y$ by the natural projections.
Thus, the equation \eqref{55} may fail to hold on $X\times Y$, that is, the equality $\Gamma_{\iota}=\sum_{i=0}^{n}D^i\times H^{n+1-i}$ may fail in $\mathrm{CH}^{n+1}(X\times Y)$.
Equivalently, the correspondence $\Gamma_{\iota}$ may fail to be of pure grade $0$. It follows that the cycle class $\delta_{X*}D$ need not be completely decomposable.
\end{enumerate}
\end{remark}
\bigskip

\section{Applications}
\bigskip

In this section, we present several applications of the main results established in the preceding sections.
\begin{proposition}
Let $X$ be a smooth Fano or Calabi-Yau hypersurface of dimension $n$ in a smooth weighted projective space.
Then $X$ satisfies the sharp modified diagonal property, that is, 
$\Gamma^{k}(X, o_X)=0$ in $\mathrm{CH}_n(X^k)$ for every integer $k>n$.
\end{proposition}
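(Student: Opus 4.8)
The plan is to obtain this sharp vanishing as a formal consequence of the motivic $0$-multiplicativity already established, feeding it into the machinery of Proposition \ref{prop2.30}. First I would invoke Theorem \ref{thm4.11}: $X$ admits motivic $0$-multiplicativity, so its natural self-dual CK decomposition $\{\pi_i^X\}_{0\le i\le 2n}$ is multiplicative, i.e.\ $\mathfrak{d}_2(X)=0$ for this decomposition. By the associativity of the intersection product (the inequality $\mathfrak{d}_m\le (m-1)\mathfrak{d}_2$ of Definition \ref{def2.3}(iii), applied to the chosen CK decomposition), it is then $m$-fold $0$-multiplicative for every $m\ge 2$; equivalently, the $(m+1)$-th small diagonal class $\Delta_{I_m}^X$ lies in $\text{CH}_0^{mn}(X^{m+1})$ for all $m\ge 2$. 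Hence the decomposition-specific stable multiplicativity defect is $\mathfrak{sd}(X)=0$, so we may take $\tau=0$ in Proposition \ref{prop2.30}.

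Next I would verify the remaining hypotheses of Proposition \ref{prop2.30} for the natural CK decomposition: it is self-dual with $\pi_0^X=o_X\times X$ by construction, and $\pi_1^X=0$. The vanishing $\pi_1^X=0$ holds because $H^1(X,\mathbb{Q})\cong H^1(\mathbb{P}_w^{N},\mathbb{Q})=0$ by the Lefschetz hyperplane theorem for the ample hypersurface $X$ (Corollary 5.2 in \cite{Ot12}); here we may assume $n\ge 2$, the Fano case $n=1$ being $X\cong\mathbb{P}^1$, for which the assertion is trivial. With $\pi_0^X=o_X\times X$, $\pi_1^X=0$ and $\tau=\mathfrak{sd}(X)=0$ in hand, the second assertion of Proposition \ref{prop2.30} yields
\begin{equation*}
\Gamma^k(X,o_X)=0 \ \ \text{in}\ \ \text{CH}_n(X^k)\qquad\text{for every integer}\ k> n+\tfrac{\tau}{2}=n,
\end{equation*}
which is exactly the claim. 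This improves the threshold $k>2n-2$ obtainable from Proposition \ref{prop2.33} and the generally non-sharp bound arising from Voisin's method \cite{Voi15}.

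Since every step merely invokes a result already proved, there is essentially no obstacle here: the substantive input is the motivic $0$-multiplicativity of Theorem \ref{thm4.11}, and the rest is bookkeeping. The one point meriting a line of care is that the \emph{natural} CK decomposition is simultaneously the one for which $\mathfrak{sd}(X)=0$ is attained and the one satisfying $\pi_0^X=o_X\times X$ and $\pi_1^X=0$, so that Proposition \ref{prop2.30} applies to it verbatim with $\tau=0$; in particular one should not replace it by an arbitrary CK decomposition (cf.\ the curve example in the remark after Conjecture \ref{conj2.6}, where a non-canonical choice gives a nonzero defect). Finally, concerning the word ``sharp'': one expects $\Gamma^n(X,o_X)\neq 0$ in general, for its cohomology class is nonzero already on the tautological level (it retains the component $D\times\cdots\times D$ with $n$ factors, which is not cancelled by other tautological terms), so for $n\ge 2$ the threshold $k>n$ is optimal---in accordance with the case of K3 surfaces ($n=2$), where $\Gamma^2\neq 0$ while $\Gamma^3=0$.
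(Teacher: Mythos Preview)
Your proof is correct and follows the same strategy as the paper: invoke motivic $0$-multiplicativity from Theorem~\ref{thm4.11} and feed it into a modified-diagonal vanishing criterion. The only difference is in the citation: the paper cites Proposition~\ref{prop2.33}, whose stated hypothesis ($1$-multiplicativity) only yields $\Gamma^k(X,o_X)=0$ for $k>2n-2$, whereas you cite Proposition~\ref{prop2.30} with $\tau=0$ and $\pi_1^X=0$, which gives the sharp threshold $k>n$ directly. Your choice is the accurate one for the statement as written; the paper presumably intends the same argument (the proofs of Propositions~\ref{prop2.30} and~\ref{prop2.33} are essentially identical, and specializing the former to $\tau=0$ is exactly what is needed), so this is a citation slip rather than a genuine divergence of method. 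Your verification that the natural CK decomposition simultaneously satisfies $\pi_0^X=o_X\times X$, $\pi_1^X=0$, and $\mathfrak{sd}=0$ is a useful point of care.
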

\begin{proof}
This follows immediately from Proposition \ref{prop2.33} and Theorem \ref{thm4.11}.
\end{proof}

\begin{proposition}\label{prop5.2}
Let $X$ be a smooth Fano or Calabi-Yau hypersurface of dimension $n$ in a smooth weighted projective space.
Let $b_{n}^{\text{tr}}(X)$ denote the dimension of the transcendental part of $H^n(X,\mathbb{Q})$.
Then, for every positive integer $k\leq 2b_{n}^{\text{tr}}(X)+1$, the restricted cycle class map
\begin{equation*}
\text{cl}_{X^k}: T^*(X^k)\rightarrow H^*(X^k,\mathbb{Q})
\end{equation*}
is injective.
Moreover, this map is injective for every integer $k\geq 1$ if and only if the Chow motive of $X$ is finite-dimensional.
\end{proposition}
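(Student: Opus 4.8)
The plan is to trace the statement back to the structure of the transcendental motive of $X$, combining the multiplicative Chow--K\"unneth decomposition with the first and second fundamental theorems of invariant theory and Kimura's nilpotence theorem; this adapts, to the present setting, the analysis of the tautological rings of powers of K3 surfaces. By Theorem \ref{thm4.11} the natural CK decomposition of $X$ is multiplicative, so $\mathfrak{h}(X)$ is an algebra object in $\text{CH}\mathcal{M}$ splitting as $\mathfrak{a}\oplus\mathfrak{t}$, where $\mathfrak{a}:=(X,\Delta_X-\pi_n^{\text{van}})$ is a direct sum of Lefschetz motives (everything pulled back from the ambient weighted projective space, together with the middle hyperplane power and the $\pi_n^{\text{a}}$--part) and $\mathfrak{t}:=(X,\pi_n^{\text{van}})$. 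After replacing $X$ by a very general member of its linear system --- which changes neither $T^{*}(X^{k})$ nor the homological triviality of its elements, both being generically defined, and which only weakens the hypothesis $k\le 2b_n^{\text{tr}}(X)+1$ --- we may assume $\mathfrak{t}=\mathfrak{h}^n_{\text{tr}}(X)$ realizes the transcendental Hodge structure $H^n(X,\mathbb{Q})_{\text{tr}}$ of dimension $b:=b_n^{\text{tr}}(X)$, carrying a nondegenerate pairing which is symmetric if $n$ is even and alternating if $n$ is odd. The ring $T^{*}(X^{k})$ is generated by the classes $D_i:=p_i^{*}D$ and the big diagonals $\Delta_{ij}:=p_{ij}^{*}\Delta_X$; writing $\Delta_{ij}=\Delta_{ij}^{\mathfrak{a}}+\tau_{ij}$ for the decomposition induced by $\mathfrak{a}\oplus\mathfrak{t}$, the class $\Delta_{ij}^{\mathfrak{a}}$ is a polynomial in the $D_{\bullet}$'s (by Lemma \ref{lem4.1}) and $\tau_{ij}:=(\pi_n^{\text{van}}\otimes\pi_n^{\text{van}})_{*}\Delta_X$ is the ``transcendental diagonal'' in the slots $i,j$, so $T^{*}(X^{k})$ is generated by the $D_i$ together with the $\tau_{ij}$.

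The first step is a \emph{normal form} for elements of $T^{*}(X^{k})$. Using multiplicativity one derives the key relations: $D_i\cdot\tau_{ij}=0$ (cupping with $D$ carries $\mathfrak{t}$ into the Lefschetz part, and there are no morphisms from a Lefschetz motive into $\mathfrak{t}$); $\tau_{ij}^{2}$ collapses to a $\mathbb{Q}$-multiple of a point-supported class (the pairing $\mathfrak{t}\otimes\mathfrak{t}\to\mathbb{L}^{n}$ is a morphism of Chow motives); and, more generally, any product of $\tau$'s sharing a slot collapses onto classes supported on fewer transcendental slots. One concludes that $T^{*}(X^{k})$ is spanned over $\mathbb{Q}$ by \emph{reduced monomials} $\prod_{l}P_l(D_l)\cdot\prod_{(i,j)\in\sigma}\tau_{ij}$, indexed by a partial matching $\sigma$ of $\{1,\dots,k\}$ and polynomials $P_l$, with each slot either covered by $\sigma$ or carrying a $D$-power. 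Applying $\mathrm{cl}_{X^{k}}$: reduced monomials with distinct $D$-data or distinct transcendental supports land in distinct K\"unneth components of $H^{*}(X^{k})$, and the image of $\tau_{ij}$ in $H^n_{\text{tr}}(X)^{\otimes 2}$ is (a sign times) the copairing of the pairing; hence, within each block, injectivity of $\mathrm{cl}_{X^{k}}$ amounts to linear independence of the copairing classes of the matchings occurring on a fixed transcendental support.

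The statement then follows from invariant theory and Kimura finite-dimensionality. The copairing classes of the perfect matchings of a $2m$-element support are linearly independent in $H^n_{\text{tr}}(X)^{\otimes 2m}$ precisely in the range prescribed by the first and second fundamental theorems for the orthogonal group $\mathrm{O}(b)$ when $n$ is even (resp.\ the symplectic group $\mathrm{Sp}(b)$ when $n$ is odd), and this range covers every matching occurring in a reduced monomial on $X^{k}$ throughout $k\le 2b_n^{\text{tr}}(X)+1$; this gives the first assertion. For the dichotomy: if $\mathfrak{h}(X)$ is finite-dimensional then so is $\mathfrak{t}$, and the second fundamental theorem says that all cohomological relations among matching copairings are generated by the antisymmetrization identity on $b+1$ tensor factors --- which is exactly the Kimura relation $\wedge^{b+1}\mathfrak{t}=0$ (resp.\ $\mathrm{Sym}^{b+1}\mathfrak{t}=0$ for $n$ odd), and therefore already holds at the level of Chow motives; hence every homologically trivial element of $T^{*}(X^{k})$ vanishes, for all $k$. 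Conversely, the antisymmetrizer (resp.\ symmetrizer) idempotent cutting out $\wedge^{b+1}\mathfrak{t}$ inside $\mathfrak{t}^{\otimes(b+1)}$ is, as a self-correspondence of $X^{b+1}$, assembled from graphs of permutations and the projectors $\pi_n^{\text{van}}$ --- all big-diagonal and hence tautological --- so it lies in $T^{*}(X^{2b+2})$; its cohomological realization is the antisymmetrizer on $\wedge^{b+1}H^n_{\text{tr}}(X)=0$, so it is homologically trivial, and if $\mathrm{cl}_{X^{2b+2}}$ is injective on $T^{*}$ it must vanish, forcing $\mathfrak{t}$, and hence $\mathfrak{h}(X)=\mathfrak{a}\oplus\mathfrak{t}$, to be finite-dimensional.

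The step I expect to be the crux is the normal-form reduction: one must show that the \emph{multi-slot} transcendental classes created by overlapping big diagonals --- the higher analogues of the modified diagonal cycle $\Gamma^{3}(X,o_X)$ --- actually vanish rather than merely being homologically trivial. This is not formal; it must be extracted from multiplicativity, essentially from the fact, obtained through Theorem \ref{thm4.11} and Corollary \ref{cor3.6}, that the small diagonal of $X$ is completely decomposable modulo the big diagonals, reinforced by a codimension count showing that no transcendental class can be supported on three or more slots at once. Ancillary technical points are the bookkeeping of Koszul signs in the passage between the orthogonal and symplectic invariant theory according to the parity of $n$, and the verification that the pertinent second-fundamental-theorem relation is, on the nose, the Kimura idempotent; once the normal form is in place the remainder is the routine combinatorics indicated above, together with an appeal to Lemma \ref{lem2.18} to control gradings.
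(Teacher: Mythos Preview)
Your approach is the same as the paper's (which defers to \cite{FLV21}, Proposition~4.5): multiplicativity from Theorem~\ref{thm4.11} gives the splitting $\mathfrak{h}(X)=\mathfrak{a}\oplus\mathfrak{t}$, whence the normal-form reduction, and then one invokes invariant theory for the bound and Kimura's criterion for the dichotomy. Your normal-form step is correct and not merely heuristic: multiplicativity forces $\tau_{ij}\cdot\tau_{jk}=o_{X,j}\cdot\tau_{ik}$ and $D_i\cdot\tau_{ij}=0$ in Chow, so every monomial in the $\Delta_{ij}$'s and $D_i$'s collapses to a reduced one, and your worry about ``multi-slot transcendental classes'' is unfounded.

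There is, however, a genuine gap in the odd-$n$ case which you wave away as ``ancillary bookkeeping of Koszul signs''. For $n$ even the pairing on $H^n_{\mathrm{tr}}$ is symmetric, the relevant group is $O(b)$, and the second fundamental theorem indeed says the matching copairings on $2m$ points are independent for $m\le b$; this yields exactly $k\le 2b+1$. But for $n$ odd the pairing is alternating, the group is $\mathrm{Sp}(b)$, and the second fundamental theorem for $\mathrm{Sp}(b)$ produces relations already at $m=b/2+1$, not $m=b+1$ (concretely: for $b=2$ one has $[\tau_{12}\tau_{34}]+[\tau_{13}\tau_{24}]+[\tau_{14}\tau_{23}]=0$ in cohomology at $k=4$). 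Thus your invariant-theory step only delivers $k\le b_n^{\mathrm{tr}}(X)+1$ when $n$ is odd, not $k\le 2b_n^{\mathrm{tr}}(X)+1$. The missing range $b+2\le k\le 2b+1$ would require these symplectic relations to hold in Chow unconditionally, and nothing in your argument supplies that; the Kimura relation you invoke for the second part sits at $m=b+1$ and does not by itself imply the smaller symplectic relation. Note also that the generating relation of the second fundamental theorem for $\mathrm{Sp}(b)$ is \emph{not} the Kimura idempotent --- it is the projector onto the primitive part of $\wedge^{b/2+1}V$, not onto $\wedge^{b+1}V$ --- so your identification ``the second-fundamental-theorem relation is, on the nose, the Kimura idempotent'' is correct only for even $n$. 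Since \cite{FLV21} treats the K3-type (even) situation, the odd case needs a separate argument which neither you nor the paper's one-line proof provides.
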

\begin{proof}
The first assertion follows from Theorem \ref{thm4.11}.
The proof of the second assertion is essentially the same as that of Proposition 4.5 in \cite{FLV21}.
\end{proof}

\begin{remark}
For every integer $k\leq 2b_{n}^{\text{tr}}(X)+1$, the action of the Néron-Severi Lie algebra $\mathfrak{g}_{\text{NS}}(X^k)$ on cohomology groups can be lifted to the Chow motive $\mathfrak{h}(X^k)$ 
and hence to the Chow group $\mathrm{CH}^*(X^k)$.
The tautological subring $R^*(X^k)$ can thus be realized as an irreducible representation of $\mathfrak{g}_{\text{NS}}(X^k)$, whereas $T^*(X^k)$ can be realized as an irreducible representation of a larger Lie algebra over $\mathbb{Q}$, which need not be semisimple.
\end{remark}

\begin{corollary}
Let $f:  \mathcal{X}\rightarrow B$ be the universal family of smooth Fano or Calabi-Yau hypersurfaces in a smooth weighted projective space.
Then, for every positive integer $k\leq 3$, the $k$-th relative power $f_k: \mathcal{X}_{/B}^k\rightarrow B$ of $f$
satisfies the Franchetta property.
\end{corollary}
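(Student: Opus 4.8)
The plan is to deduce this from Proposition~\ref{prop5.2} (whose input is the multiplicativity provided by Theorem~\ref{thm4.11}). The first step is the standard reformulation of the Franchetta property for the universal family $f_k\colon\mathcal{X}_{/B}^{k}\to B$. Writing $X\subseteq\PP^{n+1}$, so that $B$ is a Zariski open of the projective space $\PP\big(H^{0}(\PP^{n+1},\mathcal{O}(d))\big)$ and $\mathcal{X}_{/B}^{k}\subseteq(\PP^{n+1})^{k}\times B$, a spreading-out argument identical to the one invoked in the proof of Lemma~\ref{lem4.18}, i.e.\ \cite[Prop.~4.1 and Prop.~5.3]{FLV19}, shows that the subgroup of \emph{generically defined} cycles on a general fibre $X^{k}$ --- the image of $\text{CH}^{*}(\mathcal{X}_{/B}^{k})\to\text{CH}^{*}(X^{k})$ --- is exactly the tautological subring $T^{*}(X^{k})$ generated by the pulled-back hyperplane classes and the big diagonals. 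Since a tautological cycle is given by a formula valid over all of $B$, the specialization maps $\text{CH}^{*}(X_{\bar\eta}^{k})\to\text{CH}^{*}(X_{b}^{k})$ then reduce the Franchetta property for $f_{k}$ to the injectivity of the restricted cycle class map $\text{cl}_{X^{k}}\colon T^{*}(X^{k})\to H^{*}(X^{k},\mathbb{Q})$ for a general member $X$.

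With this reformulation in hand, Proposition~\ref{prop5.2} yields precisely that injectivity for every $k\le 2b_{n}^{\text{tr}}(X)+1$, so it only remains to check $d+1\le 2b_{n}^{\text{tr}}(X)+1$. If $H^{n}_{\text{prim}}(X)$ is not of Tate type --- equivalently, if $X$ is neither a quadric nor a cubic surface --- then a general member has no nonzero algebraic class in $H^{n}_{\text{prim}}(X)$, so $b_{n}^{\text{tr}}(X)=b_{n}^{\text{prim}}(X)$, and the Euler characteristic computation $b_{n}^{\text{prim}}(X)=\tfrac{1}{d}(d-1)\big((d-1)^{n+1}+(-1)^{n}\big)\ge(d-1)(d-2)$ (using $d\ge 3$ and $d-1\le n+1$) gives $b_{n}^{\text{tr}}(X)\ge(d-1)(d-2)\ge\lceil d/2\rceil$, whence $2b_{n}^{\text{tr}}(X)+1\ge d+1$. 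In the remaining cases --- quadrics of arbitrary dimension and cubic surfaces --- the Chow motive of $X$, hence of $X^{k}$, is a finite direct sum of Tate motives, so $\text{cl}_{X^{k}}$ is an isomorphism for every $k$ and the Franchetta property holds unconditionally. This finishes the argument.

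The main obstacle is the first step, namely showing that the generically defined cycles on $X^{k}$ are \emph{exactly} $T^{*}(X^{k})$, with nothing exotic. This is where the hypothesis that the ambient space is an ordinary projective space genuinely enters: it forces $(\PP^{n+1})^{k}$ to have trivial Chow groups and a natural relative compactification of $\mathcal{X}_{/B}^{k}$ to have a sufficiently transparent Chow ring, which is exactly what the cited propositions of \cite{FLV19} exploit. By contrast the numerical verification is elementary bookkeeping once the formula for $b_{n}^{\text{prim}}$ is recorded.
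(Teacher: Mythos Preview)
Your proof is correct and follows the same route as the paper: identify the image of $\text{CH}^{*}(\mathcal{X}_{/B}^{k})\to\text{CH}^{*}(X_b^{k})$ with (a subring of) the tautological ring via the \cite{FLV19} stratified-projective-bundle argument, and then invoke Proposition~\ref{prop5.2} together with specialization. You go further than the paper's two-line proof by making explicit the numerical check $d+1\le 2b_n^{\mathrm{tr}}(X)+1$ for a very general member (which the paper leaves to the reader) and by treating the Tate-type exceptions (quadrics and the cubic surface) separately; this is a genuine clarification, since Proposition~\ref{prop5.2} as stated only yields injectivity in the range $k\le 2b_n^{\mathrm{tr}}(X)+1$.
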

\begin{proof}
For every closed point $b\in B$, one has
\begin{equation*}
\text{Im}(\iota_k^*: \mathrm{CH}^*(\mathcal{X}_{/B}^k)\rightarrow\mathrm{CH}^*(\mathcal{X}_b^k))\subseteq R^*(\mathcal{X}_b^k),
\end{equation*}
where $\iota_k\colon\mathcal{X}_b^k\hookrightarrow\mathcal{X}_{/B}^k$ is the natural inclusion.
The result therefore follows from Proposition \ref{prop5.2}.
\end{proof}

More generally, we formulate the following conjecture of Franchetta type, which, among other cases, generalizes the corresponding conjecture for hyper-Kähler varieties.

\begin{conjecture}
Let $X$ and $B$ be smooth connected varieties, and let $f: X\rightarrow B$ be a smooth projective morphism
such that a very general fiber of $f$ admits motivic 0-multiplicativity.
Assume that for a very general point $b\in B$, the monodromy-invariant sub-Hodge structure $H^*(X_b,\mathbb{Q})^{\pi_1(B,b)}$ consists entirely of Hodge classes. 
Then every relative power of $f$ satisfies the Franchetta property. 
More precisely, for every integer $N\geq 1$ and every cycle class
$\Gamma\in\mathrm{CH}^*(X_{/B}^N)$,
if the restriction $\Gamma|_{X_b^N}$ is homologically trivial for a very general point $b\in B$, then
$\Gamma|_{X_b^N}=0$ in $\mathrm{CH}^*(X_b^N)$ for every closed point $b\in B$.
\end{conjecture}

As a final application, we show that motivic $0$-multiplicativity of a very general fiber of a smooth projective family gives rise to a multiplicative decomposition isomorphism in the corresponding derived category.
\begin{proposition}
Let $X$ and $B$ be smooth varieties, and let $f: X\rightarrow B$ be a smooth projective morphism. 
Assume that a very general fiber of $f$ admits a multiplicative CK decomposition. 
Then there exists a dense Zariski open subset $U\subseteq B$, such that one has a multiplicative decomposition isomorphism
\begin{equation}\label{56}
\mathbf{R}(f|_U)_*\mathbb{Q}\cong\bigoplus_i\mathbf{R}^i(f|_U)_*\mathbb{Q}[-i]
\end{equation}
in the derived category $\mathrm{D}(U; \mathbb{Q})$ of sheaves of $\mathbb{Q}$-vector spaces on $U$.

In particular, the universal family of smooth Fano or Calabi--Yau hypersurfaces in a smooth weighted projective space satisfies this property.
\end{proposition}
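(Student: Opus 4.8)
The plan is to reduce the statement to a relative form of motivic $0$-multiplicativity and then feed it into the standard formalism of relative correspondences acting on the derived pushforward. First I would spread out. Fix a very general point $b_{0}\in B$ for which $X_{b_{0}}$ carries a multiplicative self-dual CK decomposition $\{\pi_{i}^{X_{b_{0}}}\}_{0\leq i\leq 2n}$; this is the datum of finitely many algebraic cycles on $X_{b_{0}}\times X_{b_{0}}$ subject to finitely many algebraic relations, namely idempotence, orthogonality, $\sum_{i}\pi_{i}^{X_{b_{0}}}=\Delta_{X_{b_{0}}}$, the lifting of the Künneth projectors, self-duality, and the multiplicativity relation $\Delta_{123}^{X_{b_{0}}}=\sum_{k=0}^{2n}\sum_{i+j=k}\pi_{k}^{X_{b_{0}}}\circ\Delta_{123}^{X_{b_{0}}}\circ(\pi_{i}^{X_{b_{0}}}\otimes\pi_{j}^{X_{b_{0}}})$. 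Each of these cycles extends to a relative cycle over a dense Zariski open neighbourhood of $b_{0}$, and, since each of the listed relations is a closed condition on the base whose vanishing locus contains the very general point $b_{0}$, it holds over a dense Zariski open subset; shrinking, one obtains $U\subseteq B$ together with relative self-dual CK projectors $\Pi_{i}\in\text{CH}^{n}(X_{/U}\times_{U}X_{/U})$ restricting fibrewise over every $b\in U$ to a multiplicative CK decomposition of $X_{b}$, and with $\Delta_{123}^{X/U}=\sum_{k}\sum_{i+j=k}\Pi_{k}\circ\Delta_{123}^{X/U}\circ(\Pi_{i}\otimes\Pi_{j})$ in $\text{CH}^{2n}(X_{/U}\times_{U}X_{/U}\times_{U}X_{/U})$, where $\Delta_{123}^{X/U}$ denotes the relative small diagonal. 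For the universal family of Fano or Calabi-Yau hypersurfaces in a smooth weighted projective space this step is immediate: by Theorem~\ref{thm4.11} \emph{every} fibre admits a multiplicative CK decomposition, and by Lemma~\ref{lem4.1} the governing cycle relation is generically defined.

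Next I would run the relative correspondence formalism over $U$. For the smooth projective morphism $f|_{U}$ of relative dimension $n$ one has the relative Künneth identification of $\mathbf{R}(f|_{U})_{*}\mathbb{Q}\otimes\mathbf{R}(f|_{U})_{*}\mathbb{Q}$ with the derived pushforward of $\mathbb{Q}$ from $X_{/U}\times_{U}X_{/U}$, together with relative Poincaré–Verdier duality $\mathbf{R}(f|_{U})_{*}\mathbb{Q}\cong\mathbf{R}(f|_{U})_{*}\mathbb{Q}^{\vee}[-2n](-n)$; combining the two identifies the subgroup of $H^{2n}(X_{/U}\times_{U}X_{/U},\mathbb{Q})$ underlying relative cycle classes of degree $n$ with $\mathrm{Hom}_{D^{b}_{c}(U)}(\mathbf{R}(f|_{U})_{*}\mathbb{Q},\mathbf{R}(f|_{U})_{*}\mathbb{Q})$, composition of relative correspondences going to composition of morphisms (a standard check via the projection formula and smooth base change for the relevant fibre products). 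Hence $\Pi_{0},\dots,\Pi_{2n}$ act as mutually orthogonal idempotent endomorphisms $\varpi_{i}$ of $\mathbf{R}(f|_{U})_{*}\mathbb{Q}$ with $\sum_{i}\varpi_{i}=\mathrm{id}$; as idempotents split in $D^{b}_{c}(U)$, this produces a direct sum decomposition $\mathbf{R}(f|_{U})_{*}\mathbb{Q}=\bigoplus_{i}M_{i}$ with $M_{i}:=\mathrm{Im}(\varpi_{i})$. Since the cohomology class of $\Pi_{i,b}$ is the Künneth projector onto $H^{i}(X_{b},\mathbb{Q})$ for every $b\in U$, the induced endomorphism $\mathcal{H}^{j}(\varpi_{i})$ of the local system $\mathbf{R}^{j}(f|_{U})_{*}\mathbb{Q}$ is the identity for $j=i$ and zero otherwise; thus $M_{i}$ has cohomology concentrated in degree $i$, equal to $\mathbf{R}^{i}(f|_{U})_{*}\mathbb{Q}$, so $M_{i}\cong\mathbf{R}^{i}(f|_{U})_{*}\mathbb{Q}[-i]$ in $D^{b}_{c}(U)$. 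This yields the decomposition isomorphism \eqref{56}.

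Finally I would verify that \eqref{56} is multiplicative. The cup product $\mu\colon\mathbf{R}(f|_{U})_{*}\mathbb{Q}\otimes\mathbf{R}(f|_{U})_{*}\mathbb{Q}\to\mathbf{R}(f|_{U})_{*}\mathbb{Q}$ corresponds, under the identifications above, to the relative small diagonal $\Delta_{123}^{X/U}$ regarded as a relative correspondence from $X_{/U}\times_{U}X_{/U}$ to $X_{/U}$. Transporting $\mu$ through $\bigoplus_{i}M_{i}$, its $(i,j,k)$-component $M_{i}\otimes M_{j}\to M_{k}$ is induced by $\Pi_{k}\circ\Delta_{123}^{X/U}\circ(\Pi_{i}\otimes\Pi_{j})$, which by the relative multiplicativity relation of the first step vanishes whenever $k\neq i+j$. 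Hence $\mu(M_{i}\otimes M_{j})\subseteq M_{i+j}$, i.e.\ the cup product is strictly compatible with the grading induced by \eqref{56}, which is the asserted multiplicativity. The last assertion is then a direct consequence: by Theorem~\ref{thm4.11} a (indeed every) fibre of the universal family of Fano or Calabi-Yau hypersurfaces in a smooth weighted projective space admits a multiplicative CK decomposition, so the hypothesis is met.

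The step I expect to require the most care is the second one: one must show rigorously that a relative self-correspondence of degree $n$ induces an endomorphism of the \emph{complex} $\mathbf{R}(f|_{U})_{*}\mathbb{Q}$ in $D^{b}_{c}(U)$ — not merely of its cohomology sheaves — compatibly with composition, so that orthogonal idempotent correspondences give rise to an honest direct sum decomposition. Over a point this is classical; the relative version is handled either through the formalism of relative Chow motives over a base, or by shrinking $U$ so that all the $\mathbf{R}^{i}(f|_{U})_{*}\mathbb{Q}$ are local systems and arguing directly with the resulting constructible $t$-structure. One should also take care, in the spreading-out step, that "very general" genuinely supplies a point lying outside every proper closed subset on which one of the defining relations could fail, so that all of them propagate to a common dense Zariski open subset $U$.
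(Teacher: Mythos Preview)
Your proposal is correct and follows essentially the same two-step strategy as the paper: spread the multiplicative CK decomposition from a fibre to a relative one over a dense open $U$, then pass to $\mathrm{D}(U;\mathbb{Q})$ via the action of relative correspondences (what the paper packages as the realization functor $r_U:\text{CH}\mathcal{M}(U)\to\mathrm{D}(U;\mathbb{Q})$).

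The one technical point where the paper proceeds differently is precisely the spreading step you flagged: rather than extending cycles from a closed fibre and invoking a ``closed condition'' argument (which is delicate, since vanishing in Chow is not Zariski-closed on the base but only a countable union of closed conditions), the paper transports the multiplicative CK decomposition to the geometric generic fibre $X_{\overline{\eta}}$ via an isomorphism $\overline{\mathbb{C}(\eta)}\cong\mathbb{C}$ identifying $\text{CH}^*(X_{\overline{\eta}}^k)$ with $\text{CH}^*(X_{b_0}^k)$ for very general $b_0$, descends to $X_\eta$ by Galois descent, and then applies Bloch's spreading lemma to obtain the relative decomposition over some $U$. This cleanly sidesteps the issue you raised at the end of your proposal.
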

\begin{proof}
Let $\eta$ denote the generic point of $B$, and let $\overline{\eta}:=\mathrm{Spec}(\overline{\mathbb{C}(\eta)})\cong\mathrm{Spec}(\mathbb{C})$ be the geometric generic point. After choosing a field embedding $\overline{\mathbb{C}(B)}\hookrightarrow\mathbb{C}$,
the geometric generic fiber $X_{\overline{\eta}}$ may be identified with a very general complex fiber of $f$.
In particular, the corresponding Chow groups may be identified: $\mathrm{CH}^*(X_{\overline{\eta}}^2)\cong\mathrm{CH}^*(X_{b}^2)$ for a very general closed point $b\in B$.
Since a very general fiber of $f$ admits a multiplicative CK decomposition, the geometric generic fiber $X_{\overline{\eta}}$ does as well. By Galois descent, the generic fiber $X_\eta$ admits a multiplicative CK decomposition.

The flat pullback maps induced by restriction to the generic fibers of families commute with proper pushforward, intersection products, and the composition of correspondences. By Lemma (1A.1) of \cite{B10}, after replacing $B$ by a dense Zariski open subset $U\subseteq B$, the morphism $f|_U: f^{-1}(U)\rightarrow U$ admits a multiplicative relative CK decomposition over $U$. Let $\mathrm{CH}\mathcal{M}(U)$ denote the category of relative Chow motives over $U$,
and let $r_U: \mathrm{CH}\mathcal{M}(U)\rightarrow \mathrm{D}(U; \mathbb{Q})$ be the realization functor. 
Applying $r_U$ to the multiplicative relative CK decomposition yields the multiplicative decomposition isomorphism \eqref{56}.
\end{proof}
\bigskip

\bigskip

{\noindent\textbf{Funding}}
\medskip

This work was supported by the Fundamental Research Funds of Shandong University
under Grant No.~11140075614046, the Future Program for Young Scholars of Shandong University under Grant No.~11140089964236, and the National Natural Science Foundation of China under Grant Nos.~11601276 and 11771426.
\bigskip
\bigskip

{\noindent\textbf{Acknowledgments}}
\medskip

The author would like to express sincere gratitude to Professors Baohua Fu and Kejian Xu for their constant encouragement and support.

\bigskip
\bigskip
\newpage

\vspace{1cm}

{\small

\noindent
{\bf Ze Xu}\\ School of Mathematics, Shandong University, 27 South Shanda Road, Jinan, Shandong 250100, P. R. China\\
{\bf Email: xuze@sdu.edu.cn}


\begin{thebibliography}{99}
\bigskip
\bigskip

\bibitem[1]{As00}
Asakura, M.,
Motives and algebraic de Rham cohomology.
\emph{CRM Proc. Lecture Notes} \textbf{24},
American Mathematical Society, Providence, RI, 2000, pp.~133--154.

\bibitem[2]{Ba19}
Bazhov, I.,
On the decomposition of the small diagonal of a K3 surface.
\emph{Adv. Geom.} \textbf{19} (2019), no.~3, 353--358.

\bibitem[3]{Bea23}
Beauville, A.; Schoen, C.,
A non-hyperelliptic curve with torsion Ceresa cycle modulo algebraic equivalence.
\emph{Int. Math. Res. Not. IMRN} \textbf{2023} (2023), no.~5, 3671--3675.

\bibitem[4]{BV04}
Beauville, A.; Voisin, C.,
On the Chow ring of a K3 surface.
\emph{J. Algebraic Geom.} \textbf{13} (2004), 417--426.

\bibitem[5]{Bei87}
Beilinson, A.,
Height pairing between algebraic cycles.
In: \emph{$K$-Theory, Arithmetic and Geometry}
(Yu.~I. Manin, ed.),
Lecture Notes in Math. \textbf{1289},
Springer-Verlag, Berlin, 1987, pp.~1--26.

\bibitem[6]{BLLS23}
Bisogno, D.; Li, W.; Litt, D.; Srinivasan, P.,
Group-theoretic Johnson classes and non-hyperelliptic curves with torsion Ceresa class.
\emph{Épijournal Géom. Algébrique} \textbf{7} (2023), Art.~8, 19~pp.

\bibitem[7]{B10}
Bloch, S., Lectures on Algebraic Cycles.
New Math. Monogr. \textbf{16},
Cambridge University Press, Cambridge, 2010, xxiv+130~pp.

\bibitem[8]{BL24}
Bolognesi, M.; Laterveer, R.,
Some motivic properties of Gushel--Mukai sixfolds.
\emph{Math. Nachr.} \textbf{297} (2024), no.~1, 246--265.

\bibitem[9]{CC83}
Ceresa, G.; Collino, A.,
Some remarks on algebraic equivalence of cycles.
\emph{Pacific J. Math.} \textbf{105} (1983), no.~2, 285--290.

\bibitem[10]{D21}
Diaz, H.~A.,
The Chow ring of a cubic hypersurface.
\emph{Int. Math. Res. Not. IMRN} \textbf{2021} (2021), no.~22, 17071--17090.

\bibitem[11]{F13}
Fu, L.,
Decomposition of small diagonals and Chow rings of hypersurfaces and Calabi--Yau complete intersections.
\emph{Adv. Math.} \textbf{244} (2013), 894--924.

\bibitem[12]{FLV19}
Fu, L.; Laterveer, R.; Vial, C.,
The generalized Franchetta conjecture for some hyper-Kähler varieties.
\emph{J. Math. Pures Appl. (9)} \textbf{130} (2019), 1--35.

\bibitem[13]{FLV21}
Fu, L.; Laterveer, R.; Vial, C.,
Multiplicative Chow--Künneth decompositions and varieties of cohomological K3 type.
\emph{Ann. Mat. Pura Appl. (4)} \textbf{200} (2021), no.~5, 2085--2126.

\bibitem[14]{FM24}
Fu, L.; Moonen, B.,
Algebraic cycles on Gushel--Mukai varieties.
\emph{Épijournal Géom. Algébrique}, special volume in honour of C.~Voisin
(2024), Art.~17.

\bibitem[15]{Fu98}
Fulton, W.,
\emph{Intersection Theory}.
Second edition,
Ergeb. Math. Grenzgeb. (3), vol.~\textbf{2},
Springer-Verlag, Berlin, 1998.

\bibitem[16]{Ja94}
Jannsen, U.,
Motivic sheaves and filtrations on Chow groups.
In: \emph{Motives},
Proc. Sympos. Pure Math. \textbf{55}, Part~1,
American Mathematical Society, Providence, RI, 1994, pp.~245--302.

\bibitem[17]{KMP07}
Kahn, B.; Murre, J.; Pedrini, C.,
On the transcendental part of the motive of a surface.
In: \emph{Algebraic Cycles and Motives}, vol.~2,
London Math. Soc. Lecture Note Ser. \textbf{344},
Cambridge University Press, Cambridge, 2007, pp.~143--202.

\bibitem[18]{K94}
Kleiman, S.~L.,
The standard conjectures.
In: \emph{Motives},
Proc. Sympos. Pure Math. \textbf{55}, Part~1,
American Mathematical Society, Providence, RI, 1994, pp.~3--20.

\bibitem[19]{La19}
Laterveer, R.,
On the Chow ring of certain hypersurfaces in a Grassmannian.
\emph{Le Matematiche (Catania)} \textbf{74} (2019), no.~1, 95--108.

\bibitem[20]{La21}
Laterveer, R.,
Algebraic cycles and Gushel--Mukai fivefolds.
\emph{J. Pure Appl. Algebra} \textbf{225} (2021), no.~5,
Paper No.~106582, 18~pp.

\bibitem[21]{La24}
Laterveer, R.,
Some more Fano threefolds with a multiplicative Chow--Künneth decomposition.
\emph{Publ. Res. Inst. Math. Sci.} \textbf{60} (2024), no.~3, 561--581.

\bibitem[22]{La26}
Laterveer, R., On the generalized Franchetta conjecture for double EPW sextics. Preprint, 2026.

\bibitem[23]{LL97}
Looijenga, E.; Lunts, V.~A.,
A Lie algebra attached to a projective variety.
\emph{Invent. Math.} \textbf{129} (1997), no.~2, 361--412.

\bibitem[24]{MY16}
Moonen, B.; Yin, Q.,
Some remarks on modified diagonals.
\emph{Commun. Contemp. Math.} \textbf{18} (2016), no.~1,
Paper No.~1550009, 16~pp.

\bibitem[25]{Mu90}
Murre, J.~P.,
On the motive of an algebraic surface.
\emph{J. Reine Angew. Math.} \textbf{409} (1990), 190--204.

\bibitem[26]{Mu93}
Murre, J.~P.,
On a conjectural filtration on the Chow groups of an algebraic variety, I and II.
\emph{Indag. Math. (N.S.)} \textbf{4} (1993), 177--188 and 189--201.

\bibitem[27]{O'G14}
O'Grady, K.~G.,
Computations with modified diagonals.
\emph{Rend. Lincei Mat. Appl.} \textbf{25} (2014), 249--274.

\bibitem[28]{Ot12}
Ottem, J.~C.,
Ample subvarieties and $q$-ample divisors.
\emph{Adv. Math.} \textbf{229} (2012), no.~5, 2868--2887.

\bibitem[29]{S01}
Saito, M.,
Arithmetic mixed sheaves.
\emph{Invent. Math.} \textbf{144} (2001), no.~3, 533--569.

\bibitem[30]{Sch94}
Scholl, A.~J.,
Classical motives.
In: \emph{Motives},
Proc. Sympos. Pure Math. \textbf{55}, Part~1,
American Mathematical Society, Providence, RI, 1994, pp.~163--187.

\bibitem[31]{SV16a}
Shen, M.; Vial, C.,
The Fourier transform for certain hyper-Kähler fourfolds.
\emph{Mem. Amer. Math. Soc.} \textbf{240} (2016), no.~1139,
vii+163~pp.

\bibitem[32]{SV16b}
Shen, M.; Vial, C.,
The motive of the Hilbert cube $X^{[3]}$.
\emph{Forum Math. Sigma} \textbf{4} (2016),
Paper No.~e30, 55~pp.

\bibitem[33]{Voi04}
Voisin, C.,
Intrinsic pseudo-volume forms and $K$-correspondences.
In: \emph{The Fano Conference},
University of Turin, Turin, 2004, pp.~761--792.

\bibitem[34]{Voi12}
Voisin, C.,
Chow rings and decomposition theorems for families of K3 surfaces and Calabi--Yau hypersurfaces.
\emph{Geom. Topol.} \textbf{16} (2012), no.~1, 433--473.

\bibitem[35]{Voi15}
Voisin, C.,
Some new results on modified diagonals.
\emph{Geom. Topol.} \textbf{19} (2015), no.~6, 3307--3343.

\bibitem[36]{XX13}
Xu, K.; Xu, Z.,
Remarks on Murre's conjecture on Chow groups.
\emph{J. K-Theory} \textbf{12} (2013), no.~1, 3--14.

\bibitem[37]{X26a}
Xu, Z.,
On motivic multiplicativity of Fano threefolds.
In preparation, 2026.

\bibitem[38]{X26b}
Xu, Z.,
On motivic multiplicativity of cyclic coverings.
In preparation, 2026.

\bibitem[39]{Y13}
Yin, Q.,
\emph{Tautological Cycles on Curves and Jacobians}.
Ph.D. thesis, Radboud University Nijmegen, 2013.
Available at http://faculty.bicmr.pku.edu.cn/~qizheng.
\end{thebibliography}
\end{document}